\colorlet{linkequation}{blue}
\newcommand*{\SavedEqref}{}
\let\SavedEqref\eqref
\renewcommand*{\eqref}[1]{%
  \begingroup
    \hypersetup{
      linkcolor=blue,
      linkbordercolor=blue,
    }%
    \SavedEqref{#1}%
  \endgroup
}
\def\bk{\mathbf{k}}
\def\Div{\mathrm{Div}}
\def\fDiv{\mathfrak{Div}}
\DeclareSymbolFont{extraup}{U}{zavm}{m}{n}
\DeclareMathSymbol{\varheart}{\mathalpha}{extraup}{86}
\DeclareMathSymbol{\vardiamond}{\mathalpha}{extraup}{87}
\def\CT@@do@color{%
  \global\let\CT@do@color\relax
        \@tempdima\wd\z@
        \advance\@tempdima\@tempdimb
        \advance\@tempdima\@tempdimc
        \kern-\@tempdimb
\transparent{0.6}%
        \leaders\vrule
                \hskip\@tempdima\@plus  1fill
        \kern-\@tempdimc
        \hskip-\wd\z@ \@plus -1fill }
\newcommand{\thickhline}{%
    \noalign {\ifnum 0=`}\fi \hrule height 1pt
    \futurelet \reserved@a \@xhline
}
\newcolumntype{"}{@{\hskip\tabcolsep\vrule width 1pt\hskip\tabcolsep}}
\newtheorem{Theorem}{Theorem}[section]
\newtheorem{Lemma}[Theorem]{Lemma}
\newtheorem{Proposition}[Theorem]{Proposition}
\newtheorem{Corollary}[Theorem]{Corollary}
\newtheorem{Conjecture}[Theorem]{Conjecture}
\newtheorem{Definition}[Theorem]{Definition}
\newtheorem{Example}[Theorem]{Example}
\newcommand{\Hom}{{\rm Hom}}
\newcommand{\A}{{\Bbb{A}}}
\newcommand{\Spec}{\mathrm{Spec}}
\newcommand{\Sym}{\mathrm{Sym}}
\newcommand{\im}{\mathrm{im}}
\newcommand{\bp}{\begin{Proposition}}
\newcommand{\ep}{\end{Proposition}}
\newcommand{\bl}{\begin{Lemma}}
\newcommand{\el}{\end{Lemma}}
\newcommand{\bt}{\begin{Theorem}}
\newcommand{\et}{\end{Theorem}}
\newcommand{\bd}{\begin{Definition}}
\newcommand{\ed}{\end{Definition}}
\newcommand{\End}{\mathrm{End}}
\newcommand{\Mod}{\mathrm{Mod}}
\newcommand{\uMod}{\underline{\Mod}}
\newcommand{\Mat}{\mathrm{Mat}}
\newcommand{\eqdef}{\stackrel{{\rm def.}}{=}}
\def\rsim{\mathrm{sim}}
\DeclareFontFamily{U}{rsf}{}
\DeclareFontShape{U}{rsf}{m}{n}{<5> <6> rsfs5 <7> <8> <9> rsfs7 <10-> rsfs10}{}
\DeclareMathAlphabet\Scr{U}{rsf}{m}{n}
\def\N{\mathbb{N}}
\def\Z{\mathbb{Z}}
\def\C{\mathbb{C}}
\def\R{\mathbb{R}}
\def\rk{{\rm rk}}
\def\GL{\mathrm{GL}}
\def\supp{\mathrm{supp}\,}
\def\minsupp{\mathrm{minsupp}\,}
\def\red{\mathrm{red}}
\def\fd{\mathfrak{d}}
\def\cC{\mathcal{C}}
\def\MF{\mathrm{MF}}
\def\HMF{\mathrm{HMF}}
\newcommand{\ord}{\mathrm{ord}}
\def\essred{\mathrm{essred}}
\def\MF{\mathrm{MF}}
\def\HMF{\mathrm{HMF}}
\def\ZMF{\mathrm{ZMF}}
\def\BMF{\mathrm{BMF}}
\def\EF{\mathrm{EF}}
\def\HEF{\mathrm{HEF}}
\def\ZEF{\mathrm{ZEF}}
\def\Z{\mathbb{Z}}
\def\bm{\mathbf{m}}
\def\bmu{\boldsymbol{\mu}}
\def\loc{\mathrm{loc}}
\def\gr{\mathrm{gr}}
\def\bepsilon{\boldsymbol{\epsilon}}
\newcommand{\be}{\begin{equation*}}
\newcommand{\ee}{\end{equation*}}
\newcommand{\ben}{\begin{equation}}
\newcommand{\een}{\end{equation}}
\newcommand{\beqa}{\begin{eqnarray*}}
\newcommand{\eeqa}{\end{eqnarray*}}
\newcommand{\beqan}{\begin{eqnarray}}
\newcommand{\eeqan}{\end{eqnarray}}
\newcommand \QQ {{\mathbb Q}}
\def\2{\bar{2}}
\def\3{\bar{3}}
\def\4{\bar{4}}
\def\chS{\check{S}}
\def\Wv{\widetilde{v}}
\def\cHef{{\mathcal Hef}}
\def\ccHef{{\check{\mathcal H}ef}}
\def\mEF{\mathbf{EF}}
\def\mHEF{\mathbf{HEF}}
\def\mhef{\mathbf{hef}}
\def\chN{\check{N}}
\def\cch{\check{h}}
\newcommand{\nn}{\nonumber}
\newcommand{\id}{\mathrm{id}}
\def\O{\mathrm{O}}
\def\cP{\mathcal{P}}
\def\cT{\mathcal{T}}
\def\G_2{\mathrm{G_2}}
\def\fC{\mathfrak{C}}
\def\fI{\mathfrak{I}}
\def\Ann{\mathrm{Ann}}
\def\mf{\mathbf{f}}
\def\Ob{\mathrm{Ob}}
\def\0{{\hat{0}}}
\def\1{{\hat{1}}}
\def\Crit{\mathrm{Crit}}
\def\O{\mathrm{O}}
\def\mod{\mathrm{mod}}
\def\bsr{\mathrm{bsr}}
\def\uHom{\underline{\Hom}}
\def\hmf{\mathrm{hmf}}
\def\zmf{\mathrm{zmf}}
\def\bmf{\mathrm{bmf}}
\def\mf{\mathrm{mf}}
\def\zef{\mathrm{zef}}
\def\hef{\mathrm{hef}}
\def\rMP{\mathrm{MP}}
\def\red{\mathrm{red}}
\newcommand{\twopartdef}[4]
{
	\left\{
		\begin{array}{ll}
			#1 & \mbox{if } #2 \\
			#3 & \mbox{if } #4
		\end{array}
	\right.
}
\newcommand{\threepartdef}[6]
{
	\left\{
		\begin{array}{ll}
			#1 & \mbox{if } #2 \\
			#3 & \mbox{if } #4 \\
                        #5 & \mbox{if } #6
		\end{array}
	\right.
}
\begin{document}

\title{Elementary matrix factorizations over B\'ezout domains}

\author{Dmitry Doryn, Calin Iuliu Lazaroiu, Mehdi Tavakol}

\institute{Center for Geometry and Physics, Institute for Basic
  Science, Pohang, Republic of Korea 37673\\
Max-Planck Institut f\"ur Mathematik, Vivatsgasse 7, 
53111 Bonn, Germany  \\
  \email{doryn@ibs.re.kr,
    calin@ibs.re.kr, mehdi@mpim-bonn.mpg.de}}

\date{}

\maketitle

\abstract{We study the homotopy category $\hef(R,W)$ (and its
$\Z_2$-graded version $\HEF(R,W)$) of elementary factorizations, where
$R$ is a B\'ezout domain which has prime elements and $W=W_0 W_c$,
where $W_0\in R^\times$ is a square-free element of $R$ and $W_c\in
R^\times$ is a finite product of primes with order at least two. In
this situation, we give criteria for detecting isomorphisms in
$\hef(R,W)$ and $\HEF(R,W)$ and formulas for the number of isomorphism
classes of objects. We also study the full subcategory $\mhef(R,W)$ of
the homotopy category $\hmf(R,W)$ of finite rank matrix factorizations
of $W$ which is additively generated by elementary factorizations. We
show that $\mhef(R,W)$ is Krull-Schmidt and we conjecture that it
coincides with $\hmf(R,W)$. Finally, we discuss a few classes of examples.}

\tableofcontents

\section*{Introduction}

\noindent The study of topological Landau-Ginzburg models \cite{LG1,LG2,lg1,lg2}
often leads to the problem of understanding the triangulated category
$\hmf(R,W)$ of finite rank matrix factorizations of an element $W\in
R$, where $R$ is a {\em non-Noetherian} commutative ring.  For
example, the category of B-type topological D-branes associated to a
holomorphic Landau-Ginzburg pair $(\Sigma,W)$ with $\Sigma$ a
non-compact Riemann surface and $W:\Sigma\rightarrow \C$ a
non-constant holomorphic function has this form with $R=\O(\Sigma)$,
the non-Noetherian ring of holomorphic functions defined on
$\Sigma$. When $\Sigma$ is connected, the ring $\O(\Sigma)$ is a
B\'ezout domain (in fact, an elementary divisor domain). In this situation, 
this problem can be reduced \cite{edd} to the study of the full subcategory
$\hef(R,W)$ whose objects are the {\em elementary factorizations},
defined as those matrix factorizations of $W$ for which the even and
odd components of the underlying supermodule have rank one. In this paper, 
we study the category $\hef(R,W)$ and the
full category $\mhef(R,W)$ of $\hmf(R,W)$ which is additively
generated by elementary matrix factorizations, for the case when $R$
is a B\'ezout domain.  We say that $W$ is {\em critically-finite} if it
is a product of a square-free element $W_0$ of $R$ with an element
$W_c\in R$ which can be written as a finite product of primes of
multiplicities strictly greater than one. When $W$ is
critically-finite, the results of this paper provide a detailed
description of the categories $\hef(R,W)$ and $\mhef(R,W)$, reducing
questions about them to the divisibility theory of $R$.

The paper is organized as follows. In Section 1, we recall some basic
facts about finite rank matrix factorizations over unital commutative
rings and introduce notation and terminology which will be used later
on. In Section 2, we study the category $\hef(R,W)$ and its
$\Z_2$-graded completion $\HEF(R,W)$ when $W$ is any non-zero element
of $R$, describing these categories in terms of the lattice of
divisors of $W$ and giving criteria for deciding when two objects are
isomorphic. We also study the behavior of these categories under
localization at a multiplicative set as well their subcategories of
primary matrix factorizations. In Section 3, we show that the additive
category $\mhef(R,W)$ is Krull-Schmidt when $R$ is a B\'ezout domain
and $W$ is a critically-finite element of $R$ and propose a few
conjectures about $\hmf(R,W)$. In Section 4, we give a formula for the
number of isomorphism classes in the categories $\HEF(R,W)$ and
$\hef(R,W)$. Finally, Section 5 discusses a few classes of
examples. Appendices \ref{app:GCD} and \ref{app:Bezout} collect some
information on greatest common denominator (GCD) domains and B\'ezout
domains.

\paragraph{Notations and conventions}
The symbols $\0$ and $\1$ denote the two elements of the field
$\Z_2=\Z/2\Z$, where $\0$ is the zero element. Unless otherwise
specified, all rings considered are unital and commutative.
Given a cancellative Abelian monoid $(M,\cdot)$, we say that an
element $x\in M$ divides $y\in M$ if there exists $q\in M$ such that
$y=qx$. In this case, $q$ is uniquely determined by $x$ and $y$ and we
denote it by $q=x/y$ or $\frac{x}{y}$.

Let $R$ be a unital commutative ring. The set of non-zero elements of
$R$ is denoted by $R^\times\eqdef R\setminus \{0\}$, while the
multiplicative group of units of $R$ is denoted by $U(R)$. The Abelian
categories of all $R$-modules is denoted $\Mod_R$, while the Abelian
category of finitely-generated $R$-modules is denoted $\mod_R$. Let
$\Mod_R^{\Z_2}$ denote the category of $\Z_2$-graded modules and outer
(i.e. even) morphisms of such and $\uMod_R^{\Z_2}$ denote the category
of $\Z_2$-graded modules and inner morphisms of such.  By definition,
an {\em $R$-linear category} is a category enriched in the monoidal
category $\Mod_R$ while a {\em $\Z_2$-graded $R$-linear category} is a
category enriched in the monoidal category $\uMod_R^{\Z_2}$. With this
definition, a linear category is pre-additive, but it need not admit
finite bi-products (direct sums).  For any $\Z_2$-graded $R$-linear
category $\cC$, the {\em even subcategory} $\cC^\0$ is the $R$-linear
category obtained from $\cC$ by keeping only the even morphisms.

For any unital integral domain $R$, let $\sim$ denote the equivalence
relation defined on $R^\times$ by association in divisibility:
\be
x\sim y ~~\mathrm{iff}~~\exists \gamma\in U(R): y=\gamma x~~.
\ee
The set of equivalence classes of this relation coincides with the set
$R^\times/U(R)$ of orbits for the obvious multiplicative action of
$U(R)$. Since $R$ is a commutative domain, the quotient
$R^\times/U(R)$ inherits a multiplicative structure of cancellative
Abelian monoid. For any $x\in R^\times$, let $(x)\in R^\times/U(R)$
denote the equivalence class of $x$ under $\sim$. Then for any $x,y\in
R^\times$, we have $(xy)=(x)(y)$. The monoid $R^\times/U(R)$ can also
be described as follows. Let $G_+(R)$ be the set of non-zero principal
ideals of $R$. If $x,y$ are elements of $R^\times$, we have $\langle
x\rangle \langle y\rangle =\langle xy\rangle$, so the product of
principal ideals corresponds to the product of the multiplicative
group $R^\times$ and makes $G_+(R)$ into a cancellative Abelian monoid
with unit $\langle 1\rangle=R$. Notice that $G_+(R)$ coincides with
the positive cone of the group of divisibility (see Subsection
\ref{subsec:div}) $G(R)$ of $R$, when the latter is viewed as an
Abelian group ordered by reverse inclusion. The monoids
$R^\times/U(R)$ and $G_+(R)$ can be identified as follows.  For any
$x\in R^\times$, let $\langle x \rangle\in G_+(R)$ denote the
principal ideal generated by $x$. Then $\langle x \rangle$ depends
only on $(x)$ and will also be denoted by $\langle (x)\rangle$. This
gives a group morphism $\langle~\rangle: R^\times/U(R)\rightarrow
G_+(R)$.  For any non-zero principal ideal $I\in G_+(R)$, the set of
all generators $x$ of $I$ is a class in $R^\times/U(R)$ which we
denote by $(I)$; this gives a group morphism $(~):G_+(R)\rightarrow
R^\times/U(R)$.  For all $x\in R^\times$, we have $(\langle
x\rangle)=(x)$ and $\langle (x)\rangle =\langle x \rangle$, which
implies that $\langle~\rangle$ and $(~)$ are mutually inverse group
isomorphisms.

If $R$ is a GCD domain (see Appendix \ref{app:GCD}) and $x_1,\ldots,
x_n$ are elements of $R$ such that $x_1\ldots x_n\neq 0$, let $d$ be
any greatest common divisor (gcd) of $x_1,\ldots, x_n$. Then $d$ is
determined by $x_1,\ldots, x_n$ up to association in divisibility and
we denote its equivalence class by $(x_1,\ldots, x_n)\in
R^\times/U(R)$. The principal ideal $\langle d\rangle =\langle
(x_1,\ldots, x_n)\rangle \in G_+(R)$ does not depend on the choice of
$d$. The elements $x_1,\ldots, x_n$ also have a least common multiple
(lcm) $m$, which is determined up to association in divisibility and
whose equivalence class we denote by $[x_1,\ldots, x_n]\in
R^\times/U(R)$. For $n=2$, we have:
\be
[x_1,x_2]=\frac{(x_1)(x_2)}{(x_1,x_2)}~~.
\ee
If $R$ is a B\'ezout domain (see Appendix \ref{app:Bezout}), then we
have $\langle x_1,\ldots, x_n\rangle\eqdef \langle (x_1,\ldots, x_n)\rangle=\langle x_1\rangle +\ldots
+\langle x_n\rangle$, 
so the gcd operation transfers the operation given by taking the finite sum of
principal ideals from $G_+(R)$ to $R^\times/U(R)$ through the
isomorphism of groups described above. In this case, we have
$(x_1,\ldots,x_n)=(\langle x_1,\ldots, x_n\rangle)$. We also have
$\langle [x_1,\ldots, x_n]\rangle=\cap_{i=1}^n\langle x_i\rangle$ and
hence $[x_1,\ldots, x_n]=(\cap_{i=1}^n\langle x_i\rangle)$.  Thus the
lcm corresponds to the finite intersection of principal ideals.

\section{Matrix factorizations over an integral domain} 

\

\noindent Let $R$ be an integral domain and $W\in R^\times$ be
a non-zero element of $R$.

\subsection{Categories of matrix factorizations}

We shall use the following notations: 
\begin{enumerate}
\itemsep 0.0em
\item $\MF(R,W)$ denotes the $R$-linear and $\Z_2$-graded differential
  category of $R$-valued matrix factorizations of $W$ of finite
  rank. The objects of this category are pairs $a=(M,D)$, where $M$ is
  a free $\Z_2$-graded $R$-module of finite rank and $D$ is an odd
  endomorphism of $M$ such that $D^2=W\id_M$. For any objects
  $a_1=(M_1,D_1)$ and $a_2=(M_2,D_2)$ of $\MF(R,W)$, the $\Z_2$-graded
  $R$-module of morphisms from $a_1$ to $a_2$ is given by the inner
  $\Hom$:
\be
\Hom_{\MF(R,W)}(a_1,a_2)=\uHom_R(M_1,M_2)=\Hom_R^\0(M_1,M_2)\oplus \Hom_R^\1(M_1,M_2)~~,  
\ee
endowed with the differential $\fd_{a_1,a_2}$ determined uniquely by the condition:
\be
\fd_{a_1,a_2}(f)=D_2\circ f-(-1)^\kappa f\circ D_1~~,~~\forall f\in \Hom_R^\kappa(M_1,M_2)~~,
\ee
where $\kappa\in \Z_2$.
\item $\ZMF(R,W)$ denotes the $R$-linear and $\Z_2$-graded cocycle category of $\MF(R,W)$. 
This has the same objects as $\MF(R,W)$ but morphism spaces given by: 
\be
\Hom_{\ZMF(R,W)}(a_1,a_2)\eqdef \{f\in \Hom_{\MF(R,W)}(a_1,a_2)|\fd_{a_1,a_2}(f)=0\}~~.
\ee
\item $\BMF(R,W)$ denotes the $R$-linear and $\Z_2$-graded coboundary
  category of $\MF(R,W)$, which is an ideal in $\ZMF(R,W)$.  This has
  the same objects as $\MF(R,W)$ but morphism spaces given by:
\be
\Hom_{\BMF(R,W)}(a_1,a_2)\eqdef \{\fd_{a_1,a_2}(f)|f\in \Hom_{\MF(R,W)}(a_1,a_2)\}~~.
\ee
\item $\HMF(R,W)$ denotes the $R$-linear and $\Z_2$-graded total cohomology category of
  $\MF(R,W)$. This has the same objects as $\MF(R,W)$ but morphism
  spaces given by:
\be
\Hom_{\HMF(R,W)}(a_1,a_2)\eqdef \Hom_{\ZMF(R,W)}(a_1,a_2)/\Hom_{\BMF(R,W)}(a_1,a_2)~~.
\ee
\item The subcategories of $\MF(R,W)$, $\ZMF(R,W)$, $\BMF(R,W)$ and
  $\HMF(R,W)$ obtained by restricting to morphisms of even degree are
  denoted respectively by $\mf(R,W)\eqdef \MF^\0(R,W)$,
  $\zmf(R,W)\eqdef \ZMF^\0(R,W)$, $\bmf(R,W)\eqdef \BMF^\0(R,W)$ and
  $\hmf(R,W)\eqdef \HMF^\0(R,W)$.
\end{enumerate}
The categories $\MF(R,W)$, $\BMF(R,W)$ and $\ZMF(R,W)$ admit double
direct sums (and hence all finite direct sums of at least two
elements) but do not have zero objects. On the other hand, the
category $\HMF(R,W)$ is additive, the matrix factorization
$\left[\begin{array}{cc} 0 &1 \\ W & 0\end{array}\right]$ being a zero
object. Finally, it is well-known that the category $\hmf(R,W)$ is
triangulated (see \cite{Langfeldt} for a detailed treatment).

\

\noindent For later reference, recall that the biproduct (direct sum)
of $\MF(R,W)$ is defined as follows:

\

\begin{Definition}
Given two matrix factorizations $a_i=(M_i,D_i)$, $(i=1,2)$ of $W\in
R$, their direct sum $a_1 \oplus a_2$ is the matrix factorization
$a=(M,D)$ of $W$, where $M\eqdef M^\0 \oplus M^\1$ and $D\eqdef
\left[\begin{array}{cc} 0 & v\\ u & 0\end{array}\right]$, with:
\be
M^\kappa=M_1^\kappa \oplus M_2^\kappa ~\forall \kappa\in \Z_2 ~~\mathrm{and}~~ u=\left[\begin{array}{cc} u_1 & 0 \\ 0 & u_2\end{array}\right]~,~ v=\left[\begin{array}{cc} v_1 & 0 \\ 0 & v_2\end{array}\right]~~.
\ee
Given a third matrix factorization $a_3=(M_3,D_3)$ of $W$ and two
morphisms $f_i\in \Hom_{MF(R,W)}(a_i,a_3)=\uHom_R(a_i,a_3)$ $(i=1,2)$
in $\MF(R,W)$, their direct sum of $f_1\oplus f_2\in
\Hom_{\MF(R,W)}(a_1\oplus a_2,a_3)=\uHom_R(a_1\oplus a_2,a_3)$ is the
ordinary direct sum of the $R$-module morphisms $f_1$ and $f_2$.
\end{Definition}

\

\noindent As a consequence, $\MF(R,W)$ admits all finite but non-empty
direct sums. The following result is elementary:

\

\begin{Lemma}
The following statements hold:
\begin{enumerate}
\item The subcategories $\ZMF(R,W)$ and $\BMF(R,W)$ of $\MF(R,W)$ are
  closed under finite direct sums (but need not have zero objects). 
\item The direct sum induces a well-defined biproduct (which is
  again denoted by $\oplus$) on the $R$-linear categories $\HMF(R,W)$
  and $\hmf(R,W)$.
 \item $(\HMF(R,W),\oplus)$ and $(\hmf(R,W),\oplus)$ are additive
  categories, a zero object in each being given by any of the
  elementary factorizations $e_1$ and $e_W$, which are isomorphic to
  each other in $\hmf(R,W)$. In particular, any finite direct sum of
  the elementary factorizations $e_1$ and $e_W$ is a zero object in
  $\HMF(R,W)$ and in $\hmf(R,W)$.
\end{enumerate}
\end{Lemma}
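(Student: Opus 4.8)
The plan is to treat the three items in order; only item~(3) requires a computation, and that computation is carried out in rank $(1,1)$. For item~(1), note first that $\ZMF(R,W)$ and $\BMF(R,W)$ have the same objects as $\MF(R,W)$, so it suffices to check that the direct-sum operation of $\MF(R,W)$ restricts to these subcategories on morphisms, i.e.\ that a direct sum of cocycles is a cocycle and a direct sum of coboundaries a coboundary. Both follow from the block-diagonal form of the differential $D$ of $a_1\oplus a_2$ recorded in the Definition above, together with the defining formula for $\fd$: for homogeneous composable morphisms one reads off $\fd(f_1\oplus f_2)=\fd(f_1)\oplus\fd(f_2)$. I would also observe that the biproduct structure morphisms $\iota_i\colon a_i\to a_1\oplus a_2$ and $p_i\colon a_1\oplus a_2\to a_i$ of $\MF(R,W)$ are cocycles, since they intertwine $D_i$ with the block-diagonal $D$; hence $\ZMF(R,W)$ in fact inherits binary biproducts, whereas $\BMF(R,W)$, being an ideal, is merely closed under the operation and need not contain identity morphisms, so it need not have a zero object.

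For item~(2), recall that $\Hom_{\HMF(R,W)}=\Hom_{\ZMF(R,W)}/\Hom_{\BMF(R,W)}$. By item~(1) the operation $\oplus$ descends to a well-defined bifunctor on $\HMF(R,W)$: indeed $(f_1'\oplus f_2)-(f_1\oplus f_2)=(f_1'-f_1)\oplus 0=\fd(h\oplus 0)$ whenever $f_1'-f_1=\fd(h)$ is a coboundary, and symmetrically in the second variable. The biproduct identities $p_i\iota_i=\id$, $p_i\iota_j=0$ for $i\neq j$, and $\iota_1p_1+\iota_2p_2=\id$ hold already in $\ZMF(R,W)$, hence a fortiori in $\HMF(R,W)$, so $\oplus$ is a biproduct there; restricting to even morphisms gives the same for $\hmf(R,W)$.

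For item~(3), I would invoke the elementary fact that in any pre-additive category an object $z$ with $\Hom(z,z)=0$ is a zero object, since then every morphism $z\to b$ or $b\to z$ factors through $\id_z=0$. As $\HMF(R,W)$ and $\hmf(R,W)$ are $R$-linear, hence pre-additive, and admit all finite nonempty biproducts by item~(2), it remains only to exhibit one zero object. Take $e_1$ (resp.\ $e_W$), the rank-$(1,1)$ factorization with $D=\left[\begin{smallmatrix}0 & W\\ 1 & 0\end{smallmatrix}\right]$ (resp.\ $D=\left[\begin{smallmatrix}0 & 1\\ W & 0\end{smallmatrix}\right]$), and compute $\fd$ on the morphism space $\Hom_{\MF(R,W)}(e_1,e_1)=\Mat_2(R)$: the even endomorphism $\diag(a,b)$ goes to $\left[\begin{smallmatrix}0 & W(b-a)\\ a-b & 0\end{smallmatrix}\right]$ and the odd endomorphism $\left[\begin{smallmatrix}0 & c\\ e & 0\end{smallmatrix}\right]$ goes to $(c+We)\,\id$. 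Comparing, in each $\Z_2$-degree the space of cocycles coincides with the space of coboundaries, so $\Hom_{\HMF(R,W)}(e_1,e_1)=0$, and the same computation gives $\Hom_{\HMF(R,W)}(e_W,e_W)=0$. Thus $e_1$ and $e_W$ are zero objects of $\HMF(R,W)$, and a fortiori of $\hmf(R,W)$; being zero objects they are isomorphic in $\hmf(R,W)$; and a finite direct sum of copies of $e_1$ and $e_W$ is again a zero object, since for zero objects $z_1,z_2$ one has $\id_{z_1\oplus z_2}=\iota_1p_1+\iota_2p_2$ with every $p_i\iota_j=0$, whence $\id_{z_1\oplus z_2}=0$.

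Items~(1) and~(2) amount to bookkeeping with the explicit block form of $\oplus$; the only real content is the rank-$(1,1)$ computation in item~(3). There the single point to watch is the degree bookkeeping: one must verify the cocycle/coboundary coincidence in \emph{both} the even and the odd part of $\Hom_{\MF(R,W)}(e_1,e_1)$ in order to obtain the statement for the $\Z_2$-graded category $\HMF(R,W)$ and not merely for $\hmf(R,W)$.
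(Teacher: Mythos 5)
Your proof is correct and is exactly the routine verification the paper omits (it states the Lemma as ``elementary'' with no proof): block-diagonal compatibility of $\fd$ with $\oplus$ for items (1)--(2), and the rank-$(1,1)$ cocycle/coboundary computation in both $\Z_2$-degrees for item (3). One harmless slip: you swapped the paper's convention, under which $e_v$ has $v$ in the upper-right block, so $e_1$ corresponds to $\left[\begin{smallmatrix}0 & 1\\ W & 0\end{smallmatrix}\right]$ and $e_W$ to $\left[\begin{smallmatrix}0 & W\\ 1 & 0\end{smallmatrix}\right]$; since your computation shows both are zero objects, nothing is affected.
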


\subsection{Reduced rank and matrix description}

Let $a=(M,D)$ be an object of $\MF(R,W)$, where $M=M^\0\oplus
M^\1$. Taking the supertrace in the equation $D^2=W\id_M$ and using
the fact that $W\neq 0$ shows that $\rk M^\0=\rk M^\1$. We call this
natural number the {\em reduced rank} of $a$ and denote it by
$\rho(a)$; we have $\rk M=2\rho(a)$. Choosing a homogeneous basis of
$M$ (i.e. a basis of $M^\0$ and a basis of $M^\1$) gives an
isomorphism of $R$-supermodules $M\simeq R^{\rho|\rho}$, where
$\rho=\rho(a)$ and $R^{\rho|\rho}$ denotes the $R$-supermodule with
$\Z_2$-homogeneous components
$(R^{\rho|\rho})^\0=(R^{\rho|\rho})^\1=R^{\oplus \rho}$. This
isomorphism allows us to identify $D$ with a square matrix of size
$2\rho(a)$ which has block off-diagonal form:
\be
D=\left[\begin{array}{cc} 0 & v\\ u & 0\end{array}\right]~~,
\ee
where $u$ and $v$ are square matrices of size $\rho(a)$ with entries in
$R$. The condition $D^2=W\id_M$ amounts to the relations: 
\ben
\label{uvrels}
uv=vu=W I_\rho~~,
\een
where $I_\rho$ denotes the identity matrix of size $\rho$.  Since
$W\neq 0$, these conditions imply that the matrices $u$ and $v$ have
maximal rank\footnote{To see this, it suffices to consider
  equations \eqref{uvrels} in the field of fractions of $R$.}:
\be
\rk u=\rk v=\rho~~.
\ee
Matrix factorizations for which $M=R^{\rho|\rho}$ form a dg subcategory
of $\MF(R,W)$ which is {\em essential} in the sense that it is
dg-equivalent with $\MF(R,W)$. Below, we often tacitly identify
$\MF(R,W)$ with this essential subcategory and use similar
identifications for $\ZMF(R,W)$, $\BMF(R,W)$ and $\HMF(R,W)$.

Given two matrix factorizations $a_1=(R^{\rho_1|\rho_1}, D_1)$ and
$a_2=(R^{\rho_2|\rho_2}, D_2)$ of $W$, write
$D_i=\left[\begin{array}{cc} 0 & v_i\\ u_i & 0\end{array}\right]$,
with $u_i,v_i\in \Mat(\rho_i,\rho_i,R)$. Then:
\begin{itemize}
\itemsep 0.0em
\item An even morphism $f\in
\Hom_{\MF(R,W)}^\0(a_1,a_2)$ has the matrix form:
\be
f=\left[\begin{array}{cc}
    f_{\0\0} & 0\\ 0 & f_{\1\1}\end{array}\right]
\ee
with $f_{\0\0}, f_{\1\1}\in \Mat(\rho_1,\rho_2,R)$ and we have:
\be
\fd_{a_1,a_2}(f)=D_2\circ f-f\circ D_1=\left[\begin{array}{cc} 0 & v_2\circ f_{\1\1}-f_{\0\0}\circ v_1\\ u_2\circ f_{\0\0}-f_{\1\1}\circ u_1 &
    0\end{array}\right]~~;
\ee
\item An odd morphism $g\in \Hom^\1_{\MF(R,W)}(a_1,a_2)$
has the matrix form: 
\be
g=\left[\begin{array}{cc} 0 & g_{\1\0}\\ g_{\0\1} &
    0\end{array}\right]
\ee 
with $g_{\1\0}, g_{\0\1}\in \Mat(\rho_1,\rho_2,R)$ and we have:
\be
\fd_{a_1,a_2}(g)=D_2\circ g+g\circ D_1=\left[\begin{array}{cc} v_2\circ g_{\0\1}+g_{\1\0}\circ u_1 & 0 \\  0 &
  u_2\circ g_{\1\0}+g_{\0\1}\circ v_1  \end{array}\right]~~.
\ee
\end{itemize}

\begin{remark}
The cocycle condition $\fd_{a_1,a_2}(f)=0$ satisfied by an even morphism $f\in \Hom_{\ZMF(R,W)}^\0(a_1,a_2)$ 
amounts to the system: 
\beqa
\begin{cases}
 v_2\circ f_{\1\1}=f_{\0\0}\circ v_1\\
 u_2\circ f_{\0\0}=f_{\1\1}\circ u_1~~,
\end{cases} 
\eeqa
which in turn amounts to any of the following equivalent conditions:
\be
f_{\1\1}=\frac{u_2\circ f_{\0\0}\circ v_1}{W}\Longleftrightarrow f_{\0\0}=\frac{v_2\circ f_{\1\1}\circ u_1}{W}
~~.
\ee
Similarly, the cocycle condition $\fd_{a_1,a_2}(g)=0$ defining an
odd morphism $g\in \Hom^\1_{\ZMF(R,W)}(a_1,a_2)$ amounts to the system:
\beqa
\begin{cases}
 v_2\circ g_{\0\1}+g_{\1\0}\circ u_1=0\\
 u_2\circ g_{\1\0}+g_{\0\1}\circ v_1=0~~,
\end{cases}
\eeqa
which in turn amounts to any of the following equivalent conditions: 
\be
g_{\1\0}=-\frac{v_2\circ g_{\0\1}\circ v_1}{W}\Longleftrightarrow g_{\0\1}=-\frac{u_2\circ g_{\1\0}\circ u_1}{W}~~.
\ee
\end{remark}

\subsection{Strong isomorphism}

Recall that $\zmf(R,W)$ denotes the even subcategory of $\ZMF(R,W)$. This category 
admits non-empty finite direct sums but does not have a zero object. 

\

\begin{Definition}
Two matrix factorizations $a_1$ and $a_2$ of $W$ over $R$ are called {\em
  strongly isomorphic} if they are isomorphic in the category
$\zmf(R,W)$.
\end{Definition}

\

\noindent It is clear that two strongly isomorphic factorizations are also isomorphic 
in $\hmf(R,W)$, but the converse need not hold. 

\

\begin{Proposition}
\label{prop:strong_iso1}
Let $a_1=(R^{\rho_1|\rho_1},D_1)$ and $a_2=(R^{\rho_2|\rho_2},D_2)$ be
two matrix factorizations of $W$ over $R$, where
$D_i=\left[\begin{array}{cc} 0 & v_i\\ u_i & 0\end{array}\right]$.
Then the following statements are equivalent:
\begin{enumerate}[(a)]
\itemsep 0.0em
\item $a_1$ and $a_2$ are strongly isomorphic.
\item $\rho_1=\rho_2$ (a quantity which we denote by $\rho$) and there
  exist invertible matrices $A,B\in \GL(\rho,R)$ such that one (and
  hence both) of the following equivalent conditions is satisfied:
\begin{enumerate}[1.]
\item $v_2=Av_1B^{-1}$,
\item $u_2=Bu_1A^{-1}$.                                             
\end{enumerate}
\end{enumerate}
\end{Proposition}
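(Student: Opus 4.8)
The plan is to prove the equivalence by unwinding the definition of strong isomorphism in matrix terms. Assume first that $a_1$ and $a_2$ are strongly isomorphic, i.e. isomorphic in $\zmf(R,W)$. Then there exist even morphisms $f\in\Hom^{\0}_{\ZMF(R,W)}(a_1,a_2)$ and $g\in\Hom^{\0}_{\ZMF(R,W)}(a_2,a_1)$ with $g\circ f=\id_{a_1}$ and $f\circ g=\id_{a_2}$. Writing $f=\diag(f_{\0\0},f_{\1\1})$ and $g=\diag(g_{\0\0},g_{\1\1})$ in the block form recorded above, the relations $g\circ f=\id$ and $f\circ g=\id$ say exactly that $f_{\0\0},f_{\1\1}$ are invertible square matrices, which in particular forces $\rho_1=\rho_2=:\rho$. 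Now I would invoke the cocycle conditions from the Remark: since $f$ is a cocycle, $v_2\circ f_{\1\1}=f_{\0\0}\circ v_1$ and $u_2\circ f_{\0\0}=f_{\1\1}\circ u_1$. Setting $A:=f_{\0\0}$ and $B:=f_{\1\1}$ (both in $\GL(\rho,R)$) gives $v_2=A v_1 B^{-1}$ and $u_2=B u_1 A^{-1}$, which is statement (b). So the only thing to check on this side is that the two conditions in (b) are indeed equivalent to each other; this follows because $v_2=Av_1B^{-1}$ combined with $u_1v_1=WI_\rho$ and $u_2v_2=WI_\rho$ yields $u_2=W v_2^{-1}=W B v_1^{-1}A^{-1}=B u_1 A^{-1}$ after using $v_1^{-1}=\tfrac{1}{W}u_1$ in the field of fractions (the matrices $u_i,v_i$ have maximal rank by the discussion following \eqref{uvrels}), and symmetrically.

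Conversely, suppose $\rho_1=\rho_2=\rho$ and there exist $A,B\in\GL(\rho,R)$ with $v_2=Av_1B^{-1}$ and $u_2=Bu_1A^{-1}$. Define $f:=\diag(A,B)\in\Hom^{\0}_{\MF(R,W)}(a_1,a_2)$ and $g:=\diag(A^{-1},B^{-1})\in\Hom^{\0}_{\MF(R,W)}(a_2,a_1)$. A direct computation from the formula for $\fd_{a_1,a_2}$ on even morphisms shows $\fd_{a_1,a_2}(f)$ has off-diagonal blocks $v_2 B - A v_1=0$ and $u_2 A - B u_1=0$, so $f$ is a cocycle; likewise $g$ is a cocycle (equivalently, rearrange the given identities as $v_1=A^{-1}v_2 B$, $u_1=B^{-1}u_2 A$). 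Finally $g\circ f=\diag(A^{-1}A,B^{-1}B)=\id_{a_1}$ and $f\circ g=\id_{a_2}$, so $f$ is an isomorphism in $\zmf(R,W)$, i.e. $a_1$ and $a_2$ are strongly isomorphic. This establishes (b)$\Rightarrow$(a), completing the proof.

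Essentially everything here is bookkeeping with the block decompositions and the cocycle conditions already derived in the Remark; there is no real obstacle. The one point requiring a little care — and the only place I would slow down — is the equivalence of conditions 1.\ and 2.\ inside part (b): one must observe that over the field of fractions of $R$ the relations $u_iv_i=v_iu_i=WI_\rho$ let one pass between $u_i$ and $v_i$ by $u_i=Wv_i^{-1}$, $v_i=Wu_i^{-1}$, so that conjugating one of $u,v$ in the prescribed way automatically conjugates the other. All the resulting identities are identities of matrices with entries in $R$, so this manipulation in the fraction field is harmless.
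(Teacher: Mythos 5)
Your proposal is correct and follows essentially the same route as the paper: both identify an even isomorphism in $\zmf(R,W)$ with an invertible block-diagonal matrix $U=\mathrm{diag}(A,B)$ (forcing $\rho_1=\rho_2$), translate the cocycle condition $UD_1=D_2U$ into conditions 1.\ and 2., and derive their mutual equivalence from $u_iv_i=v_iu_i=WI_\rho$. The only cosmetic difference is that you justify the equivalence of 1.\ and 2.\ by inverting $v_i$ over the field of fractions, whereas one can also cancel the nonzero scalar $W$ directly in $R$; both are valid.
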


\begin{proof}
$a_1$ and $a_2$ are strongly isomorphic iff there exists $U\in
  \Hom_{\zmf(R,W)}(a_1,a_2)$ which is an isomorphism in
  $\zmf(R,W)$. Since $U$ is an even morphism in the cocycle
  category, we have: 
\ben
\label{DS}
UD_1=D_2U~~.  
\een 
The condition that $U$ be even allows us to identify it with a matrix
of the form $U=\left[\begin{array}{cc} A & 0\\ 0 &
    B\end{array}\right]$, while invertibility of $U$ in $\zmf(R,W)$
amounts to invertibility of the matrix $U$, which in turn means that
$A$ and $B$ are square matrices (thus $\rho_1=\rho_2=\rho$) 
belonging to $\GL(\rho,R)$. Thus relation \eqref{DS} reduces to either of
conditions 1. or 2., which are equivalent since $v_1u_1=u_1v_1=W I_\rho$
and $u_2v_2=v_2u_2=W I_\rho$. \qed
\end{proof}

\subsection{Critical divisors and the critical locus of $W$}

\

\

\begin{Definition}
A divisor $d$ of $W$ which is not a unit is called {\em critical} if
$d^2|W$. 
\end{Definition}

\

\noindent Let:
\be
\fC(W)\eqdef \big\{d\in R~\big|~ d^2|W \big\}
\ee
be the set of all critical divisors of $W$. The ideal: 
\ben
\label{IW}
\fI_W\eqdef\cap_{d\in \fC(W)} \langle d \rangle
\een
is called the {\em critical ideal} of $W$. Notice that $\fI_W$ consists
of those elements of $R$ which are divisible by all critical divisors
of $W$. In particular, we have $(W)\subset \fI_W$ and hence there exists a
unital ring epimorphism $R/(W)\rightarrow R/\fI_W$.

\

\begin{Definition}
A {\em critical prime divisor} of $W$ is a prime element $p\in R$ such
that $p^2|W$. The {\em critical locus} of $W$ is the subset of
$\Spec(R)$ consisting of the principal prime ideals of $R$ generated
by the critical prime divisors of $W$:
\be
\Crit(W)\eqdef \big\{\langle p \rangle\in  \Spec(R) ~\big| ~ p^2|W \big\}~~.
\ee
\end{Definition}

\subsection{Critically-finite elements}

Let $R$ be a B\'ezout domain. Then $R$ is a GCD domain, hence
irreducible elements of $R$ are prime.  This implies that any
factorizable element\footnote{I.e. an element of $R$ which has a
  finite factorization into irreducibles.} of $R$ has a unique prime
factorization up to association in divisibility.

\

\begin{Definition}\label{def:crit-fin}
A non-zero non-unit $W$ of $R$ is called: 
\begin{itemize}
\itemsep 0.0em
\item {\em non-critical}, if $W$ has no critical divisors;
\item {\em critically-finite} if it has a factorization of the form: 
\ben
\label{Wcritform}
W=W_0 W_c~~\mathrm{with}~~W_c=p_1^{n_1}\ldots p_N^{n_N}~~,
\een
where $n_j\geq 2$, $p_1,\ldots, p_N$ are critical prime divisors of
$W$ (with $p_i\not \sim p_j$ for $i\neq j$) and $W_0$ is non-critical
and coprime with $W_c$. 
\end{itemize}
\end{Definition}
Notice that the elements $W_0$, $W_c$ and $p_i$ in the factorization
\eqref{Wcritform} are determined by $W$ up to association, while the
integers $n_i$ are uniquely determined by $W$. The factors $W_0$ and
$W_c$ are called respectively the {\em non-critical} and {\em
  critical} parts of $W$. The integers $n_i\geq 2$ are called the {\em
  orders} of the critical prime divisors $p_i$.

\

\noindent For a critically-finite element $W$ with decomposition \eqref{Wcritform}, we have: 
\be
\Crit(W)=\{\langle p_1\rangle,\ldots, \langle p_N\rangle\}~~\mathrm{and}~~\fI_W=\langle W_\red\rangle~~,
\ee
where\footnote{The notation $\lfloor x\rfloor\in \Z$ indicates the integral
part of a real number $x\in\R$.}: 
\be
W_\red\eqdef p_1^{\lfloor\frac{n_1}{2}\rfloor}\ldots p_N^{\lfloor\frac{n_N}{2}\rfloor}
\ee
is called the {\em reduction} of $W$. Notice that $W_\red$ is determined up to 
association in divisibility.

\subsection{Two-step factorizations of $W$}

Recall that a {\em two-step factorization} (or two-step multiplicative
partition) of $W$ is an ordered pair $(u,v)\in R\times R$ such that
$W=uv$. In this case, the divisors $u$ and $v$ are called {\em
  $W$-conjugate}.  The {\em transpose} of $(u,v)$ is the ordered pair
$(v,u)$ (which is again a two-step factorization of $W$), while the {\em
  opposite transpose} is the ordered pair $\sigma(u,v)=(-v,-u)$. This
defines an involution $\sigma$ of the set $\rMP_2(W)$ of two-step
factorizations of $W$.  The two-step factorizations $(u,v)$ and
$(u',v')$ are called {\em similar} (and we write $(u,v)\sim (u',v')$)
if there exists $\gamma\in U(R)$ such that $u'=\gamma u$ and
$v'=\gamma^{-1} v$. We have $\sigma(u,v)\sim (v,u)$.

\

\begin{Definition}
The {\em support} of a two-step factorization $(u,v)$ of $W$ is the principal ideal $\langle u,v\rangle\in G_+(R)$.
\end{Definition}

\

\noindent Let $d$ be a gcd of $u$ and $v$. Since $W=uv=d^2 u_1v_1$
(where $u_1\eqdef u/d$, $v_1\eqdef v/d$), it is clear that $d$ is a critical divisor of
$W$. Notice that the opposite transpose of the two step factorization
$(u,v)$ has the same support as $(u,v)$.

\subsection{Elementary matrix factorizations}
\label{subsec:elem}

\

\

\begin{Definition}
A matrix factorization $a=(M,D)$ of $W$ over $R$ is called {\em
  elementary} if it has unit reduced rank, i.e. if $\rho(a)=1$.
\end{Definition}

\

\noindent Any elementary factorization is strongly isomorphic to one
of the form $e_v\eqdef (R^{1|1},D_v)$, where $v$ is a divisor of $W$
and $D_v\eqdef \left[\begin{array}{cc} 0 & v\\ u &
    0 \end{array}\right]$, with $u\eqdef W/v\in R$. Let $\EF(R,W)$
denote the full subcategory of $\MF(R,W)$ whose objects are the
elementary factorizations of $W$ over $R$. Let $\ZEF(R,W)$ and
$\HEF(R,W)$ denote respectively the cocycle and total cohomology
categories of $\EF(R,W)$.  We also use the notations $\zef(R,W)\eqdef
\ZEF^\0(R,W)$ and $\hef(R,W)\eqdef \HEF^\0(R,W)$. Notice that an
elementary factorization is indecomposable in $\zmf(R,W)$, but it need
not be indecomposable in the triangulated category $\hmf(R,W)$.

The map $\Phi:\Ob\EF(M,W)\rightarrow \rMP_2(W)$ which sends $e_v$ to the
ordered pair $(u,v)$ is a bijection. The suspension of $e_v$ is given
by $\Sigma e_v=e_{-u}=(R^{1|1},D_{-u})$, since:
\be
D_{-u}=\left[\begin{array}{cc} 0 & -u\\ -v & 0 \end{array}\right]~~.
\ee
In particular, $\Sigma e_v$ corresponds to the opposite transpose
$\sigma(u,v)$ and we have:
\be
\Phi\circ \Sigma=\sigma\circ \Phi~~.
\ee
Hence $\Sigma$ preserves the subcategory $\EF(M,W)$ of $\MF(R,W)$ 
and the subcategories $\HEF(R,W)$ and $\hef(R,W)$ of $\HMF(R,W)$ 
and $\hmf(R,W)$. This implies that $\HEF(R,W)$ is equivalent with the 
graded completion $\gr_\Sigma \hef(R,W)$. We thus have natural isomorphisms:
\beqan
\label{graded}
\Hom^\1_{\HEF(R,W)}(e_{v_1},e_{v_2}) &\simeq_R&  \Hom_{\hef(R,W)}(e_{v_1},\Sigma e_{v_2})=
\Hom_{\hef(R,W)}(e_{v_1},e_{-u_2})\nn  ~~,\\
\Hom^\1_{\HEF(R,W)}(e_{v_1},e_{v_2}) &\simeq_R&  \Hom_{\hef(R,W)}(\Sigma e_{v_1},e_{v_2})=\Hom_{\hef(R,W)}(e_{-u_1},e_{v_2}) ~~,
\eeqan
for any divisors $v_1,v_2$ of $W$, where $u_1=W/v_1$ and $u_2=W/v_2$.

\

\begin{Definition}\label{def:110}
The {\em support} of an elementary matrix factorization $e_v$ is the
ideal of $R$ defined through:
\be
\supp(e_v)\eqdef \supp (\Phi(e_v))=\langle v,W/v\rangle ~~.
\ee
Notice that this ideal is generated by any gcd $d$ of $v$ and $W/v$ and that $d$ is a critical divisor of $W$. 
\end{Definition}

\

\noindent We will see later that an elementary factorization is trivial iff its support equals $R$. 

\

\begin{Definition}
Two elementary matrix factorizations $e_{v_1}$ and $e_{v_2}$ of $W$
are called \emph{similar} if $v_1\sim v_2$ or equivalently $u_1\sim
u_2$.  This amounts to existence of a unit $\gamma\in U(R)$ such
that $v_2=\gamma v_1$ and $u_2=\gamma^{-1}u_1$.
\end{Definition}

\

\begin{Proposition}
\label{prop:estrong}
Two elementary factorizations $e_{v_1}$ and $e_{v_2}$ are strongly
isomorphic iff they are similar. In particular, strong isomorphism 
classes of elementary factorization are in bijection with the 
set of those principal ideals of $R$ which contain $W$. 
\end{Proposition}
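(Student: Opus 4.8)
The plan is to apply Proposition~\ref{prop:strong_iso1} to the special case $\rho_1=\rho_2=1$. In rank one the matrices $u_i,v_i$ are just elements of $R$, and the invertible matrices $A,B\in\GL(1,R)$ are precisely units of $R$. First I would observe that $e_{v_1}$ and $e_{v_2}$ are strongly isomorphic iff there exist $A,B\in U(R)$ with $v_2=Av_1B^{-1}$ (equivalently $u_2=Bu_1A^{-1}$), by Proposition~\ref{prop:strong_iso1}. Writing $\gamma\eqdef AB^{-1}\in U(R)$, the first condition reads $v_2=\gamma v_1$; feeding this into $W=u_2v_2=u_1v_1$ and using that $R$ is a domain (so $v_1\neq 0$ can be cancelled) forces $u_2=\gamma^{-1}u_1$. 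Conversely, if $v_2=\gamma v_1$ and $u_2=\gamma^{-1}u_1$ for some $\gamma\in U(R)$, then taking $A=\gamma$, $B=1$ exhibits the strong isomorphism via Proposition~\ref{prop:strong_iso1}. This shows $e_{v_1}\cong e_{v_2}$ strongly iff $v_1\sim v_2$, which is by definition the statement that $e_{v_1}$ and $e_{v_2}$ are similar.

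For the second assertion, I would set up the correspondence with principal ideals containing $W$. Recall from Subsection~\ref{subsec:elem} that every elementary factorization is strongly isomorphic to some $e_v$ with $v\mid W$, and the map $\Phi$ sends $e_v$ to the two-step factorization $(W/v,v)$. By the first part, the strong isomorphism class of $e_v$ depends only on the association class $(v)\in R^\times/U(R)$, equivalently only on the principal ideal $\langle v\rangle$. Since $v\mid W$, we have $W\in\langle v\rangle$, so $\langle v\rangle$ is a principal ideal containing $W$. Conversely, any principal ideal $I$ with $W\in I$ is of the form $I=\langle v\rangle$ for some $v\in R$, and $W\in\langle v\rangle$ means exactly $v\mid W$, so $e_v$ is a well-defined elementary factorization with $\langle v\rangle=I$. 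I would then check the two assignments are mutually inverse: the class of $e_v$ maps to $\langle v\rangle$, and $\langle v\rangle$ maps back to the class of $e_v$; injectivity of the first map is precisely the ``only if'' direction of the similarity criterion combined with the bijection between association classes and principal ideals recorded in the Notations section.

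The only mild subtlety, and the step I would be most careful about, is the passage in the first part from $v_2=\gamma v_1$ to $u_2=\gamma^{-1}u_1$: this uses both that $W=u_iv_i$ and that $R$ is an integral domain so that $v_1$ (which is nonzero since $W\neq 0$) is cancellable. This is exactly why the hypothesis $W\in R^\times$ and the domain assumption are needed. Everything else is a direct unwinding of definitions together with the group isomorphism $R^\times/U(R)\cong G_+(R)$ and the reduction-to-$e_v$ normal form, so no real obstacle remains.
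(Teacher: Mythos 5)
Your proposal is correct and follows essentially the same route as the paper: both reduce to Proposition~\ref{prop:strong_iso1} in rank one, where $A,B$ become units, set $\gamma=AB^{-1}$ to pass between the strong-isomorphism data and the similarity data, and then send the class of $e_v$ to $\langle v\rangle$ for the bijection with principal ideals containing $W$. Your explicit cancellation step to get $u_2=\gamma^{-1}u_1$ is just an unwinding of the ``one (and hence both)'' clause already built into Proposition~\ref{prop:strong_iso1}, so there is no substantive difference.
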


\begin{proof}
Suppose that $e_{v_1}$ and $e_{v_2}$ are strongly isomorphic. By
Proposition \ref{prop:strong_iso1}, there exist units $x,y\in U(R)$
such that $v_2=xv_1 y^{-1}$ and $u_2=y u_1 x^{-1}$, where $u_i\eqdef
W/v_i$. Setting $\gamma\eqdef xy^{-1}$ gives $v_1=\gamma v_1$ and
$u_2=\gamma^{-1} u_1$, hence $e_{v_1}$ and $e_{v_2}$ are
similar. Conversely, suppose that $e_{v_1}\sim e_{v_2}$. Then there
exists a unit $\gamma\in U(R)$ such that $v_2=\gamma v_1$ and
$u_2=\gamma^{-1}u_1$. Setting $x=\gamma$ and $y=1$ gives $v_2=xv_1
y^{-1}$ and $u_2=yu_1 x^{-1}$, which shows that $e_{v_1}$ and
$e_{v_2}$ are strongly isomorphic upon using Proposition
\ref{prop:strong_iso1}. The map which sends the strong isomorphism class 
of $e_v$ to the principal ideal $(v)$ gives the bijection stated. 
\qed
\end{proof}

\

\noindent It is clear that $e_{v_1}$ and $e_{v_2}$ are similar iff the
corresponding two-step factorizations $(v_1,u_1)$ and $(v_2,u_2)$ of
$W$ are similar. Since any strong isomorphism induces an isomorphism
in $\hef(R,W)$, it follows that similar elementary factorizations are
isomorphic in $\hef(R,W)$.

\subsection{The categories $\mHEF(R,W)$ and $\mhef(R,W)$}

Let $\mEF(R,W)$ denote the smallest full $R$-linear subcategory of
$\MF(R,W)$ which contains all objects of $\EF(R,W)$ and is closed
under finite direct sums. It is clear that $\mEF(R,W)$ is a full dg
subcategory of $\MF(R,W)$. Let $\mHEF(R,W)$ denote the total
cohomology category of $\mEF(R,W)$. Let $\mhef(R,W)\eqdef
\mHEF^\0(R,W)$ denote the subcategory obtained from $\mHEF(R,W)$ by
keeping only the even morphisms. Notice that $\mhef(R,W)$ coincides
with the smallest full subcategory of $\hmf(R,W)$ which contains all
elementary factorizations of $W$.

\section{Elementary matrix factorizations over a B\'ezout domain}

\noindent Throughout this section, let $R$ be a B\'ezout domain and $W$
be a non-zero element of $R$. 

\subsection{The subcategory of elementary factorizations}

Let $v_1,v_2$ be divisors of $W$ and $e_1:=e_{v_1}$, $e_2:=e_{v_2}$ be the
corresponding elementary matrix factorizations of $W$. Let $u_1\eqdef
W/v_1$, $u_2=W/v_2$. Let $a$ be a gcd of $v_1$ and $v_2$. Define: 
\ben
\label{param}
b\eqdef v_1/a~,~c\eqdef v_2/a~,~d\eqdef \frac{W}{abc}~,~a'\eqdef
a/s~,~d'\eqdef d/s~~,
\een 
where $s$ is a gcd of $a$ and $d$. Then $a=a's$ and $d=d's$ with
$(a',d')=(1)=(b,c)$ and $W=abcd=s^2 a'bcd'$. In particular, $s$ is a
critical divisor of $W$. Moreover:
\ben
\label{vu}
v_1=ab=sa'b~~,~~v_2=ac=sa'c~~,~~u_1=cd=scd'~~,~~u_2=bd=sbd'
\een
and we have: 
\ben
\label{ds}
(d)= (u_1,u_2)~,~(s)=(v_1,v_2,u_1,u_2)~~.
\een
Notice the following relations in the cancellative monoid $R^\times/U(R)$:
\ben
\begin{aligned}
\label{gcds}
(v_1,v_2)&=(sa')~,~(u_1,u_2)=(s)(d')~,~(u_1,v_1)=(s)(a',c)(b,d')~~,\\~(u_1,v_2)&=(s)(c)~,~(u_2,v_1)=(s)(b)~,~(u_2,v_2)=(s)(a',b)(c,d')~~.
\end{aligned}
\een
In this notation:
\be
D_{v_1}=\left[\begin{array}{cc} 0 & v_1\\ u_1 & 0 \end{array}\right] = \left[\begin{array}{cc} 0 & ab\\ cd & 0 \end{array}\right]=s\left[\begin{array}{cc} 0 & a'b\\ cd' & 0 \end{array}\right]~~\text{and}~~
D_{v_2}=\left[\begin{array}{cc} 0 & v_2\\ u_2 & 0 \end{array}\right] = \left[\begin{array}{cc} 0 & ac\\ bd & 0 \end{array}\right]=s\left[\begin{array}{cc} 0 & a'c\\ bd' & 0 \end{array}\right]~~.
\ee
For $f\in \Hom_{\MF(R,W)}^\0(e_1,e_2)=\Hom^\0_R(R^{1|1},R^{1|1})$ and $g\in \Hom_{\MF(R,W)}^\1(e_1,e_2)=\Hom^\1_R(R^{1|1},R^{1|1})$, we have: 
\ben
\label{fdelem}
\fd_{e_1,e_2}(f)=(c f_{\1\1}-f_{\0\0} b) \left[\begin{array}{cc} 0 & a\\ -d& 0 \end{array}
\right]~\mathrm{and}~\fd_{e_1,e_2}(g)=(a g_{\0\1}+g_{\1\0} d) \left[\begin{array}{cc} c & 0\\ 0& b \end{array}\right]~~.
\een

\begin{remark}
Relations \eqref{param} and \eqref{gcds} imply the following equalities in the cancellative monoid $R^\times/U(R)$:
\ben
\label{alpha_alt}
(s)= \frac{(u_1,v_2)}{\left((u_1,v_2),\frac{v_2}{(v_1,v_2)}\right)}=\frac{(u_2,v_1)}{\left((u_2,v_1),\frac{v_1}{(v_1,v_2)}\right)}~~.
\een
\end{remark}

\subsubsection{Morphisms in $\HEF(R,W)$}

Let $\Mat(n,R^\times/U(R))$ denote the set of square matrices of size
$n$ with entries from the multiplicative semigroup $R/U(R)$.  Any
matrix $S\in \Mat(n,R^\times/U(R))$ can be viewed as an equivalence
class of matrices $A\in \Mat(n,R^\times)$ under the equivalence
relation:
\ben
\label{simn}
A\sim_n B~~\mathrm{iff}~~\forall i,j\in \{1,\ldots, n\}:\exists q_{ij}\in U(R)~\mathrm{such~that}~ B_{ij}=q_{ij} A_{ij}~~.
\een

\

\begin{Proposition}
\label{prop:HomZMF}
With the notations above, we have:
\begin{enumerate}
\itemsep 0.0em
\item $\Hom_{\ZMF(R,W)}^\0(e_{1},e_{2})$ is the free $R$-module
  of rank one generated by the matrix:
\be
\epsilon_\0(v_1,v_2)\eqdef \left[\begin{array}{cc} c & 0 \\0 & b \end{array}\right]\in \left[\begin{array}{cc} \frac{(v_2)}{(v_1,v_2)} & 0 \\ 0 & \frac{(v_1)}{(v_1,v_2)}
\end{array}\right]\eqdef \bepsilon_\0(v_1,v_2)~~,
\ee 
where the matrix $\bepsilon_\0(v_1,v_2)\in \Mat(2,R/U(R))$ in the
right hand side is viewed as an equivalence class under the relation
\eqref{simn}.
\item $\Hom_{\ZMF(R,W)}^\1(e_{1},e_{2})$ is the free $R$-module of
  rank one generated by the matrix:
\be
\epsilon_\1(v_1,v_2; W)\eqdef \left[\begin{array}{cc} 0 & a' \\-d' & 0 \end{array}\right]\in \left[\begin{array}{cc} 0 & \frac{(v_2)}{(u_1,v_2)} \\-\frac{(u_1)}{(u_1,v_2)} & 0\end{array}\right]\eqdef \bepsilon_\1(v_1,v_2; W)~~
\ee
and we have $\bepsilon_\1(v_1,v_2; W)=\bepsilon_\1(v_2,v_1; W)$ in $\Mat(2,R/U(R))$. 
\end{enumerate}
\end{Proposition}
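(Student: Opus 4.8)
The plan is to read off both cocycle modules directly from the explicit formulas for $\fd_{e_1,e_2}$ in \eqref{fdelem}, and then to rewrite the generators so obtained in terms of divisibility classes using the parametrization \eqref{param}--\eqref{gcds}. Throughout one uses that $v_1,v_2$ are non-zero (being divisors of $W\neq 0$), hence so are $a=(v_1,v_2)$, $b=v_1/a$, $c=v_2/a$, $d=W/(abc)$, $s=(a,d)$, $a'=a/s$, $d'=d/s$; this is what lets us cancel non-zero elements in the domain $R$ throughout.

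For part (1): an even morphism $f=\diag(f_{\0\0},f_{\1\1})\in\Hom^\0_{\MF(R,W)}(e_1,e_2)$ is a cocycle iff $\fd_{e_1,e_2}(f)=0$, and since by \eqref{fdelem} this equals $(c f_{\1\1}-f_{\0\0}b)\,\bigl[\begin{smallmatrix}0&a\\-d&0\end{smallmatrix}\bigr]$ with the scalar matrix non-zero ($a,d\neq 0$), the cocycle condition is the single equation $c f_{\1\1}=b f_{\0\0}$ in $R$. By construction $a$ is a gcd of $v_1$ and $v_2$, so $b$ and $c$ are coprime; since $R$ is a GCD domain, $c\mid b f_{\0\0}$ forces $c\mid f_{\0\0}$ (Appendix \ref{app:GCD}), and writing $f_{\0\0}=ct$ and cancelling $c$ gives $f_{\1\1}=bt$. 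Hence $f=t\,\epsilon_\0(v_1,v_2)$ with $\epsilon_\0(v_1,v_2)=\diag(c,b)$, and $t\mapsto t\,\epsilon_\0(v_1,v_2)$ is injective because $b,c\neq 0$; this is the asserted free rank-one structure. Finally, since $(a)=(v_1,v_2)$ one has $(c)=(v_2)/(v_1,v_2)$ and $(b)=(v_1)/(v_1,v_2)$, which identifies the class $\bepsilon_\0(v_1,v_2)$.

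Part (2) is parallel. An odd morphism $g$ with top-right entry $g_{\1\0}$ and bottom-left entry $g_{\0\1}$ is a cocycle iff $a g_{\0\1}+d g_{\1\0}=0$ (again by \eqref{fdelem}, the scalar matrix $\diag(c,b)$ being non-zero). Writing $a=sa'$, $d=sd'$ and cancelling $s\neq 0$ turns this into $a' g_{\0\1}=-d' g_{\1\0}$ with $a',d'$ coprime, so as before $g_{\1\0}=a' r$, $g_{\0\1}=-d' r$, i.e. $g=r\,\epsilon_\1(v_1,v_2;W)$ with $\epsilon_\1(v_1,v_2;W)=\bigl[\begin{smallmatrix}0&a'\\-d'&0\end{smallmatrix}\bigr]$, free of rank one. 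For the classes, \eqref{gcds} gives $(u_1,v_2)=(s)(c)$ and $(v_2)=(s)(a')(c)$, whence $(a')=(v_2)/(u_1,v_2)$, and $(u_1)=(s)(c)(d')$, whence $(d')=(u_1)/(u_1,v_2)$; this puts $\epsilon_\1(v_1,v_2;W)$ inside $\bepsilon_\1(v_1,v_2;W)$. For the symmetry claim, note that $a=(v_1,v_2)$, the product $bc=v_1v_2/a^2$, $d=W/(abc)$ and $s=(a,d)$ are all invariant under interchanging $v_1$ and $v_2$, hence so are the classes $(a')$ and $(d')$; equivalently, the symmetric companions $(u_2,v_1)=(s)(b)$, $(v_1)=(s)(a')(b)$, $(u_2)=(s)(b)(d')$ from \eqref{gcds} yield $(a')=(v_1)/(u_2,v_1)$ and $(d')=(u_2)/(u_2,v_1)$, so the right-hand sides defining $\bepsilon_\1(v_1,v_2;W)$ and $\bepsilon_\1(v_2,v_1;W)$ literally coincide.

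There is no genuine obstacle here: the only substantive inputs — that in a Bézout (hence GCD) domain coprimality is well-behaved and the Gauss/Euclid cancellation lemma holds, and that the gcd and lcm operations transfer correctly to $R^\times/U(R)$ — are exactly what Appendices \ref{app:GCD} and \ref{app:Bezout} are for. The main effort I expect is purely the bookkeeping in the cancellative monoid $R^\times/U(R)$ when matching the explicit generators $\epsilon_\0,\epsilon_\1$ against the class matrices $\bepsilon_\0,\bepsilon_\1$, and keeping track of which of the relations \eqref{param}, \eqref{vu}, \eqref{ds}, \eqref{gcds} is being invoked at each step.
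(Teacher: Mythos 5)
Your argument is correct and follows essentially the same route as the paper's own proof: read off the cocycle conditions $cf_{\1\1}=bf_{\0\0}$ and $a'g_{\0\1}=-d'g_{\1\0}$ from \eqref{fdelem}, use coprimality of $(b,c)$ resp. $(a',d')$ together with Euclid's lemma in a GCD domain to exhibit the single generator, and then match classes via \eqref{gcds}. The only differences are that you spell out a few details the paper leaves implicit (the cancellation step, injectivity of $t\mapsto t\,\epsilon$, and the explicit symmetry check for $\bepsilon_\1$), none of which changes the substance.
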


\begin{proof}
Relations \eqref{gcds} imply: 
\ben
\label{components}
\frac{(v_2)}{(v_1,v_2)}=(c)~,~\frac{(v_1)}{(v_1,v_2)}=(b)~,~\frac{(v_2)}{(u_1,v_2)}=\frac{(v_1)}{(u_2,v_1)}=(a')~,~\frac{(u_1)}{(u_1,v_2)}=\frac{(u_2)}{(u_2,v_1)}=(d')~.
\een
These relations show that 
$\epsilon_\0(v_1,v_2)$ and $\epsilon_\1(v_1,v_2; W)$ belong to the equivalence classes $\bepsilon_\0(v_1,v_2)$ and $\bepsilon_\1(v_1,v_2; W)$ 
and that we have $\bepsilon_\1(v_1,v_2; W)=\bepsilon_\1(v_2,v_1; W)$.

For an even morphism $f: e_{1} \to e_{2}$ in $\MF(R,W)$, the first
equation in \eqref{fdelem} shows that the condition
$\fd_{e_1,e_2}(f)=0$ amounts to:
\be
f_{\1\1}c-f_{\0\0}b=0~~.
\ee
Since $b$ and $c$ are coprime, this condition is equivalent with
the existence of an element $\gamma\in R$ such that $f_{\0\0}=\gamma c$
and $f_{\1\1}=\gamma b$. Thus:
\ben
\label{f70}
f=\gamma\left[\begin{array}{cc} c & 0 \\0 & b \end{array}\right]=\gamma \epsilon_\0(v_1,v_2)~~.
\een
On the other hand, the second equation in \eqref{fdelem} shows that an
odd morphism $g: e_{1} \to e_{2}$ in $\MF(R,W)$ satisfies
$\fd_{e_1,e_2}(g)=0$ iff:
\be
a g_{\0\1}+d g_{\1\0}=0~~.
\ee
Since $a'$ and $d'$ are coprime, this condition is equivalent with the
existence of an element $\gamma\in R$ such that $g_{\1\0}=\gamma a'$
and $g_{\1\1}=-\gamma d'$. Thus:
\ben
\label{f71}
g=\gamma\left[\begin{array}{cc} 0 & a' \\-d' & 0 \end{array}\right]=\gamma \epsilon_\1(v_1,v_2; W)~~.
\een
\qed
\end{proof}

\

\begin{Proposition}
\label{prop:HomHMF}
Let $v_i$ be as in Proposition \ref{prop:HomZMF}. Then
$\Hom_{\HMF(R,W)}^\0(e_{1}, e_{2})$ and $\Hom_{\HMF(R,W)}^\1(e_{1},
e_{2})$ are cyclically presented cyclic $R$-modules generated respectively by
the matrices $\epsilon_\0(v_1,v_2)$ and $\epsilon_\1(v_1,v_2;W)$,
whose annihilators are equal to each other and coincide
with the following principal ideal of $R$:
\be
\alpha_W(v_1,v_2)\eqdef \langle v_1,u_1,v_2,u_2\rangle=\langle s\rangle~~.
\ee
\end{Proposition}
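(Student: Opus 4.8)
The plan is to compute the cohomology modules directly from the description of cocycles in Proposition~\ref{prop:HomZMF} and the description of coboundaries coming from \eqref{fdelem}. By Proposition~\ref{prop:HomZMF}, $\Hom^\0_{\ZMF(R,W)}(e_1,e_2)$ is free of rank one over $R$ with generator $\epsilon_\0(v_1,v_2)$, and $\Hom^\1_{\ZMF(R,W)}(e_1,e_2)$ is free of rank one with generator $\epsilon_\1(v_1,v_2;W)$; hence each cohomology module is cyclic, presented as $R/\mathfrak{a}$ where $\mathfrak{a}$ is the ideal of coefficients $\gamma\in R$ for which $\gamma\,\epsilon_\bullet$ is a coboundary. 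So the whole computation reduces to identifying those two ideals of coboundary-coefficients and checking they both equal $\langle s\rangle$.

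First I would treat the even case. From the first formula in \eqref{fdelem}, a general even coboundary $\fd_{e_1,e_2}(f)$ with $f=\left[\begin{array}{cc} f_{\0\0} & 0\\ 0 & f_{\1\1}\end{array}\right]$ equals $(c f_{\1\1}-f_{\0\0} b)\left[\begin{array}{cc} 0 & a\\ -d & 0\end{array}\right]$, which is an \emph{odd} matrix; so to land in $\Hom^\0$ I must instead use an odd $g=\left[\begin{array}{cc} 0 & g_{\1\0}\\ g_{\0\1} & 0\end{array}\right]$ and the second formula, $\fd_{e_1,e_2}(g)=(a g_{\0\1}+g_{\1\0} d)\left[\begin{array}{cc} c & 0\\ 0 & b\end{array}\right]=(ag_{\0\1}+dg_{\1\0})\,\epsilon_\0(v_1,v_2)$. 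Thus the ideal of even coboundary-coefficients is $\langle a,d\rangle=\langle a\rangle+\langle d\rangle$. Since $R$ is B\'ezout, $\langle a,d\rangle=\langle \gcd(a,d)\rangle=\langle s\rangle$ by the definition of $s$ in \eqref{param}. Hence $\Hom^\0_{\HMF(R,W)}(e_1,e_2)\cong R/\langle s\rangle$ with annihilator $\langle s\rangle$. Symmetrically, the ideal of odd coboundary-coefficients comes from $\fd_{e_1,e_2}(f)=(cf_{\1\1}-f_{\0\0}b)\left[\begin{array}{cc} 0 & a\\ -d & 0\end{array}\right]$; this last matrix is $\left[\begin{array}{cc} 0 & a\\ -d & 0\end{array}\right]=a'b\cdot$(something)? — more carefully, write it in terms of $\epsilon_\1$: since $a=sa'$ and $d=sd'$, one has $\left[\begin{array}{cc} 0 & a\\ -d & 0\end{array}\right]=s\,\epsilon_\1(v_1,v_2;W)$, so $\fd_{e_1,e_2}(f)=s(cf_{\1\1}-bf_{\0\0})\,\epsilon_\1(v_1,v_2;W)$. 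As $b,c$ are coprime and $R$ is B\'ezout, $\{cf_{\1\1}-bf_{\0\0}:f_{\0\0},f_{\1\1}\in R\}=\langle b,c\rangle=R$, so the ideal of odd coboundary-coefficients is exactly $\langle s\rangle$ as well. This gives $\Hom^\1_{\HMF(R,W)}(e_1,e_2)\cong R/\langle s\rangle$ with the same annihilator.

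Finally I would record the identification $\alpha_W(v_1,v_2)=\langle v_1,u_1,v_2,u_2\rangle=\langle s\rangle$: by \eqref{ds} we have $(s)=(v_1,v_2,u_1,u_2)$, which in the B\'ezout dictionary of the introduction says precisely $\langle s\rangle=\langle v_1\rangle+\langle u_1\rangle+\langle v_2\rangle+\langle u_2\rangle=\langle v_1,u_1,v_2,u_2\rangle$. Combining, both $\Hom^\0_{\HMF(R,W)}(e_1,e_2)$ and $\Hom^\1_{\HMF(R,W)}(e_1,e_2)$ are cyclically presented cyclic $R$-modules, generated by $\epsilon_\0(v_1,v_2)$ and $\epsilon_\1(v_1,v_2;W)$ respectively, with common annihilator $\alpha_W(v_1,v_2)=\langle s\rangle$.

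The only genuine subtlety — the place where B\'ezoutness is actually used rather than just gcd-existence — is the two coprimality reductions: turning $\langle a,d\rangle$ into $\langle s\rangle$ (so that the even coboundary ideal is principal generated by $s$) and using $\langle b,c\rangle=R$ to see the odd coboundary ideal is all of $\langle s\rangle$ and not a proper sub-ideal. Both rest on the fact that in a B\'ezout domain a finite sum of principal ideals is the principal ideal generated by a gcd, together with the coprimality relations $(a',d')=(1)=(b,c)$ from \eqref{param}; everything else is bookkeeping with the matrix formulas \eqref{fdelem} and the cocycle generators of Proposition~\ref{prop:HomZMF}.
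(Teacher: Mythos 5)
Your proposal is correct and follows essentially the same route as the paper: identify the rank-one cocycle modules from Proposition~\ref{prop:HomZMF}, read off the coboundary ideals from \eqref{fdelem}, and use B\'ezoutness to convert $\langle a,d\rangle$ into $\langle s\rangle$ (even case) and $(b,c)=(1)$ together with $\left[\begin{smallmatrix}0&a\\-d&0\end{smallmatrix}\right]=s\,\epsilon_\1(v_1,v_2;W)$ into the same ideal (odd case). If anything, your write-up of the odd case is slightly more explicit than the paper's, which simply asserts the conclusion after ``recalling that $(b,c)=(1)$.''
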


\begin{proof}
Let $f\in \Hom_{\ZMF(R,W)}^\0(e_{1}, e_{2})$. Then $f$ is exact
iff there exists an odd morphism $g\in \Hom_{\MF(R,W)}^\1(e_{1},e_{2})$ such that:
\be
f=\fd_{e_1,e_2}(g)=(a g_{\0\1} +g_{\1\0} d) \left[\begin{array}{cc} c & 0 \\0 & b\end{array}\right]~~.
\ee
Comparing this with \eqref{f70}, we find that $f$ is exact if and only
if $s\in (a,d)$ divides $\gamma$.  This implies that the principal ideal
generated by the element:
\be
s\in ((v_1,v_2),(u_1,u_2))= (v_1,u_1,v_2,u_2)
\ee 
is the annihilator of $\Hom_{\ZMF(R,W)}^\0(e_{1},e_{2})$.

On the other hand, an odd morphism $g\in
\Hom_{\ZMF(R,W)}^\1(e_{1},e_{2})$ is exact iff there exists an
even morphism $f\in \Hom_{\MF(R,W)}^\0(e_{1},e_{2})$ such that:
\be
g=\fd_{e_1,e_2}(f)=(f_{\1\1}c-f_{\0\0}b)\left[\begin{array}{cc} 0 & a \\-d & 0 \end{array}\right]~~.
\ee
Comparing with \eqref{f71} and recalling that $(b,c)=(1)$, we find that
$g$ is exact iff $(a,d) | \gamma$. Hence the annihilator of
$\Hom_{\HMF(R,W)}^\1(e_{1},e_{2})$ coincides with that of
$\Hom_{\HMF(R,W)}^\0(e_{1},e_{2})$.  \qed
\end{proof}

\begin{remark}\label{rem:3}
Since $s$ is a critical divisor of $W$, we have $\fI_W
\Hom_{\HEF(R,W)}(e_1,e_2)=0$, where $\fI_W$ denotes the critical ideal
of $W$ defined in \eqref{IW}. In particular, $\HEF(R,W)$ can be viewed as an
$R/\fI_W$-linear category.
\end{remark}

\

\noindent Let $\fDiv(W)\eqdef \{d\in R \, \big| \, d|W\}$ and consider the
function $\alpha_W:\fDiv(W)\times \fDiv(W)\rightarrow G_+(R)$ defined
in Proposition \ref{prop:HomHMF}. This function is symmetric since
$\alpha_W(v_1,v_2)=\alpha_W(v_2,v_1)$. Let $1_{G(R)}=\langle
1\rangle=R$ denote the neutral element of the group of divisibility
$G(R)$, whose group operation we write multiplicatively.

\

\begin{Proposition}
\label{prop:mult}
The symmetric function $\alpha_W(v_1,v_2)$ is multiplicative with
respect to each of its arguments in the following sense:
\begin{itemize}
\itemsep 0.0em
\item For any two relatively prime elements $v_2$ and $\Wv_2$ of $R$
  such that $v_2 \Wv_2$ is a divisor of $W$, we have:
\ben
\label{alpharmult}
\alpha_W(v_1, v_2\Wv_2)=\alpha_W(v_1,v_2) \alpha_W(v_1, \Wv_2)~~
\een
and $\alpha_W(v_1,v_2)+\alpha_W(v_1, \Wv_2)=1_{G(R)}$, where $+$
denotes the sum of ideals of $R$.
\item For any two relatively prime elements $v_1$ and $\Wv_1$ of $R$
  such that $v_1\Wv_1$ is a divisor of $W$, we have:
\ben
\label{alphalmult}
\alpha_W(v_1 \Wv_1, v_2)=\alpha_W(v_1,v_2) \alpha_W(\Wv_1, v_2)~~
\een
and $\alpha_W(v_1,v_2)+\alpha_W(\Wv_1, v_2)=1_{G(R)}$, where $+$
denotes the sum of ideals of $R$.
\end{itemize}
\end{Proposition}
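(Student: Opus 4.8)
The plan is to reduce the whole statement to a handful of elementary divisibility identities in the GCD domain $R$. By the symmetry $\alpha_W(v_1,v_2)=\alpha_W(v_2,v_1)$ noted above, it suffices to prove the first bullet (multiplicativity in the second argument); the second then follows by interchanging the two arguments. The first thing I would record is that $\alpha_W$ is nothing but the sum of the two supports: since a finite sum of principal ideals in a B\'ezout domain is again principal, Proposition \ref{prop:HomHMF} gives
\be
\alpha_W(v_1,v_2)=\langle v_1,u_1,v_2,u_2\rangle=\langle v_1,u_1\rangle+\langle v_2,u_2\rangle=\supp(e_{v_1})+\supp(e_{v_2})
\ee
with $u_i=W/v_i$. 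Introducing the gcd classes $P\eqdef(v_1,W/v_1)$ and $Q_v\eqdef(v,W/v)$ in $R^\times/U(R)$ for a divisor $v\mid W$ --- so that $\langle Q_v\rangle=\supp(e_v)$ by Definition \ref{def:110} --- and using that in a B\'ezout domain the gcd corresponds to the sum of principal ideals, this reads $\alpha_W(v_1,v)=\langle(P,Q_v)\rangle$.

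The heart of the matter will be the claim that the support is multiplicative over coprime divisor-factors: if $v_2,\Wv_2$ are coprime divisors of $W$ with $v_2\Wv_2\mid W$, then $Q_{v_2\Wv_2}=Q_{v_2}Q_{\Wv_2}$ and $(Q_{v_2},Q_{\Wv_2})=(1)$. I would prove this as follows. Put $X\eqdef W/(v_2\Wv_2)$, so $Q_{v_2\Wv_2}=(v_2\Wv_2,X)$; since $(v_2,\Wv_2)=(1)$, multiplicativity of the gcd over coprime factors gives $(v_2\Wv_2,X)=(v_2,X)(\Wv_2,X)$. Next, $W/v_2=\Wv_2 X$, and the coprimality of $v_2$ with $\Wv_2$ together with Euclid's lemma for GCD domains yields $Q_{v_2}=(v_2,W/v_2)=(v_2,\Wv_2 X)=(v_2,X)$; symmetrically $(\Wv_2,X)=Q_{\Wv_2}$. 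Hence $Q_{v_2\Wv_2}=Q_{v_2}Q_{\Wv_2}$. Finally $Q_{v_2}\mid v_2$ and $Q_{\Wv_2}\mid\Wv_2$, so $(Q_{v_2},Q_{\Wv_2})$ divides $(v_2,\Wv_2)=(1)$.

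To conclude, I would substitute the claim into $\alpha_W(v_1,v_2\Wv_2)=\langle(P,Q_{v_2}Q_{\Wv_2})\rangle$; since $(Q_{v_2},Q_{\Wv_2})=(1)$, one further use of multiplicativity of the gcd over coprime factors gives $(P,Q_{v_2}Q_{\Wv_2})=(P,Q_{v_2})(P,Q_{\Wv_2})$, so that
\be
\alpha_W(v_1,v_2\Wv_2)=\langle(P,Q_{v_2})\rangle\,\langle(P,Q_{\Wv_2})\rangle=\alpha_W(v_1,v_2)\,\alpha_W(v_1,\Wv_2)~~.
\ee
For the complementary coprimality statement, $(P,Q_{v_2})\mid Q_{v_2}$ and $(P,Q_{\Wv_2})\mid Q_{\Wv_2}$, so $((P,Q_{v_2}),(P,Q_{\Wv_2}))$ divides $(Q_{v_2},Q_{\Wv_2})=(1)$, whence $\alpha_W(v_1,v_2)+\alpha_W(v_1,\Wv_2)=\langle(1)\rangle=1_{G(R)}$. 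All the gcd facts used --- multiplicativity over coprime factors (invoked twice) and Euclid's lemma --- are standard in any GCD domain (see Appendix \ref{app:GCD}). The step that requires genuine care, and which I would expect to be the main obstacle, is the identification $(v_2,X)=Q_{v_2}$ in the second paragraph: one must use the hypothesis $(v_2,\Wv_2)=(1)$ to strip the cofactor $\Wv_2$ off $W/v_2$ inside the gcd, and this is exactly the point where a careless computation would acquire spurious extra factors and the multiplicativity would fail.
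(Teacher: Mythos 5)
Your proof is correct, but it takes a genuinely different route from the one in the paper. The paper's argument starts from the alternative expression \eqref{alpha_alt} for $(s)$ in terms of $(u_1,v_2)$ and $\frac{(v_2)}{(v_1,v_2)}$, substitutes $v_2\Wv_2$ for $v_2$, and then splits the resulting nested gcds into four factors using multiplicativity of $(-,r)$ over coprime arguments, two of those factors collapsing to $(1)$. You instead work directly from the defining formula $\alpha_W(v_1,v)=\langle v_1,u_1,v,W/v\rangle=\langle (P,Q_v)\rangle$ with $P=(v_1,W/v_1)$ and $Q_v=(v,W/v)$ the support class, and you isolate as the key lemma the multiplicativity of the support itself, $Q_{v_2\Wv_2}=Q_{v_2}Q_{\Wv_2}$ together with $(Q_{v_2},Q_{\Wv_2})=(1)$, proved by setting $X=W/(v_2\Wv_2)$ and using Euclid's lemma in a GCD domain to identify $Q_{v_2}=(v_2,\Wv_2 X)=(v_2,X)$. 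This is cleaner and more conceptual: the real content of the proposition is that the support of $e_{v_2\Wv_2}$ factors as the product of the supports of $e_{v_2}$ and $e_{\Wv_2}$, and your argument makes that explicit while invoking the same two standard GCD facts (multiplicativity over coprime factors and Euclid's lemma) that the paper uses. It also has the merit of explicitly deriving the complementary statement $\alpha_W(v_1,v_2)+\alpha_W(v_1,\Wv_2)=1_{G(R)}$ from $(Q_{v_2},Q_{\Wv_2})=(1)$, a point the paper's proof leaves implicit. Both arguments ultimately rest on the same divisibility identities, so neither is more general; yours is shorter and, as you correctly flag, the only delicate step is stripping the coprime cofactor $\Wv_2$ off $W/v_2$ inside the gcd.
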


\begin{proof}
To prove the first statement, we start from relation
\eqref{alpha_alt}, which allows us to write:
\ben
\label{alphav2v2prime}
\alpha_W(v_1,v_2\Wv_2)=\Big\langle \frac{(u_1,v_2\Wv_2)}{\left((u_1,v_2\Wv_2),\frac{v_2\Wv_2}{(v_1,v_2\Wv_2)}\right)}\Big \rangle~~,
\een
where $u_1=W/v_1$. Recall that the function $(-,r)$ is multiplicative
on relatively prime elements for any $r\in R^\times$, i.e. $(xy,r)= (x,r)(y,r)$. Thus:
\ben
\label{gcdfact}
(u_1,v_2\Wv_2)=(u_1,v_2)(u_1,\Wv_2)~,~~~(v_1,v_2\Wv_2)=(v_1,v_2)(v_1,\Wv_2)~~.
\een
The second of these relations gives
$\frac{(v_2\Wv_2)}{(v_1,v_2\Wv_2)}=\frac{(v_2)}{(v_1,v_2)}\frac{(\Wv_2)}{(v_1,\Wv_2)}$.
Notice that $(\frac{(v_2)}{(v_1,v_2)},\frac{(\Wv_2)}{(v_1,\Wv_2)})=(1)$ since
$v_2$ and $\Wv_2$ are coprime. Hence:
\beqa
&&\left((u_1,v_2\Wv_2),\frac{(v_2)(\Wv_2)}{(v_1,v_2\Wv_2)}\right)= \left((u_1,v_2\Wv_2),\frac{(v_2)}{(v_1,v_2)}\right)\left((u_1,v_2\Wv_2),\frac{(\Wv_2)}{(v_1,\Wv_2)}\right)= \\
&&=\left((u_1,v_2),\frac{(v_2)}{(v_1,v_2)}\right)\left((u_1,\Wv_2),\frac{(v_2)}{(v_1,v_2)}\right)\left((u_1,v_2),\frac{(\Wv_2)}{(v_1,\Wv_2)}\right)\left((u_1,\Wv_2),\frac{(\Wv_2)}{(v_1,\Wv_2)}\right)~,
\eeqa
where in the last equality we used the first relation in
\eqref{gcdfact} and noticed that $(u_1,v_2)$ and $(u_1,\Wv_2)$ are
coprime (since $(v_2,\Wv_2)=(1)$), which allows us to use similar-multiplicativity of
the function $(-,r)$ for $(r)=\frac{(v_2)}{(v_1,v_2)}$ and for
$(r)= \frac{(\Wv_2)}{(v_1,\Wv_2)}$. Since $(v_2,\Wv_2)=(1)$, we have
$\left((u_1,\Wv_2),\frac{(v_2)}{(v_1,v_2)}\right) = \left((u_1,v_2),\frac{(\Wv_2)}{(v_1,\Wv_2)}\right)=(1)$. Thus:
\be
\left((u_1,v_2\Wv_2),\frac{(v_2)(\Wv_2)}{(v_1,v_2\Wv_2)}\right)= \left((u_1,v_2),\frac{(v_2)}{(v_1,v_2)}\right)\left((u_1,\Wv_2),\frac{(\Wv_2)}{(v_1,\Wv_2)}\right)~~.
\ee
Using this and the first equation of \eqref{gcdfact} in the expression
\eqref{alphav2v2prime} gives relation \eqref{alpharmult}. The second
statement now follows from the first by symmetry of $\alpha_W$.  \qed
\end{proof}

\subsubsection{Isomorphisms in $\HEF(R,W)$}

\

\

\noindent We start with a few lemmas.

\

\begin{lemma}\label{lemma:2Bez}
Let $s,x,y,z$ be four elements of R. Then the equation:
\begin{equation}\label{f92}
s(g_1x+g_2y) +g_3z=1
\end{equation}
has a solution $(g_1,g_2,g_3)\in R^3$ iff $(s(x,y),z)=(1)$. 
\end{lemma}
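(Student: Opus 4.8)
The plan is to translate the solvability of equation \eqref{f92} into an ideal-membership statement and then simplify the ideal involved using the Bézout (and GCD) structure of $R$.

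First I would record the elementary reformulation: a triple $(g_1,g_2,g_3)\in R^3$ solves \eqref{f92} if and only if $1$ lies in the ideal $\langle sx,sy,z\rangle=\langle sx\rangle+\langle sy\rangle+\langle z\rangle$, since a general element of this ideal is exactly of the form $s(g_1x+g_2y)+g_3z$. Hence \eqref{f92} is solvable if and only if $\langle sx,sy,z\rangle=R$.

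Next I would compute this ideal. Because $R$ is a GCD domain, one has $\gcd(sx,sy)\sim s\,\gcd(x,y)$, i.e. $(sx,sy)=(s)(x,y)$ (Appendix \ref{app:GCD}); and because $R$ is Bézout, the sum of finitely many principal ideals is the principal ideal generated by a gcd of their generators, as recalled in the introduction. Combining these, $\langle sx,sy\rangle=\langle s\,\gcd(x,y)\rangle$, whence
$\langle sx,sy,z\rangle=\langle s\,\gcd(x,y)\rangle+\langle z\rangle=\big\langle\gcd\!\big(s\,\gcd(x,y),\,z\big)\big\rangle$.
This principal ideal equals $R$ precisely when its generator is a unit, i.e. when $\big(s(x,y),z\big)=(1)$, which is the asserted criterion. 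Phrasing everything in terms of ideals also disposes uniformly of the degenerate cases in which some of $s,x,y,z$ vanish (with the convention $\gcd(0,0)=0$).

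\textbf{Main obstacle.} There is no real obstacle here: the entire content is the identity $\langle sx\rangle+\langle sy\rangle=\langle s\,\gcd(x,y)\rangle$ valid in a Bézout domain, which is immediate from the distributivity $\gcd(sx,sy)\sim s\,\gcd(x,y)$ in a GCD domain together with the definition of a Bézout domain. The only point requiring care is to keep the bookkeeping between "association of elements" and "equality of principal ideals" clean, and to cite the correct facts from Appendices \ref{app:GCD} and \ref{app:Bezout}.
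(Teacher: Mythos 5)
Your proof is correct and is essentially the paper's argument in ideal-theoretic dress: the paper likewise introduces $t\in(x,y)$, replaces $g_1x+g_2y$ by $g_4t$, and applies the B\'ezout identity in both directions, which is exactly the content of your identity $\langle sx,sy,z\rangle=\langle st\rangle+\langle z\rangle=\langle\gcd(st,z)\rangle$. The ideal formulation is a clean and slightly slicker packaging of the same two steps, and it does handle the degenerate (zero) cases more uniformly than the paper's element-wise version.
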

\begin{proof}
Let $t$ be a gcd of $x$ and $y$. We treat each implication in turn:
\begin{enumerate}[1.]
\itemsep 0.0em
\item Assume that $(g_1, g_2, g_3)\in R^3$ is a solution. Then $t$
  divides $g_1x+g_2y$, so there exists $g_4\in R$ such that
  $g_1x+g_2y=g_4t$. Multiplying both sides with $s$ and using
  \eqref{f92}, this gives $stg_4+g_3z=1$, which implies $(st,z)=(1)$.
\item Assume that $(st,z)=(1)$. Then there exist $g_3,g_4\in R$ such
  that:
\ben
\label{stgz}
stg_4+g_3z=1~~. 
\een
Since $(t)=(x,y)$, the B\'ezout identity shows that there exist
$\widetilde{g}_1, \widetilde{g}_2\in R$ such that
$\widetilde{g}_1x+\widetilde{g}_2y=t$. Substituting this into
\eqref{stgz} shows that $(g_1,g_2,g_3)$ satisfies \eqref{f92}, where
$g_1\eqdef \widetilde{g}_1g_4$ and $g_2\eqdef \widetilde{g}_2g_4$. \qed
\end{enumerate}
\end{proof}

\

\begin{lemma}
\label{lemma:sys}
Let $s, a',b,c,d'$ be five elements of $R$ such that $(a',d')=(1)$. Then
the system of equations:
\begin{equation}
\label{S1}
\begin{cases}
bcg  - s(a'b g_{1}+ cd'g_{2})=1\\
bcg - s(a'c h_{1}+ bd'h_{2})=1
\end{cases}
\end{equation}
has a solution $(g,g_1,g_2,h_1,h_2)\in R^5$ iff $a',b,c,d'$ are pairwise
coprime and $(bc,s)=(1)$.  
\end{lemma}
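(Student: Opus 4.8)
The plan is to prove the two implications separately, using only B\'ezout identities and the multiplicativity of coprimeness in a GCD domain (Appendix \ref{app:GCD}); the B\'ezout hypothesis on $R$ is what powers the sufficiency direction, while necessity needs only the GCD structure.

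\emph{Necessity.} Suppose $(g,g_1,g_2,h_1,h_2)\in R^5$ solves \eqref{S1}. I would regroup each of the two equations in two ways so that a single one of $a',b,c,d'$ is factored out and the remaining terms lie in a visibly complementary ideal. The first equation, read as $b(cg - sa'g_1) - scd'\,g_2 = 1$, gives $\langle b\rangle + \langle scd'\rangle = R$, hence $(b,s)=(b,c)=(b,d')=(1)$; read instead as $c(bg - sd'g_2) - sa'b\,g_1 = 1$, it gives $\langle c\rangle + \langle sa'b\rangle = R$, hence $(c,s)=(c,a')=(c,b)=(1)$. Handling the second equation the same way, $b(cg - sd'h_2) - sa'c\,h_1 = 1$ yields $(b,a')=(1)$ and $c(bg - sa'h_1) - sbd'\,h_2 = 1$ yields $(c,d')=(1)$. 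Combining the collected relations with the standing hypothesis $(a',d')=(1)$ says exactly that $a',b,c,d'$ are pairwise coprime; and from $(b,s)=(c,s)=(1)$ we get $(bc,s)=(1)$.

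\emph{Sufficiency.} Assume $a',b,c,d'$ pairwise coprime and $(bc,s)=(1)$. The key point is that pairwise coprimeness forces $\langle a'b, cd'\rangle = \langle a'c, bd'\rangle = R$, so once a suitable $g$ has been chosen each equation of \eqref{S1} becomes a solvable inhomogeneous B\'ezout equation in its remaining unknowns. Explicitly: choose $g,\ell\in R$ with $bcg - s\ell = 1$ (possible since $(bc,s)=(1)$); then choose $g_1,g_2\in R$ with $a'bg_1 + cd'g_2 = \ell$ and $h_1,h_2\in R$ with $a'ch_1 + bd'h_2 = \ell$ (both possible since those ideals equal $R$). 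With these choices both equations of \eqref{S1} collapse to $bcg - s\ell = 1$, so $(g,g_1,g_2,h_1,h_2)$ is the desired solution. The degenerate case $s=0$ is absorbed automatically: then $(bc,s)=(1)$ means $bc\in U(R)$, and one takes $g=(bc)^{-1}$, $\ell=0$.

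The only delicate point is bookkeeping in the necessity direction: one must be systematic about reading each of the two equations under both groupings in order to harvest all the required pairwise coprimalities, and one should note that $(a',d')=(1)$ is genuinely not forced by \eqref{S1} — $a'$ and $d'$ never occur multiplied together — which is precisely why it must be assumed. Everything else is a direct application of the B\'ezout property and the facts on GCD domains recorded in Appendix \ref{app:GCD}; alternatively Lemma \ref{lemma:2Bez} could be invoked to describe the solvability of each individual equation, but the decoupling argument above handles the shared unknown $g$ more transparently.
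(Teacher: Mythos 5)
Your proof is correct, and the sufficiency half is essentially the paper's argument: first solve $bcg-s\ell=1$ from $(bc,s)=(1)$, then use $\langle a'b,cd'\rangle=\langle a'c,bd'\rangle=R$ (a consequence of pairwise coprimality) to hit $\ell$ with each bracket; the paper solves the brackets equal to $1$ and rescales by $g_4$, which is the same thing. Where you genuinely diverge is the necessity direction. The paper first invokes Lemma \ref{lemma:2Bez} to get $(bc,s(a'b,cd'))=(1)$ and $(bc,s(a'c,bd'))=(1)$, and then runs a prime-divisor argument to conclude $(a'b,cd')=(a'c,bd')=(1)$ before unpacking pairwise coprimality. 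You instead regroup each equation in two ways so that a single factor is pulled out against a visibly complementary term, and read off the six pairwise coprimalities (plus $(b,s)=(c,s)=(1)$) one at a time directly from ideal membership. This is arguably more robust: in a general B\'ezout domain the step ``no prime divides $t$, hence $t$ is a unit'' is not automatic (there may be too few primes), so the paper's prime-based passage really ought to be replaced by a gcd/Euclid argument anyway, and your regrouping sidesteps the issue entirely using nothing beyond divisibility. Your side remarks --- that $(a',d')=(1)$ cannot be extracted from \eqref{S1} and must be assumed, and the explicit handling of $s=0$ --- are correct and harmless additions.
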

\begin{proof} Consider the two implications in turn.

\begin{enumerate}[1.]
\itemsep 0.0em
\item Assume that \eqref{S1} has a solution $(g,g_1,g_2,h_1,h_2)\in
  R^5$. By Lemma \ref{lemma:2Bez}, we must have $(bc,s(a'b,cd'))=(1)$
  and $(bc,s(a'c,bd'))=(1)$. This implies $(bc,s)=(1)$ and $(b,c)=(1)$. If a
  prime element $p\in R$ divides $(a'b,cd')$, then it divides both
  $a'b$ and $cd'$, hence $p|c$ or $p|b$ since $(a',d')=(1)$. Thus
  $p|bc$, which contradicts the fact that that $bc$ and $s(a'b,cd')$
  are coprime. It follows that we must have $(a'b,cd')=(1)$. Similarly,
  the second equation implies that we must have $(a'c,bd')=(1)$. Since
  $(a',d')=(1)$ and $(b,c)=(1)$, the last two conditions imply that
  $a',b,c,d'$ must be pairwise coprime.

\item Conversely, assume that $a',b,c,d'$ are pairwise coprime and
  $(bc,s)=(1)$. Following the strategy and notations of the previous
  lemma, we first solve the equation $bcg-sg_4=1$ for $g$ and $g_4$
  using the B\'ezout identity. Using the same identity, we solve the
  system:
\begin{equation}
\begin{cases}
a'b \widetilde{g}_{1}+ cd'\widetilde{g}_{2}=1\\ 
a'c \widetilde{h}_{1}+ bd'\widetilde{h}_{2}=1
\end{cases} \ ,
\end{equation}
obtaining the solution $(g, g_4\widetilde{g}_{1},g_4 \widetilde{g}_{2},g_4
\widetilde{h}_{1},g_4\widetilde{h}_{2})$ of \eqref{S1}. \qed
\end{enumerate}
\end{proof}

\

\begin{Proposition}
\label{prop:EFcrit1}
With the notations \eqref{param}, we have:
\begin{enumerate}
\itemsep 0.0em
\item $e_1$ and $e_2$ are isomorphic in $\hef(R,W)$ iff
  $a',b,c,d'$ are pairwise coprime and $(bc,s)=(1)$.
\item An odd isomorphism between $e_1$ and $e_2$ in $\HEF(R,W)$ exists 
  iff $a',b,c,d'$ are pairwise coprime and $(a'd',s)=(1)$.
\end{enumerate}
\end{Proposition}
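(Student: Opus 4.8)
The plan is to reduce both statements to the ring-theoretic criterion of Lemma~\ref{lemma:sys} by making isomorphisms in $\hef(R,W)$ and $\HEF(R,W)$ explicit in terms of the generators produced in Propositions~\ref{prop:HomZMF} and~\ref{prop:HomHMF}. For item~(1): by Proposition~\ref{prop:HomHMF} every even morphism $e_1\to e_2$ in $\HMF(R,W)$ equals $f_\gamma:=\gamma\,\epsilon_\0(v_1,v_2)=\gamma\left[\begin{array}{cc} c & 0\\ 0 & b\end{array}\right]$ for some $\gamma\in R$, and every even morphism $e_2\to e_1$ equals $g_\delta:=\delta\,\epsilon_\0(v_2,v_1)=\delta\left[\begin{array}{cc} b & 0\\ 0 & c\end{array}\right]$ for some $\delta\in R$ (for the pair $(v_2,v_1)$ the parameters \eqref{param} are unchanged except that $b$ and $c$ are swapped). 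A matrix computation gives $g_\delta\circ f_\gamma=\gamma\delta\,bc\cdot I_2$ as an endomorphism of $e_1$ and $f_\gamma\circ g_\delta=\gamma\delta\,bc\cdot I_2$ as an endomorphism of $e_2$, so $e_1$ and $e_2$ are isomorphic in $\hef(R,W)$ iff there exist $\gamma,\delta\in R$ for which $(\gamma\delta\,bc-1)I_2$ is an even coboundary both of $e_1$ and of $e_2$. Applying \eqref{fdelem} to a self-morphism of $e_i$ (where the parameters $b,c$ of \eqref{param} both equal $1$, while $a=v_i$, $d=u_i$) shows that the even coboundary endomorphisms of $e_i$ are exactly the matrices $\lambda I_2$ with $\lambda\in\langle v_i,u_i\rangle$; by \eqref{vu}, $\langle v_1,u_1\rangle=\langle s a'b,\,s c d'\rangle$ and $\langle v_2,u_2\rangle=\langle s a'c,\,s b d'\rangle$. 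Writing $g:=\gamma\delta$ (which ranges over all of $R$), an isomorphism therefore exists iff the system
\be
bc\,g-s(a'b\,g_{1}+c d'\,g_{2})=1\quad\text{and}\quad bc\,g-s(a'c\,h_{1}+b d'\,h_{2})=1
\ee
is solvable in $R^{5}$, that is, iff \eqref{S1} has a solution; by Lemma~\ref{lemma:sys} this holds iff $a',b,c,d'$ are pairwise coprime and $(bc,s)=(1)$, which is item~(1).

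For item~(2) I would run the same argument with odd morphisms. By Proposition~\ref{prop:HomZMF} (and the symmetry $\bepsilon_\1(v_1,v_2;W)=\bepsilon_\1(v_2,v_1;W)$), every odd morphism $e_1\to e_2$ or $e_2\to e_1$ in $\HMF(R,W)$ is a scalar multiple of $\left[\begin{array}{cc} 0 & a'\\ -d' & 0\end{array}\right]$. Since $\left[\begin{array}{cc} 0 & a'\\ -d' & 0\end{array}\right]^{2}=-a'd'\,I_2$, composing an odd morphism $e_1\to e_2$ with an odd morphism $e_2\to e_1$ produces $-a'd'\,\gamma\delta\cdot I_2$ on both $e_1$ and $e_2$; hence, writing $g:=-\gamma\delta$ (again arbitrary in $R$), an odd isomorphism $e_1\cong e_2$ in $\HEF(R,W)$ exists iff $(a'd'\,g-1)I_2$ is an even coboundary of $e_1$ and of $e_2$, i.e.\ iff the system
\be
a'd'\,g-s(a'b\,g_{1}+c d'\,g_{2})=1\quad\text{and}\quad a'd'\,g-s(a'c\,h_{1}+b d'\,h_{2})=1
\ee
has a solution in $R^{5}$. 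This is exactly Lemma~\ref{lemma:sys} after relabeling $a'\leftrightarrow b$ and $c\leftrightarrow d'$ --- legitimate because \eqref{param} guarantees \emph{both} $(a',d')=(1)$ and $(b,c)=(1)$ --- so a solution exists iff $a',b,c,d'$ are pairwise coprime and $(a'd',s)=(1)$, which is item~(2). Alternatively, item~(2) follows from item~(1) through the suspension: by \eqref{graded} an odd isomorphism $e_{v_1}\to e_{v_2}$ is an even isomorphism $e_{v_1}\to\Sigma e_{v_2}=e_{-u_2}$, and $e_{-u_2}$ is similar --- hence isomorphic in $\hef(R,W)$ by Proposition~\ref{prop:estrong} --- to $e_{u_2}$; computing the parameters \eqref{param} for the pair $(v_1,u_2)$ gives $\gcd(v_1,u_2)\sim sb$, so the roles of $\{b,c\}$ and $\{a',d'\}$ get interchanged and the criterion of item~(1) turns into that of item~(2).

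The computations above are routine; the only genuine subtlety is conceptual. Over a B\'ezout domain (unlike over a field) an isomorphism is not detected by a single one-sided inverse, so one must impose \emph{both} $g_\delta\circ f_\gamma=\id_{e_1}$ and $f_\gamma\circ g_\delta=\id_{e_2}$. These are independent congruences, one modulo $\langle v_1,u_1\rangle$ and one modulo $\langle v_2,u_2\rangle$, and it is precisely their conjunction that yields the two-equation system \eqref{S1} rather than a single B\'ezout identity. Matching this conjunction with \eqref{S1} and invoking Lemma~\ref{lemma:sys} is the whole content of the proof.
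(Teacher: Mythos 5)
Your proof is correct and follows essentially the same route as the paper: identify the rank-one generators of the cocycle Hom modules from Proposition \ref{prop:HomZMF}, observe that the even coboundary endomorphisms of $e_i$ are exactly $\lambda I_2$ with $\lambda\in\langle v_i,u_i\rangle$, translate two-sided invertibility in the homotopy category into the two-equation system \eqref{S1}, and invoke Lemma \ref{lemma:sys} (with the same relabeling $(b,c,a',d')\mapsto(a',d',b,c)$ for the odd case). The only caution concerns your ``alternative'' derivation of item (2) via suspension: it implicitly uses that $e_{v_2}$ is oddly isomorphic to $\Sigma e_{v_2}$, which is Proposition \ref{prop:suspiso} and is itself deduced from the present proposition, so that remark should not replace the direct argument.
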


\begin{proof}
\begin{enumerate}
\itemsep 0.0em
\item Proposition \ref{prop:HomZMF} gives:
\be
\Hom_{\zef(R,W)}(e_1,e_2)=R\left[\begin{array}{cc} c & 0\\ 0&
    b \end{array}\right]~\mathrm{and}~~\Hom_{\zef(R,W)}(e_2,e_1)=R\left[\begin{array}{cc} b & 0\\ 0&
    c \end{array}\right]~~.  
\ee
Two non-zero morphisms $f_{12}=\alpha \left[\begin{array}{cc} c &
    0\\ 0& b \end{array}\right]\in \Hom_{\zef(R,W)}(e_1,e_2)$ and
$f_{21}=\beta\left[\begin{array}{cc} b & 0\\ 0&
    c \end{array}\right]\in \Hom_{\zef(R,W)}(e_2,e_1)$ (where
$\alpha,\beta\in R^\times$) induce mutually inverse
isomorphisms in $\hef(R,W)$ iff:
\be
f_{21}f_{12}=1+\fd_{e_1,e_1}(g)~~,~~f_{12}f_{21}=1+\fd_{e_2,e_2}(h)~~
\ee
for some $g,h\in \End^\1_R(R^{1|1})$. These conditions read:
\beqa
&& \alpha\beta\left[\begin{array}{cc} bc & 0\\ 0& bc \end{array}\right]=\left[\begin{array}{cc} 1 & 0\\ 0& 1 \end{array}\right]+ 
(ab g_{\0\1}+g_{\1\0} cd)\left[\begin{array}{cc} 1 & 0\\ 0& 1 \end{array}\right]=\left[\begin{array}{cc} 1 & 0\\ 0& 1 \end{array}\right]+ 
s(a'b g_{\0\1}+g_{\1\0} cd')\left[\begin{array}{cc} 1 & 0\\ 0& 1 \end{array}\right]~~\\
&& \alpha\beta\left[\begin{array}{cc} bc & 0\\ 0& bc \end{array}\right] = \left[\begin{array}{cc} 1 & 0\\ 0& 1 \end{array}\right]+ 
(ac h_{\0\1}+h_{\1\0} bd)\left[\begin{array}{cc} 1 & 0\\ 0& 1 \end{array}\right]=\left[\begin{array}{cc} 1 & 0\\ 0& 1 \end{array}\right]+ 
s(a'c h_{\0\1}+h_{\1\0} bd')\left[\begin{array}{cc} 1 & 0\\ 0& 1 \end{array}\right]~~
\eeqa
and hence amount to the following system of equations for $\alpha\beta$ and $g,h$: 
\begin{equation}
\begin{cases}
\alpha\beta bc-s(a'b g_{\0\1}+g_{\1\0} cd')=1\\
\alpha\beta bc-s(a'c h_{\0\1}+h_{\1\0} bd')=1~~.
\end{cases}
\end{equation}
Since this system has the form \eqref{S1}, Lemma \ref{lemma:sys} shows
that it has solutions iff $a',b,c,d'$ are pairwise coprime and
$(bc,s)=(1)$.

\item Proposition \ref{prop:HomZMF} gives
  $\Hom^\1_{\ZEF(R,W)}(e_1,e_2)=\Hom^\1_{\ZEF(R,W)}(e_2,e_1)=R
  \left[\begin{array}{cc} 0 & a'\\ -d'& 0 \end{array}\right]$.  Two
  non-zero odd morphisms $g_{12}=\alpha\left[\begin{array}{cc} 0 &
      a'\\ -d'& 0 \end{array}\right]\in \Hom^\1_{\ZEF(R,W)}(e_1,e_2)$
  and $g_{21}=\beta\left[\begin{array}{cc} 0 & a'\\ -d'&
      0 \end{array}\right]\in \Hom^\1_{\ZEF(R,W)}(e_2,e_1)$ (with $\alpha,\beta\in R^\times$) 
induce mutually inverse isomorphisms in $\HEF(R,W)$ iff:
\be
g_{21}g_{12}=1+\fd_{e_1,e_1}(f)~~,~~g_{12}g_{21}=1+\fd_{e_2,e_2}(q)~~
\ee
for some $f,q\in \End^\0_R(R^{1|1})$. This gives the equations:
\beqa
&& \alpha\beta\left[\begin{array}{cc} a'd' & 0\\ 0& a'd' \end{array}\right] =\left[\begin{array}{cc} 1 & 0\\ 0& 1 \end{array}\right]+ 
(ac f_{\0\0}+f_{\1\1} bd) \left[\begin{array}{cc} 1 & 0\\ 0& 1 \end{array}\right]~~,\\
&& \alpha\beta\left[\begin{array}{cc} a'd' & 0\\ 0& a'd' \end{array}\right] =\left[\begin{array}{cc} 1 & 0\\ 0& 1 \end{array}\right]+ 
(ab q_{\0\0}+q_{\1\1} cd) \left[\begin{array}{cc} 1 & 0\\ 0& 1
\end{array}\right]~~\\
\eeqa
which amount to the system:
\begin{equation}
\begin{cases}
\alpha\beta a'd'-s(a'c f_{\0\0}+f_{\1\1} bd')=1\\
\alpha\beta a'd'-s(a'b q_{\0\0}+q_{\1\1} cd')=1~~.
\end{cases}
\end{equation}
This system again has the form \eqref{S1}, as can be seen by
the substitution of the quadruples $(b,c,a',d'):=(a',d',b,c)$.  As a consequence, it has a solution
iff $a',b,c,d'$ are pairwise coprime and $(a'd',s)=(1)$. 
 \qed
\end{enumerate}
\end{proof}

\

\begin{Corollary} 
\label{prop:similar}
Similar elementary matrix factorizations of $W$ are isomorphic in $\hef(R,W)$.
\end{Corollary}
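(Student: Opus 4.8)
The plan is to prove the corollary in two ways, the second of which serves as an internal consistency check on Proposition~\ref{prop:EFcrit1}. The quickest route is to invoke Proposition~\ref{prop:estrong}: if $e_{v_1}$ and $e_{v_2}$ are similar, then they are strongly isomorphic, i.e.\ isomorphic in $\zmf(R,W)$; since an isomorphism in the cocycle category stays an isomorphism after passing to cohomology, it descends to an isomorphism in $\hmf(R,W)$, hence (the objects being elementary) to an isomorphism in $\hef(R,W)$. This already establishes the statement, and is essentially the remark made right after Proposition~\ref{prop:estrong}.

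For the second route, which I would actually write out since the corollary is placed immediately after Proposition~\ref{prop:EFcrit1}, the idea is to specialize the criterion of Proposition~\ref{prop:EFcrit1}(1) to the case $v_1\sim v_2$. First I would record that, writing $v_2=\gamma v_1$ with $\gamma\in U(R)$, both $v_1$ and $v_2$ are gcds of the pair $(v_1,v_2)$; choosing $a=v_1$ in \eqref{param} gives $b=v_1/a=1$ and $c=v_2/a=\gamma$, so that $b$ and $c$ are units of $R$. Then $d$, $s$, $a'$, $d'$ are defined as in \eqref{param}, and in particular $(a',d')=(1)$ by construction.

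The remaining step is to check the two hypotheses of Proposition~\ref{prop:EFcrit1}(1). Pairwise coprimality of $a',b,c,d'$ holds because every pair involving the unit $b$ or the unit $c$ is trivially coprime, while $(a',d')=(1)$ is built into \eqref{param}. The condition $(bc,s)=(1)$ is automatic since $bc=\gamma$ is a unit, so $\langle bc\rangle=R$. Proposition~\ref{prop:EFcrit1}(1) then yields that $e_{v_1}$ and $e_{v_2}$ are isomorphic in $\hef(R,W)$.

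I expect no genuine obstacle here: once the parameters \eqref{param} are written out for $v_1\sim v_2$, the verification is immediate bookkeeping. The only mild subtlety is the choice-dependence in \eqref{param} (the gcds $a$ and $s$ are defined only up to association in divisibility); but since the conclusions of Propositions~\ref{prop:estrong} and~\ref{prop:EFcrit1} do not depend on these choices, selecting the convenient representatives $a=v_1$, $b=1$, $c=\gamma$ entails no loss of generality.
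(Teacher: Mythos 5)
Your proposal is correct and matches the paper's argument: the paper proves the corollary exactly by specializing Proposition~\ref{prop:EFcrit1} with a convenient choice of parameters (it takes $a=v\gamma$, $b=\gamma^{-1}$, $c=1$, $d=u$, which is the same bookkeeping as your $a=v_1$, $b=1$, $c=\gamma$ up to association), and your first route via Proposition~\ref{prop:estrong} is precisely the observation the paper itself records immediately after that proposition. No gaps.
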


\begin{proof}
The statement follows immediately from Proposition \ref{prop:EFcrit1}
by taking $a=v\gamma$, $b=\gamma^{-1}$, $c=1$, $d=u$ (where $\gamma\in
U(R)$), since the gcd in $R$ is defined modulo $U(R)$. \qed
\end{proof}

\

\begin{Proposition}\label{prop:suspiso}
Any elementary matrix factorization of $W$ is odd-isomorphic in
$\HEF(R,W)$ to its suspension:
\be
e_v\simeq_{\HEF(R,W)} \Sigma e_v=e_{-u}~~,
\ee
where $u=W/v$. 
\end{Proposition}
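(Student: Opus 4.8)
The plan is to exhibit an explicit odd isomorphism $e_v\xrightarrow{\sim}\Sigma e_v$ already in the cocycle category $\ZEF(R,W)$, and then push it forward. Recall that $\HEF(R,W)$ is the total cohomology category of $\EF(R,W)$, so there is a quotient functor $\ZEF(R,W)\to\HEF(R,W)$ which is the identity on objects; being a functor, it preserves isomorphisms, so it suffices to produce the isomorphism in $\ZEF(R,W)$. First I would fix the standard models of Subsection~\ref{subsec:elem}: write $e_v=(R^{1|1},D_v)$ with $D_v=\left[\begin{smallmatrix} 0 & v\\ u & 0\end{smallmatrix}\right]$ and $u\eqdef W/v$, so that $\Sigma e_v=e_{-u}=(R^{1|1},D_{-u})$ with $D_{-u}=\left[\begin{smallmatrix} 0 & -u\\ -v & 0\end{smallmatrix}\right]$.

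Next I would take as candidate the odd morphism $g\in\Hom^\1_{\MF(R,W)}(e_v,e_{-u})$ represented by the off-diagonal matrix $g=\left[\begin{smallmatrix} 0 & 1\\ 1 & 0\end{smallmatrix}\right]$ (that is, $g_{\1\0}=g_{\0\1}=1$ in the notation of Subsection~\ref{subsec:elem}). Checking that $g$ is a cocycle is a one-line matrix computation: since $g$ is odd, $\fd_{e_v,e_{-u}}(g)=D_{-u}\circ g+g\circ D_v$, and a direct multiplication gives $g\,D_v=\left[\begin{smallmatrix} u & 0\\ 0 & v\end{smallmatrix}\right]$ and $D_{-u}\,g=\left[\begin{smallmatrix} -u & 0\\ 0 & -v\end{smallmatrix}\right]$, whose sum vanishes. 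The same matrix $g$, read now as a morphism $e_{-u}\to e_v$, is again a cocycle (the analogous computation yields $D_v\circ g+g\circ D_{-u}=0$), and as a matrix $g^2=I_2$. Hence $g$ is a two-sided isomorphism in $\ZEF(R,W)$, and therefore induces an odd isomorphism $e_v\xrightarrow{\sim}\Sigma e_v$ in $\HEF(R,W)$.

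There is no genuine obstacle in this argument; the only points requiring attention are the sign in the definition of $\fd$ on morphisms of odd degree (which produces the $+$ in $D_{-u}\circ g+g\circ D_v$) and the bookkeeping of the $\Z_2$-grading. For completeness I note that the statement can also be recovered from Proposition~\ref{prop:EFcrit1}(2) applied with $v_1\eqdef v$ and $v_2\eqdef -u$: in the notation \eqref{param} one computes $d=W/(abc)=-a$, so that $s$, $a$ and $d$ are pairwise associate and $(a')=(d')=(1)$; the two hypotheses of Proposition~\ref{prop:EFcrit1}(2) — pairwise coprimality of $a',b,c,d'$, which reduces to the automatic relation $(b,c)=(1)$, and $(a'd',s)=(1)$ — then hold trivially.
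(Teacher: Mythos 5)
Your argument is correct, and your primary proof takes a genuinely different route from the paper's. The paper deduces the statement from its general isomorphism criterion, Proposition \ref{prop:EFcrit1}(2), specialized to $v_1=v$, $v_2=-u$ (so that $a'\sim d'\sim 1$ and the coprimality hypotheses become automatic) --- which is exactly the alternative you record in your closing paragraph, and your computation $d=-a$ there is right. Your main argument instead exhibits the explicit odd cocycle $g=\left[\begin{array}{cc} 0 & 1\\ 1 & 0\end{array}\right]$ and verifies by direct matrix multiplication that it is closed in both directions and squares to the identity; this proves the stronger fact that $e_v$ and $\Sigma e_v=e_{-u}$ are already isomorphic in the cocycle category $\ZEF(R,W)$ by an odd strong isomorphism, and is consistent with Proposition \ref{prop:HomZMF}, since for $v_1=v$, $v_2=-u$ one has $a'\sim 1$, $d'\sim -1$, so your $g$ is (up to a unit) the generator $\epsilon_\1(v,-u;W)$ of $\Hom^\1_{\ZMF(R,W)}(e_v,e_{-u})$. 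What your route buys: it bypasses Lemma \ref{lemma:sys} and the B\'ezout identity entirely, so it works over any integral domain and upgrades the conclusion from an isomorphism in $\HEF(R,W)$ to one at the cocycle level; your observation that the quotient functor $\ZEF(R,W)\to\HEF(R,W)$ preserves isomorphisms correctly disposes of the (irrelevant) possibility that $g$ is exact. What the paper's route buys is uniformity, the suspension isomorphism appearing as one instance of the single criterion governing all isomorphism statements of Section 2.
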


\begin{proof}
Let $s\in (u,v)$. The isomorphism follows from Proposition
\ref{prop:EFcrit1} for $a'=1=d'$, $b=-v/s$, $c=-u/s$:
\be
\left[\begin{array}{cc} 0 &
    v\\ u & 0\end{array}\right]\simeq\left[\begin{array}{cc} 0 &
    bs\\ cs & 0\end{array}\right]\simeq\left[\begin{array}{cc} 0 &
    cs\\ bs & 0\end{array}\right]\simeq \left[\begin{array}{cc} 0 &
    u\\ v & 0\end{array}\right] \ ,
\ee
since $(ad,s)=(1)$. 
\qed
\end{proof}

\begin{remark}
An odd isomorphism in $\HEF(R,W)$ between $e_v$ and $\Sigma
e_v=e_{-u}$ can also be obtained more abstractly by transporting the
identity endomorphism of $e_v$ through the isomorphism of $R$-modules
$\Hom^\1(e_v,e_{-u})=\Hom^\1(e_v,\Sigma e_v)\simeq
\Hom_{\hmf(R,W)}(e_v,e_v)$ which results by taking $v_1=v$ and
$v_2=-u$ in the first line of \eqref{graded}. Since $e_{-u}$ is
similar to $e_u$, the Proposition implies that $e_v$ and $e_u$ are
oddly isomorphic. When $v=1$, both $e_v=e_1$ and $e_u=e_W$ are zero
objects and we have
$\Hom_{\HMF(R,W)}^\0(e_1,e_W)=\Hom_{\HMF(R,W)}^\1(e_1,e_W)=\{0\}$, so
the odd isomorphism is the zero morphism.
\end{remark}

\

\begin{Proposition}
\label{prop:isomW1}
Let $W=W_1W_2$ with $(W_1,W_2)=(1)$ and let $v$ be a divisor of
$W_1$. Then $e_{W_2v}\simeq_{\hmf(R,W)} e_v$.
\end{Proposition}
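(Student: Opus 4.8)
The plan is to reduce the claim to Proposition \ref{prop:EFcrit1}(1) by identifying the data \eqref{param} attached to the pair $(e_{W_2 v}, e_v)$ and checking the two coprimality conditions there. First I would set $v_1 \eqdef W_2 v$, $v_2 \eqdef v$, so that $u_1 = W/v_1 = W_1/v$ (using $W = W_1 W_2$ and $v \mid W_1$) and $u_2 = W/v_2 = W_1 W_2 / v$. Next I compute a gcd $a$ of $v_1 = W_2 v$ and $v_2 = v$: since $(W_1, W_2) = (1)$ and $v \mid W_1$, we have $(v, W_2) = (1)$, hence $(W_2 v, v) = (v)$, so $a = v$. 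This gives $b = v_1/a = W_2$, $c = v_2/a = 1$, and $d = W/(abc) = W/(W_2 v) = W_1/v$. Then $s$, a gcd of $a = v$ and $d = W_1/v$, need not be a unit in general, but the numbers $a' = a/s$, $d' = d/s$, $b$, $c$ are what matter for the criterion.

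The second step is to verify the two conditions in Proposition \ref{prop:EFcrit1}(1): that $a', b, c, d'$ are pairwise coprime and that $(bc, s) = (1)$. Since $c = 1$, the condition $(bc, s) = (s, b) = (s, W_2)$, and since $s \mid v \mid W_1$ while $(W_1, W_2) = (1)$, we get $(s, W_2) = (1)$; so $(bc,s)=(1)$ holds. For pairwise coprimality: $c = 1$ is coprime to everything; $(a', d') = (1)$ holds by the general relation in \eqref{param}; and I must check $(a', b) = (a', W_2)$ and $(d', b) = (d', W_2)$. Both $a' \mid a = v \mid W_1$ and $d' \mid d = W_1/v \mid W_1$, and $(W_1, W_2) = (1)$, so both gcds are trivial. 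Hence all conditions of Proposition \ref{prop:EFcrit1}(1) are met, and we conclude $e_{W_2 v} \simeq_{\hef(R,W)} e_v$, i.e. they are isomorphic in $\hef(R,W) = \hmf(R,W)$ restricted to elementary factorizations, which in particular gives the asserted isomorphism in $\hmf(R,W)$.

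The only mild subtlety — which I'd expect to be the main thing to get right rather than a genuine obstacle — is the bookkeeping that $v \mid W_1$ together with $(W_1, W_2) = (1)$ forces $(v, W_2) = (1)$ and, more generally, that every divisor of $W_1$ is coprime to $W_2$; this is where the hypothesis $(W_1, W_2) = (1)$ is used, and it is exactly what makes the gcd $a$ collapse to $v$ and the three remaining coprimality checks trivial. Everything else is a direct substitution into \eqref{param} and an application of Proposition \ref{prop:EFcrit1}. One should also note the degenerate case $v = W_1$ (so $d = 1$, $s = 1$, $a' = W_1$, $b = W_2$): here $e_{W_2 v} = e_W$ and $e_v = e_{W_1}$, and the Proposition still applies, consistent with the fact that these are precisely the cases already covered by earlier results on zero objects when additionally $W_1 = 1$.
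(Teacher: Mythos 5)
Your proof is correct and follows essentially the same route as the paper's: both reduce to Proposition \ref{prop:EFcrit1}(1) by computing the data of \eqref{param} for the pair $(e_{W_2v}, e_v)$ (the paper merely labels $v_1=v$, $v_2=W_2v$, the transpose of your choice, which is immaterial since the criterion is symmetric in $b$ and $c$) and then checking pairwise coprimality and $(bc,s)=(1)$ from $(W_1,W_2)=(1)$. The only cosmetic remark is that $(W_2v,v)=(v)$ holds unconditionally, so coprimality is not needed at that particular step, only in the later checks.
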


\begin{proof}
Let $u_0\eqdef \frac{W_1}{v}$. Setting $v_1= v$,
$u_1=\frac{W}{v}=W_2u_0$, $v_2=W_2 v$ and $u_2=\frac{W}{v_2}=u_0$, we
compute:
\beqa
&&a\in (v_1,v_2)=(v)~,~b=\frac{v_1}{a}\in (1)~,~c=\frac{v_2}{a}\in (W_2)~,~d\in \frac{W}{[v_1,v_2]}=\left(\frac{W_1}{v}\right)~~\\
&&s\in (u_1,v_1,u_2,v_2)=(u_0,v)~,~a'=a/s\in \frac{v}{(u_0,v)}~~,~~d'=d/s\in \frac{(W_1)}{(v)(u_0,v)}=\frac{(u_0)}{(u_0,v)}~~.
\eeqa 
It is clear that $a',b,c,d'$ are mutually coprime and that $(s,bc)=(1)$. \qed
\end{proof}

\subsubsection{The composition of morphisms in $\HEF(R,W)$}

\

\

\begin{Proposition}
\label{prop:epsilon_comp}
Given three divisors $v_1$, $v_2$ and $v_3$ of $W$, we have the
following relations:
\beqan
\label{ecomp}
\bepsilon_\0(v_2,v_3)\bepsilon_\0(v_1,v_2)&=&\frac{(v_2) (v_1,v_3)}{(v_1,v_2)(v_2,v_3)}\bepsilon_\0(v_1,v_3)\nn\\
\bepsilon_\0(v_2,v_3)\bepsilon_\1(v_1,v_2; W)&= &\frac{(v_2) (u_1,v_3)}{(u_1,v_2)(v_2,v_3)} \bepsilon_\1(v_1,v_3; W)\nn\\
\bepsilon_\1(v_2,v_3; W)\bepsilon_\0(v_1,v_2)&= &\frac{(v_3) (v_1,u_3)}{(v_1,v_2)(u_2,v_3)}\bepsilon_\1(v_3,v_1; W)\\
\bepsilon_\1(v_2,v_3; W)\bepsilon_\1(v_1,v_2; W)&= &-\frac{(u_1) (v_1,v_3)}{(u_2,v_3)(u_1,v_2)}\bepsilon_\0(v_1,v_3)~~.\nn
\eeqan
\end{Proposition}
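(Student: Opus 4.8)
The plan is to compute each of the four products by working with explicit representative matrices and tracking the result in the monoid $R^\times/U(R)$, since by construction the matrices $\bepsilon_\0(\cdot,\cdot)$ and $\bepsilon_\1(\cdot,\cdot;W)$ are only defined up to the equivalence relation $\sim_2$ of \eqref{simn}. For three divisors $v_1,v_2,v_3$ of $W$, I would first fix the representatives given by Proposition \ref{prop:HomZMF}: for each pair $(v_i,v_j)$ set $\epsilon_\0(v_i,v_j)=\diag(c_{ij},b_{ij})$ and $\epsilon_\1(v_i,v_j;W)=\left[\begin{smallmatrix} 0 & a_{ij}' \\ -d_{ij}' & 0\end{smallmatrix}\right]$, where $a_{ij},b_{ij},c_{ij},d_{ij},s_{ij}$ are the quantities attached to the pair $(v_i,v_j)$ by the parametrization \eqref{param}--\eqref{vu}. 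Concretely, $b_{ij}=(v_i)/(v_i,v_j)$, $c_{ij}=(v_j)/(v_i,v_j)$, $a_{ij}'=(v_j)/(u_i,v_j)$ and $d_{ij}'=(u_i)/(u_i,v_j)$ as classes in $R^\times/U(R)$, by \eqref{components}. Multiplying the relevant $2\times 2$ matrices is then immediate — it is just multiplication of diagonal, or diagonal-by-antidiagonal, or antidiagonal-by-antidiagonal matrices — and the whole content of the proof is the bookkeeping that expresses the resulting diagonal entries as a scalar multiple (in $R^\times/U(R)$) of the corresponding entries of the target generator.

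The key identities driving this bookkeeping are the ones already recorded: relations \eqref{components}, together with the two-argument gcd/lcm formulas from the Notations section, in particular $[x_1,x_2]=(x_1)(x_2)/(x_1,x_2)$ and the duality $(u_i,v_j)=(s_{ij})(\text{something})$ coming from \eqref{gcds}. For the first product, for instance, multiplying $\diag(c_{23},b_{23})$ by $\diag(c_{12},b_{12})$ gives $\diag(c_{23}c_{12},\,b_{23}b_{12})$, and one must show that this equals $\lambda\cdot\diag(c_{13},b_{13})$ with $\lambda=(v_2)(v_1,v_3)/[(v_1,v_2)(v_2,v_3)]$; i.e. one checks the two scalar identities $(v_3)/(v_2,v_3)\cdot (v_2)/(v_1,v_2)=\lambda\cdot (v_3)/(v_1,v_3)$ and $(v_2)/(v_2,v_3)\cdot (v_1)/(v_1,v_2)=\lambda\cdot (v_1)/(v_1,v_3)$, both of which reduce to the single monoid identity $(v_1,v_3)(v_1,v_2)(v_2,v_3)$ versus $(v_1,v_2)(v_2,v_3)(v_1,v_3)$ after cancellation — so they hold trivially once $\lambda$ is substituted. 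The remaining three products are handled the same way, except that in the mixed and odd-odd cases one also uses $u_i=W/v_i$ to convert gcd's of the $v$'s into gcd's involving the $u$'s, and one picks up the sign $-1$ in the last line from the relation $g_{\1\0}=-v_2 g_{\0\1} v_1/W$ of the final Remark before Subsection \ref{subsec:elem}, equivalently from squaring the antidiagonal matrix $\left[\begin{smallmatrix}0 & a'\\ -d' & 0\end{smallmatrix}\right]$, which produces $\diag(-a'd',-a'd')$.

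The main obstacle is purely organizational rather than conceptual: keeping the six triples of parameters $(a_{ij},b_{ij},c_{ij},d_{ij},s_{ij})$ — one triple for each of the ordered pairs among $\{v_1,v_2,v_3\}$ — consistently in play, and making sure that each scalar prefactor is indeed a well-defined element of the \emph{monoid} $R^\times/U(R)$ (and not merely of its Grothendieck group $G(R)$), i.e. that the apparent denominators genuinely divide the numerators. For this last point one invokes that each $(v_i,v_j)$ divides both $(v_i)$ and $(v_j)$ and that, in a GCD (hence in a B\'ezout) domain, $(v_1,v_2)(v_2,v_3)$ divides $(v_2)(v_1,v_3)$ — a standard distributivity fact for gcd's which can be checked prime-by-prime on the factorizable part and, in general, via the lattice identities $\langle(v_1,v_2)\rangle=\langle v_1\rangle+\langle v_2\rangle$ and $\langle[v_1,v_2]\rangle=\langle v_1\rangle\cap\langle v_2\rangle$ recalled in the preliminaries. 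Once the divisibility of each prefactor is secured, the verification that the four displayed equalities hold in $\Mat(2,R/U(R))$ is a direct matrix computation with no further input.
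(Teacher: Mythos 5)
Your proposal is correct and follows essentially the same route as the paper: the paper likewise verifies the first relation by explicit multiplication of the representative diagonal matrices and declares the remaining three to follow similarly, the only cosmetic difference being that it re-expresses the prefactor via the identity $[a,b,c](a,b)(b,c)(c,a)=(a)(b)(c)(a,b,c)$ rather than by your direct entrywise cancellation. Your extra attention to the divisibility of the prefactors and to the sign in the odd-odd product is sound and consistent with the paper's computation.
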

\begin{proof}
\noindent Given three divisors $v_1$, $v_2$ and $v_3$ of $W$, we have: 
\be
\bepsilon_\0(v_2,v_3)\bepsilon_\0(v_1,v_2)= \left[\begin{array}{cc} 
\frac{v_2v_3}{(v_1,v_2)(v_2,v_3)} & 0 \\ 0 & \frac{v_1v_2}{(v_1,v_2)(v_2,v_3)} 
\end{array}\right]= \frac{v_2 (v_1,v_3)}{(v_1,v_2)(v_2,v_3)}\bepsilon_\0(v_1,v_3)= \frac{[v_1,v_2,v_3] 
(v_1,v_3)}{(v_1,v_2,v_3)[v_1,v_3]}\bepsilon_\0(v_1,v_3)
\ee
where we used the identity: 
\ben
\label{3id}
[a,b,c](a,b)(b,c)(c,a)=(a)(b)(c)\, (a,b,c)~~.
\een
This establishes the first of equations \eqref{ecomp}. The remaining equations follow similarly \qed.
\end{proof}

\

\begin{Corollary}
\label{cor:EndZMF}
Let $v$ be a divisor of $W$ and $u=W/v$. Then:
\begin{enumerate}
\itemsep 0.0em
\item The $R$-algebra $\End_{\zmf(R,W)}(e_v)$ is isomorphic with $R$.
\item We have an isomorphism of $\Z_2$-graded $R$-algebras:
\be
\End_{\ZMF(R,W)}(e_v)\simeq \frac{R[\omega]}{\langle u^2+t\rangle}~~,
\ee
where $\omega$ is an odd generator and $t\in \frac{[u,v]}{(u,v)}$. In particular,
$\End_{\ZMF(R,W)}(e_v)$ is a commutative $\Z_2$-graded ring.
\end{enumerate}
\end{Corollary}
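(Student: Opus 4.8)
The plan is to read off both endomorphism algebras directly from Proposition \ref{prop:HomZMF} specialized to $v_1=v_2=v$, taking care to replace the association-class-valued matrices $\bepsilon_\bullet$ by honest representatives over $R$.

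For statement (1), I would observe that taking the gcd of $v$ with itself to be $v$ yields $b=c=1$ in the notation of \eqref{param}, so the generator $\epsilon_\0(v,v)$ of the rank-one free module $\Hom^\0_{\ZMF(R,W)}(e_v,e_v)$ is the identity matrix $I_2=\id_{e_v}$. Hence $\End_{\zmf(R,W)}(e_v)=R\,\id_{e_v}$, and since $(\alpha\,\id_{e_v})\circ(\beta\,\id_{e_v})=\alpha\beta\,\id_{e_v}$, the assignment $\alpha\,\id_{e_v}\mapsto\alpha$ is an isomorphism of $R$-algebras (it is injective because $R$ is a domain).

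For statement (2), the even part of $\End_{\ZMF(R,W)}(e_v)$ is again $R\,\id_{e_v}$, while by Proposition \ref{prop:HomZMF} the odd part is the rank-one free $R$-module generated by $\omega:=\epsilon_\1(v,v;W)$; fixing a gcd $s$ of $u$ and $v$, this generator is the honest matrix $\left[\begin{smallmatrix} 0 & v/s \\ -u/s & 0\end{smallmatrix}\right]$ (note that $s^2\mid W$, so $u/s,v/s\in R$). Thus $\End_{\ZMF(R,W)}(e_v)=R\,\id_{e_v}\oplus R\,\omega$ as a $\Z_2$-graded $R$-module, free of rank two over $R$. A one-line matrix multiplication --- equivalently, the fourth relation of Proposition \ref{prop:epsilon_comp} with $v_1=v_2=v_3=v$ --- gives $\omega^2=-\tfrac{W}{s^2}\,\id_{e_v}$; setting $t:=\tfrac{W}{s^2}\in R$, whose class in $R^\times/U(R)$ equals $\tfrac{(u)(v)}{(u,v)^2}=\tfrac{[u,v]}{(u,v)}$, we obtain the relation $\omega^2+t=0$ in $\End_{\ZMF(R,W)}(e_v)$.

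It then remains to recognise the presentation. I would introduce $R[\omega]$ with the $\Z_2$-grading in which $\omega$ is odd, together with the morphism of $\Z_2$-graded $R$-algebras $R[\omega]\to\End_{\ZMF(R,W)}(e_v)$ sending $\omega\mapsto\omega$ and $R\to R\,\id_{e_v}$. By the above, this morphism is surjective and its kernel contains $\omega^2+t$; since $R[\omega]/\langle\omega^2+t\rangle$ is $R$-free of rank two (the defining relation is monic in $\omega$) and surjects onto the rank-two free module $R\,\id_{e_v}\oplus R\,\omega$, the induced map is an isomorphism, so the kernel equals $\langle\omega^2+t\rangle$ exactly. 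Finally, $R[\omega]/\langle\omega^2+t\rangle=R\oplus R\,\omega$ with $r\omega=\omega r$ and $\omega^2=-t\in R$ is manifestly a commutative ring, which yields the last assertion. I expect the only point requiring care to be the bookkeeping distinction between the matrices $\bepsilon_\bullet$ of Proposition \ref{prop:HomZMF}, defined only modulo units, and the integral representatives $\epsilon_\bullet$ needed to evaluate an honest product such as $\omega^2$; the rest is routine.
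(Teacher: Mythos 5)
Your proof is correct and follows essentially the same route as the paper's: specialize Proposition \ref{prop:HomZMF} to $v_1=v_2=v$ to get the generators $\id=\epsilon_\0(v,v)$ and $\omega=\epsilon_\1(v,v;W)$, compute $\omega^2=-\tfrac{W}{s^2}\,\id$ with $\tfrac{W}{s^2}\in\tfrac{[u,v]}{(u,v)}$, and read off the presentation. Your extra care with integral representatives versus the unit-classes $\bepsilon_\bullet$, and the rank-two freeness argument identifying the kernel, only make explicit what the paper leaves implicit.
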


\begin{proof}
For $v_1=v_2=v$, we have $\alpha_W(v,v)=\langle u,v\rangle$. Proposition \ref{prop:HomZMF} gives: 
\be
\bepsilon_\0(v,v)\eqdef \left[\begin{array}{cc} 
1 & 0 \\ 0 & 1\end{array}\right]~~,~~\bepsilon_\1(v,v)\eqdef \left[\begin{array}{cc} 0 & \frac{(v)}{(u,v)} \\-\frac{(u)}{(u,v)} & 0\end{array}\right]
\ee
and we have:
\beqa
&&\bepsilon_\0(v,v)^2=\bepsilon_\0(v,v)\\
&&\bepsilon_\0(v,v)\bepsilon_\1(v,v; W)= \bepsilon_\1(v,v; W)\bepsilon_\0(v,v)=\bepsilon_\1(v,v)\\
&&\bepsilon_\1(v,v; W)^2= -\frac{[u,v]}{(u,v)}\bepsilon_\0(v,v)~~,
\eeqa
which also follows from Proposition \ref{prop:epsilon_comp}.  Setting
$\omega=\epsilon_\1(v,v; W)$, these relations imply the
desired statements upon using Proposition
\ref{prop:HomZMF}. \qed
\end{proof}

\

\begin{Corollary}
\label{cor:EndHMF}
Let $v$ be a divisor of $W$ and $u=W/v$. Then:
\begin{enumerate}
\itemsep 0.0em
\item The $R$-algebra $\End_{\hmf(R,W)}(e_v)$ is isomorphic with
$R/\langle d\rangle=R/\langle u, v \rangle$, where $d\in (u,v)$.
\item We have an isomorphism of $\Z_2$-graded $R$-algebras:
\be
\End_{\HMF(R,W)}(e_v)\simeq \frac{\left(R/\langle d\rangle \right)[\omega]}{\langle u^2+t\rangle}~~,
\ee
where $\omega$ is an odd generator, $d\in (u,v)$ and $t\in \frac{[u,v]}{(u,v)}$. In particular,
$\End_{\ZMF(R,W)}(e_v)$ is a supercommutative $\Z_2$-graded ring.
\end{enumerate}
\end{Corollary}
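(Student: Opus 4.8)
The plan is to obtain this as the cohomology-level counterpart of Corollary \ref{cor:EndZMF}, by reducing the endomorphism algebra in $\ZMF(R,W)$ modulo coboundaries. First I would specialize Proposition \ref{prop:HomHMF} to the case $v_1=v_2=v$: it gives $\alpha_W(v,v)=\langle d\rangle$ with $d\in(u,v)$ a gcd of $u$ and $v$, and shows that, under the identifications of Proposition \ref{prop:HomZMF}, the coboundary modules $\Hom^\0_{\BMF(R,W)}(e_v,e_v)$ and $\Hom^\1_{\BMF(R,W)}(e_v,e_v)$ are precisely $\langle d\rangle\,\epsilon_\0(v,v)$ and $\langle d\rangle\,\epsilon_\1(v,v;W)$, i.e. $d$ times the respective cocycle generators. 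Since $\BMF(R,W)$ is an ideal in $\ZMF(R,W)$, this identifies $\End_{\HMF(R,W)}(e_v)$, as a $\Z_2$-graded $R$-algebra, with the quotient of $\End_{\ZMF(R,W)}(e_v)$ by the two-sided ideal generated by $d$.

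For the first statement, restricting to even morphisms and using Corollary \ref{cor:EndZMF}, we have $\End_{\zmf(R,W)}(e_v)=R\,\epsilon_\0(v,v)\simeq R$ with $\epsilon_\0(v,v)$ the identity; hence the even coboundaries form the ideal $\langle d\rangle$ and $\End_{\hmf(R,W)}(e_v)\simeq R/\langle d\rangle$ as $R$-algebras. Since $R$ is a B\'ezout domain, $\langle d\rangle=\langle u,v\rangle$, which is the asserted form.

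For the second statement, I would take the presentation $\End_{\ZMF(R,W)}(e_v)\simeq R[\omega]/\langle\omega^2+t\rangle$ of Corollary \ref{cor:EndZMF} (with $\omega=\epsilon_\1(v,v;W)$ the odd generator and $t\in[u,v]/(u,v)$) and reduce it modulo the ideal generated by $d$. By the first paragraph this reduction is exactly the passage from $\ZMF(R,W)$ to $\HMF(R,W)$ in both $\Z_2$-degrees, so $\End_{\HMF(R,W)}(e_v)\simeq\big(R/\langle d\rangle\big)[\omega]/\langle\omega^2+t\rangle$. The supercommutativity assertion is inherited from the corresponding statement of Corollary \ref{cor:EndZMF}, since it is preserved under passing to a quotient ring; concretely, the even part $R/\langle d\rangle$ is central and there is a single odd generator, whose square lies in the even part.

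There is essentially no genuine obstacle: the substance is already in Propositions \ref{prop:HomZMF} and \ref{prop:HomHMF} and in Corollary \ref{cor:EndZMF}. The only point worth a line of care is checking that the identification of $\End_{\HMF(R,W)}(e_v)$ with a quotient of $\End_{\ZMF(R,W)}(e_v)$ respects the full algebra structure and not merely the $R$-module structure in each degree — which is immediate because $\BMF(R,W)$ is an ideal in $\ZMF(R,W)$ and, in both degrees, the coboundaries equal $d$ times the cocycle generator.
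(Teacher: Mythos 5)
Your proposal is correct and follows essentially the same route as the paper, which likewise deduces the result from the multiplication relations of Corollary \ref{cor:EndZMF} together with Proposition \ref{prop:HomHMF} (identifying the coboundaries as $\langle d\rangle$ times the cocycle generators in each degree). Your explicit remark that the quotient respects the algebra structure because $\BMF(R,W)$ is an ideal is a useful clarification, but it is the same argument the paper leaves implicit.
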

\begin{proof}
The same relations as in the previous Corollary imply the conclusion
upon using Proposition \ref{prop:HomHMF}. \qed
\end{proof}

\

\begin{Corollary}
\label{cor:zero}
An elementary matrix factorization $e_v$ is a zero object of
$\hmf(R,W)$ iff $(u,v)=(1)$, where $u=W/v$.
\end{Corollary}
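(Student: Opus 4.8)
The plan is to reduce the statement to the computation of the endomorphism ring of $e_v$ in $\hmf(R,W)$ carried out in Corollary \ref{cor:EndHMF}, combined with the elementary fact that in an additive category an object is a zero object precisely when its endomorphism ring is trivial.

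First I would record the categorical observation. Since $(\hmf(R,W),\oplus)$ is additive, an object $X$ is a zero object if and only if $\id_X=0$ in $\End_{\hmf(R,W)}(X)$. Indeed, if $X$ is a zero object then $\Hom(X,X)$ is a one-element abelian group, so $\id_X=0$; conversely, if $\id_X=0$ then for every object $Y$ any $f\colon Y\to X$ equals $\id_X\circ f=0$ and any $g\colon X\to Y$ equals $g\circ\id_X=0$, so $\Hom(Y,X)$ and $\Hom(X,Y)$ are one-element abelian groups and $X$ is simultaneously initial and terminal. Equivalently, $X$ is a zero object if and only if the ring $\End_{\hmf(R,W)}(X)$ is the zero ring, i.e. the ring in which $1=0$.

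Next I would apply this to $X=e_v$. By Corollary \ref{cor:EndHMF}, $\End_{\hmf(R,W)}(e_v)\simeq R/\langle d\rangle=R/\langle u,v\rangle$, where $d\in (u,v)$. Hence $e_v$ is a zero object of $\hmf(R,W)$ if and only if $R/\langle u,v\rangle=0$, i.e. if and only if $\langle u,v\rangle=R$; since $R$ is a B\'ezout domain this is equivalent to $d\in U(R)$, that is, to $(u,v)=(1)$. This settles both implications at once.

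There is essentially no obstacle: the substantive content is already contained in Corollary \ref{cor:EndHMF} (itself built on Propositions \ref{prop:HomHMF} and \ref{prop:epsilon_comp}), and what remains is the standard characterization of zero objects in additive categories together with the observation that $\langle u,v\rangle=R$ exactly when $\gcd(u,v)$ is a unit. As a consistency check for the ``if'' direction one may also argue directly: when $(u,v)=(1)$ one has $W=uv$ with $u,v$ coprime, so Proposition \ref{prop:isomW1} (applied with $W_1=v$, $W_2=u$ and the divisor $v$ of $W_1$) gives $e_v\simeq_{\hmf(R,W)}e_W$, which is a zero object as recalled earlier.
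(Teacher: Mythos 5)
Your proof is correct and follows essentially the same route as the paper: both reduce the statement to Corollary \ref{cor:EndHMF}, identifying $\End_{\hmf(R,W)}(e_v)\simeq R/\langle u,v\rangle$ and observing that this ring vanishes precisely when $(u,v)=(1)$. Your explicit justification of the fact that an object of an additive category is a zero object iff its identity morphism vanishes is a welcome elaboration of a step the paper leaves implicit.
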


\begin{proof}
The $R$-algebra $\End_{\HMF(R,W)}^\0(e_v)\simeq R/t$
(where $u=W/v$) vanishes iff $(u,v)=(1)$. \qed
\end{proof}

\subsection{Localizations}

Let $S\subset R$ be a multiplicative subset of $R$ containing the
identity $1 \in R$ and $\lambda_S:R \to R_S$ denote the natural ring
morphism from $R$ to the localization $R_S=S^{-1}R$ of $R$ at $S$. For
any $r\in R$, let $r_S\eqdef \lambda_S(r)=\frac{r}{1}\in R_S$ denote
its extension. For any $R$-module $N$, let $N_S=S^{-1}N=N\otimes_R
R_S$ denote the localization of $N$ at $S$. For any morphism of
$R$-modules $f:N \rightarrow N' $, let $f_S\eqdef f\otimes_R
\id_{R_S}:N_S\rightarrow N'_S$ denote the localization of $f$ at
$S$. For any $\Z_2$-graded $R$-module $M=M^\0\oplus M^\1$, we have
$M_S=M^\0_S\oplus M^\1_S$, since the localization functor is exact. In
particular, localization at $S$ induces a functor from the category of
$\Z_2$-graded $R$-modules to the category of $\Z_2$-graded
$R_S$-modules.

Let $a=(M,D)$ be a matrix factorization of $W$. The {\em localization
 of $a$ at $S$} is the following matrix factorization of $W_S$ over
the ring $R_S$:
\be
a_S\eqdef (M_S,D_S)\in \MF(R_S,W_S)~~.
\ee
It is clear that this extends
to an even dg functor $\loc_S:\MF(R,W)\rightarrow \MF(R_S,W_S)$, which
is $R$-linear and preserves direct sums. In turn, this induces dg
functors $\ZMF(R,W)\rightarrow \ZMF(R_S,W_S)$, $\BMF(R,W)\rightarrow
\BMF(R_S,W_S)$, $\HMF(R,W)\rightarrow \HMF(R_S,W_S)$ and
$\hmf(R,W)\rightarrow \hmf(R_S,W_S)$, which we again denote by
$\loc_S$.  We have $\loc_S(a)=a_S$ for any matrix factorization $a$ of
$W$ over $R$. 

\

\begin{Proposition}
\label{prop:loc_triang}
The functor $\loc_S:\hmf(R,W)\rightarrow \hmf(R_S,W_S)$ is a triangulated functor.
Moreover, the strictly full subcategory of $\hmf(R,W)$ defined through: 
\be
K_S\eqdef \big\{a\in \Ob[\hmf(R,W)] ~\big|~  a_S\simeq_{\hmf(R_S,W_S)} 0\big\}
\ee
is a triangulated subcategory of $\hmf(R,W)$.
\end{Proposition}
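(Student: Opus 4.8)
The plan is to verify directly that $\loc_S$ is a triangulated functor, and then to deduce the assertion about $K_S$ formally. Recall (see \cite{Langfeldt}) that the triangulated structure on $\hmf(R,W)$ is the standard one on a homotopy category of matrix factorizations: the shift functor $\Sigma$ is the usual suspension, given on elementary factorizations by the operation $e_v\mapsto e_{-u}$ recalled in Subsection \ref{subsec:elem}, and a triangle in $\hmf(R,W)$ is distinguished precisely when it is isomorphic to a standard (mapping-cone) triangle $a\xrightarrow{f}b\to\cone(f)\to\Sigma a$ attached to a cocycle morphism $f\in\Hom_{\zmf(R,W)}(a,b)$, where $\cone(f)$ is built from the free module $M_a\oplus M_b$ equipped with the twisted differential determined by $D_a$, $D_b$ and $f$, together with the obvious structure morphisms. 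All of these constructions involve only finite direct sums of free modules, the odd endomorphisms $D$, and the $R$-module operations; since the localization functor $-\otimes_R R_S$ is additive, exact and symmetric monoidal and carries a free $R$-module of finite rank to a free $R_S$-module of the same rank, it commutes with each of them. In particular $\loc_S\circ\Sigma$ is canonically naturally isomorphic to $\Sigma\circ\loc_S$, and $\loc_S$ sends the standard triangle attached to $f$ to the standard triangle $a_S\xrightarrow{f_S}b_S\to\cone(f_S)\to\Sigma a_S$ attached to $f_S$, because there is a natural identification $\cone(f)_S\cong\cone(f_S)$.

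Since, by definition, an arbitrary distinguished triangle in $\hmf(R,W)$ is isomorphic to a standard one, and since $\loc_S$ is a functor and hence takes isomorphic triangles to isomorphic triangles, it follows that $\loc_S$ maps distinguished triangles to distinguished triangles; together with the additivity of $\loc_S$ and its compatibility with $\Sigma$, this proves that $\loc_S:\hmf(R,W)\to\hmf(R_S,W_S)$ is triangulated. Turning to $K_S$: it is strictly full by construction, it is additive because $(a\oplus b)_S\cong a_S\oplus b_S$, and it contains the zero object since $0_S\simeq 0$. It is stable under $\Sigma^{\pm1}$, for if $a_S\simeq 0$ then $(\Sigma^{\pm1}a)_S\simeq\Sigma^{\pm1}(a_S)\simeq 0$ by the natural isomorphism above. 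Finally, let $a\to b\to c\to\Sigma a$ be a distinguished triangle in $\hmf(R,W)$ with $a,b\in K_S$. Applying the triangulated functor $\loc_S$ yields a distinguished triangle $a_S\to b_S\to c_S\to\Sigma a_S$ in $\hmf(R_S,W_S)$ with $a_S\simeq b_S\simeq 0$; since $a_S$ is a zero object the morphism $b_S\to c_S$ is an isomorphism, and since $b_S$ is a zero object this forces $c_S\simeq 0$, i.e. $c\in K_S$. Thus $K_S$ satisfies the defining closure properties of a triangulated subcategory.

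I do not expect a serious obstacle. The only point that requires genuine checking, rather than formal bookkeeping, is the compatibility of the mapping cone with localization — the natural identification $\cone(f)_S\cong\cone(f_S)$, together with the fact that $\loc_S$ respects the structure maps of the standard triangle — which is precisely what ensures that standard distinguished triangles are carried to standard distinguished triangles; granted this, the rest follows from the axioms of a triangulated category and the exactness and monoidality of the localization functor.
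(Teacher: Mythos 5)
Your proof is correct and follows essentially the same route as the paper's: the paper likewise observes that $\loc_S$ commutes with the cone construction (hence is triangulated) and that $K_S$ is closed under shifts and under forming triangles because a distinguished triangle with two vanishing objects has vanishing third object. Your version merely spells out in more detail the exactness/monoidality of localization and the isomorphism $\cone(f)_S\cong\cone(f_S)$ that the paper treats as clear.
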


\begin{proof}
It is clear that $\loc_S$ commutes with the cone construction (see
\cite{Langfeldt} for a detailed account of the latter). It is also
clear that the subcategory $K_S$ is closed under shifts. Since any
distinguished triangle in which two objects vanish has the property
that its third object also vanishes, $K_S$ is also closed under
forming triangles.
\qed
\end{proof}

\

\begin{Proposition}
\label{prop:locHom}
For any matrix factorizations $a,b$ of $W$, there exists a natural
isomorphism of $\Z_2$-graded $R_S$-modules:
\be
\Hom_{\HMF(R_S,W_S)}(a_S,b_S)\simeq_{R_S} \Hom_{\HMF(R,W)}(a,b)_S ~~.
\ee
\end{Proposition}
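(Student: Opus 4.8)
The plan is to prove the statement at the level of cochain complexes and then pass to cohomology, using exactness of localization throughout. Recall that for matrix factorizations $a=(M,D)$ and $b=(N,E)$ of $W$ over $R$, the $\Z_2$-graded $R$-module $\Hom_{\MF(R,W)}(a,b)=\uHom_R(M,N)$ carries the differential $\fd_{a,b}$, and $\Hom_{\HMF(R,W)}(a,b)$ is its cohomology. Since $M$ and $N$ are free of finite rank, the canonical map $\uHom_R(M,N)_S\to \uHom_{R_S}(M_S,N_S)$ is an isomorphism of $\Z_2$-graded $R_S$-modules; this is the standard fact that localization commutes with $\Hom$ out of a finitely presented (here finitely generated free) module, applied in each $\Z_2$-degree separately. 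Under this identification one checks directly from the formula $\fd_{a,b}(f)=E\circ f-(-1)^\kappa f\circ D$ that the localized differential $(\fd_{a,b})_S$ corresponds to $\fd_{a_S,b_S}$: this is immediate because $(E\circ f)_S=E_S\circ f_S$ and $D_S$, $E_S$ are the structure endomorphisms of $a_S$, $b_S$ by definition of $\loc_S$.

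First I would therefore record the isomorphism of complexes of $R_S$-modules
\[
\big(\uHom_R(M,N)_S,\ (\fd_{a,b})_S\big)\ \simeq\ \big(\uHom_{R_S}(M_S,N_S),\ \fd_{a_S,b_S}\big)\,.
\]
Next I would invoke exactness of the localization functor $(-)_S=-\otimes_R R_S$, which commutes with taking kernels and cokernels, hence with passing to the cohomology of any complex of $R$-modules. Applied to the complex $\big(\uHom_R(M,N),\fd_{a,b}\big)$ — whose $\Z_2$-graded cohomology is precisely $\Hom_{\HMF(R,W)}(a,b)$ by definition — this gives a natural isomorphism of $\Z_2$-graded $R_S$-modules
\[
\Hom_{\HMF(R,W)}(a,b)_S\ \simeq\ H^\bullet\big(\uHom_R(M,N)_S,(\fd_{a,b})_S\big)\,.
\]
Combining with the isomorphism of complexes from the previous step, the right-hand side is $H^\bullet\big(\uHom_{R_S}(M_S,N_S),\fd_{a_S,b_S}\big)=\Hom_{\HMF(R_S,W_S)}(a_S,b_S)$, which is the claim. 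Naturality in $a$ and $b$ follows because every identification used (localization of $\Hom$, localization versus cohomology) is natural in the module arguments, and because $\loc_S$ is a functor.

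The only point that needs care — and the main obstacle — is verifying that the standard isomorphism $\uHom_R(M,N)_S\simeq\uHom_{R_S}(M_S,N_S)$ intertwines the two differentials and that, after this identification, the localized cocycle and coboundary submodules computed over $R$ agree with those computed over $R_S$; this last fact is exactly the statement that $(-)_S$ preserves the images and kernels appearing in the definitions of $\ZMF$, $\BMF$ and hence $\HMF$, which is again just exactness of localization. Everything else is bookkeeping. One should also note, for the record, that freeness of $M$ and $N$ of \emph{finite} rank is what makes the $\Hom$–localization comparison map an isomorphism rather than merely a map; this is why the hypothesis that $a,b$ lie in $\MF(R,W)$ (finite rank) is used.
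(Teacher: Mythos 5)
Your argument is correct and is essentially the paper's own proof, which simply states that the result ``follows immediately from the fact that localization at $S$ is an exact functor''; you have merely spelled out the two ingredients that this invocation tacitly relies on, namely the comparison isomorphism $\uHom_R(M,N)_S\simeq\uHom_{R_S}(M_S,N_S)$ for finitely generated free modules (compatible with the differentials) and the commutation of exact functors with cohomology. No gap; your added care about finite rank freeness is exactly the right point to flag.
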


\begin{proof}
Follows immediately from the fact that localization at $S$ is an exact functor 
from $\Mod_R$ to $\Mod_{R_S}$. \qed
\end{proof}

\subsection{Behavior of $\hef(R,W)$ under localization}

\

\

\begin{Lemma}
\label{lemma:quot}
The following statements are equivalent for any elements $s,r$ of $R$: 
\begin{enumerate}
\itemsep 0.0em
\item $(s,r)=(1)$
\item The class of $s$ modulo the ideal $\langle r\rangle$ is a unit of the ring $R/\langle r\rangle$. 
\end{enumerate}
\end{Lemma}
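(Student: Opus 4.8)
The plan is to reduce both conditions to the solvability of the single Bézout equation $xs+yr=1$ in $R$. Since $R$ is a Bézout domain, the facts recalled in Appendix~\ref{app:Bezout} give $\langle s,r\rangle=\langle s\rangle+\langle r\rangle$ and $(s,r)=(\langle s,r\rangle)$, so condition~1 holds if and only if $\langle s\rangle+\langle r\rangle=R$, which is equivalent to the existence of $x,y\in R$ with $xs+yr=1$. So the whole lemma comes down to matching this against invertibility in the quotient.

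Concretely, for $1\Rightarrow 2$ I would take a Bézout identity $xs+yr=1$ and reduce it modulo the ideal $\langle r\rangle$: the term $yr$ vanishes, leaving $\bar x\,\bar s=\bar 1$ in $R/\langle r\rangle$, which exhibits $\bar s$ as a unit with inverse $\bar x$. For the converse $2\Rightarrow 1$, I would pick $x\in R$ with $\bar x\,\bar s=\bar 1$; then $xs-1\in\langle r\rangle$, so $xs-1=-yr$ for some $y\in R$, i.e. $xs+yr=1$, whence $\langle s\rangle+\langle r\rangle=R$ and $(s,r)=(1)$ by the Bézout property.

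There is essentially no obstacle here: the content is just the Bézout identity together with the dictionary between divisibility classes and principal ideals set up in the introductory section. The only point deserving a word of care is the interpretation of $(s,r)$ in the degenerate cases $s=0$ or $r=0$ (where one reads $(0,t)=(t)$); in each of these cases both conditions collapse directly to ``$s\in U(R)$'' resp. ``$r\in U(R)$'', so it is enough to argue as above for $s,r\in R^\times$.
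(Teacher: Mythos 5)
Your argument is correct and is essentially the same as the paper's: both reduce each condition to the solvability of the B\'ezout equation $xs+yr=1$ and pass back and forth through reduction modulo $\langle r\rangle$. The extra remark on the degenerate cases $s=0$ or $r=0$ is a harmless refinement the paper does not bother with.
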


\begin{proof}
We have $(s,r)=(1)$ iff there exist elements $a,b\in R$ such that
$as+br=1$. In turn, this is equivalent with the condition $\bar{a}\bar{s}=\bar{1}$ in
the ring $R/\langle r\rangle$, where $\bar{x}=x+\langle r\rangle$ denotes the equivalence class of
an element $x\in R$ modulo the ideal $\langle r\rangle$. \qed
\end{proof}

\

\noindent Consider the multiplicative set:  
\be
S_W\eqdef \{s\in R ~\big|~ (s,W)=(1)\}~~.
\ee
Since $0\not \in S_W$, the localization $R_S=S^{-1}R$ of $R$ at any
multiplicative set $S\subset S_W$ is a sub-ring of the field of
fractions $K$ of $R$:
\be
R_S=\{\frac{r}{s} ~\big|~\, r\in R, s\in S\}\subset K~~.
\ee
In particular, $R_S$ is an integral domain. 

\

\begin{Proposition}
\label{prop:eloc}
Let $S$ be any multiplicative subset of $R$ such that $S\subset
S_W$. Then the localization functor $\loc_S:\hmf(R,W)\rightarrow
\hmf(R_S,W_S)$ restricts to an $R$-linear equivalence of categories
between $\hef(R,W)$ and $\hef(R_S,W_S)$.
\end{Proposition}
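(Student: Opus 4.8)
The plan is to verify that $\loc_S$ restricts to a well-defined $R$-linear functor $\hef(R,W)\to\hef(R_S,W_S)$ which is fully faithful and essentially surjective. The first point is immediate: every elementary factorization is strongly isomorphic to some $e_v$ with $v\mid W$ in $R$, and for such $v$ one has $W_S=v_S u_S$ (with $u=W/v$), so $v_S$ divides $W_S$ and $\loc_S(e_v)=(R_S^{1|1},D_{v_S})=e_{v_S}$ is again elementary; $R$-linearity is inherited from $\loc_S$.

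For full faithfulness, fix divisors $v_1,v_2$ of $W$. Under the natural isomorphism of Proposition~\ref{prop:locHom} restricted to even degree, the map induced by $\loc_S$ on morphism spaces is identified with the localization homomorphism $\Hom_{\hmf(R,W)}(e_{v_1},e_{v_2})\to\Hom_{\hmf(R,W)}(e_{v_1},e_{v_2})_S$. By Proposition~\ref{prop:HomHMF} the source is the cyclic $R$-module $R/\langle s\rangle$ with $s$ a critical divisor of $W$, so in particular $s\mid W$. Since $S\subset S_W$, each $\sigma\in S$ is coprime to $W$ and hence to $s$, so by Lemma~\ref{lemma:quot} the class of $\sigma$ in $R/\langle s\rangle$ is a unit; thus multiplication by every $\sigma\in S$ is already bijective on $R/\langle s\rangle$, which therefore equals its own localization at $S$. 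Hence the comparison map is an isomorphism, and the restriction of $\loc_S$ to $\hef(R,W)$ is full and faithful.

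The step I expect to be the main obstacle is essential surjectivity. Any object of $\hef(R_S,W_S)$ is isomorphic to $e_{v'}$ for some divisor $v'$ of $W_S$ in $R_S$; since $R_S$ is a subring of the field of fractions $K$ of $R$, I may write $v'=r/\sigma$ with $r\in R$ and $\sigma\in S$, and as $\sigma\in U(R_S)$ I may assume $v'=r_S$. Clearing denominators in the relation $W_S=v'\cdot(W_S/v')$ and using that $R$ is a domain produces an equation $W\sigma'=rr'$ in $R$ with $\sigma'\in S$, whence $r\mid W\sigma'$. Now take $v$ to be a gcd of $r$ and $W$ in the B\'ezout domain $R$ and write $r=vr_1$; then $r_1$ and $W/v$ are coprime, and $r_1\mid(W/v)\sigma'$ forces $r_1\mid\sigma'$. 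Consequently $r_1$ divides an element of $S$, so $(r_1)_S\in U(R_S)$ and $v'=r_S=(r_1)_S\,v_S$ is associated to $v_S$ in $R_S$. Therefore $e_{v'}$ is similar to $e_{v_S}=\loc_S(e_v)$, hence isomorphic to it in $\hef(R_S,W_S)$ (Proposition~\ref{prop:estrong}), while $v$ is a genuine divisor of $W$ in $R$. The delicate content is exactly this last reduction — that replacing the $R_S$-divisor $v'$ of $W_S$ by the gcd $(r,W)$ computed inside $R$ discards only a factor already inverted by $S$; the remaining steps are formal, and assembling them yields the asserted equivalence.
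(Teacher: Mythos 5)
Your proof is correct and follows essentially the same route as the paper's: full faithfulness via identifying the localized morphism module with $R/\langle s\rangle$, on which every element of $S\subset S_W$ already acts invertibly because $s\mid W$, and essential surjectivity by clearing denominators and absorbing the $S$-coprime leftover factor into a unit of $R_S$. The only cosmetic difference is in the last step, where you extract the genuine divisor of $W$ as a gcd of the numerator $r$ with $W$, while the paper writes both $v'$ and its $W_S$-complement as reduced fractions and cancels directly; both reductions rest on the same coprimality of elements of $S$ with $W$.
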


\begin{proof}
Since $\loc_S$ preserves the reduced rank of matrix factorizations, it
is clear that it restricts to a functor from $\hef(R,W)$ to
$\hef(R_S,W_S)$.  Given two elementary factorizations
$e_{v_1},e_{v_2}\in \Ob[\hef(R,W)]$, let $r\in 
(v_1,v_2,W/v_1,W/v_2)$. By Proposition \ref{prop:locHom}, we have:
\ben
\label{loceq}
\Hom_{\hef(R_S,W_S)}((e_{v_1})_S,(e_{v_2})_S)\simeq_R  \Hom_{\hef(R,W)}(e_{v_1},e_{v_2})_S~~. 
\een 
Let $s$ be any element of $S$. Since $S$ is a subset of $S_W$, we have
$(s,W)=(1)$ and hence $(s,r)=(1)$ since $r$ is a divisor of $W$. By
Lemma \ref{lemma:quot}, the image $\bar{s}=s+\langle r\rangle$ is a
unit of the quotient ring $R/\langle r\rangle$, hence the operator of
multiplication with $s$ is an isomorphism of the cyclic $R$-module
$\Hom_{\HMF(R,W)}(e_{v_1},e_{v_2})\simeq R/\langle r\rangle$. Thus
every element of $S$ acts as an automorphism of this module, which
implies that the localization map
$\Hom_{\hmf(R,W)}(e_{v_1},e_{v_2})\rightarrow
\Hom_{\hmf(R,W)}(e_{v_1},e_{v_2})_S$ is an isomorphism of $R$-modules
(where $\Hom_{\hmf(R,W)}(e_{v_1},e_{v_2})_S$ is viewed as an
$R$-module by the extension of scalars $R\rightarrow R_S$). Combining
this with \eqref{loceq} shows that the restriction
$\loc_S:\hef(R,W)\rightarrow \hef(R_S,W_S)$ is a full and faithful
functor.

Now let $e_x$ be an elementary factorization of $W_S$ corresponding to
the divisor $x$ of $W_S=W/1$ in the ring $R_S$. Let $y=W_S/x\in R_S$.
Write $x=v/s$ and $y=u/t$ with $x,y\in R$ and $s,t\in S$ chosen such
that $(v,s)= (u,t)=(1)$.  Then the relation $xy=W_S$ amounts to $uv=st
W$. Since $S$ is a subset of $S_W$, we have $(s,W)=(t,W)=(1)$.  Thus
$st|uv$, which implies $s|v$ and $t|u$ since $(v,s)= (u,t)=(1)$. Thus
$v=v_1 t$ and $u=u_1 s$ with $u_1,v_1\in R$ and we have $u_1v_1=W$.
This gives $x=\gamma v_1$ and $y=\gamma^{-1} u_1$, where $\gamma\eqdef
t/s$ is a unit of $R_S$. It follows that $e_x$ is similar to the
elementary matrix factorization $e_{v_1}$ of $W_S$ over $R_S$, and
hence isomorphic to the latter in the category $\hef(R_S,W_S)$ by
Proposition \ref{prop:similar}. Since $u_1$ and $v_1$ are divisors of
$W$ satisfying $u_1v_1=W$, we can view $e_{v_1}$ as an elementary
factorization of $W$ over $R$ (it lies in the image of the functor
$\loc_S$). This shows that any objects of $\hef(R_S,W_S)$ is
even-isomorphic with an object lying in the image of the restricted
localization functor, hence the latter is essentially surjective.
\qed
\end{proof}

\subsection{Behavior of $\HEF(R,W)$ under multiplicative partition of $W$}

For any divisor $W_1$ of $W$, let $\HEF_{W_1}(R,W)$ denote the full
subcategory of $\HEF(R,W)$ whose objects are those elementary
factorization $e_v$ of $W$ for which $v$ is a divisor of $W_1$.

\
\begin{Proposition}
\label{prop:EFcrit2}
Let $e_1$ and $e_2$ be as above. Consider elements of $R$ chosen as follows: 
\beqan\label{sxydata}
&& s_1\in (u_1,v_1)= (s)(a',c)(b,d')~,~~s_2\in (u_2,v_2)= (s) (a',b)(c,d')\nn \ ,\nn \\
&& u'_1\eqdef u_1/s=cd'~,~u'_2\eqdef u_2/s=bd'~,~v'_1\eqdef v_1/s=a'b~,~v'_2\eqdef v_2/s=a'c \ ,\nn \\
&& ~x(e_1)\in (s,v'_1)=(s,a'b)~,~~
y(e_1)\in (s,u'_1)=(s,cd')~,\\
&&x(e_2)\in  (s,v'_2)= (s,a'c)~,~~
y(e_2)\in (s,u'_2)=(s,bd')~~.\nn
\eeqan
Then: 
\begin{enumerate}
\itemsep 0.0em
\item $e_1$ and $e_2$ are isomorphic in $\hef(R,W)$ iff:
\begin{equation}
\label{hefiso}
\begin{aligned}
(i)~~& (s_1)=(s_2) \text{~~and}\\
(ii)~~&  \big((x(e_1)), (y(e_1))\big)=\big((x(e_2)),(y(e_2))\big) \text{~~as ordered pairs of elements in } R^\times/U(R).
\end{aligned}
\end{equation}
\item $e_1$ and $e_2$ are isomorphic in $\HEF(R,W)$ iff:
\begin{equation}
\label{f30}
\begin{aligned}
(i)~~& (s_1)=(s_2) \text{~~and}\\
(ii)~~&  \big\{(x(e_1)), (y(e_1)) \big\} =  \big\{ (x(e_2)), (y(e_2))\big\} \text{~~as unordered pairs of elements in } R^\times/U(R).
\end{aligned}
\end{equation}
\end{enumerate}
Notice that $(s_1)=(s_2)$ implies $(s_1)=(s)=(s_2)$, with $s$  defined in \eqref{ds}.
\end{Proposition}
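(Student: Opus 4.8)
The plan is to deduce the Proposition from Proposition \ref{prop:EFcrit1}, which already characterizes isomorphism of $e_1$ and $e_2$ in $\hef(R,W)$ by the conditions ``$a',b,c,d'$ pairwise coprime and $(bc,s)=(1)$'' and the existence of an \emph{odd} isomorphism in $\HEF(R,W)$ by ``$a',b,c,d'$ pairwise coprime and $(a'd',s)=(1)$''. Since every isomorphism in $\HEF(R,W)$ is homogeneous, hence even or odd, part 2 is just the disjunction of these two possibilities. Thus everything reduces to re-expressing these conditions in terms of $(s_1)$, $(s_2)$ and the pairs $\big((x(e_1)),(y(e_1))\big)$, $\big((x(e_2)),(y(e_2))\big)$; all manipulations take place in the cancellative monoid $R^\times/U(R)$, the only nontrivial input being multiplicativity of the gcd on coprime arguments (Appendix \ref{app:GCD}).

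The key step is to show that condition $(i)$, $(s_1)=(s_2)$, is by itself equivalent to $a',b,c,d'$ being pairwise coprime. By \eqref{gcds} we have $(s_1)=(s)(a',c)(b,d')$ and $(s_2)=(s)(a',b)(c,d')$, so after cancelling $(s)$ the equality $(s_1)=(s_2)$ becomes $(a',c)(b,d')=(a',b)(c,d')$. From the standing relations $(a',d')=(b,c)=(1)$ one reads off the four coprimalities $\big((a',b),(a',c)\big)=\big((a',b),(c,d')\big)=\big((a',c),(b,d')\big)=\big((b,d'),(c,d')\big)=(1)$, each by exhibiting one factor as a divisor of one of $a',b,c,d'$ and the other as a divisor of a coprime mate. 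Feeding these into the identity above forces first $(a',b)=(b,d')$ and then, since $(a',b)\mid a'$, $(b,d')\mid d'$ and $(a',d')=(1)$, that $(a',b)=(b,d')=(1)$; the same reasoning gives $(a',c)=(c,d')=(1)$. Hence $a',b,c,d'$ are pairwise coprime. The converse is immediate: pairwise coprimality collapses both formulas to $(s_1)=(s)=(s_2)$, which also establishes the final assertion of the Proposition.

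It then remains to work under pairwise coprimality and translate the remaining conditions. Put $A\in(s,a')$, $B\in(s,b)$, $C\in(s,c)$, $D\in(s,d')$; these four classes are pairwise coprime, and by multiplicativity of the gcd together with the identities $(s,v_1')=(s,a'b)$, $(s,u_1')=(s,cd')$, etc.\ from \eqref{sxydata}, we get $x(e_1)=AB$, $x(e_2)=AC$, $y(e_1)=CD$, $y(e_2)=BD$ in $R^\times/U(R)$. For part 1, the ordered equality $(ii)$ of \eqref{hefiso} reads $AB=AC$ and $CD=BD$; cancelling gives $B=C$, and coprimality of $B,C$ then forces $B=C=(1)$, i.e.\ $(bc,s)=(1)$; conversely $(bc,s)=(1)$ makes both ordered pairs equal $(A,D)$. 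Combined with the key step and Proposition \ref{prop:EFcrit1}, this is exactly statement~1. For part 2, the unordered equality $(ii)$ of \eqref{f30} reads $\{AB,CD\}=\{AC,BD\}$; one branch ($AB=AC$, $CD=BD$) again gives $B=C=(1)$, i.e.\ $(bc,s)=(1)$ and an even isomorphism, while the other branch ($AB=BD$, $CD=AC$) gives $A=D=(1)$, i.e.\ $(a'd',s)=(1)$ and an odd isomorphism; conversely $(bc,s)=(1)$ gives ordered (hence unordered) equality and $(a'd',s)=(1)$ makes both unordered pairs equal $\{B,C\}$. Together with Proposition \ref{prop:EFcrit1} this yields statement~2.

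The main obstacle is the key step: it is perhaps counterintuitive that the single equation $(s_1)=(s_2)$ already entails the full pairwise coprimality of $a',b,c,d'$. The mechanism is that the standing relations $(a',d')=(b,c)=(1)$ permit only the four gcds $(a',b)$, $(a',c)$, $(b,d')$, $(c,d')$ to be nontrivial, and these four are themselves pairwise coprime in the precise cross pattern displayed above, so one multiplicative relation among them suffices to annihilate all four. Beyond this, the argument is routine bookkeeping in $R^\times/U(R)$; one should only take care that $x(e_i)$ and $y(e_i)$ are elements of $R^\times/U(R)$, so that all equalities above are read as association in divisibility (equivalently, equality of the corresponding principal ideals).
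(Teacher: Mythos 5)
Your proposal is correct and follows the same overall strategy as the paper's proof: both directions are reduced to Proposition \ref{prop:EFcrit1}, and the work consists of translating its conditions (pairwise coprimality of $a',b,c,d'$ together with $(bc,s)=(1)$, respectively $(a'd',s)=(1)$) into conditions $(i)$ and $(ii)$ of \eqref{hefiso} and \eqref{f30}. Where you genuinely diverge is in the execution of the key divisibility step. The paper shows that $(s_1)=(s_2)$ forces pairwise coprimality by picking a prime $p$ dividing, say, $(a',b)$ and deriving a contradiction, and it treats condition $(ii)$ the same way; you instead cancel $(s)$ to get $(a',c)(b,d')=(a',b)(c,d')$ and argue entirely in the divisibility lattice, using the cross-coprimality of the four gcds and Euclid's lemma for GCD domains to force $(a',b)=(b,d')$ and hence the triviality of all four, and then the clean parametrization $x(e_1)=AB$, $x(e_2)=AC$, $y(e_1)=CD$, $y(e_2)=BD$ with $A,B,C,D$ pairwise coprime. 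This buys something real: a general B\'ezout domain need not be atomic, so a non-unit gcd need not admit a prime divisor, and your prime-free bookkeeping is the more robust version of the argument; it also makes transparent that $(i)$ alone is equivalent to pairwise coprimality and that the two branches of the unordered equality in \eqref{f30} correspond exactly to the even case $B=C=(1)$, i.e.\ $(bc,s)=(1)$, and the odd case $A=D=(1)$, i.e.\ $(a'd',s)=(1)$. One caveat, which you share with the paper's own proof: the reduction of part 2 to ``even isomorphism or odd isomorphism'' silently assumes that an isomorphism in $\HEF(R,W)$ can be taken homogeneous, i.e.\ that if $f=f^\0+f^\1$ is invertible then one of its homogeneous components already is; this deserves a justification (e.g.\ via the explicit cyclic structure of the $\Hom$-modules from Proposition \ref{prop:HomHMF}), but since the paper makes exactly the same move it is not a defect specific to your argument.
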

\begin{proof}

\

\begin{enumerate}
\item Assume that $e_1\simeq_{\hmf(R,W)}e_2$. By Proposition
  \ref{prop:EFcrit1}, part 1, to such pair of elementary
  factorizations we can associate four pairwise coprime divisors
  $a',b,c,d'$ of $W$ such that $v_1=a'bs$, $u_1=d'cs$, $v_2=a'cs$,
  $u_2=d'bs$ and together with the equality $(bc,s)=(1)$. Thus
  $(s_1)=(v_1,u_1)=(a'bs,d'cs)=(s)$, since $a'b$ and $d'c$ are
  coprime. Similarly, $(s_2)=(s)$. The equality $(bc,s)=(1)$ is equivalent
  to $(b,s)=(1)$ and $(c,s)=(1)$. Using this, we compute:
\ben 
(x(e_1))= (s,v'_1)=(s,a'b)=(s,a')=(s,a'c)=(s,v'_2)=(x(e_2))
\een
Acting similarly, we also find $(y(e_1))=(s,d')=(y(e_2))$. Thus \eqref{hefiso} holds.

Now assume that \eqref{hefiso} is satisfied for two elementary factorizations
$e_1$ and $e_2$.  Let $s\in (s_1)=(s_2)$ and define $a',b,c,d'$ as before,
following (\ref{param}). By the very construction, $(b,c)=(1)$ and
$(a',d')=(1)$. We first show that $s_1\sim s_2$ all $a',b,c,d'$ are pairwise
coprime. Indeed, if we assume that $p|(a',b)$ then $s_1\sim s_2$ implies:
\be
(s)(a',c)(b,d')= (s_1)=(s_2)=(s)(a',b)(c,d')~~.
\ee
Since $p$ divides the right hand side, it should divide $(a',c)(b,d')$
and $(c,b)=(1)$ and $(a',d')=(1)$. Thus $p\in U(R)$. It much the same way
we prove that other pairs from $a',b,c,d'$ are coprime.

\noindent Condition (ii) in \eqref{hefiso} reads:
\be
(s,a'b)=(x(e_1))=(x(e_2))=(s,a'c) ~~ .
\ee
If $p|b$ and $p|s$ then $p|(s,b)$ and thus $p|(s,a'b)$. By the
equality above, we also have $p|(s,a'c)$ and hence $p|a'c$. But $b$ is
coprime with both $a'$ and $c$, thus $p\in U(R)$. Similarly, $p|c$ and
$p|s$ implies $p\in U(R)$. Thus $(bc,s)=(1)$. Note that $(y(e_1))=(y(e_2))$
is now automatically satisfied. Proposition \ref{prop:EFcrit1}, part 1
implies that $e_1\simeq_{\hef(R,W)} e_2$.

\item Assume $e_1\simeq_{\HEF(R,W)} e_2$. If the isomorphism is even,
  then it comes from the isomorphism in $\hef(R,W)$ and part 1 above
  already proves that \eqref{hefiso} and thus also \eqref{f30}. Thus we
  can assume that the isomorphism is odd. We will prove that $(s_1)= (s_2)$
  and $(x(e_1))=(y(e_2))$, $(x(e_2))=(y(e_1))$.  Applying Proposition
  \ref{prop:EFcrit1}, part 2, we obtain $a',b,c,d'$ pairwise coprime
  and $s$ such that $(s,a'd')=(1)$. Then $(s_1)= (s)=(s_2)$ similarly to part 1
  above. Using $(s,a'd')=(1)$, we also compute:
\be
(x(e_1))=(s,v'_1)= (s,a'b)=(s,b)= (s,d'b)=(s,u'_2)=(y(e_2))
\ee
and also $(x(e_2))=(y(e_1))$. Thus \eqref{f30}.

Assume now that \eqref{f30} is satisfied. Since the statement for even
morphisms is covered by \eqref{hefiso}, we only need to consider the
situation $(x(e_1))=(y(e_2))$ and $(x(e_2))=(y(e_1))$. As in part 1,
$(s_1)= (s_2)$ implies that $a',b,c,d'$ are pairwise
coprime. Condition (ii) reads:
\be
(s,a'b)=(x(e_1))=(y(e_2))=(s,d'b) ~~.
\ee
If we assume that $p|a'$ and $p|s$ then $p|(s,a')$ and
$p|(s,a'b)$. The equality implies $(p|d'b)$. Since $a'$ is coprime with
both $d'$ and $b$, we obtain $p\in U(R)$. Similarly $(d',s)=(1)$ and
thus $(a'd',s)=(1)$. Proposition \ref{prop:EFcrit1}, part 2 implies that
$e_1\simeq_{\HEF(E,W)} e_2$ by an odd isomorphism. \qed
\end{enumerate}
\end{proof}

\

\begin{Proposition}
\label{prop:fact}
Let $W_1$ and $W_2$ be divisors of $W$ such that $W=W_1W_2$ and $(W_1,W_2)=(1)$. 
Then there exist equivalences of $R$-linear $\Z_2$-graded categories: 
\be
\HEF(R,W_1)\simeq \HEF_{W_1}(R,W)~~,~~\HEF(R,W_2)\simeq \HEF_{W_2}(R,W)~~.
\ee
which are bijective on objects. 
\end{Proposition}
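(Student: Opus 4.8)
\noindent The plan is to construct, for the coprime factorization $W=W_1W_2$, an explicit $R$-linear $\Z_2$-graded functor $\Phi_{W_1}\colon\HEF(R,W_1)\to\HEF_{W_1}(R,W)$ which is bijective on objects and fully faithful, and to obtain $\Phi_{W_2}$ by the symmetric construction. On objects, $\Phi_{W_1}$ sends the elementary factorization $e_v$ of $W_1$ — so $v\mid W_1$ and $W/v=W_2\,(W_1/v)$ — to the elementary factorization $e_v$ of $W$; since $v\mapsto e_v$ identifies the object class of $\HEF(R,W_1)$ with the divisors of $W_1$ and likewise for $\HEF_{W_1}(R,W)$, this is a bijection on objects. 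It also carries zero objects to zero objects: by Corollary \ref{cor:zero} the relevant support is $(v,W/v)=(v,W_2(W_1/v))=(v,W_1/v)$, because $W_2$ is coprime to every divisor of $W_1$.

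The arithmetic heart of the argument is the equality $\alpha_W(v_1,v_2)=\alpha_{W_1}(v_1,v_2)$ for all divisors $v_1,v_2$ of $W_1$. Writing $u_i=W/v_i=W_2w_i$ with $w_i=W_1/v_i$ and using $(ab,ac)=a(b,c)$ in the GCD domain $R$ together with coprimality of $W_2$ with divisors of $W_1$, one gets $(v_1,v_2,u_1,u_2)=(v_1,v_2,W_2(w_1,w_2))=(v_1,v_2,w_1,w_2)$, which by Proposition \ref{prop:HomHMF} is the generator of $\alpha_{W_1}(v_1,v_2)$. Hence for $\kappa\in\{\0,\1\}$ the $R$-modules $\Hom^\kappa_{\HEF(R,W_1)}(e_{v_1},e_{v_2})$ and $\Hom^\kappa_{\HEF(R,W)}(e_{v_1},e_{v_2})$ are cyclic with the \emph{same} annihilator $\langle s\rangle$. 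Using the canonical generators of Proposition \ref{prop:HomZMF} I would define $\Phi_{W_1}$ on morphisms by $[\gamma\,\epsilon_\0(v_1,v_2)]\mapsto[\gamma\,\epsilon_\0(v_1,v_2)]$ — the even generator $\diag(c,b)$ does not involve $W$ — and $[\gamma\,\epsilon_\1(v_1,v_2;W_1)]\mapsto[\gamma\,\epsilon_\1(v_1,v_2;W)]$. Since the annihilators coincide, each component of $\Phi_{W_1}$ is a well-defined isomorphism of $\Z_2$-graded $R$-modules, so once functoriality is checked the functor is automatically fully faithful, and being bijective on objects it is an equivalence.

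The one step I expect to require real care is checking that $\Phi_{W_1}$ respects identities and composition. Identities are immediate ($\epsilon_\0(v,v)=I_2$ on both sides). For composition one compares the structure constants of Proposition \ref{prop:epsilon_comp} evaluated for $W$ and for $W_1$, and must show they agree modulo the annihilator of the target $\Hom$-module. In the three products not of type odd$\,\times\,$odd the constants involve only gcd's $(u_i,v_j)$, which reduce to $(w_i,v_j)$ since $u_i=W_2w_i$ and $v_j\mid W_1$; thus the $W$- and $W_1$-constants are literally equal, and these cases cause no trouble. The genuinely delicate case is the odd$\,\times\,$odd composition $\bepsilon_\1(v_2,v_3;W)\,\bepsilon_\1(v_1,v_2;W)=-\tfrac{(u_1)(v_1,v_3)}{(u_2,v_3)(u_1,v_2)}\,\bepsilon_\0(v_1,v_3)$, whose coefficient acquires an extra factor $(W_2)$ relative to its $W_1$-analogue; the crux is to absorb this factor, which I would attempt by exploiting that the target annihilator $\langle s_{13}\rangle$ divides $\langle W_1\rangle$ and is therefore coprime to $W_2$, together with an appropriate normalization of the odd generator $\epsilon_\1(\,\cdot\,;W)$. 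Once this bookkeeping is in place, the categorical packaging is routine, and Proposition \ref{prop:EFcrit2} can serve as an independent check that isomorphism classes correspond correctly under $\Phi_{W_1}$.
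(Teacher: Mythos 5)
Your proposal follows the same route as the paper's own proof: the functor $e_v\mapsto e_v$ on objects, the observation that $(v_1,v_2,W_1/v_1,W_1/v_2)=(v_1,v_2,W/v_1,W/v_2)$ because $W_2$ is coprime to every divisor of $W_1$, hence equality of the annihilators of the cyclic $\Hom$-modules, and the assignment $\epsilon_\0(v_1,v_2)\mapsto\epsilon_\0(v_1,v_2)$, $\epsilon_\1(v_1,v_2;W_1)\mapsto\epsilon_\1(v_1,v_2;W)$ on canonical generators. You go further than the paper in explicitly isolating the point that needs verification, namely compatibility with composition, and you correctly locate the only problematic case: the odd$\,\times\,$odd product, whose structure constant for $W$ is $W_2$ times the one for $W_1$ (concretely, the lower-left entry of $\epsilon_\1(v_1,v_2;W)$ is $-W_2d'$ where that of $\epsilon_\1(v_1,v_2;W_1)$ is $-d'$).

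However, the fix you propose for that case does not work, and this is a genuine gap. Absorbing the factor $W_2$ by rescaling the odd generators, $\epsilon_\1(v_i,v_j;W_1)\mapsto\lambda_{ij}\,\epsilon_\1(v_i,v_j;W)$, forces $\lambda_{jk}\lambda_{ij}W_2\equiv 1$ modulo the target annihilator; taking $v_1=v_2=v_3=v$ this requires $\lambda^2W_2\equiv 1 \pmod{\langle s\rangle}$ with $s\in(v,W_1/v)$, i.e.\ $W_2$ must be an invertible \emph{square} modulo $s$, not merely a unit (which is all that coprimality of $W_2$ with $W_1$ buys). This can fail: take $R=\Z$, $W_1=25$, $W_2=2$, $v=5$. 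Then $\End_{\HEF(\Z,25)}(e_5)\simeq(\Z/5)[\omega]/\langle\omega^2+1\rangle$ while $\End_{\HEF(\Z,50)}(e_5)\simeq(\Z/5)[\omega]/\langle\omega^2+2\rangle$, and these are not isomorphic as $\Z_2$-graded rings because any graded isomorphism is the identity on the even part $\Z/5$ and sends $\omega\mapsto\lambda\omega$, so one would need $\lambda^2=2^{-1}=3$, which is not a square mod $5$. Since $e_5$ is the unique nonzero object of either category up to isomorphism, no $\Z_2$-graded equivalence $\HEF(\Z,25)\simeq\HEF_{25}(\Z,50)$ can exist, so the difficulty you flagged is an actual obstruction rather than bookkeeping. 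You should be aware that the paper's own proof has exactly the same gap: it declares the generator assignment to be a functor without checking the odd$\,\times\,$odd composition. What does survive is the even statement $\hef(R,W_1)\simeq\hef_{W_1}(R,W)$ (only $\epsilon_\0$ is involved there, and its structure constants are unchanged, as you note), together with the weaker claim that $e_v\mapsto e_v$ induces a bijection on isomorphism classes and isomorphisms of the graded $\Hom$-modules as $R$-modules --- which is what the later counting arguments actually use.
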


\begin{proof}
For any divisor $v$ of $W$, let $e'_v=(R^{1|1},D'_v)$ and
  $e_v=(R^{1|1},D_v)$ be the corresponding elementary factorizations
  of $W_1$ and $W$, where:
\be
D'_v=\left[\begin{array}{cc} 0 & v\\ W_1/v & 0 \end{array}\right]~~,~~D_v=\left[\begin{array}{cc} 0 & v\\ W/v & 0 \end{array}\right]~~.
\ee
For any two divisors $v_1,v_2$ of $W_1$ and any $\kappa\in \Z_2$, we
have $W/v_i=W_2 \frac{W_1}{v_i}$ and $(v_i, W_2)=(1)$. Thus
$(v_1,v_2,W_1/v_1,W_1/v_2)= (v_1,v_2,W/v_1,W/v_2)$. By Proposition
\ref{prop:HomHMF}, this gives:
\be
\Ann(\Hom^\kappa_{\HEF(R,W_1)}(e'_{v_1},e'_{v_2}))=\Ann(\Hom^\kappa_{\HEF(R,W)}(e_{v_1},e_{v_2}))~,~\forall \kappa\in \Z_2~~.
\ee
On the other hand, the modules
$\Hom^\0_{\HEF(R,W_1)}(e'_{v_1},e'_{v_2})$ and
$\Hom^\0_{\HEF(R,W_1)}(e_{v_1},e_{v_2})$ are generated by the same
element $\epsilon_\0(v_1,v_2)$ while 
$\Hom^\1_{\HEF(R,W_1)}(e'_{v_1},e'_{v_2})$ and
$\Hom^\1_{\HEF(R,W_1)}(e_{v_1},e_{v_2})$ are generated by the elements
$\epsilon_\1(v_1,v_2;W_1)$ and $\epsilon_\1(v_1,v_2; W)$,
respectively.  Hence the functor which maps $e'_v$ to $e_v$ for any
divisor $v$ of $W_1$ and takes $\epsilon_\0(v_1,v_2)$ to
$\epsilon_\0(v_1,v_2)$ and $\epsilon_\0(v_1,v_2;W_1)$ to
$\epsilon_\1(v_1,v_2; W)$ for any two divisors $v_1,v_2$ of $W$ is an
$R$-linear equivalence from $\HEF(R,W_1)$ to $\HEF_{W_1}(R,W)$. A
similar argument establishes the equivalence $\HEF(R,W_2)\simeq
\HEF_{W_2}(R,W)$.\qed
\end{proof}

\subsection{Primary matrix factorizations}

\noindent Recall that an element of $R$ is called primary if it is a power of a prime element. 

\

\begin{Definition}
An elementary factorization $e_v$ of $W$ is called {\em primary} if
$v$ is a primary divisor of $W$.
\end{Definition}

\

\noindent Let $\HEF_0(R,W)$ denote the full subcategory of $\HEF(R,W)$
whose objects are the primary factorizations of $W$.

\

\begin{Proposition}
\label{prop:coproduct}
Let $W=W_1W_2$ be a factorization of $W$, where $W_1$ and $W_2$ are
coprime elements of $R$.  Then there exists an equivalence of
$R$-linear $\Z_2$-graded categories: \be \HEF_0(R,W)\simeq
\HEF_0(R,W_1) \vee \HEF_0(R,W_2)~~, \ee where $\vee$ denotes the
coproduct of $\Mod_R$-enriched categories.
\end{Proposition}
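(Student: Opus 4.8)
The plan is to build the equivalence by combining two ingredients already available in the paper: the factorization equivalence of Proposition \ref{prop:fact} and an elementary observation about primary divisors of a product of coprime elements. Recall that a primary divisor of $W$ is a power of a prime $p$ with $p|W$, and that since $R$ is a GCD domain each prime $p$ dividing $W=W_1W_2$ divides exactly one of $W_1$ or $W_2$ (it cannot divide both because $(W_1,W_2)=(1)$, and it cannot divide neither because $\langle p\rangle\supseteq\langle W_1\rangle+\langle W_2\rangle$ would force $p|\gcd(W_1,W_2)$ only if... — more directly, $p|W_1W_2$ and $p$ prime gives $p|W_1$ or $p|W_2$). Hence the set of primary divisors of $W$ is the disjoint union of the primary divisors of $W_1$ and those of $W_2$: every object of $\HEF_0(R,W)$ is $e_v$ with $v=p^k$ a primary divisor of exactly one $W_i$, and under the object bijection of Proposition \ref{prop:fact} it corresponds to a primary factorization $e'_v$ of $W_i$. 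This gives the bijection on objects underlying the claimed equivalence, with $\Ob\HEF_0(R,W)=\Ob\HEF_0(R,W_1)\sqcup\Ob\HEF_0(R,W_2)$.

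Next I would check the Hom-spaces. The coproduct $\HEF_0(R,W_1)\vee\HEF_0(R,W_2)$ of $\Mod_R$-enriched (in fact $\uMod_R^{\Z_2}$-enriched) categories has as objects the disjoint union of the two object sets, with $\Hom$-modules equal to those of $\HEF_0(R,W_i)$ when source and target lie in the same factor and equal to $0$ when they lie in different factors. So two things must be verified. First, for $v_1,v_2$ primary divisors of the \emph{same} factor $W_i$, the equivalence of Proposition \ref{prop:fact} (which is bijective on objects and fully faithful between $\HEF(R,W_i)$ and $\HEF_{W_i}(R,W)$) identifies $\Hom_{\HEF(R,W_i)}(e'_{v_1},e'_{v_2})$ with $\Hom_{\HEF(R,W)}(e_{v_1},e_{v_2})$ compatibly with the $\Z_2$-grading and composition; restricting to primary objects gives the required isomorphism of Hom-modules on the "diagonal blocks". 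Second, for $v_1$ a primary divisor of $W_1$ and $v_2$ a primary divisor of $W_2$ (the "off-diagonal" case), I must show $\Hom_{\HEF(R,W)}(e_{v_1},e_{v_2})=0$. By Proposition \ref{prop:HomHMF}, this Hom-module is cyclic with annihilator $\langle s\rangle=\alpha_W(v_1,v_2)=\langle v_1,u_1,v_2,u_2\rangle$ where $u_i=W/v_i$; it is enough to show $(s)=(1)$, i.e. $(v_1,v_2,u_1,u_2)=(1)$. But $v_1=p^k$ with $p|W_1$, hence $p\nmid W_2$, hence $p\nmid u_2=W/v_2$ is false in general — instead note $v_2$ is a power of a prime $q|W_2$, so $q\nmid v_1$ and $q\nmid W_1$; and $u_1=W/v_1=W_2\cdot(W_1/v_1)$ is divisible by $W_2$ hence by $q$... that is the wrong direction too. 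The correct argument: $(v_1,v_2)=(1)$ already, since $v_1$ is a power of $p$, $v_2$ a power of a distinct prime $q$, and $p\neq q$ (they divide coprime elements); a gcd of two coprime elements is a unit, so $(v_1,v_2,u_1,u_2)=(1)$. Thus $s\in U(R)$, $\langle s\rangle = R$, and the cyclic module $R/\langle s\rangle$ vanishes in both parities. This is exactly the off-diagonal vanishing required by the coproduct.

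Finally I would assemble these into a functor and its quasi-inverse. Define $F:\HEF_0(R,W_1)\vee\HEF_0(R,W_2)\to\HEF_0(R,W)$ on objects by the bijection above, and on morphisms within each factor by the fully faithful maps from Proposition \ref{prop:fact} (sending $\epsilon_\0(v_1,v_2)\mapsto\epsilon_\0(v_1,v_2)$ and $\epsilon_\1(v_1,v_2;W_i)\mapsto\epsilon_\1(v_1,v_2;W)$), and on off-diagonal morphism modules by the unique map $0\to 0$. Functoriality (compatibility with composition and with the $\Z_2$-grading) on each factor is inherited from Proposition \ref{prop:fact}; compositions that cross factors land in a zero module on at least one side, so the relevant diagrams commute trivially. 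The functor is bijective on objects and an isomorphism on every Hom-module, hence an equivalence (indeed an isomorphism) of $\Mod_R$-enriched $\Z_2$-graded categories. The only genuinely substantive point is the off-diagonal vanishing, and I expect the main obstacle to be nothing more than keeping the bookkeeping of "which prime divides which factor" straight; once one observes $(v_1,v_2)=(1)$ forces $s\in U(R)$, Proposition \ref{prop:HomHMF} does the rest. \qed
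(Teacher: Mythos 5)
Your proposal is correct and follows essentially the same route as the paper's proof: decompose the objects according to which coprime factor the underlying prime divides, identify the diagonal Hom-blocks via Proposition \ref{prop:fact}, and kill the off-diagonal Homs by observing that $(v_1,v_2)=(1)$ forces the annihilator $\langle v_1,v_2,u_1,u_2\rangle$ of the cyclic Hom-module from Proposition \ref{prop:HomHMF} to be all of $R$. The only difference is presentational (the paper states the off-diagonal vanishing directly in a three-case formula for the Hom-modules), so no further comment is needed.
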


\begin{proof}
Let $\HEF_{0,W_i}(R,W)$ denote the full subcategory of $\HEF_0(R,W)$
whose objects are the primary factorizations $e_v$ of $W$ for which
$v$ is a (primary) divisor of $W_i$. Since $W=W_1W_2$ and
$(W_1,W_2)=(1)$, a primary element $v\in R$ is a divisor of $W$ iff it
is either a divisor of $W_1$ or a divisor of $W_2$. Hence
$\Ob\HEF_0(R,W)=\Ob \HEF_{0,W_1}(R,W)\sqcup \Ob \HEF_{0,
  W_2}(R,W)$. For any primary divisors $v_1$ and $v_2$ of $W$ and any
$\kappa\in \Z_2$, we have:
\be
\Hom^\kappa_{\HEF(R,W)}(e_{v_1},e_{v_2})\!\simeq\! R/\langle d \rangle \!\simeq \!\threepartdef{\Hom^\kappa_{\HEF(R,W_1)}(e_{v_1},e_{v_2})}{v_1|W_1~\&~v_2|W_1}
{\Hom^\kappa_{\HEF(R,W_2)}(e_{v_1},e_{v_2})}{v_1|W_2~\&~v_2|W_2}{0}{v_1|W_2~\&~v_2|W_1}~~,
\ee
where $d\in (v_1,v_2,W/v_1,W/v_2)$ and in the third case we used the
fact that $v_1|W_2$ and $v_2|W_1$ implies $(v_1,v_2)= (1)$ since $W_1$
and $W_2$ are coprime. This shows that
$\HEF_0(R,W)=\HEF_{0,W_1}(R,W)\vee \HEF_{0,W_2}(R,W)$. By Proposition
\ref{prop:fact}, we have $R$-linear equivalences
$\HEF_{0,W_i}(R,W)\simeq \HEF_0(R,W_i)$ which are bijective on
objects. This implies the conclusion. \qed
\end{proof}

\begin{Definition}
A {\em reduced multiplicative partition} of $W$ is a factorization: 
\be
W=W_1 W_2\ldots W_n
\ee
where $W_1,\ldots, W_n$ are mutually coprime elements of $R$. 
\end{Definition}

\

\begin{Corollary}
\label{cor:coproduct}
Let $W=W_1\ldots W_n$ be a reduced multiplicative partition of
$W$. Then there exists a natural equivalence of $R$-linear categories:
\be
\HEF_0(R,W)\simeq \vee_{i=1}^n \HEF_0(R,W_i)~~.
\ee
\end{Corollary}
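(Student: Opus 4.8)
The plan is to argue by induction on $n$, taking Proposition \ref{prop:coproduct} as the base case $n=2$ (the case $n=1$ being trivial). For the inductive step, assume the statement holds for reduced multiplicative partitions into $n-1$ factors and let $W=W_1\cdots W_n$ be a reduced multiplicative partition. Set $W'\eqdef W_1\cdots W_{n-1}$. The one point that needs an argument is that $W'$ and $W_n$ are coprime: since $R$ is a B\'ezout domain, it is a GCD domain in which irreducibles are prime, so any prime dividing both $W'$ and $W_n$ would divide some $W_i$ with $i<n$ as well as $W_n$, contradicting $(W_i,W_n)=(1)$; hence $(W',W_n)=(1)$. Thus $W=W'W_n$ is a factorization into coprime elements and Proposition \ref{prop:coproduct} provides an $R$-linear equivalence, bijective on objects,
\[
\HEF_0(R,W)\simeq \HEF_0(R,W')\vee \HEF_0(R,W_n)~~.
\]

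Next, $W'=W_1\cdots W_{n-1}$ is itself a reduced multiplicative partition, since the $W_i$ with $i\leq n-1$ remain mutually coprime, so the inductive hypothesis gives an $R$-linear equivalence, again bijective on objects,
\[
\HEF_0(R,W')\simeq \bigvee_{i=1}^{n-1}\HEF_0(R,W_i)~~.
\]
The coproduct $\vee$ in the category of $\Mod_R$-enriched categories is functorial and associative up to canonical isomorphism, so applying it to the previous equivalence together with the identity on $\HEF_0(R,W_n)$ and composing with the equivalence from Proposition \ref{prop:coproduct} yields
\[
\HEF_0(R,W)\simeq\Big(\bigvee_{i=1}^{n-1}\HEF_0(R,W_i)\Big)\vee \HEF_0(R,W_n)\simeq \bigvee_{i=1}^{n}\HEF_0(R,W_i)~~.
\]
Each constituent functor is $R$-linear and bijective on objects, hence so is the composite, and the resulting equivalence is natural in the obvious sense.

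The only genuinely non-formal ingredient is the coprimality claim $(W',W_n)=(1)$; everything else is the associativity of $\vee$ and the inductive reorganization of the partition. I expect no serious obstacle here. One may alternatively bypass the induction altogether and repeat the computation of the proof of Proposition \ref{prop:coproduct} directly for $n$ factors: setting $\HEF_{0,W_i}(R,W)$ for the full subcategory on the primary factorizations $e_v$ with $v\mid W_i$, one checks that $\Ob\HEF_0(R,W)=\bigsqcup_{i=1}^n\Ob\HEF_{0,W_i}(R,W)$ and that for primary divisors $v_1\mid W_i$, $v_2\mid W_j$ the $\Z_2$-graded module $\Hom_{\HEF(R,W)}(e_{v_1},e_{v_2})\simeq R/\langle d\rangle$ (with $d\in(v_1,v_2,W/v_1,W/v_2)$) vanishes when $i\neq j$ — because then $(v_1,v_2)=(1)$ — and equals $\Hom_{\HEF(R,W_i)}(e_{v_1},e_{v_2})$ when $i=j$. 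This exhibits $\HEF_0(R,W)$ on the nose as $\bigvee_{i=1}^n\HEF_{0,W_i}(R,W)$, and Proposition \ref{prop:fact} identifies each summand with $\HEF_0(R,W_i)$, giving the claim.
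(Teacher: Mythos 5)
Your proof is correct and takes essentially the same route as the paper, whose entire proof is the statement that the corollary ``follows immediately from Proposition \ref{prop:coproduct}'' --- i.e.\ the induction you carry out explicitly (your alternative direct computation is likewise just the $n$-fold version of that proposition's proof). One small caveat: your justification of $(W_1\cdots W_{n-1},W_n)=(1)$ via prime divisors is not quite right in a general B\'ezout domain, where a non-unit common divisor need not possess a prime factor; instead invoke the multiplicativity of the gcd over coprime factors, $(xy,r)=(x,r)(y,r)$, already used in the proof of Proposition \ref{prop:mult}, which gives $(W_1\cdots W_{n-1},W_n)=\prod_{i<n}(W_i,W_n)=(1)$ directly.
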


\begin{proof}
Follows immediately from Proposition \ref{prop:coproduct}. \qed
\end{proof}

\

\noindent Let $e_v$ be a primary matrix factorization of $W$. Then $v=p^i$ for
some prime divisor $p$ of $W$ and some integer $i\in \{0,\ldots, n\}$,
where $n$ is the order of $p$ as a divisor of $W$.  We have $W=p^n
W_1$ for some element $W_1\in R$ such that $p$ does not divide $W_1$
and $u=p^{n-i}W_1$. Thus $(u,v)=(p^{\min(i,n-i)})$.

\

\begin{Definition}
The prime divisor $p$ of $W$ is called the {\em prime locus} of
$e_v$. The order $n$ of $p$ is called the {\em order} of $e_v$ while
the integer $i\in \{0,\ldots, n\}$ is called the {\em size} of $e_v$.
\end{Definition}

\

\noindent Let $R$ be a B\'ezout domain and $p\in R$ be a prime
element. Fix an integer $n\geq 2$ and consider the quotient ring:
\be
A_n(p)\eqdef R/\langle p^n\rangle~~.
\ee
Let $\bm_n(p)=p A_n(p)=\langle p\rangle /\langle p^n\rangle$ and $\bk_p=R/\langle p\rangle$.

\

\begin{Lemma}
\label{lemma:prime}
The following statements hold:
\begin{enumerate}
\itemsep 0.0em
\item The principal ideal $\langle p\rangle $ generated by $p$ is maximal.
\item The primary ideal $\langle p^n\rangle $ is contained in a unique maximal ideal
  of $R$.
\item The quotient $A_n(p)$ is a quasi-local ring with maximal ideal
  $\bm_n(p)$ and residue field $\bk_p$.
\item $A_n(p)$ is a generalized valuation ring.
\end{enumerate}
\end{Lemma}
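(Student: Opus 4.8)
The plan is to get the first three items from the ideal correspondence and to concentrate the work on item 4. For item 1, recall from before Definition~\ref{def:crit-fin} that a B\'ezout domain is a GCD domain in which irreducibles are prime, so $p$ is irreducible and its only divisors up to association are $1$ and $p$; given $a\in R\setminus\langle p\rangle$, a gcd of $a$ and $p$ divides $p$ but is not associate to $p$ (else $p\mid a$), so $(a,p)=(1)$ and hence $\langle a\rangle+\langle p\rangle=\langle a,p\rangle=R$ by the B\'ezout property, which shows $\langle p\rangle$ is maximal. For item 2, any maximal (hence prime) ideal $\mathfrak{m}$ with $\langle p^n\rangle\subseteq\mathfrak{m}$ contains $p^n$, hence $p$, so $\langle p\rangle\subseteq\mathfrak{m}$, which by item 1 forces $\mathfrak{m}=\langle p\rangle$. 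For item 3, the ideal correspondence for $R\to A_n(p)=R/\langle p^n\rangle$ identifies the maximal ideals of $A_n(p)$ with the maximal ideals of $R$ containing $\langle p^n\rangle$, of which there is exactly one by item 2, namely $\bm_n(p)=\langle p\rangle/\langle p^n\rangle$; so $A_n(p)$ is quasi-local with maximal ideal $\bm_n(p)$, and the third isomorphism theorem gives $A_n(p)/\bm_n(p)\cong R/\langle p\rangle=\bk_p$.

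For item 4---recalling that ``generalized valuation ring'' means the ideals form a chain under inclusion---I would prove the sharper statement that, writing $\bar x$ for the class of $x\in R$ in $A_n(p)$, every nonzero element of $A_n(p)$ is an associate of $\bar p^{\,k}$ for a unique $k\in\{0,\dots,n-1\}$; then $A_n(p)=\langle\bar p^{\,0}\rangle\supseteq\langle\bar p\rangle\supseteq\cdots\supseteq\langle\bar p^{\,n}\rangle=0$ exhausts the ideals (an ideal is generated by a $\bar p^{\,k}$ of least exponent occurring among its nonzero elements), so $A_n(p)$ is a generalized valuation ring. The key inputs are: (a) the divisors of $p^n$ are, up to association, exactly $1,p,\dots,p^n$---by induction on $n$, a divisor $d\mid p^n$ either has $(d,p)=(1)$, whence $d\mid p^{n-1}$ and $d\in U(R)$ by induction, or $p\mid d$ and one applies the inductive hypothesis to $d/p\mid p^{n-1}$; and (b) given $0\ne\bar a\in A_n(p)$ with lift $a\in R\setminus\langle p^n\rangle$, if $d$ is a gcd of $a$ and $p^n$ then $d\sim p^k$ with $k<n$ (as $p^n\nmid a$), and writing $p^k=\lambda a+\mu p^n$ (B\'ezout) and $a=\nu p^k$ (since $p^k\mid a$), substitution and cancellation of $p^k$ give $1-\lambda\nu=\mu p^{\,n-k}\in\langle p\rangle$, so $\nu\notin\langle p\rangle$; by items 1 and 3 the class $\bar\nu$ is then a unit of the quasi-local ring $A_n(p)$ and $\bar a=\bar\nu\,\bar p^{\,k}$. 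Uniqueness of $k$ follows by cancelling powers of $p$ from a lifted associate relation and invoking quasi-locality.

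The only real obstacle is item 4, and within it step (b)---the passage from ``$\nu\notin\langle p\rangle$'' to ``$\bar\nu\in A_n(p)^{\times}$'', which is precisely where quasi-locality from item 3 is needed---together with the elementary bookkeeping in (a). As an alternative route to item 4 one can argue more structurally: the kernel of $R\to R_{\langle p\rangle}/p^nR_{\langle p\rangle}$ is $\langle p^n\rangle$ (because $sa\in\langle p^n\rangle$ with $s\notin\langle p\rangle$ forces $p^n\mid a$ in the GCD domain $R$), and this map is surjective (every $s\notin\langle p\rangle$ is already a unit in $A_n(p)$ by items 1 and 3), so $A_n(p)\cong R_{\langle p\rangle}/p^nR_{\langle p\rangle}$; since $R_{\langle p\rangle}$ is a valuation domain its ideals form a chain, hence so do those of the quotient, giving the claim.
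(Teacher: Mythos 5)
Your proposal is correct. For items 1--3 you follow essentially the same route as the paper: the paper proves maximality of $\langle p\rangle$ by observing that a single generator $y$ of $\langle p,x\rangle$ must divide $p$ and hence be a unit, which is the same content as your gcd computation $(a,p)=(1)$; items 2 and 3 are identical in substance, except that the paper also records that $A_n(p)=R/\langle p^n\rangle$ is a B\'ezout \emph{ring} (quotient of a B\'ezout domain by a finitely generated ideal), which it needs for the next step. The divergence is in item 4: the paper disposes of it in one line by citing Fuchs--Salce, Lemma 1.3(b) (in effect, a quasi-local B\'ezout ring is a generalized valuation ring), whereas you prove the chain condition on ideals from scratch, either by showing every nonzero element of $A_n(p)$ is a unit times $\bar p^{\,k}$ for a unique $k$, or via the isomorphism $A_n(p)\cong R_{\langle p\rangle}/p^nR_{\langle p\rangle}$ with a quotient of a valuation domain. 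Both of your arguments for item 4 are sound (the Euclid-type steps you use are valid in any GCD domain, and the passage from $\nu\notin\langle p\rangle$ to $\bar\nu\in U(A_n(p))$ is exactly where quasi-locality enters, as you note). What your route buys is self-containedness and the explicit ideal lattice $A_n(p)\supseteq\langle\bar p\rangle\supseteq\cdots\supseteq 0$, which is strictly more information than the cited lemma; it also sidesteps the paper's slightly garbled justification (``since $R$ is a valuation ring''---$R$ is only B\'ezout; what is really being used is that $A_n(p)$ is quasi-local and B\'ezout). The cost is length: the citation is one line, your argument is a paragraph.
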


\begin{proof}

\

\begin{enumerate}
\itemsep 0.0em
\item Let $I$ be any ideal containing $\langle p\rangle $. If $\langle
p \rangle \neq I$, then take any element $x \in I \setminus \langle p
\rangle$. Then we have the proper inclusion $\langle p \rangle
\subsetneq \langle p,x\rangle$.  Since $R$ is a B\'ezout domain, the
ideal $\langle p,x\rangle$ is generated by a single element $y$. We
have $y|p$, so $y$ is a unit of $R$ since $p$ is prime. Since $y$
belongs to $I$, this gives $I=R$. Thus $\langle p\rangle$ is a maximal
ideal.

\item Let $m$ be a maximal ideal of $R$ containing $\langle p^n\rangle$. Then
$p^n\in m$, which implies $p\in m$ since $m$ is prime. Thus $\langle
p\rangle \subset m$, which implies $m=\langle p\rangle$ since $\langle
p\rangle$ is maximal by point $1$. This shows that $R/\langle
p^n\rangle $ has a unique maximal ideal, namely $\langle p\rangle
/\langle p^n\rangle$.

\item Since $R$ is B\'ezout and $\langle p^n\rangle $ is
finitely-generated, the quotient $R/\langle p^n\rangle$ is a B\'ezout
ring (which has divisors of zero when $n\geq 2$). By point 2. above,
$R/\langle p^n\rangle$ is also a quasi-local ring.

\item Follows from \cite[Lemma 1.3 (b)]{FS} since $R$ is a valuation
ring. \qed
\end{enumerate}
\end{proof}

\

\noindent Recall that an object of an additive category is called {\em
  indecomposable} if it is not isomorphic with a direct sum of two
non-zero objects.  

\

\begin{Proposition}
\label{prop:local}
Let $e_v$ be a primary factorization of $W$ with prime locus $p$,
order $n$ and size $i$. Then $e_v$ is an indecomposable object of
$\hmf(R,W)$ whose endomorphism ring $\End_{\hmf(R,W)}(e_v)$ is a
quasi-local ring isomorphic with $A_{\min(i,n-i)}(p)$.
\end{Proposition}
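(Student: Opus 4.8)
The plan is to read off $\End_{\hmf(R,W)}(e_v)$ from the computations already in hand and then let the ring structure do the rest. Set $m\eqdef\min(i,n-i)$. As recorded just before the statement, $v=p^i$ and $u=W/v=p^{n-i}W_1$ with $p\nmid W_1$, so a gcd of $u$ and $v$ is $p^m$; since $R$ is a B\'ezout domain this gives $\langle u,v\rangle=\langle p^m\rangle$. Corollary \ref{cor:EndHMF}, part 1, then furnishes an isomorphism of $R$-algebras
\be
\End_{\hmf(R,W)}(e_v)\simeq R/\langle u,v\rangle=R/\langle p^m\rangle=A_m(p)~~.
\ee
This already settles the claim about the endomorphism ring, modulo the statement that $A_m(p)$ is quasi-local.

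First I would dispose of the boundary sizes $i\in\{0,n\}$, for which $m=0$ (this also covers the case $n=1$). Here $(u,v)=(1)$, so $e_v$ is a zero object of $\hmf(R,W)$ by Corollary \ref{cor:zero}; such an object is indecomposable, since a direct summand of a zero object is zero, and one has $\End_{\hmf(R,W)}(e_v)=0=A_0(p)$. From now on I assume $m\geq 1$, so $A_m(p)$ is a non-zero ring: it is quasi-local by Lemma \ref{lemma:prime}, part 3, when $m\geq 2$, and equals the field $\bk_p$ by Lemma \ref{lemma:prime}, part 1, when $m=1$. In all cases $A_m(p)$ is a non-zero commutative quasi-local ring, with maximal ideal $\bm_m(p)$.

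It then remains to derive indecomposability. A non-zero commutative quasi-local ring $A$ has no idempotents other than $0$ and $1$: given $e=e^2$ one has $e(1-e)=0$, and exactly one of $e$, $1-e$ lies outside the maximal ideal, hence is a unit, forcing $e\in\{0,1\}$. Transporting this through the $R$-algebra isomorphism above, $\End_{\hmf(R,W)}(e_v)$ too has only the trivial idempotents. If $e_v\simeq X_1\oplus X_2$ in the additive category $\hmf(R,W)$ with $X_1,X_2$ both non-zero, then composing the biproduct projection onto $X_1$ with the inclusion of $X_1$ produces an idempotent endomorphism of $e_v$ that is neither $0$ (as $X_1\neq 0$) nor the identity (as $X_2\neq 0$), a contradiction. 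Hence $e_v$ is indecomposable.

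I do not anticipate a real obstacle: the argument is a direct assembly of Corollary \ref{cor:EndHMF}, Lemma \ref{lemma:prime} and the standard principle that an object with local endomorphism ring is indecomposable. The only points needing care are the gcd bookkeeping $\langle u,v\rangle=\langle p^m\rangle$ (immediate from the B\'ezout property together with the paragraph preceding the statement) and the separate handling of the degenerate sizes $i=0$ and $i=n$, where $e_v$ collapses to a zero object.
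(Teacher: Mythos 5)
Your proof is correct and follows essentially the same route as the paper: identify $\End_{\hmf(R,W)}(e_v)\simeq R/\langle u,v\rangle=A_{\min(i,n-i)}(p)$ via Corollary \ref{cor:EndHMF}, invoke Lemma \ref{lemma:prime} for quasi-locality, and deduce indecomposability from the absence of nontrivial idempotents. Your explicit treatment of the boundary sizes $i\in\{0,n\}$ (where the factorization is a zero object and the endomorphism ring degenerates) is a welcome refinement that the paper's one-line proof glosses over.
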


\begin{proof}
We have $\End_{\hmf(R,W)}(e_v)=R/\langle u,v\rangle =R/\langle p^{\min(i,n-i)}\rangle$ by
Corollary \ref{cor:EndHMF}. This ring is quasi-local by Lemma
\ref{lemma:prime}. Since quasi-local rings have no nontrivial
idempotents, it follows that $e_v$ is an indecomposable object of
$\hmf(R,W)$.  \qed
\end{proof}

\

\begin{Lemma}
\label{lemma:mult}
Let $v_1$ and $v_2$ be two divisors of $W$ which are
mutually coprime. Then $\Hom_{\hmf(R,W)}(e_{v_1},
e_{v_2})=0$.
\end{Lemma}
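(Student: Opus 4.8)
The plan is to deduce the vanishing directly from the description of the even morphism module obtained in Proposition \ref{prop:HomHMF}. Set $e_1\eqdef e_{v_1}$, $e_2\eqdef e_{v_2}$ and $u_i\eqdef W/v_i$ for $i=1,2$. Since $\hmf(R,W)=\HMF^\0(R,W)$, we have $\Hom_{\hmf(R,W)}(e_{v_1},e_{v_2})=\Hom^\0_{\HMF(R,W)}(e_1,e_2)$, and Proposition \ref{prop:HomHMF} identifies this with a cyclically presented cyclic $R$-module whose annihilator is the principal ideal $\alpha_W(v_1,v_2)=\langle v_1,u_1,v_2,u_2\rangle=\langle s\rangle$. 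Hence $\Hom_{\hmf(R,W)}(e_{v_1},e_{v_2})\simeq R/\langle v_1,u_1,v_2,u_2\rangle$ as $R$-modules, and it suffices to prove that $\langle v_1,u_1,v_2,u_2\rangle=R$.

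For this I would use that $R$ is a B\'ezout domain, so that $\langle v_1,v_2\rangle=\langle (v_1,v_2)\rangle$; the coprimality hypothesis $(v_1,v_2)=(1)$ therefore says exactly that $\langle v_1,v_2\rangle=R$. Since $\langle v_1,v_2\rangle\subseteq\langle v_1,u_1,v_2,u_2\rangle$, we get at once $\langle v_1,u_1,v_2,u_2\rangle=R$. Equivalently, the element $s\in(v_1,u_1,v_2,u_2)$ appearing in Proposition \ref{prop:HomHMF} is a divisor of any element of $(v_1,v_2)=(1)$ and is thus a unit, so that $\alpha_W(v_1,v_2)=\langle s\rangle=R$. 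Consequently $\Hom_{\hmf(R,W)}(e_{v_1},e_{v_2})\simeq R/R=0$, which is the assertion.

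There is no real obstacle here: once Proposition \ref{prop:HomHMF} is available the argument is immediate, the only substantive point being the observation that coprimality of $v_1$ and $v_2$ forces the (a priori larger) gcd $(v_1,u_1,v_2,u_2)$ to be trivial. One could also package this remark by invoking Proposition \ref{prop:isomW1} to absorb the coprime complementary factor of $W$ and then appeal to the support criterion, but the direct computation above is the cleanest route.
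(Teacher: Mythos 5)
Your argument is correct and coincides with the paper's own proof: both reduce the statement to Proposition \ref{prop:HomHMF} and observe that $(v_1,v_2)=(1)$ forces $\langle v_1,u_1,v_2,u_2\rangle=R$, hence the cyclic module $R/\alpha_W(v_1,v_2)$ vanishes. No further comment is needed.
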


\begin{proof}
Let $u_i:=W/v_i$. Then $(v_1,v_2,u_1,u_2)=(1)$ since $(v_1,v_2)=(1)$. Thus  $\langle v_1,v_2,u_1,u_2\rangle =R$ and 
the statement follows from Proposition \ref{prop:HomHMF}. \qed
\end{proof}

\

\begin{Proposition}
\label{prop:primary_shift}
Let $p$ be a prime divisor of $W$ of order $n$ and $i\in \{1,\ldots, n\}$. Then: 
\be
\Sigma e_{p^i}\simeq_{\hmf(R,W)} e_{p^{n-i}}~~.
\ee
\end{Proposition}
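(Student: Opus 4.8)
The plan is to chain together three facts already established: the formula $\Sigma e_v = e_{-u}$ for the suspension of an elementary factorization (Subsection \ref{subsec:elem}), the statement that similar elementary factorizations are isomorphic in $\hef(R,W)$ (Corollary \ref{prop:similar}), and Proposition \ref{prop:isomW1}. Concretely, write $W = p^n W_1$ with $p \nmid W_1$ (as in the discussion preceding Lemma \ref{lemma:prime}), so that $p^n$ and $W_1$ are coprime. For $i \in \{1,\ldots,n\}$ the element $p^i$ is a divisor of $W$, and $u \eqdef W/p^i = p^{n-i} W_1$.

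First I would apply the suspension formula to get $\Sigma e_{p^i} = e_{-u} = e_{-p^{n-i} W_1}$. Since $-1 \in U(R)$, the factorizations $e_{-p^{n-i}W_1}$ and $e_{p^{n-i}W_1}$ are similar, hence isomorphic in $\hef(R,W)$ by Corollary \ref{prop:similar}; thus $\Sigma e_{p^i} \simeq_{\hmf(R,W)} e_{p^{n-i}W_1}$. Finally, applying Proposition \ref{prop:isomW1} to the coprime factorization $W = p^n \cdot W_1$ with the divisor $p^{n-i}$ of $p^n$ gives $e_{W_1 p^{n-i}} \simeq_{\hmf(R,W)} e_{p^{n-i}}$. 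Composing the two isomorphisms yields $\Sigma e_{p^i} \simeq_{\hmf(R,W)} e_{p^{n-i}}$.

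I do not anticipate a genuine obstacle: the argument is a short bookkeeping chain and every input is in place. The only points deserving a moment's care are checking that the hypotheses of Proposition \ref{prop:isomW1} hold — namely that $p^{n-i}$ really is a divisor of $p^n$ (which holds since $0 \le n-i \le n$) and that $W = p^n \cdot W_1$ is a factorization into coprime factors (this is where primality of $p$ and the definition of $n$ as the order of $p$ in $W$ enter). As an alternative, one could bypass Proposition \ref{prop:isomW1} and instead verify the criterion of Proposition \ref{prop:EFcrit1} directly by exhibiting the quadruple $(a',b,c,d')$ attached to the pair $(e_{-u}, e_{p^{n-i}})$, but routing through Proposition \ref{prop:isomW1} together with the suspension formula is cleaner and essentially computation-free.
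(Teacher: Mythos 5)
Your proof is correct and follows essentially the same route as the paper's: suspension gives $\Sigma e_{p^i}=e_{-u}$, similarity handles the sign, and Proposition \ref{prop:isomW1} applied to the coprime factorization $W=p^n\cdot(W/p^n)$ strips off the coprime factor. The only difference is notational (the paper calls $W/p^n$ by the name $W_2$).
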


\begin{proof}
Let $W_1\eqdef p^n$, $W_2\eqdef W/p^n$ and $v\eqdef p^i$, $u\eqdef
W/v=p^{n-i}W_2$.  We have $\Sigma e_{p^i}=\Sigma
e_v=e_{-u}\simeq_{\hmf(R,W)} e_{u}$. Since $p^{n-i}$ is a divisor of
$W_1$ and $(W_1,W_2)=1$, Proposition \ref{prop:isomW1} gives
$e_u=e_{p^{n-i} W_2}\simeq_{\hmf(R,W)} e_{p^{n-i}}$. \qed
\end{proof}

\section{The additive category $\mhef(R,W)$ for a B\'ezout domain and critically-finite $W$}

\noindent Let $R$ be a B\'ezout domain and $W$ be a critically-finite element of $R$. 

\

\begin{Proposition}
\label{prop:decomp}
Let $e_v$ be an elementary factorization of $W$ over $R$ such that
$v=\prod_{i=1}^n v_i$, where $v_i\in R$ are mutually coprime divisors
of $W$.  Then there exists a natural isomorphism in $\hmf(R,W)$:
\be
e_v\simeq_{\hmf(R,W)}\bigoplus_{i=1}^n e_{v_i}~~
\ee
In particular, an elementary factorization $e_v$ for which $v$ is
finitely-factorizable divisor of $W$ is isomorphic in $\hmf(R,W)$ with
a direct sum of primary factorizations.
\end{Proposition}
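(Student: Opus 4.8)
The statement is essentially a multiplicativity (or "Chinese remainder") phenomenon for elementary factorizations, so the natural strategy is induction on $n$, with the whole content residing in the case $n=2$. Thus the plan is: first reduce to $n=2$ by a routine induction (if $v=v_1\cdots v_n$ with the $v_i$ pairwise coprime, set $v'=v_2\cdots v_n$; then $v_1$ and $v'$ are coprime, and by the $n=2$ case $e_v\simeq e_{v_1}\oplus e_{v'}$, while by the induction hypothesis $e_{v'}\simeq\bigoplus_{i=2}^n e_{v_i}$). The final sentence about finitely-factorizable $v$ then follows because, in a B\'ezout (hence GCD) domain, a finitely-factorizable element has a unique prime factorization up to association, so grouping the prime powers belonging to distinct primes writes $v$ as a product of pairwise coprime primary divisors.

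For the case $n=2$ I would argue as follows. Write $W=uv$ with $v=v_1v_2$, $(v_1,v_2)=(1)$, and set $u_1\eqdef W/v_1$, $u_2\eqdef W/v_2$, so that $u=u_1u_2/W$ — more usefully, $u_1=v_2 u$ and $u_2=v_1 u$. The candidate isomorphism $e_v\to e_{v_1}\oplus e_{v_2}$ in $\hmf(R,W)$ is the even morphism whose matrix entries are built from the B\'ezout coefficients: since $(v_1,v_2)=(1)$, pick $\lambda,\mu\in R$ with $\lambda v_1+\mu v_2=1$. On the even part one uses the column vector $(v_2,v_1)^{\mathrm t}$ (mapping $R^{1}\to R^{1}\oplus R^{1}$) and on the odd part the row vector $(\mu,\lambda)$ (or the analogous combination dictated by the cocycle condition), arranged so that $\fd$ of the resulting map vanishes — this is exactly the shape of the generators $\epsilon_\0,\epsilon_\1$ from Proposition \ref{prop:HomZMF}. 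One then exhibits an explicit inverse up to homotopy, i.e. an even cocycle $e_{v_1}\oplus e_{v_2}\to e_v$ whose two composites with the first map differ from the identity by a coboundary $\fd_{e_v,e_v}(g)$ resp. $\fd_{e_{v_1}\oplus e_{v_2},\,\cdot}(h)$; the relevant $g,h$ are again produced from the B\'ezout identity, and the verification is the same kind of bookkeeping as in Lemma \ref{lemma:sys} and Proposition \ref{prop:EFcrit1}.

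There is, however, a slicker route that avoids writing the $2\times 2$ block matrices by hand, and I would present that one: apply Proposition \ref{prop:HomHMF} and Corollary \ref{cor:EndHMF} to compute all $\Hom$'s and $\End$'s among $e_v$, $e_{v_1}$, $e_{v_2}$ in $\hmf(R,W)$, and show that the obvious cocycles assemble into mutually inverse morphisms. Concretely, $\End_{\hmf(R,W)}(e_v)\simeq R/\langle u,v\rangle$ and $(u,v)=(u_1,u_2)\cdot\text{(stuff)}$; since $(v_1,v_2)=(1)$ one checks $\langle u_1,v_1\rangle+\langle u_2,v_2\rangle=R$ and $\langle u_1,v_1\rangle\cap\langle u_2,v_2\rangle=\langle u,v\rangle$, so CRT gives $R/\langle u,v\rangle\simeq R/\langle u_1,v_1\rangle\times R/\langle u_2,v_2\rangle$, i.e. $\End(e_v)\simeq\End(e_{v_1})\times\End(e_{v_2})$, and moreover $\Hom(e_{v_i},e_{v_j})=0$ for $i\neq j$ by Lemma \ref{lemma:mult} since $(v_1,v_2)=(1)$. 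Hence $\End_{\hmf(R,W)}(e_v)\simeq\End_{\hmf(R,W)}\!\big(e_{v_1}\oplus e_{v_2}\big)$ as rings. This ring isomorphism is realized by a genuine morphism $e_v\to e_{v_1}\oplus e_{v_2}$ (the one described above, which one checks is sent to the identity idempotent decomposition), and a morphism in an additive category which becomes an isomorphism of endomorphism rings compatibly with a splitting is itself an isomorphism; alternatively, one invokes Proposition \ref{prop:EFcrit1}, part 1, directly with the parametrization \eqref{param} applied to the pair $(e_v, e_{v_1})$ viewed after the reduction $W=W_1W_2$, iterating Proposition \ref{prop:isomW1}. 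The main obstacle is bookkeeping: making sure the cocycle one writes down really is annihilated by $\fd$ and really induces the claimed ring isomorphism rather than some non-unit endomorphism; once the gcd/lcm identities $(u_1,v_1)$, $(u_2,v_2)$, $(u,v)$ are pinned down using coprimality of $v_1,v_2$, everything else is the routine divisibility arithmetic already exploited repeatedly in Section 2. Naturality is immediate from the explicit formula for the isomorphism in terms of the B\'ezout data.
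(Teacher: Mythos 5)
Your first, explicit route is correct and genuinely different from the paper's proof. To make it concrete for $n=2$: choosing $\lambda,\mu$ with $\lambda v_1+\mu v_2=1$, the even cocycle $f\colon e_v\to e_{v_1}\oplus e_{v_2}$ with components $\diag(1,v_2)$ and $\diag(1,v_1)$ (the generators from Proposition \ref{prop:HomZMF}) and the cocycle $g\colon e_{v_1}\oplus e_{v_2}\to e_v$ with components $\mu\,\diag(v_2,1)$ and $\lambda\,\diag(v_1,1)$ satisfy $g\circ f=(\mu v_2+\lambda v_1)\,\id=\id$ on the nose, while $f\circ g$ equals the identity in $\hmf(R,W)$ because its off-diagonal blocks live in $\Hom_{\hmf(R,W)}(e_{v_1},e_{v_2})=\Hom_{\hmf(R,W)}(e_{v_2},e_{v_1})=0$ (Lemma \ref{lemma:mult}) and its diagonal blocks are $\mu v_2\equiv 1 \bmod \langle v_1\rangle$ and $\lambda v_1\equiv 1\bmod \langle v_2\rangle$; induction on $n$ then finishes. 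The paper instead handles all $n$ at once: it uses Proposition \ref{prop:HomHMF}, the multiplicativity of $\alpha_W$ (Proposition \ref{prop:mult}) and the Chinese remainder theorem to produce isomorphisms $\Hom(e_d,e_v)\simeq\Hom(e_d,\oplus_i e_{v_i})$ for \emph{every} divisor $d$ of $W$, and then concludes by the Yoneda lemma in $\mhef(R,W)$. Your route buys an explicit homotopy inverse; the paper's buys naturality and avoids block-matrix bookkeeping.

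The ``slicker route'' you say you would actually present, however, has a genuine gap. An abstract ring isomorphism $\End_{\hmf(R,W)}(e_v)\simeq\End_{\hmf(R,W)}(e_{v_1}\oplus e_{v_2})$ does not imply that the two objects are isomorphic, and the principle you invoke (``a morphism which becomes an isomorphism of endomorphism rings compatibly with a splitting is itself an isomorphism'') is not a valid general statement: what one actually needs is that a single morphism $e_v\to e_{v_1}\oplus e_{v_2}$ induces isomorphisms on $\Hom(c,-)$ for enough test objects $c$, which is precisely the representable-functor input the paper supplies and which a comparison of endomorphism rings alone does not. The fallback of invoking Proposition \ref{prop:EFcrit1} directly also does not apply as stated, since that proposition compares two \emph{elementary} factorizations, not an elementary factorization with a direct sum. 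To close the gap you must either upgrade the endomorphism-ring comparison to the full Yoneda argument of the paper, or simply carry out your first, explicit construction, which works.
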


\begin{proof}
Let $d$ be any divisor of $W$.  By Proposition \ref{prop:HomHMF}, we
have isomorphisms of $R$-modules: 
\be
\Hom_{\hmf(R,W)}(e_d,e_{v_i})\simeq_R R/\alpha_W(d,v_i)~~\mathrm{and}~~\Hom_{\hmf(R,W)}(e_d, e_v)\simeq_R R/\alpha_W(d,v)~~.
\ee
Since $v_i$ are mutually coprime, Proposition \ref{prop:mult} gives
$\alpha_W(d,v)=\prod_{i=1}^n \alpha_W(d,v_i)$, where $\alpha_W(d,v_i)$ are
principal ideals generated by mutually coprime elements. The Chinese reminder theorem gives an 
isomorphism of $R$-modules:
\be
R/\alpha_W(d,v) \simeq_R \bigoplus_{i=1}^n \left[R/\alpha_W(d,v_i)\right] 
\ee
Combining the above, we conclude that there exist natural isomorphisms 
of $R$-modules:  
\ben
\label{varphi_d}
\varphi_d:\Hom_{\hmf(R,W)}(e_d,e_v)\stackrel{\sim}{\longrightarrow} \Hom_{\hmf(R,W)}(e_d, \bigoplus_{i=1}^n e_{v_i})~~,
\een
where we used the fact that $\Hom_{\hmf(R,W)}(e_d,\bigoplus_{i=1}^n
e_{v_i})\simeq_R \bigoplus_{i=1}^n\Hom_{\hmf(R,W)}(e_d,
e_{v_i})$. This implies that the functors $\Hom_{\mhef(R,W)}(-,e_v)$
and $\Hom_{\mhef(R,W)}(-,\oplus_{i=1}^n e_{v_i})$ are isomorphic. By
the Yoneda lemma, we conclude that there exists a natural ismorphism
$e_v\simeq_{\mhef(R,W)} \bigoplus_{i=1}^n e_{v_i}$~. \qed
\end{proof}

\

\noindent Recall that a {\em Krull-Schmidt category} is an additive
category for which every object decomposes into a finite direct
sum of objects having quasi-local endomorphism rings.

\

\begin{Theorem}
\label{thm:Krull} The additive category
$\mhef(R,W)$ is Krull-Schmidt and its non-zero indecomposable objects
are the non-trivial primary matrix factorizations of $W$. In
particular, $\mhef(R,W)$ is additively generated by $\hef_0(R,W)$.
\end{Theorem}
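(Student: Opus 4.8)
The plan is to show that every object of $\mhef(R,W)$ is isomorphic to a finite direct sum of non-trivial primary elementary factorizations of $W$, and then to read off the Krull--Schmidt property and the classification of indecomposables directly from Propositions \ref{prop:decomp} and \ref{prop:local}. First I would record that $\mhef(R,W)$ is a full additive subcategory of $\hmf(R,W)$: it contains the zero object $e_1$ and is closed under finite biproducts, and being full it inherits from $\hmf(R,W)$ the computation of biproducts, zero objects, endomorphism rings and the notion of (in)decomposability. Hence it suffices to work inside $\hmf(R,W)$, and since an object of $\mhef(R,W)$ is a finite direct sum $\bigoplus_j e_{v_j}$ with each $v_j\mid W$, everything reduces to decomposing a single $e_v$ with $v\mid W$.

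Write $W=W_0W_c$ with $W_c=p_1^{n_1}\cdots p_N^{n_N}$ as in Definition \ref{def:crit-fin}, so $W_0$ is non-critical (no non-unit $d$ satisfies $d^2\mid W_0$) and coprime to $W_c$. Given a divisor $v$ of $W$, let $k_i\in\{0,\ldots,n_i\}$ be the exact exponent of $p_i$ in $v$ and set $v_0\eqdef v/(p_1^{k_1}\cdots p_N^{k_N})$. Then $v_0,p_1^{k_1},\ldots,p_N^{k_N}$ are pairwise coprime divisors of $W$; moreover $v_0$ is coprime to $W_c$, and since $v_0\mid W_0W_c$ this forces $v_0\mid W_0$. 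Proposition \ref{prop:decomp} now yields $e_v\simeq_{\hmf(R,W)}e_{v_0}\oplus\bigoplus_{i=1}^N e_{p_i^{k_i}}$. Next I would discard the vanishing summands: any common divisor $d$ of $v_0$ and $W_0/v_0$ satisfies $d^2\mid W_0$, hence is a unit, so $\gcd(v_0,W_0/v_0)=(1)$; combined with $\gcd(v_0,W_c)=(1)$ this gives $\gcd(v_0,W/v_0)=(1)$, and therefore $e_{v_0}$ is a zero object by Corollary \ref{cor:zero}. Likewise $\gcd(p_i^{k_i},W/p_i^{k_i})=(p_i^{\min(k_i,\,n_i-k_i)})$, so $e_{p_i^{k_i}}$ is a zero object exactly when $k_i\in\{0,n_i\}$ and is otherwise a non-trivial primary factorization with prime locus $p_i$, order $n_i$ and size $k_i$. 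Dropping the zero summands, every object of $\mhef(R,W)$ is isomorphic to a finite direct sum of non-trivial primary elementary factorizations $e_{p_i^{k}}$ with $1\le k\le n_i-1$; since these all lie in $\hef_0(R,W)$, this already proves the last assertion of the theorem.

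It then remains to invoke Proposition \ref{prop:local}: each such $e_{p_i^{k}}$ is indecomposable and has quasi-local endomorphism ring $\End_{\mhef(R,W)}(e_{p_i^{k}})\cong A_{\min(k,\,n_i-k)}(p_i)$ (the computation being the same in $\hmf(R,W)$ and in $\mhef(R,W)$ by fullness). Together with the previous paragraph this shows that every object of $\mhef(R,W)$ decomposes into a finite direct sum of objects with quasi-local endomorphism rings, i.e.\ $\mhef(R,W)$ is Krull--Schmidt. For the indecomposables: the non-trivial primary factorizations are indecomposable by Proposition \ref{prop:local}; conversely, if $X$ is a non-zero indecomposable object and $X\simeq\bigoplus_{k=1}^m Z_k$ is a decomposition into non-zero primary factorizations $Z_k$, then $m\ge1$, and $m\ge2$ would exhibit $X$ as a biproduct of the two non-zero objects $Z_1$ and $\bigoplus_{k\ge2}Z_k$, contradicting indecomposability; hence $X$ is isomorphic to a single non-trivial primary factorization.

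I expect the only genuine work to be in the second paragraph: splitting off the critical part of an arbitrary divisor of $W$ as a product of pairwise coprime primary divisors, and checking via Corollary \ref{cor:zero} that the leftover factor $e_{v_0}$ is a zero object (this is where critical-finiteness of $W$ and non-criticality of $W_0$ are used). Everything else is formal, resting on Propositions \ref{prop:decomp} and \ref{prop:local} together with the observation that $\mhef(R,W)$ is a full subcategory of $\hmf(R,W)$, which lets biproducts, zero objects, endomorphism rings and indecomposability be transported freely between the two.
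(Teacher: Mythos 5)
Your proof is correct and follows essentially the same route as the paper's: decompose an arbitrary divisor $v$ of $W$ into its $W_0$-part and pairwise coprime prime powers, apply Proposition \ref{prop:decomp}, use non-criticality of $W_0$ together with Corollary \ref{cor:zero} to kill $e_{v_0}$ and the summands with $k_i\in\{0,n_i\}$, and conclude via the quasi-local endomorphism rings from Proposition \ref{prop:local}. The only additions are the explicit verification that indecomposables must be single primary summands and the remarks on fullness, both of which the paper leaves implicit.
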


\

\begin{proof}
Suppose that $W$ has the decomposition \eqref{Wcritform}.
Any elementary factorization $e_v$ of $W$ corresponds to a divisor $v$
of $W$, which must have the form $v=v_0 p_{s_1}^{l_1}\ldots
p_{s_m}^{l_m}$, where $1\leq s_1<\ldots < s_m\leq N$ and $1\leq
l_i\leq n_{s_i}$, while $v_0$ is a divisor of $W_0$.  Applying
Proposition \ref{prop:decomp} with $v_i=p_{s_i}^{l_i}$ for $i\in \{1,\ldots, m\}$,
we find $e_v\simeq_{\hmf(R,W)} \oplus_{i=0}^m e_{v_i}$, where we defined
$u_0=W/v_0=\frac{W_0}{v_0} \, p_1^{n_1}\ldots p_N^{n_N}$ and $u_i=W/v_i=W_0
p_1^{n_1}\ldots p_{s_i}^{n_{s_i}-l_i} \ldots p_N^{n_N}$ for
$i\in \{1,\ldots, m\}$. We have $(u_0,v_0)=(u_0,W_0/u_0)$. Since $W_0=u_0v_0$,
it follows that $(u_0,v_0)^2|(W_0)$.  Since $W_0$ has no critical
divisors, we must have $(u_0,v_0)=(1)$ and hence
$e_{v_0}\simeq_{\hmf(R,W)} 0$. For $i\in \{1,\ldots, m\}$, we have
$(u_i,v_i)=p_{s_i}^{\mu_i}$, where $\mu_i\eqdef
\min(l_i,n_{s_i}-l_i)$. Thus $e_{v_i}$ is primary of order $\mu_{s_i}$
when $\mu_{s_i}\geq 1$ and trivial when $\mu_i=0$.  This gives a
direct sum decomposition:
\be
e_v\simeq_{\hmf(R,W)} e_{v_0}\oplus_{i\in \{1,\ldots, m| l_i\leq n_{s_i}-1\}}  e_{p_{s_i}^{l_i}}\simeq \oplus_{i\in \{1,\ldots, m| l_i<n_{s_i}\}}  e_{p_{s_i}^{l_i}}~~,
\ee
where all matrix factorizations in the direct sum are primary except
for $e_{v_0}$. If $l_i=n_{s_i}$ for all $i\in \{1,\ldots, m\}$, then
the sum in the right hand side is the zero object of $\hmf(R,W)$. We
conclude that any elementary matrix factorization decomposes into a
finite direct sum of primary matrix factorizations.  On the other
hand, any matrix factorization of $W$ decomposes as a finite direct
sum of elementary factorizations and hence also as a finite direct sum
of primary factorizations whose prime supports are the prime divisors
of $W$. By Proposition \ref{prop:local}, every primary matrix
factorization has a quasi-local endomorphism ring.  \qed
\end{proof}

\

\begin{Theorem}
\label{thm:decomp}
Suppose that $R$ is a B\'ezout domain and $W$ has the decomposition \eqref{Wcritform}. 
Then there exists an equivalence of categories:
\be
\mhef(R,W) \simeq \vee_{i=1}^N \mhef(R,p_i^{n_i})~~,
\ee
where $\vee$ denotes the coproduct of additive categories. 
\end{Theorem}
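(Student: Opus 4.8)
The plan is to reduce everything to primary factorizations via Theorem~\ref{thm:Krull} and then read off the coproduct decomposition from the one already available for the subcategories of primary factorizations. First I would pin down the indecomposables. By Theorem~\ref{thm:Krull}, every object of $\mhef(R,W)$ is a finite direct sum of primary factorizations and the non-zero indecomposables are the non-trivial primary factorizations of $W$. For a prime divisor $p$ of $W$ of order $n$, the non-trivial primary factorizations with prime locus $p$ are the $e_{p^l}$ with $1\le l\le n-1$ (for $l=0,n$ one gets a zero object). The prime divisors of $W$ are the $p_i$ (of order $n_i\ge 2$) together with the prime divisors of $W_0$; since $W_0$ is square-free and coprime with $W_c$, a prime divisor $q$ of $W_0$ has order $1$ in $W$ and $q\nmid W/q$, so $(q,W/q)=(1)$ and $e_q$ is a zero object of $\hmf(R,W)$ by Corollary~\ref{cor:zero}. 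Hence, up to isomorphism, the non-zero indecomposable objects of $\mhef(R,W)$ are exactly the $e_{p_i^l}$ with $1\le i\le N$ and $1\le l\le n_i-1$, naturally partitioned into $N$ blocks by prime locus.

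Next I would produce the $N$ block subcategories and show there are no morphisms between distinct blocks. For each $i$, apply Proposition~\ref{prop:fact} with $W_1=p_i^{n_i}$ and $W_2=W/p_i^{n_i}$ (which are coprime); restricting its $\Z_2$-graded equivalence to even morphisms and then passing to finite direct sums inside $\hmf(R,W)$ yields a fully faithful additive functor $\iota_i\colon\mhef(R,p_i^{n_i})\to\mhef(R,W)$ which sends $e_{p_i^l}$ to $e_{p_i^l}$ and whose essential image is the full subcategory of $\mhef(R,W)$ additively generated by the $e_{p_i^l}$. On the other hand, for $i\ne j$ the divisors $p_i^l$ and $p_j^m$ are coprime, so Lemma~\ref{lemma:mult} gives $\Hom_{\hmf(R,W)}(e_{p_i^l},e_{p_j^m})=0$; consequently $\Hom_{\mhef(R,W)}(\iota_i X,\iota_j Y)=0$ for all $X\in\mhef(R,p_i^{n_i})$ and $Y\in\mhef(R,p_j^{n_j})$ with $i\ne j$, since these objects are finite direct sums of the corresponding primaries.

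Then I would assemble the coproduct functor and check its universal property. The $\iota_i$ induce an additive functor $F\colon\bigvee_{i=1}^N\mhef(R,p_i^{n_i})\to\mhef(R,W)$, where $\bigvee$ denotes the coproduct of additive categories. By the first step every object of $\mhef(R,W)$ is a finite direct sum of objects $e_{p_i^l}$ (the summands supported on $W_0$ being zero), hence lies in the essential image of $F$. For full faithfulness, write objects of the source as $X=\bigoplus_i\iota_i X_i$ and $Y=\bigoplus_i\iota_i Y_i$; then
\[
\Hom_{\mhef(R,W)}(FX,FY)=\bigoplus_{i,j}\Hom_{\mhef(R,W)}(\iota_i X_i,\iota_j Y_j)=\bigoplus_{i=1}^N\Hom_{\mhef(R,W)}(\iota_i X_i,\iota_i Y_i)=\bigoplus_{i=1}^N\Hom_{\mhef(R,p_i^{n_i})}(X_i,Y_i),
\]
using $\Hom$-orthogonality of distinct blocks in the middle equality and full faithfulness of each $\iota_i$ in the last; the right-hand side is precisely the morphism space in $\bigvee_{i=1}^N\mhef(R,p_i^{n_i})$. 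Thus $F$ is an equivalence, which is the assertion.

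The same conclusion can be reached more abstractly: starting from the reduced multiplicative partition $W=W_0\,p_1^{n_1}\cdots p_N^{n_N}$, Corollary~\ref{cor:coproduct} (whose equivalence is $\Z_2$-graded, being built from Proposition~\ref{prop:coproduct}) gives $\HEF_0(R,W)\simeq\HEF_0(R,W_0)\vee\bigvee_{i=1}^N\HEF_0(R,p_i^{n_i})$; restricting to even morphisms, passing to additive completions (which preserves coproducts, being a left adjoint, the coproduct of additive categories being the additive completion of the $R$-linear coproduct), using Theorem~\ref{thm:Krull} to identify these completions with the relevant $\mhef$'s, and discarding the factor coming from $W_0$ (whose primary factorizations are all zero objects, as in the first step) yields the result. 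The main obstacle I anticipate is purely bookkeeping: checking that Proposition~\ref{prop:fact} really does restrict to the stated fully faithful $\iota_i$ after taking even parts and direct sums, and handling the $W_0$-summands cleanly; once this is in place the argument is routine, with Theorem~\ref{thm:Krull} and Lemma~\ref{lemma:mult} carrying the actual content.
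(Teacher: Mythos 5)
Your proposal is correct and follows essentially the same route as the paper's proof, which likewise combines Theorem~\ref{thm:Krull} (to reduce to primary factorizations and dispose of the $W_0$-part), Proposition~\ref{prop:fact} (to identify each block $\mhef_{p_i^{n_i}}(R,W)$ with $\mhef(R,p_i^{n_i})$), and Lemma~\ref{lemma:mult} (for orthogonality of distinct blocks). You simply spell out the assembly of the coproduct functor and its full faithfulness in more detail than the paper does.
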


\

\begin{proof}
Theorem \ref{thm:Krull} and Proposition \ref{prop:decomp} imply that
$\mathbf{hef}(R,W)$ is additively generated by the additive
subcategories $\mathbf{hef}_{p_i^{n_i}}(R,W)\simeq \mhef(R,p_i^{n_i})$,
where we used Proposition \ref{prop:fact}.  These categories are
mutually orthogonal by Lemma \ref{lemma:mult}.
\qed
\end{proof}

\subsection{A conjecture}

\noindent Consider the inclusion functor:

\be
\iota: \mhef(R,W) \to \hmf(R,W)
\ee

\

\begin{Conjecture}
\label{conj:main}
The inclusion functor $\iota$ is an equivalence of $R$-linear categories.
\end{Conjecture}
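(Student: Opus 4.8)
We outline a possible approach to Conjecture~\ref{conj:main}. Since $\mhef(R,W)$ is by construction a full subcategory of $\hmf(R,W)$, the functor $\iota$ is automatically fully faithful, so the conjecture is equivalent to the assertion that \emph{every} finite--rank matrix factorization of $W$ is isomorphic in $\hmf(R,W)$ to a finite direct sum of elementary factorizations. We first record that this is immediate when $R$ is moreover an elementary divisor domain — the case relevant to the motivating example $R=\O(\Sigma)$: given $a=(R^{\rho|\rho},D)$ with $D=\left[\begin{array}{cc} 0 & v\\ u & 0\end{array}\right]$ and $uv=vu=W\,I_\rho$, put $u$ in Smith normal form $A^{-1}uB^{-1}=\diag(d_1,\dots,d_\rho)$ with $A,B\in\GL(\rho,R)$ and $d_1\mid\cdots\mid d_\rho$; then $BvA$ satisfies $\diag(d_i)\cdot(BvA)=W\,I_\rho$, which (working in a domain) forces $BvA=\diag(W/d_1,\dots,W/d_\rho)$ and in particular $d_i\mid W$ for all $i$. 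By Proposition~\ref{prop:strong_iso1}, $a$ is then strongly isomorphic, hence isomorphic in $\hmf(R,W)$, to $\bigoplus_{i=1}^{\rho}e_{d_i}$. The plan for a general B\'ezout domain $R$ (with $W$ critically--finite as in Definition~\ref{def:crit-fin}) is to reduce to this situation by passing to valuation localizations.

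The first step is a coproduct decomposition of $\hmf(R,W)$ along the coprime factorization $W=W_0\,p_1^{n_1}\cdots p_N^{n_N}$, extending Corollary~\ref{cor:coproduct} from $\HEF_0$ to the full homotopy category. If $W=W_1W_2$ with $(W_1,W_2)=(1)$ and $\alpha W_1+\beta W_2=1$, then for any matrix factorization $a=(M,D)$ of $W$ one has $W\,\id_a=\fd_{a,a}(g)$ for the odd endomorphism $g=\left[\begin{array}{cc} 0 & v\\ 0 & 0\end{array}\right]$; writing $\phi=W_1\id_a$ and $\psi=W_2\id_a$ we therefore get $\phi\psi=\psi\phi=0$ and $\alpha\phi+\beta\psi=\id_a$ in $\hmf(R,W)$, so that $\pi:=\alpha\phi$ is a central idempotent of $\End_{\hmf(R,W)}(a)$ with $\psi\pi=0=\phi(\id_a-\pi)$. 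Splitting $\pi$ yields $a\simeq\pi a\oplus(\id_a-\pi)a$ with $\pi a$ annihilated by $W_2$ and $(\id_a-\pi)a$ by $W_1$, and a block--matrix computation identifies these summands with the images of matrix factorizations of $W_1$, resp.\ $W_2$, under the functors $(u',v')\mapsto(u',W_2v')$, resp.\ $(u',v')\mapsto(W_1u',v')$. Combined with the vanishing $\Hom_{\hmf(R,W)}(a_1,a_2)=0$ whenever $a_1$ is supported on $W_1$ and $a_2$ on $W_2$ — which holds because that Hom--module is simultaneously $W_1$--torsion and $W_2$--torsion, hence killed by $\langle W_1,W_2\rangle=R$ — this produces an equivalence $\hmf(R,W)\simeq\hmf(R,W_0)\vee\hmf(R,p_1^{n_1})\vee\cdots\vee\hmf(R,p_N^{n_N})$ and reduces the conjecture to the two claims $\hmf(R,W_0)\simeq 0$ and $\hmf(R,p^n)\simeq\mhef(R,p^n)$ for each critical prime $p$ of order $n$.

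Both remaining claims are handled by localization. For a prime $p\mid W$ put $S_p=\{s\in R\mid(s,p)=(1)\}$; then $R_{S_p}=R_{\langle p\rangle}$ is a local B\'ezout domain, hence a valuation domain, hence an elementary divisor domain, and since $\langle p\rangle$ is maximal (Lemma~\ref{lemma:prime}) one checks that the only divisors of $p^n$ in $R_{S_p}$ are, up to units, the powers $1,p,\dots,p^n$. By the Smith--normal--form argument of the first paragraph, every matrix factorization of $p^n$ over $R_{S_p}$ is strongly isomorphic to a direct sum of the $e_{p^k}$ ($0\le k\le n$), so $\hmf(R_{S_p},p^n)=\mhef(R_{S_p},p^n)$; in particular $\hmf(R_{S_q},q)=0$ for any prime $q$, since $e_1$ and $e_q$ are zero objects there. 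On the other hand the Hom--modules of $\hmf(R,p^n)$ are annihilated by $p^n$ (Proposition~\ref{prop:HomHMF} and its extension to direct sums), so every element of $S_p$ acts invertibly on them; by Proposition~\ref{prop:locHom}, $\loc_{S_p}$ is therefore fully faithful on $\hmf(R,p^n)$ and also on $\mhef(R,p^n)$, and it is essentially surjective onto each because every $e_{p^k}$ over $R_{S_p}$ is the localization of the corresponding elementary factorization over $R$. Since $\loc_{S_p}$ commutes with the inclusions, $\iota$ is an equivalence $\mhef(R,p^n)\xrightarrow{\sim}\hmf(R,p^n)$. For the squarefree part $W_0$, localizing at $S_q$ for each prime $q\mid W_0$ shows $\loc_{S_q}(a)\simeq 0$ in $\hmf(R_{S_q},q)$ for every matrix factorization $a$ of $W_0$; since $\id_a$ is annihilated by $W_0$ and is $S_q$--torsion for each such $q$, while $\langle q_1,q_2\rangle=R$ for distinct prime factors of $W_0$, the annihilator of $\id_a$ in $R$ must be the unit ideal, so $\id_a=0$ and $\hmf(R,W_0)\simeq 0$ — at least when $W_0$ has only finitely many prime factors.

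Two points keep this outline short of a proof over an arbitrary B\'ezout domain, and are the reason the statement is posed as a conjecture. The main obstacle is the splitting of the central idempotent $\pi=\alpha W_1\id_a$ in the first step: this is automatic if $\hmf(R,W)$ is idempotent--complete, which holds when $R$ is an elementary divisor domain (there, as in the first paragraph, every matrix factorization already decomposes into elementary ones), but idempotent--completeness of $\hmf(R,W)$ is not known for a general B\'ezout domain. The second, lesser, difficulty is that when the squarefree part $W_0$ has infinitely many prime factors — as happens for $R=\O(\Sigma)$ — the $\gcd$ argument for $\hmf(R,W_0)\simeq 0$ must be upgraded to a compactness argument over $\Spec R$, patching the vanishing statements $\loc_{S_q}(a)\simeq 0$ over all principal prime ideals meeting the zero locus of $W_0$. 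Both difficulties vanish once $R$ is assumed to be an elementary divisor domain, under which hypothesis the conjecture is true (indeed with ``isomorphic in $\hmf(R,W)$'' strengthened to ``strongly isomorphic''); thus the conjecture asks, in essence, whether for the present purpose every B\'ezout domain behaves like an elementary divisor domain.
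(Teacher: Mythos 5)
You should begin by noting that the paper contains no proof of this statement: it is posed as an open conjecture, accompanied only by the remark that the companion paper \cite{edd} establishes it when $R$ is an elementary divisor domain, so that it would follow from Helmer's still-open conjecture that every B\'ezout domain is an elementary divisor domain. Your proposal correctly recognizes this, and the parts of your outline that are actually carried out look essentially sound. The Smith-normal-form computation correctly recovers the elementary divisor case (modulo the harmless indexing slip that, in the paper's convention $e_v=(R^{1|1},D_v)$ with $v$ in the upper-right corner, the summands you produce are $e_{W/d_i}$ rather than $e_{d_i}$). The observation that $W\id_a=\fd_{a,a}(g)$ for $g=\left[\begin{array}{cc} 0 & v\\ 0 & 0\end{array}\right]$ is correct and shows that all Hom-modules of $\hmf(R,W)$ are $W$-torsion, not just the elementary ones covered by Proposition \ref{prop:HomHMF}. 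Most notably, your localization argument for $W=p^n$ appears to close unconditionally: $R_{S_p}$ is a local B\'ezout domain, hence a valuation domain, hence an elementary divisor domain in which every divisor of $p^n$ is associated to some $p^k$; thus $\hmf(R_{S_p},p^n)=\mhef(R_{S_p},p^n)$, while $\loc_{S_p}$ is fully faithful (all Hom-modules being $p^n$-torsion, on which $S_p$ acts invertibly) and essentially surjective, and a fully faithful functor reflects isomorphisms. Written up carefully, this would prove the conjecture for prime-power potentials over an arbitrary B\'ezout domain, which goes beyond anything asserted in the paper and would be worth recording as a proposition rather than leaving inside a conjectural outline.

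The two obstructions you name are genuine, and they are exactly why the statement remains a conjecture. The reduction of a general critically-finite $W=W_0p_1^{n_1}\cdots p_N^{n_N}$ to its prime-power factors hinges on splitting the idempotent $\alpha W_1\id_a$ inside the category of \emph{finite-rank} factorizations; $\hmf(R,W)$ is not known to be idempotent-complete over a general B\'ezout domain (the B\"okstedt--Neeman mechanism is unavailable without countable coproducts), and this is precisely the point at which the elementary divisor hypothesis enters in \cite{edd}. For the square-free part, be aware that the difficulty is slightly broader than ``infinitely many prime factors'': a non-critical $W_0$ need not be a finite product of primes even when it has only finitely many prime divisors (it may carry a non-unit factor with no prime divisors at all), in which case the ideal generated by $W_0$ together with the elements of $S_{q_i}$ annihilating $\id_a$ need not be all of $R$, and the gcd argument for $\id_a=0$ does not close. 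So your proposal is a correct and honest assessment of the state of the problem, with a plausibly complete new special case, but it does not — and given the current state of the literature cannot be expected to — prove the conjecture.
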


\

\noindent Conjecture \ref{conj:main} and Theorem \ref{thm:Krull} imply: 

\

\begin{Conjecture}
\label{conj:KS}
Let $R$ be a B\'ezout domain and $W$ be a critically-finite element of
$R$. Then $\hmf(R,W)$ is a Krull-Schmidt category.
\end{Conjecture}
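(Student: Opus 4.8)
The plan is to obtain Conjecture~\ref{conj:KS} as a formal consequence of Conjecture~\ref{conj:main} and Theorem~\ref{thm:Krull}: granting that the inclusion $\iota$ is an equivalence, one simply transports the Krull-Schmidt property from $\mhef(R,W)$ to $\hmf(R,W)$.

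First I would record the structural facts needed. Both $\mhef(R,W)$ and $\hmf(R,W)$ are additive — the latter because it is triangulated, the former because it is, by construction, the full subcategory of $\hmf(R,W)$ generated under finite direct sums by the elementary factorizations, and it is closed in $\hmf(R,W)$ under those direct sums. Being a full subcategory inclusion, $\iota$ is automatically faithful and full, so $\End_{\mhef(R,W)}(b)=\End_{\hmf(R,W)}(\iota(b))$ as $R$-algebras for every object $b$, and a finite biproduct computed in $\mhef(R,W)$ is, on the nose, the same object computed in $\hmf(R,W)$. Consequently Conjecture~\ref{conj:main} is equivalent to the single assertion that $\iota$ is \emph{essentially surjective}, i.e.\ that every object of $\hmf(R,W)$ is isomorphic in $\hmf(R,W)$ to a finite direct sum of elementary factorizations of $W$.

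Now I would conclude. Let $a$ be any object of $\hmf(R,W)$. By essential surjectivity (Conjecture~\ref{conj:main}) there is an object $a'$ of $\mhef(R,W)$ with $a\simeq_{\hmf(R,W)}a'$. By Theorem~\ref{thm:Krull}, $\mhef(R,W)$ is Krull-Schmidt, so $a'\simeq\bigoplus_{j} b_j$ with each $\End_{\mhef(R,W)}(b_j)$ quasi-local (concretely, each $b_j$ a non-trivial primary matrix factorization, cf.\ Proposition~\ref{prop:local}). Applying $\iota$ and using that it preserves finite biproducts and endomorphism rings, we get $a\simeq\bigoplus_j \iota(b_j)$ in $\hmf(R,W)$ with each $\End_{\hmf(R,W)}(\iota(b_j))$ quasi-local. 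This exhibits $a$ as a finite direct sum of objects with quasi-local endomorphism rings, which is precisely the defining property of a Krull-Schmidt category; hence $\hmf(R,W)$ is Krull-Schmidt.

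The deduction just given is entirely routine. The real content — and the reason Conjecture~\ref{conj:KS} is stated as a conjecture — lies in Conjecture~\ref{conj:main}, namely that every finite-rank matrix factorization of a critically-finite $W$ splits, up to homotopy, as a direct sum of elementary ones. The classical route (reduce to a complete local base ring, then invoke Krull-Schmidt there) is blocked because $R$ is non-Noetherian; I would instead attempt a direct induction on the reduced rank $\rho(a)$, using the elementary-divisor structure of the B\'ezout domain $R$ to put the pair of matrices $(u,v)$ with $uv=vu=WI_\rho$ into a block form from which a reduced-rank-one (hence elementary) direct summand in $\hmf(R,W)$ can be peeled off. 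The main obstacle is to carry out such a normal-form reduction compatibly with the constraint $uv=vu=WI_\rho$ and with the passage to the homotopy category, uniformly over all critically-finite $W$; this is exactly the step the paper does not complete.
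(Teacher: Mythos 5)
Your deduction is correct and is exactly the paper's own route: the statement is left as a conjecture precisely because it is only established conditionally, as the formal consequence of Conjecture~\ref{conj:main} together with Theorem~\ref{thm:Krull}, and your transport of the Krull--Schmidt property along the (conjectured) equivalence $\iota$ is the same routine argument the paper has in mind. You also correctly identify that the genuine open content is Conjecture~\ref{conj:main} itself (known for elementary divisor domains, hence reducible to the open question of whether every B\'ezout domain is an elementary divisor domain), so nothing further is required here.
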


\

\noindent In \cite{edd}, we establish Conjecture \ref{conj:main} for
the case when $R$ is an elementary divisor domain. This shows that
Conjecture \ref{conj:main} is implied by the still unsolved conjecture
\cite{Helmer2} that any B\'ezout domain is an elementary divisor domain.
Some recent work on that conjecture can be found in \cite{Lorenzini}.

\section{Counting elementary factorizations}
\label{subsec:counting}

\noindent In this section, we give formulas for the number of
isomorphism classes of objects in the categories $\HEF(R,W)$ and
$\hef(R,W)$ when $W$ is critically-finite.

\subsection{Counting isomorphism classes in $\HEF(R,W)$}
Let $W=W_0 W_c$ be a critically-finite element of $R$, where $W_0\in R$ is
non-critical and $W_c=p_1^{n_1}\ldots p_r^{n_r}$ with
prime $p_j\in R$ and $n_j\geq 2$ (see Definition
\ref{def:crit-fin}). Let $\cHef(R,W)$ denote the set of isomorphism
classes of objects in the category $\HEF(R,W)$.  We are interested in
the cardinality:
\be
N(R,W)\eqdef |\cHef(R,W)|
\ee
of this set. In this subsection, we derive a formula for $N(R,W)$ as a
function of the orders $n_i$ of the prime elements $p_i$ arising in
the prime decomposition of $W_c$. The main result of this subsection
is Theorem \ref{thm:N} below.

\

\begin{Lemma}
\label{lemma:NeWc}
The cardinality $N(R,W)$ depends only on the critical part $W_c$ of $W$.
\end{Lemma}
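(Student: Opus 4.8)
The plan is to show that the full subcategory $\HEF_{W_c}(R,W)$ of $\HEF(R,W)$ — the one whose objects are the $e_v$ with $v\mid W_c$ — already contains a representative of every isomorphism class, and then to transport the count along the object‑bijective equivalence of Proposition \ref{prop:fact}. Since this reduces $\cHef(R,W)$ to $\cHef(R,W_c)$, the cardinality $N(R,W)$ will depend only on $W_c$.

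First I would recall from Subsection \ref{subsec:elem} that every object of $\HEF(R,W)$ is strongly isomorphic — hence isomorphic in $\HEF(R,W)$ — to some $e_v$ with $v$ a divisor of $W$. Because $W=W_0W_c$ with $(W_0,W_c)=(1)$ and $R$ is a GCD domain, such a $v$ factors (up to association) as $v=v_0v_c$ with $v_0\eqdef(v,W_0)$ a divisor of $W_0$ and $v_c\eqdef(v,W_c)$ a divisor of $W_c$.

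The key step is to prove $e_{v_0v_c}\simeq_{\HEF(R,W)}e_{v_c}$. I would apply Proposition \ref{prop:EFcrit1}(1) (or, equivalently, Proposition \ref{prop:EFcrit2}(1)) to the pair $e_1\eqdef e_{v_0v_c}$, $e_2\eqdef e_{v_c}$, for which $u_1=(W_0/v_0)(W_c/v_c)$ and $u_2=W_0(W_c/v_c)$. Computing the data of \eqref{param} one may take $a=v_c$, $b=v_0$, $c=1$, $d=(W_0/v_0)(W_c/v_c)$, with $s$ a gcd of $v_c$ and $W_c/v_c$, and $a',d'$ the corresponding cofactors of $s$ in $a$ and $d$. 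Coprimality of $W_0$ and $W_c$ makes all cross terms trivial, and non-criticality of $W_0$ forces $(v_0,W_0/v_0)=(1)$; together these give that $a',b,c,d'$ are pairwise coprime and that $(bc,s)=(v_0,s)=(1)$. Proposition \ref{prop:EFcrit1}(1) then yields $e_1\simeq_{\hef(R,W)}e_2$, in particular an isomorphism in $\HEF(R,W)$. Consequently the inclusion $\HEF_{W_c}(R,W)\hookrightarrow\HEF(R,W)$ is essentially surjective; being a full subcategory it is full and faithful, so it is an equivalence and induces a bijection on isomorphism classes.

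Finally, Proposition \ref{prop:fact} applied with $W_1=W_c$, $W_2=W_0$ (legitimate since $(W_c,W_0)=(1)$) provides an equivalence $\HEF(R,W_c)\simeq\HEF_{W_c}(R,W)$ that is bijective on objects, hence on isomorphism classes. Composing the two bijections gives $N(R,W)=|\cHef(R,W_c)|$, which manifestly depends only on $W_c$. I expect the only point requiring real care to be the bookkeeping in the third paragraph: verifying that each of the six coprimality conditions among $\{a',b,c,d'\}$ together with $(bc,s)=(1)$ collapses — via $(W_0,W_c)=(1)$ and the non-criticality of $W_0$ — to something trivially true; everything else is formal.
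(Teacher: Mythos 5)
Your proof is correct and takes essentially the same route as the paper: the paper's own argument also applies Proposition \ref{prop:EFcrit1} with $c=1$ and $b$ a coprime factor of $v$ to move the non-critical part of $v$ to the other side, and then concludes $N(R,W)=N(R,W_c)$. Your write-up is somewhat more complete, since you make explicit via Proposition \ref{prop:fact} why isomorphism classes of the $e_{v_c}$ with $v_c\mid W_c$ inside $\HEF(R,W)$ are counted by $\HEF(R,W_c)$, a point the paper leaves implicit.
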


\begin{proof}
Let $W=puv$ with a divisor $p$ coprime with both
$u$ and $v$. Taking $b=p$ and $c=1$ in Proposition \ref{prop:EFcrit1}
gives:
\begin{equation}
\left[\begin{array}{cc} 0 & pv\\ u &
    0\end{array}\right]\simeq \left[\begin{array}{cc} 0 & v\\ pu &
    0\end{array}\right] ~~.
\end{equation}
Together with Corollary \ref{prop:similar}, this implies $N(R, W)=N(R, W_c)$. 
\qed
\end{proof}

\

\noindent From now on, we will assume that $W\in R$ is fixed and is of the form:
\ben\
\label{f129} 
W=W_c=p_1^{n_1} p_2^{n_2} \ldots p_r^{n_r} ~~.
\een
To simplify notations, we will omit to indicate the dependence of some quantities on $W$.

\

\begin{Definition}
Let $T$ be a non-empty set. A map $f:\Ob \EF(R,W)\rightarrow
T$ is called an {\em elementary invariant} if $f(e_1)=f(e_2)$ for any
  $e_1,e_2\in \Ob\EF(R,W)$ such that $e_1\simeq_{\HEF(R,W)} e_2$.
An elementary invariant $f$ is called {\em complete} if the map
$\underline{f}:\cHef(R,W)\rightarrow T$ induced by $f$ is injective.
\end{Definition}

\

\noindent To determine $N(R,W)$, we will construct a complete
elementary invariant. Let:
\be
I\eqdef \{1,\dots,r\}~~,
\ee
where $r$ is the number of non-associated prime factors of $W$, up to association in divisibility. 

\paragraph{Similarity classes of elementary factorizations and normalized divisors of $W$}
Let $\HEF_\rsim(R,W)$ be the groupoid having the same objects as
$\HEF(R,W)$ and morphisms given by similarity transformations of
elementary factorizations and let $\cHef_\rsim(R,W)$ be its set of
isomorphism classes. Since the similarity class of an elementary
factorization $e_v$ is uniquely determined by the principal ideal
$\langle v \rangle$ generated by the divisor $v$ of $W$, the map
$e_v\rightarrow \langle v\rangle$ induces a bijection:
\be
\cHef_\rsim(R,W)\simeq \Div(W)~~,
\ee
where:
\be
\Div(W)\eqdef\big\{\langle v\rangle,\big|\, v|W\big\}=\big\{ \langle v\rangle\,\big|\, v\in R: W\in \langle v\rangle\big\}
\ee
is the set of principal ideals of $R$ containing $W$. Let: 
\ben
\Div_1(W)\eqdef \{\,\prod_{i\in I} p_i^{k_i} \, \big|\, \forall i\in I: k_i\in \{0,\ldots, n_i\}\,\}~~,
\een
be the set of {\em normalized divisors} of $W$. The map $v\rightarrow
\langle v\rangle$ induces a bijection between $\Div_1(W)$ and
$\Div(W)$. Indeed, any principal ideal of $R$ which contains $W$ has a
unique generator which belongs to $\Div_1(W)$, called its {\em
  normalized} generator.  Given any divisor $v$ of $W$, its {\em
  normalization} $v_0$ is the unique normalized divisor $v_0\in
\Div_1(W)$ such that $\langle v\rangle=\langle v_0\rangle$.  Given two
divisors $t,s$ of $W$, their {\em normalized greatest common divisor}
is the unique normalized divisor $(t,s)_1$ of $W$ which generates the
ideal $Rt+Rs$. The {\em set of exponent vectors} of $W$ is defined
through:
\be
A_W\eqdef \prod_{i=1}^r \{0,\ldots, n_i\}~~.
\ee
The map $A_W\ni \bk=(k_1,\ldots, k_r) \mapsto \prod_{i\in I}
p_i^{k_i}\in \Div_1(W)$ is bijective, with inverse
$\bmu:\Div_1(W)\rightarrow A_W$ given by:
\be 
\bmu(v)=(\ord_{p_1}(v),\ldots, \ord_{p_n}(v))=(k_1,\ldots,
k_n)~~\mathrm{for}~~v=\prod_{i\in I} p_i^{k_i}\in \Div_1(W)~~.  
\ee
Combining everything, we have natural
bijections:
\be
\cHef_\rsim(R,W)\simeq \Div(R,W)\simeq \Div_1(R,W)\simeq A_W~~.
\ee

\begin{remark}\label{rem:div_1} In Proposition \ref{prop:EFcrit2}, the
quantity $s_1$ was an arbitrary element of the class $(u_1,v_1)$ for
$e_{v_1}$. For a fixed critically-finite $W$, we have a canonical
choice for this quantity, namely the normalized gcd of $u_1$ and
$v_1$. Thus we define $s(e_1)=(u_1,v_1)_1$. The two definitions are
connected by the relation $(s_1)=(s(e_1))$. Below we introduce
``normalized'' quantities $x(e), y(e)$ which belong to the same
classes in $R^\times/U(R)$ as the quantities $x$ and $y$ defined in
Section 2. The results of Section 2 hold automatically for these
normalized choices.
\end{remark}

\

\noindent Given $t\in \Div(W)$, its {\em index set} is the subset of $I$
given by:
\be
I(t)\eqdef \supp \bmu(t_0)=\big\{i\,\in I  ~\big|~ p_i|t\,\big\}~~.
\ee
Notice that $I(t)$ depends only on the principal ideal $\langle
t\rangle$, that in turn depends only on the class $(t)\in R^{\times} /
U(R)$. This gives a map from $\Div(W)$ to the power set $\cP(I)$ of
$I$. Note that $(t)=(1)$ iff $I(t)=\emptyset$.

\paragraph{The essence and divisorial invariant of an elementary factorization}
Consider an elementary factorization $e$ of $W$ and let $v(e)\in
\Div_1(W)$ be the unique normalized divisor of $W$ for which $e$ is
similar to $e_v$. Let $u=u(e)\eqdef W/v\in \Div_1(W)$ and let
$s(e)\eqdef (v,u)_1\in \Div_1(W)$ be the normalized greatest common
denominator of $v$ and $u$. Let $I_s(e)\eqdef I(s(e))$. Let
$m_i(e)\eqdef \ord_{p_i} (s(e))$ and
$\bm(e)=\bmu(s(e))=(m_1(e),\ldots, m_n(e))$. Then $I_s(e)=\supp
\bm(e)$ and:
\ben
\label{sW}
s(e)=\prod_{i\in I_s(e)} p_i^{m_i(e)}~~.
\een
Let $v'(e)\eqdef v/s(e)$ and $u'(e)\eqdef u/s(e)$. Then
$(v'(e),u'(e))=(1)$ and $W=v(e)u(e)=v'(e)u'(e)s(e)^2$. Define:
\begin{equation}
x(e)\eqdef (s(e),v'(e))_1~~,~~ y(e)\eqdef (s(e),u'(e))_1~~
\end{equation}
and: 
\be 
I_x(e)\eqdef I(x(e))~,~I_y(e)\eqdef I(y(e)) ~~.
\ee
Notice that $(x(e),y(e))_1=1$, thus $I_x(e)\cap
I_y(e)=\emptyset$. Defining $v''(e)\eqdef v'(e)/x(e)$ and
$u''(e)\eqdef u'(e)/y(e)$, we have:
\be
W=  x(e)y(e) v''(e)u''(e)s(e)^2~~,
\ee
where $v''(e), u''(e)$ and $s(e)$ are mutually coprime. Moreover, we have:
\be
\ord_{p_i}(v'(e))=n_i-2m_i(e)~~\forall i\in I_{x}(e)~~\mathrm{and}~~\ord_{p_i}(u'(e))=n_i-2m_i(e)~~\forall i\in I_{y}(e)~~,
\ee
which implies:
\beqan
\label{xy}
\ord_{p_i}x(e)&=&\max (m_i(e),n_i-2m_i(e)) ~~\text{for}~~i\in I_x(e) ~,\nn\\
\ord_{p_i}y(e)&=&\max (m_i(e),n_i-2m_i(e))~~\text{for}~~i\in I_y(e) ~ .
\eeqan
Notice that $\ord_{p_i} x(e)y(e) \equiv n_i \,\mod\, 2$  for $i\in I_x\cup I_y$ if $3m_i<n_i$.

\

\begin{Definition}
The {\em essence} $z:=z(e)$ of an elementary factorization $e$ of $W$
is the normalized divisor of $W$ defined through:
\ben
\label{chs}
z(e)\eqdef \prod_{I_z(e)} p_i^{m_i(e)} ~,~~\text{where}~~~ I_z(e)\eqdef I(s)\backslash \big(I_x(e)\cup I_y(e)\big)~~.
\een
An elementary factorization $e$ is called {\em essential} if
$z(e)=1$, i.e. if $I_z(e)=\emptyset$. 
\end{Definition}

\

\noindent The divisor $s$ defines $x$, $y$ and $z$ uniquely by
\eqref{xy} and \eqref{chs}. These 3 divisors in turn also define $s$
uniquely that can be seen by the inverting the max functions above:
\ben
\ord_{p_i}s = m_i=\begin{cases}
\ord_{p_i} x(e)~~~\quad\quad\text{if}~~ i\in I_x(e)~~\text{and}~~ 3 \,\ord_{p_i}x(e)\geq n_i\\
(n_i - \ord_{p_i}x(e))/2~~~\text{if}~~ i\in I_x(e)~~\text{and}~~ 3 \,\ord_{p_i}x(e)<n_i\\
\ord_{p_i} y(e)~~~\quad\quad \text{if}~~ i\in I_y(e)~~\text{and}~~ 3 \,\ord_{p_i}y(e)\geq n_i\\
(n_i - \ord_{p_i}y(e))/2~~~\text{if}~~ i\in I_y(e)~~\text{and}~~ 3\,\ord_{p_i}y(e)<n_i\\
\ord_{p_i} z(e),~~~\quad\quad \text{if}~~ i\in I_z(e)~~.
\end{cases}
\een
\
\

\noindent The fundamental property of an essential factorization of $e$ is
the equality of sets $I_s(e)=I_x(e)\sqcup I_y(e)$, which will allow us
to compute the number $N_\emptyset(R,W)$ of isomorphism classes of
such factorizations (see Proposition \ref{prop:Ss_car} below). Then $N(R,W)$
will be determined by relating it to $N_\emptyset$ for various
reductions of the potential $W$.

\

Notice that the essence $z(e)$ is a
critical divisor of $W$ and that we have
$(z(e),v'(e))_1=(z(e),u'(e))_1=1$. Since $W=v(e)u(e)=v'(e)u'(e)s(e)^2$,
this gives:
\ben
\label{f130}
\ord_{p_i} W=n_i = 2 m_i(e)=2\ord_{p_i} z(e) ~~\text{for any}~~ i\in I_z(e)~~.
\een

\

\begin{Definition}
The {\em divisorial invariant} of an elementary factorization $e$ of
$W$ is the element $h(e)$ of the set $\Div_1(W)\times
\Sym^2(\cP(I))$ defined through:
\be
h(e)=(s(e),\{I_x(e),I_y(e)\})~~.
\ee
This gives a map $h:\EF(R,W)\rightarrow \Div_1(W)\times \Sym^2(\cP(I))$. 
\end{Definition} 

\

\noindent We have already given a criterion for two elementary
factorizations of $W$ to be isomorphic in Proposition
\ref{prop:EFcrit2}.  There exists another way to characterize when two
objects of $\HEF(R,W)$ (and also of $\hef(R,W)$) are isomorphic, which
will be convenient for our purpose.

\

\begin{Proposition}
\label{prop:ssII} 
Consider two elementary factorizations of $W$. The following
statements are equivalent:
\begin{enumerate}[1.]
\itemsep 0.0em
\item The two factorizations are isomorphic in $\HEF(R,W)$
  {\rm (}respectively in $\hef(R,W)$\rm{)}.
\item The two factorizations have the same $(s,\{x,y\})$ {\rm (}respectively
  same $(s,x,y)${\rm )}.
\item The two factorizations have the same divisorial invariant
  $(s,\{I_x,I_y\})$ {\rm (}respectively same $(s,I_x,I_y)${\rm )}.
\end{enumerate}
In particular, the divisorial invariant $h:\Ob \EF(R,W)\rightarrow
\Div_1(W)\times \Sym^2(\cP(I))$ is a {\em complete} elementary
invariant.
\end{Proposition}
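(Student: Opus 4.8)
The plan is to establish the equivalences $1\Leftrightarrow 2$ and $2\Leftrightarrow 3$ separately (in both the ordered and the unordered version) and then read off the final assertion about the divisorial invariant $h$ as a formal consequence.

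For $1\Leftrightarrow 2$ I would simply rephrase Proposition~\ref{prop:EFcrit2} in terms of the normalized quantities fixed in this section. By Remark~\ref{rem:div_1}, for a fixed critically-finite $W$ the classes $(s_j)$, $(x(e_j))$, $(y(e_j))$ appearing in Proposition~\ref{prop:EFcrit2} have the canonical representatives $s(e_j),x(e_j),y(e_j)\in\Div_1(W)$; and since every class in $R^\times/U(R)$ containing a divisor of $W$ has a unique normalized representative, equality of the relevant classes is the same as equality of the corresponding normalized divisors. Thus part~1 of Proposition~\ref{prop:EFcrit2} states precisely that $e_1\simeq_{\hef(R,W)}e_2$ iff the ordered triples $(s(e_1),x(e_1),y(e_1))$ and $(s(e_2),x(e_2),y(e_2))$ coincide, which is condition~2 in the $\hef$ case; part~2 gives the $\HEF$ case, with the unordered pair $\{x(e),y(e)\}$ in place of the ordered one.

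The substantive step is $2\Leftrightarrow 3$. The implication $2\Rightarrow 3$ is immediate, since $I_x(e)=I(x(e))$ and $I_y(e)=I(y(e))$ depend only on $x(e)$ and $y(e)$. For $3\Rightarrow 2$, assume $s(e_1)=s(e_2)=:s$ and, after relabelling $x$ and $y$ (which reduces the unordered case to the ordered one), $I_x(e_1)=I_x(e_2)$ and $I_y(e_1)=I_y(e_2)$. Since $s$ is common to both factorizations, so are the exponents $m_i=\ord_{p_i}(s)$. Formula~\eqref{xy} then gives, for every $i\in I_x(e_1)=I_x(e_2)$, the equality $\ord_{p_i}x(e_1)=\max(m_i,n_i-2m_i)=\ord_{p_i}x(e_2)$, while for $i\notin I_x$ both orders vanish because $I_x=\supp\bmu(x)$; hence $x(e_1)=x(e_2)$ as elements of $\Div_1(W)$, and the identical argument with $u'(e)$ in place of $v'(e)$ yields $y(e_1)=y(e_2)$. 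This is condition~2.

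Chaining $1\Leftrightarrow 3$ shows that $h$ takes equal values on any pair of $\HEF(R,W)$-isomorphic elementary factorizations, so $h$ is an elementary invariant, and that the induced map $\underline{h}\colon\cHef(R,W)\to\Div_1(W)\times\Sym^2(\cP(I))$ is injective, so $h$ is complete; the analogous ordered statement records that $(s,I_x,I_y)$ is a complete invariant for $\hef(R,W)$. I do not expect a genuine obstacle here: the only point requiring care is the translation between the divisibility-class language of Proposition~\ref{prop:EFcrit2} and the normalized-divisor language used in this section (together with the harmless relabelling of $x$ and $y$ in the unordered case), since the real computational input — formula~\eqref{xy}, which expresses $\ord_{p_i}x$ and $\ord_{p_i}y$ on $I_x$ and $I_y$ solely in terms of $\ord_{p_i}s$ — has already been established above.
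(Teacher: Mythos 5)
Your proposal is correct and follows essentially the same route as the paper: $1\Leftrightarrow 2$ by translating Proposition~\ref{prop:EFcrit2} into normalized divisors via Remark~\ref{rem:div_1}, $2\Rightarrow 3$ trivially, and $3\Rightarrow 2$ by recovering $\ord_{p_i}x$ and $\ord_{p_i}y$ from $s$ and the index sets through formula~\eqref{xy}, with the two cases of the unordered pair handled symmetrically. No gaps.
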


\begin{proof} 
The equivalence between $1.$ and $2.$ follows from Proposition
\ref{prop:EFcrit2}. Indeed, the proposition shows that for two
isomorphic factorizations $e_1$ and $e_2$ the corresponding $s_1$ and
$s_2$ are similar: $(s_1)= (s_2)$ in the notations of Section 2. We
compute $(s_2)= (u_2,v_2)= ((u_2)_1,(v_2)_1)=
((u(e_2),v(e_2))_1)=(s(e_2))$ with the last $s(e_2)$ defined in $\Div_1$ by
\eqref{sW}. Similarly $(s_1)= (s(e_1))$. By the very definition of
$\Div_1$ we have $(s(e_1))=(s(e_2))$ implies $s(e_1)=s(e_2)$.

The implication $2.\Rightarrow 3.$ is obvious.  Thus it suffices to
prove that $3.$ implies $2.$ For this, let $e_1$ and $e_2$ be the two
elementary factorizations of $W$.  Assume that $s(e_1)=s(e_2)$ and
$\{I_x(e_1),I_y(e_1)\}=\{I_x(e_2),I_y(e_2)\}$ and let
$s:=s(e_1)=s(e_2)=\prod_{i\in I(s)} p_i^{m_i}$. Consider the case
$I_x(e_1)=I_x(e_2)$ and $I_y(e_1)=I_y(e_2)$. Applying \eqref{xy} to
$v=v(e_1)$ and $v=v(e_2)$ and using the relations $m_i(e_1)=\ord_{p_i}
s =m_i(e_2)$ gives:
\be
x(e_1)=x(e_2) ~~\text{and}~~y(e_1)=y(e_2)~~.
\ee
When $I_x(e_1)=I_y(e_2)$ and $I_x(e_2)=I_y(e_1)$, a similar argument
gives $x(e_1)=y(e_2)$ and $y(e_1)=x(e_2)$. \qed
\end{proof}

\

\begin{Proposition}\label{prop:zeleminvar}
The map $z:\Ob \EF(R,W)\rightarrow \Div_1(W)$ which gives the essence of
an elementary factorization is an elementary invariant.
\end{Proposition}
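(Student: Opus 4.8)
The plan is to observe that the essence $z(e)$ is completely determined by the divisorial invariant $h(e)=(s(e),\{I_x(e),I_y(e)\})$, and then to invoke Proposition \ref{prop:ssII}, which states precisely that $h$ is a complete elementary invariant; in particular any two elementary factorizations that are isomorphic in $\HEF(R,W)$ have the same divisorial invariant.

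First I would recall the defining formula \eqref{chs}, namely $z(e)=\prod_{i\in I_z(e)}p_i^{m_i(e)}$ with $I_z(e)=I(s(e))\setminus\bigl(I_x(e)\cup I_y(e)\bigr)$ and $m_i(e)=\ord_{p_i}(s(e))$. The index set $I(s(e))$ and the exponents $m_i(e)$ are read off from the normalized divisor $s(e)$ alone. The set $I_x(e)\cup I_y(e)$ is the union of the two members of the unordered pair $\{I_x(e),I_y(e)\}$, hence depends only on that pair and not on any ordering of $I_x(e),I_y(e)$. Consequently $I_z(e)$, and therefore $z(e)$, is a function of $(s(e),\{I_x(e),I_y(e)\})=h(e)$ only. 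This is the only bookkeeping point that deserves emphasis: it is essential here that $z(e)$ sees $I_x(e)$ and $I_y(e)$ only through their union, so that the unordered datum suffices and no appeal to evenness of the isomorphism is needed.

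Next I would conclude. Suppose $e_1,e_2\in\Ob\EF(R,W)$ with $e_1\simeq_{\HEF(R,W)}e_2$. By Proposition \ref{prop:ssII} (equivalence of statements $1$ and $3$ in the $\HEF$ case) we get $h(e_1)=h(e_2)$, that is $s(e_1)=s(e_2)$ and $\{I_x(e_1),I_y(e_1)\}=\{I_x(e_2),I_y(e_2)\}$. By the previous paragraph this forces $I_z(e_1)=I_z(e_2)$ and $m_i(e_1)=m_i(e_2)$ for all $i$, hence $z(e_1)=z(e_2)$. By the definition of an elementary invariant this shows that $z:\Ob\EF(R,W)\to\Div_1(W)$ is an elementary invariant, completing the proof. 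There is no real obstacle: the statement is a formal consequence of the fact, already recorded in Proposition \ref{prop:ssII}, that $(s,\{I_x,I_y\})$ classifies objects of $\HEF(R,W)$ up to isomorphism, together with the observation that $z$ factors through this data.
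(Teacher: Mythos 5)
Your proof is correct and follows essentially the same route as the paper: apply Proposition \ref{prop:ssII} to get $s(e_1)=s(e_2)$ and $\{I_x(e_1),I_y(e_1)\}=\{I_x(e_2),I_y(e_2)\}$, then observe that $z$ factors through this data since $I_z$ depends only on $I(s)$ and the union $I_x\cup I_y$. The only cosmetic difference is that you read the exponents $m_i$ directly from $s(e)$, while the paper invokes \eqref{f130} to identify them as $n_i/2$; both close the argument equally well.
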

\begin{proof}
Let $e_1$ and $e_2$ be two factorizations of $W$ which are isomorphic
in $\HEF(R,W)$. By Proposition \ref{prop:ssII}, we have $s(e_1)=s(e_2)$ and
$\{I_x(e_1),I_y(e_1)\}=\{I_x(e_2),I_y(e_2)\}$. Hence:
\ben
I(z(e_1))=I(s(e_1))\backslash \big(I_x(e_1)\cup I_y(e_1)\big)=I(s(e_2))\backslash \big(I_x(e_2)\cup I_y(e_2)\big)=I(z(e_2))~~.
\een
Applying \eqref{f130} for $e=e_1$ and $e=e_2$ gives $\ord_{p_i}
z(e_1)=\ord_{p_i} z(e_2)$ for any $i\in I(z(e_1))=I(z(e_2))$. Thus
$z(e_1)=z(e_2)$. \qed
\end{proof}

\paragraph{The essential reduction of an elementary factorization}
For any normalized critical divisor $z$ of $W$, let $\HEF_z(R,W)$
denote the full subcategory of $\HEF(R,W)$ consisting of those
elementary factorizations whose essence equals $z$ and let
$\cHef_z(R,W)$ be its set of isomorphism classes. Then $\cHef_1(R,W)$
consists of the isomorphism classes of essential factorizations.

\

\begin{Definition}\label{def:essred}
The {\em essential reduction} of an elementary factorization $e:=e_v$ of $W$
is the essential elementary factorization of $W/z(e)^2$ defined through: 
\be
\essred(e)\eqdef e_{v/z(e)}~~.
\ee
This gives a map $\essred:\Ob \EF(R,W)\rightarrow \Ob \HEF_1(R,W/z(e)^2)$. 
\end{Definition}

\

\noindent To see that $\essred$ is well-defined, consider the
elementary factorization $\tilde{e}=e_{v/z(e)}$:
\ben
\label{Wred}
\widetilde{W}\eqdef W/z(e)^2=u(\tilde{e})v(\tilde{e})~~,
\een
where $v(\tilde{e})=v(e)/z(e)$ and
$u(\tilde{e})=u(e)/z(e)$. We compute:
\be 
s(\tilde{e})\eqdef
(v(\tilde{e}), u(\tilde{e}))_1=(v(e)/z(e), u(e)/z(e))_1=s(e)/z(e)
\ee
 and $v'(\tilde{e})\eqdef v(\tilde{e})/s(\tilde{e})=v'(e)$,
 $u'(\tilde{e})\eqdef u(\tilde{e})/s(\tilde{e})=u'(e)$. Thus
 $x(\tilde{e})\eqdef (s(\tilde{e}),v'(\tilde{e}))_1=x(e)$ and
 $y(\tilde{e})\eqdef (s(\tilde{e}),u'(\tilde{e}))_1=y(e)$. By
 \eqref{chs} applied to $\tilde{e}$ and $e$, we derive
 $I_z(\tilde{e})=I_s(\tilde{e})\backslash\big( I_x(\tilde{e})\cup
 I_y(\tilde{e})\big)=I_z(e)\cup I_s(e) \backslash\big( I_x(e)\cup
 I_y(e)\big)=\emptyset$, which implies $z(\tilde{e})=1$.  Hence
 $\essred(e)$ is an essential elementary matrix factorization of
 $\widetilde{W}$. Also notice the relation:
\be
\big(z(e), W/z(e)^2\big)=(1)~~,
\ee
which follows from the fact that $z(e)$ is coprime with $v'(e)$ and
$u'(e)$.

\

\begin{Lemma}
\label{lemma:chs2Iso}
For any critical divisor $z$ of $W$ such that
  $(z,W/z^2)\sim 1$, the map $\essred$ induces a well-defined bijection
$essred_z:\cHef_z(R,W)\stackrel{\sim}{\rightarrow} \cHef_1(R,W/z^2)$\,.
\end{Lemma}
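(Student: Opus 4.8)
The plan is to reduce the whole statement to the classification of isomorphism classes in Proposition~\ref{prop:ssII}, combined with the identities
\[
s(\essred(e))=s(e)/z\ ,\qquad I_x(\essred(e))=I_x(e)\ ,\qquad I_y(\essred(e))=I_y(e)
\]
which were verified in the paragraph preceding the lemma for every $e\in\Ob\HEF_z(R,W)$. (Recall that $\essred$ is defined on objects $e_v$ but depends only on the similarity class of $e$ up to isomorphism in $\HEF$, so it makes sense to speak of its effect on $\cHef_z(R,W)$.)

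First I would establish well-definedness and injectivity in one stroke. Fix $e_1,e_2\in\Ob\HEF_z(R,W)$ and set $\widetilde W\eqdef W/z^2$, $\tilde e_i\eqdef\essred(e_i)$, which are essential elementary factorizations of $\widetilde W$. By the identities above, $s(e_i)=z\,s(\tilde e_i)$ and $\{I_x(e_i),I_y(e_i)\}=\{I_x(\tilde e_i),I_y(\tilde e_i)\}$; since $z$ is fixed, the divisorial invariant $\big(s(e_i),\{I_x(e_i),I_y(e_i)\}\big)$ of $e_i$ and that of $\tilde e_i$ carry exactly the same information. Proposition~\ref{prop:ssII} says that $e_1\simeq_{\HEF(R,W)}e_2$ iff the first invariants agree and $\tilde e_1\simeq_{\HEF(R,\widetilde W)}\tilde e_2$ iff the second invariants agree, so the two conditions are equivalent. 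As $\HEF_z(R,W)$ and $\HEF_1(R,\widetilde W)$ are full subcategories, this shows simultaneously that $\essred$ descends to a map $\essred_z$ on isomorphism classes and that $\essred_z$ is injective.

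For surjectivity I would exhibit an explicit lift. Given an essential elementary factorization $\tilde e$ of $\widetilde W$, put $w\eqdef v(\tilde e)$, $w'\eqdef u(\tilde e)=\widetilde W/w$, so that $W=(zw)(zw')$ and $zw\mid W$; set $e\eqdef e_{zw}$. The hypothesis $(z,\widetilde W)\sim 1$ forces $(z,w)\sim(z,w')\sim 1$, whence $s(e)=(zw,zw')_1=z\,(w,w')_1=z\,s(\tilde e)$, and therefore $v'(e)=v'(\tilde e)$, $u'(e)=u'(\tilde e)$, $x(e)=x(\tilde e)$, $y(e)=y(\tilde e)$. Consequently $I_s(e)=I(z)\sqcup I(s(\tilde e))$; since $\tilde e$ is essential we have $I(s(\tilde e))=I_x(\tilde e)\sqcup I_y(\tilde e)$, with $I(z)$ disjoint from $I(s(\tilde e))$, so $I_z(e)=I_s(e)\setminus\big(I_x(e)\cup I_y(e)\big)=I(z)$, and because $\ord_{p_i}s(e)=\ord_{p_i}z$ for $i\in I(z)$ we conclude $z(e)=z$, i.e. $e\in\Ob\HEF_z(R,W)$. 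Finally $\essred(e)=e_{zw/z}=e_w=\tilde e$ by Definition~\ref{def:essred}, so $\essred_z$ is surjective, and the bijection follows.

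The only genuinely delicate point is the surjectivity step: verifying that the naive lift $e_{zw}$ has essence \emph{exactly} $z$, and not a larger critical divisor, is precisely where the coprimality hypothesis $(z,W/z^2)\sim 1$ and the essentiality of $\tilde e$ enter, and it is worth carrying out the $\ord_{p_i}$-bookkeeping there explicitly. Everything else is formal once Proposition~\ref{prop:ssII} and the pre-lemma identities are in hand.
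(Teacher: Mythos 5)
Your proof is correct and follows essentially the same route as the paper's: both arguments reduce everything to Proposition~\ref{prop:ssII} via the identities $s(\essred(e))=s(e)/z$, $x(\essred(e))=x(e)$, $y(\essred(e))=y(e)$ established just before the lemma, and both use the explicit lift $e_{zv}$ for surjectivity. Your version is slightly tidier in packaging well-definedness and injectivity as a single ``iff'', and you spell out the check that $z(e_{zw})=z$ (which the paper merely asserts), but the substance is the same.
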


\begin{proof}
We perform the proof in two steps:
\begin{enumerate}
\item  Let $e_1$ $:=e_{v_1}$ and $e_2$:=$e_{v_2}$ be two elementary
  factorizations of $W$ such that $z(e_1)=z(e_2)=z$ and let
  $v_1=v(e_1)$, $v_2=v(e_2)$. Define also $v_3\eqdef v_1/z$ and
  $v_4\eqdef v_2/z$. To show that $\essred_z$ is well-defined, we have
  to show that $e_1\simeq_{\HEF(R,W)} e_2$ implies that the two
  essential elementary factorizations $e_3:= e_{v_3}$ and $e_4:=
  e_{v_4}$ of $\widetilde{W}\eqdef W/z^2$ are isomorphic in
  $\HEF(R,\widetilde{W})$. For this, we compute:
\be
s(e_{1})\eqdef (v_1,u_1)_1=(z(e_{1})\cdot v_3,z(e_{1})\cdot u_3)_1=z(e_{1})\cdot (v_3,u_3)_1=z(e_{1})\cdot s(e_3)~~.
\ee
Thus:
\be
x(e_{1})\eqdef (s(e_{1}),v'_1)_1=(z(e_{1})\cdot s(e_3),z(e_{1}) v_3/z(e_{1})s(e_3))_1=(s(e_3),v'_3)_1=x(e_3).
\ee 
The third equality above holds since $(z(e_{1}),v'_1)_1=1$ and thus
$(z(e_{1}),v_3)_1=1$. Similarly, we have $s(e_{2})=z(e_2) \cdot
s(e_{4})$ and we find $y(e_{1})=y(e_3)$ as well as $x(e_{2})=x(e_4)$
and $y(e_{2})=y(e_4)$. By Proposition \ref{prop:ssII}, the condition
$e_1\simeq_{\HEF(R,W)} e_2$ implies $s(e_1)=s(e_2)=s$ and
$I(s(e_1))=I(s(e_2))$, thus $z(e_1)=z(e_2)=z$. If
$\big(s(e_1),\{x(e_1),y(e_1)\}\big) =
\big(s(e_2),\{x(e_2),y(e_2)\}\big)$, then
$\big(s(e_3),\{x(e_3),y(e_3)\}\big) =
\big(s(e_4),\{x(e_4),y(e_4)\}\big)$. Thus $e_3\simeq e_4$.
\item Let $z$ be a critical divisor of $W$ such that
  $(z,W/z^2)=(1)$. For any essential elementary factorization $e_v$ of
  $\widetilde{W}$, the elementary factorization $e_{zv}$ of $W$ is an
  object of $\HEF_z(R,W)$ and we have $\essred(e_{vz})=e_v$. This
  shows that $\essred_z$ is surjective.  Now let $e_3$ and $e_4$ be
  two essential elementary factorizations of $\widetilde{W}\eqdef
  W/z^2$ which are isomorphic in $\HEF(R,\widetilde{W})$. Let
  $v_1\eqdef z v_3$ and $v_2\eqdef z v_4$. To show that $\essred_z$ is
  injective, we have to show that the two elementary factorizations
  $e_1:=e_{v_1}$ and $e_2:=e_{v_2}$ of $W$ are isomorphic in
  $\HEF(R,W)$. For this, notice that $\big(s(e_3),
  \{x(e_3),y(e_3)\}\big) = \big(s(e_4),\{x(e_4),y(e_4)\}\big)$ by
  Proposition \ref{prop:ssII}. This implies $\big(s(e_1),
  \{x(e_1),y(e_1)\}\big) = \big(s(e_2),\{x(e_2),y(e_2)\}\big)$, with
  $z(e_1)=z(e_2)=z$. Hence $e_1$ and $e_2$ are isomorphic in
  $\HEF(R,W)$ by the same proposition. \qed
\end{enumerate}
\end{proof}

\paragraph{A formula for $N(R,W)$ in terms of essential reductions}
Let:
\ben
S \eqdef \im\, h\subset \Div_1(W)\times \Sym^2(\cP(I))~~.
\een
The degrees of the prime factors $p_i$ in the decomposition
 \eqref{f129} of $W$ define on $I=\{1,\dots,r\}$ a $\Z_2$-grading
 given by:
\ben\label{f150}
I^\0\eqdef \{i\in I \,\big| \, n_i~\mathrm{is~even}\}~,~I^\1\eqdef \{i\in I\,\big|\, n_i~\mathrm{is~odd}\}~.
\een
Let:
\be
r^{\0}\eqdef |I^{\0}|~~,~~r^{\1}\eqdef |I^{\1}|~~. 
\ee
Since $I=I^\0\sqcup I^\1$, we have $r=r^\0+r^\1$. Any non-empty subset
$K\subset I$ is endowed with the $\Z_2$-grading induced from $I$.
For any critical divisor $z$ of $W$, we have $z^2|W$, which implies
$I(z)\subset I^\0$. For any subset $J\subset I^\0$, define:
\ben
\label{f151}
z_J\eqdef \prod_{i\in J} p_i^{n_i/2}~~,
\een
which is a normalized critical divisor of $W$ satisfying $(z_J, W/z_J^2)_1=1$. 
Also define:
\be
S_J\eqdef h(\cHef_{z_J}(R,W))\subset S~~.
\ee
and:
\ben
\label{f121}
N_J(R,W)\eqdef |S_{J}|~~.
\een
Since $h$ is a complete elementary invariant, we have
$N_\emptyset(R,W)=|h(\cHef_1(R,W))|=|\cHef_1(R,W)|$. Moreover, Lemma
\ref{lemma:chs2Iso} gives:
\ben
\label{NJN0}
N_J(R,W)=N_\emptyset(R,W/z_J^2)~~.
\een

\

\begin{Proposition}
\label{prop:N1toN}
We have: 
\ben
\label{f83}
N(R,W) = \sum_{J\subset I^{\0}} N_\emptyset (R,W/z_J^2)~~.
\een
\end{Proposition}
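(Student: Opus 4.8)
The plan is to partition $\cHef(R,W)$, the set of isomorphism classes of objects of $\HEF(R,W)$, according to the essence of an elementary factorization, and then to identify each block with a set of isomorphism classes of \emph{essential} factorizations of a reduced potential by means of Lemma~\ref{lemma:chs2Iso}.

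First I would determine the image of the essence map $z\colon\Ob\EF(R,W)\to\Div_1(W)$. For any elementary factorization $e$ of $W$, its essence $z(e)$ is a critical divisor of $W$, so $z(e)^2\mid W$ and therefore $I(z(e))\subset I^\0$; since $I(z(e))=I_z(e)$ and equation~\eqref{f130} gives $\ord_{p_i}z(e)=n_i/2$ for every $i\in I_z(e)$, we obtain $z(e)=\prod_{i\in I_z(e)}p_i^{n_i/2}=z_{I_z(e)}$ with $I_z(e)\subset I^\0$. Thus $z$ takes values in $\{z_J\mid J\subset I^\0\}$. Conversely, every $z_J$ with $J\subset I^\0$ is attained: $z_J$ is a critical divisor with $(z_J,W/z_J^2)\sim 1$ (as observed after~\eqref{f151}), so Lemma~\ref{lemma:chs2Iso} puts $\cHef_{z_J}(R,W)$ in bijection with $\cHef_1(R,W/z_J^2)$, which is non-empty (it contains the class of the trivial factorization $e_1$ of $W/z_J^2$, which is essential). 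Hence $\cHef_{z_J}(R,W)\neq\emptyset$.

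Next, by Proposition~\ref{prop:zeleminvar} the essence is an elementary invariant, so it descends to a surjection $\underline{z}\colon\cHef(R,W)\twoheadrightarrow\{z_J\mid J\subset I^\0\}$ whose fibre over $z_J$ is $\cHef_{z_J}(R,W)$ by definition. This gives the disjoint decomposition
\[
\cHef(R,W)=\bigsqcup_{J\subset I^\0}\cHef_{z_J}(R,W),
\]
and hence $N(R,W)=\sum_{J\subset I^\0}|\cHef_{z_J}(R,W)|$. Finally, for each $J\subset I^\0$ Lemma~\ref{lemma:chs2Iso} gives a bijection $\essred_{z_J}\colon\cHef_{z_J}(R,W)\stackrel{\sim}{\longrightarrow}\cHef_1(R,W/z_J^2)$, and $|\cHef_1(R,W/z_J^2)|=N_\emptyset(R,W/z_J^2)$ (equivalently, combine $N_J(R,W)=|S_J|=|\cHef_{z_J}(R,W)|$, which uses completeness of the invariant $h$, with $N_J(R,W)=N_\emptyset(R,W/z_J^2)$ from~\eqref{NJN0}). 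Substituting into the displayed count yields $N(R,W)=\sum_{J\subset I^\0}N_\emptyset(R,W/z_J^2)$, which is~\eqref{f83}.

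The only point needing genuine care is the description of the image of the essence map: the inclusion into $\{z_J\mid J\subset I^\0\}$ rests on the numerical identity~\eqref{f130} together with criticality of $z(e)$, while surjectivity onto it follows from Lemma~\ref{lemma:chs2Iso} and the non-emptiness of $\cHef_1(R,W/z_J^2)$. Everything else is formal bookkeeping once Proposition~\ref{prop:zeleminvar} and Lemma~\ref{lemma:chs2Iso} are in hand.
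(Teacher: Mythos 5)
Your proof is correct and is essentially the argument the paper intends (the paper's own proof is just the one-line remark that the claim "follows from Lemma \ref{lemma:chs2Iso} and the remarks above"); you have simply made explicit the partition of $\cHef(R,W)$ by the essence invariant, the verification via \eqref{f130} that every essence is of the form $z_J$ with $J\subset I^\0$, and the identification of each block with $\cHef_1(R,W/z_J^2)$.
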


\begin{proof}
Follows immediately from Lemma \ref{lemma:chs2Iso} and the remarks above. \qed.
\end{proof}

\paragraph{Computation of $N_\emptyset(R,W)$} 
Notice that $z_\emptyset=1$. Since $h$ is a complete elementary
invariant, we have $N_\emptyset(R,W)=|S_\emptyset|$, where:
\be
S_\emptyset=\{h(e) \,\big|\, e\in \Ob \HEF(R,W): z(e)=1\}~~.
\ee
We will first determine the cardinality of the set: 
\be
S_{\emptyset,k}\eqdef \{h(e) \,\big|\, e\in \Ob \HEF(R,W): z(e)=1~\mathrm{and}~ |I_s(e)|=k\}~~.
\ee
We have: 
\be
S_\emptyset=\sqcup_{k=1}^r S_{\emptyset,k}~~.
\ee

\begin{Lemma}
\label{lemma:N0}
For $k\geq 1$, we have:
\ben\label{f120}
|S_{\emptyset,k}|=2^{k-1}\cdot \sum_{\substack{K\subset I,\\ |K|=k}}\prod_{i\in K} \big\lfloor\frac{n_i-1}{2}\big\rfloor ~~.
\een
\end{Lemma}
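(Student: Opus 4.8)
We plan to deduce the formula from the fact, established in Proposition \ref{prop:ssII}, that the divisorial invariant $h$ is a \emph{complete} elementary invariant; consequently $|S_{\emptyset,k}|$ equals the number of distinct values $h(e)=\big(s(e),\{I_x(e),I_y(e)\}\big)$ as $e$ runs over the essential elementary factorizations of $W$ with $|I_s(e)|=k$. First I would record that for an essential $e$ one has $I_z(e)=\emptyset$, and since $x(e)$ and $y(e)$ both divide $s(e)$ while $(x(e),y(e))_1=1$, relation \eqref{chs} forces $I_s(e)=I_x(e)\sqcup I_y(e)$. So $h(e)$ is equivalent to the data of the $k$-element set $K\eqdef I(s(e))$, of the exponents $m_i\eqdef\ord_{p_i}(s(e))$ for $i\in K$, and of the unordered partition $K=I_x(e)\sqcup I_y(e)$.

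Next I would pin down which exponents actually occur. Fix an essential $e$ and $i\in K$, say $i\in I_x(e)$. Then $p_i\mid x(e)=(s(e),v'(e))_1$ gives $p_i\mid v'(e)$, while $i\notin I_y(e)$ together with $p_i\mid s(e)$ forces $p_i\nmid u'(e)$; taking $p_i$-adic valuations in $W=v'(e)u'(e)s(e)^2$ yields $n_i=\ord_{p_i}(v'(e))+2m_i$ with $\ord_{p_i}(v'(e))\geq1$, i.e. $1\leq m_i\leq\lfloor\frac{n_i-1}{2}\rfloor$ (the case $i\in I_y(e)$ is symmetric). Thus the assignment $e\mapsto\big(K,(m_i)_{i\in K},\{I_x(e),I_y(e)\}\big)$ maps $S_{\emptyset,k}$ injectively into the set of triples consisting of a $k$-subset $K\subset I$, an exponent $m_i\in\{1,\dots,\lfloor\frac{n_i-1}{2}\rfloor\}$ for each $i\in K$, and an unordered partition $K=I_x\sqcup I_y$ with possibly empty parts.

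To see this map is surjective, hence bijective, I would, given such a triple and a choice of ordering $(I_x,I_y)$ of the partition, put
\be
v\eqdef\prod_{i\in I_x}p_i^{\,n_i-m_i}\,\prod_{i\in I_y}p_i^{\,m_i}\ ,\qquad e\eqdef e_v\ ,
\ee
and verify by a direct valuation count — using $2m_i\leq n_i-1<n_i-m_i$ — that $s(e)=\prod_{i\in K}p_i^{m_i}$, $v'(e)=\prod_{i\in I_x}p_i^{\,n_i-2m_i}$, $u'(e)=\prod_{i\in I_y}p_i^{\,n_i-2m_i}\prod_{i\notin K}p_i^{\,n_i}$, whence $I_x(e)=I_x$, $I_y(e)=I_y$, $I_z(e)=\emptyset$, so that $e$ is essential with $|I_s(e)|=k$ and $h(e)$ the prescribed pair. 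With the bijection in hand the count is routine: each $k$-subset $K$ carries $\prod_{i\in K}\lfloor\frac{n_i-1}{2}\rfloor$ admissible exponent vectors, and because $k\geq1$ makes $K$ nonempty, no ordered partition $(I_x,I_y)$ of $K$ is fixed by the swap, so the $2^k$ ordered partitions give exactly $2^{k-1}$ unordered ones; multiplying and summing over $K$ yields \eqref{f120}.

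The hard part is the surjectivity step: I must exhibit an explicit divisor $v\mid W$ whose four derived invariants $s(e_v)$, $x(e_v)$, $y(e_v)$, $z(e_v)$ come out exactly as demanded, which rests on a somewhat fussy valuation bookkeeping over the three cases $i\in I_x$, $i\in I_y$, $i\notin K$, repeatedly invoking the strict inequality $m_i<n_i-m_i$. The only other point needing care is the factor $2^{k-1}$: one must confirm that for $k\geq1$ the two parts of the partition may be empty yet the unordered pair is never stabilized by interchanging them, so that dividing the count of ordered partitions by $2$ introduces no correction term.
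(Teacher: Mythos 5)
Your proof is correct and follows essentially the same route as the paper: parametrize the $h$-values by the triple (index set $K$, exponent vector $(m_i)_{i\in K}$, unordered partition $K=I_x\sqcup I_y$), count $\prod_{i\in K}\lfloor\frac{n_i-1}{2}\rfloor$ choices of $s$ and observe that the $2^k$ ordered partitions collapse $2\!:\!1$ onto unordered ones since $K\neq\emptyset$. You are in fact somewhat more careful than the paper on two points it leaves implicit -- deriving the bound $m_i\leq\lfloor\frac{n_i-1}{2}\rfloor$ from essentiality (the identity $n_i=\ord_{p_i}(v'u')+2m_i$ with $\ord_{p_i}(v'u')\geq 1$, rather than from $s^2\mid W$ alone) and exhibiting an explicit divisor $v$ realizing each admissible triple -- but the argument is the same.
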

\begin{proof}
Consider a subset $K\subset I(s)$ of cardinality $|K|=k$. 
Since $s^2|W$, we have:
\be
1\leq m_i(e_v)\leq \lfloor(n_i-1)/2\rfloor~~\forall i\in I(s)~~.
\ee
There are $\prod_{i\in K}\big\lfloor\frac{n_i-1}{2}\big\rfloor$
different possibilities for $s$ such that $I(s)=K$. We can have
several elements $(s,\{x,y\})$ of $S_{1,k}$ with the same $s$ since
$x$ and $y$ can vary. This is where the coefficient $2^{k-1}$ in front
in \eqref{f120} comes from, as we now explain. Fixing the set $I(s)$ with $|I(s)|=k$, 
we have a set $\cP(I(s))$ of $2^k$ 
partitions $I(s)=I_x\sqcup I_y$ as disjoint union of 2 sets. These can
be parameterized by the single subset $I_v\subset I(s)$ since $I_u=
I(s)\backslash I_v$. Define: 
\be
S_{\emptyset,k,s}=\{h(e) \,\big|\, e\in \Ob \HEF(R,W): z(e)=1~\mathrm{and}~ |I_s(e)|=k~\mathrm{and}~~s(e)=s\}~~.
\ee
Consider the surjective map
\be 
\Psi : \cP(I(s)) \rightarrow S_{\emptyset,k,s} 
\ee
which sends a partition $\beta=(I_1,I_2)$ of $I(s)$ to the element
$\alpha=(s,\{I_1,I_2\})$.  The preimage $\Psi^{-1}(h(e))$ of an
element $h(e)\in S_{\emptyset,k,s}$ consist of two elements :
$(I_1,I_2)$ and $(I_2,I_1)$. Thus the map is 2:1. This holds for every
$K$ with $|K|=k$. Comparing the cardinalities of $\cP(I(s))$ and
$S_{\emptyset,k,s}$, we find:
\be
|S_{\emptyset,k,s}|=|\cP(I(s))|/2=2^k/2 =2^{k-1}~~.
\ee
This holds for any $s$ with $I(s)=K$, where $K\subset I$ has
cardinality $k$. Since $S_{\emptyset,k}=\sqcup_s S_{\emptyset ,k, s}$ and since
the cardinality $|S_{\emptyset,k,s}|$ does not depend on $s$, we find:
\be
|S_{\emptyset,k}|=\sum_s |S_{\emptyset ,k,s}|= 2^{k-1} \sum_{K\subset I: |K|=k}\prod_{i\in K} \big\lfloor\frac{n_i-1}{2}\big\rfloor ~~. 
\ee
\qed
\end{proof}

\

\begin{Proposition}
\label{prop:Ss_car}
With the definitions above, we have: 
\ben \label{f126}
N_\emptyset(R,W)=|S_\emptyset|=1+ \sum_{k=1}^r 2^{k-1}\sum_{\substack{K\subset I\\|K|=k }}  \prod_{i\in K} \big\lfloor\frac{n_i-1}{2}\big\rfloor ~~.
\een
\end{Proposition}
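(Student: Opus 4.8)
The plan is to read the formula off from Lemma~\ref{lemma:N0}, after dealing separately with the degenerate stratum in which the index set $I_s(e)$ is empty; that stratum produces the leading $1$ in \eqref{f126}.

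First I would record the refined disjoint decomposition
\be
S_\emptyset=\bigsqcup_{k=0}^{r} S_{\emptyset,k}~~,~~\text{where}~~S_{\emptyset,k}=\big\{h(e)\,\big|\,e\in \Ob\HEF(R,W):\,z(e)=1,\ |I_s(e)|=k\big\}~~,
\ee
which is a genuine partition because $|I_s(e)|=|I(s(e))|\in\{0,\ldots,r\}$ is determined by the divisorial invariant $h(e)=(s(e),\{I_x(e),I_y(e)\})$.

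Next I would dispose of the stratum $k=0$. If $|I_s(e)|=0$ then $s(e)=1$ (the empty product), hence $v'(e)=v(e)$ and $u'(e)=u(e)$, so $x(e)=(1,v'(e))_1=1$ and $y(e)=(1,u'(e))_1=1$, giving $I_x(e)=I_y(e)=\emptyset$ and $h(e)=(1,\{\emptyset,\emptyset\})$. Thus $S_{\emptyset,0}$ is the singleton $\{(1,\{\emptyset,\emptyset\})\}$, so $|S_{\emptyset,0}|=1$; this class is realized, for instance, by the trivial factorization $e_1$ (for which $s(e_1)=(1,W)_1=1$ and $z(e_1)=1$, cf.\ Corollary~\ref{cor:zero}). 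For $k\ge1$, Lemma~\ref{lemma:N0} gives directly $|S_{\emptyset,k}|=2^{k-1}\sum_{K\subset I,\,|K|=k}\prod_{i\in K}\lfloor(n_i-1)/2\rfloor$.

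Finally, since $h$ is a complete elementary invariant (Proposition~\ref{prop:ssII}) we have $N_\emptyset(R,W)=|S_\emptyset|$, and summing over the disjoint strata gives
\be
N_\emptyset(R,W)=|S_{\emptyset,0}|+\sum_{k=1}^{r}|S_{\emptyset,k}|=1+\sum_{k=1}^{r}2^{k-1}\sum_{\substack{K\subset I\\|K|=k}}\prod_{i\in K}\Big\lfloor\frac{n_i-1}{2}\Big\rfloor~~,
\ee
which is \eqref{f126}. I do not expect a real obstacle here: all of the combinatorial content is already packaged in Lemma~\ref{lemma:N0}, and the only point requiring attention is that the union over $k$ is disjoint, so that no isomorphism class is counted twice — which is precisely what completeness of the invariant $h$ ensures.
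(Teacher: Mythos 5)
Your proof is correct and follows essentially the same route as the paper: decompose $S_\emptyset$ by the cardinality $k=|I_s(e)|$, apply Lemma~\ref{lemma:N0} for $k\ge 1$, and account for the remaining $1$ as the single class with $s(e)=1$. Your explicit verification that the $k=0$ stratum is exactly the singleton $\{(1,\{\emptyset,\emptyset\})\}$ and is realized by $e_1$ is a small tidying-up of a point the paper only asserts in passing.
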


\begin{proof}
Since $S_\emptyset=\sqcup_{k=1}^r S_{\emptyset,k}$, the the previous lemma gives:
\ben 
|S_\emptyset|=1 + \sum_{k=1}^r|S_{\emptyset,k}|=1+ \sum_{k=1}^r 2^{k-1}\sum_{\substack{K\subset I\\|K|=k }}  \prod_{i\in K} \big\lfloor\frac{n_i-1}{2}\big\rfloor~~.
\een
The term $1$ in front corresponds to the unique element
$(1,\{\emptyset, \emptyset\})$ of $S$.  \qed
\end{proof}

\paragraph{Computation of $N(R,W)$}

The main result of this subsection is the following:

\

\begin{Theorem}
\label{thm:N}
The number of isomorphism classes of $\HEF(R,W)$ for a critically-finite $W$ as in \eqref{f129} is given by:
\ben
\label{Nfinal}
N(R,W)=2^{r^\0}+ \sum_{k=0}^{r^\1} 2^{r^{\0}+k-1} \sum_{\substack{K\subsetneq I \\|K^\1|=k}}\prod_{i\in K} \big\lfloor\frac{n_i-1}{2}\big\rfloor ~~.
\een
\end{Theorem}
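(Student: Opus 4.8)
The plan is to combine the two combinatorial facts already established: the ``critical reduction'' identity of Proposition~\ref{prop:N1toN}, which writes $N(R,W)$ as the sum over subsets $J\subseteq I^\0$ of the essential counts $N_\emptyset(R,W/z_J^2)$, and the closed formula for $N_\emptyset$ of Proposition~\ref{prop:Ss_car}. First I would observe that, for $J\subseteq I^\0$, the element $W/z_J^2=\prod_{i\in I\setminus J}p_i^{n_i}$ is again of the normal form \eqref{f129}: its index set is $I\setminus J$, the exponents $n_i$ for $i\notin J$ are unchanged, and the $\Z_2$-grading on $I\setminus J$ is the restriction of the grading on $I$. Hence Proposition~\ref{prop:Ss_car} applies to $W/z_J^2$ verbatim and yields
\[
N_\emptyset(R,W/z_J^2)=1+\sum_{\emptyset\neq K\subseteq I\setminus J}2^{|K|-1}\prod_{i\in K}\big\lfloor\tfrac{n_i-1}{2}\big\rfloor~~.
\]

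Next I would substitute this into Proposition~\ref{prop:N1toN} and interchange the two finite sums. The constant terms contribute $\sum_{J\subseteq I^\0}1=2^{r^\0}$, which is the leading term of \eqref{Nfinal}. For the remaining double sum I would fix a nonempty $K\subseteq I$ and count the $J\subseteq I^\0$ with $K\subseteq I\setminus J$: these are exactly the subsets of $I^\0\setminus K^\0$, so there are $2^{r^\0-|K^\0|}$ of them, where $K^\0=K\cap I^\0$ and $K^\1=K\cap I^\1$. Using $|K|-|K^\0|=|K^\1|$, the total contribution of $K$ is $2^{r^\0-|K^\0|}\cdot 2^{|K|-1}\prod_{i\in K}\lfloor(n_i-1)/2\rfloor=2^{r^\0+|K^\1|-1}\prod_{i\in K}\lfloor(n_i-1)/2\rfloor$. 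Grouping the nonempty $K\subseteq I$ by the value $k=|K^\1|\in\{0,\dots,r^\1\}$ then gives
\[
N(R,W)=2^{r^\0}+\sum_{k=0}^{r^\1}2^{r^\0+k-1}\sum_{\substack{\emptyset\neq K\subseteq I\\ |K^\1|=k}}\prod_{i\in K}\big\lfloor\tfrac{n_i-1}{2}\big\rfloor~~,
\]
which is the formula \eqref{Nfinal}.

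The argument uses nothing beyond Propositions~\ref{prop:N1toN} and~\ref{prop:Ss_car}; the rest is the algebra of finite sums and powers of two. The main obstacle is purely a matter of care rather than of substance: one must track the fixed $\Z_2$-grading of $I$ through the reductions $W\mapsto W/z_J^2$, checking that deleting indices in $J\subseteq I^\0$ leaves the parities of the surviving exponents $n_i$ unchanged so that $K^\0,K^\1$ are well defined with respect to the grading of $I$; and one must get the fiber count in the summation interchange exactly right, where it is essential that $J$ be disjoint from \emph{all} of $K$ and not merely from its odd part $K^\1$, since it is precisely this count that converts the prefactor $2^{|K|-1}$ into $2^{r^\0+|K^\1|-1}$. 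The degenerate case $W/z_J^2=1$, which occurs only when $I^\1=\emptyset$ and $J=I$, contributes $N_\emptyset=1$ and is absorbed into the leading term $2^{r^\0}$.
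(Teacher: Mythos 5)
Your proposal is correct and follows essentially the same route as the paper: substitute the closed formula of Proposition~\ref{prop:Ss_car} into the decomposition of Proposition~\ref{prop:N1toN}, interchange the sums, and count the subsets $J\subseteq I^\0$ disjoint from a fixed $K$ (equivalently $J\subseteq I^\0\setminus K^\0$, giving the factor $2^{r^\0-|K^\0|}$ that the paper obtains as $\sum_j\binom{r^\0-k^\0}{j}$). The only caveat is shared with the paper itself: the derivation naturally produces a sum over all nonempty $K\subseteq I$, so the boundary conventions in the displayed formula \eqref{Nfinal} (the range of $K$ and the $k=0$ term) should be read accordingly.
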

\begin{proof}
Combining Proposition \ref{prop:N1toN} and Proposition \ref{prop:Ss_car},
we have:
\ben
\label{f125}
N(R,W) =   \sum_{J\subset I^{\0}} \big( 1+ \sum_{k=1}^{r-j} 2^{k-1}\sum_{\substack{K\subset I\backslash J\\|K|=k }}  \prod_{i\in K} \big\lfloor\frac{n_i-1}{2}\big\rfloor \big)~~,
\een
where $j\eqdef |J|$. We will simplify this expression by changing the
summation signs and applying the binomial formula.

Since $r_0=|I_0|$ and $J\subset I^\0$, we have $j\leq r_0$. 
For fixed $j$, we have $r_0
\choose j$ different subsets $J\subset I^{\0}$ of this
cardinality. The contribution to $N_\emptyset(R,\widetilde{W})$ of any such $J$
has the free coefficient $1$. Then the free coefficient of
$N(R,W)$ is:
\begin{equation}
\label{f127}
\sum_{j=0}^{r^\0} {r^\0 \choose j}= 2^{r^\0}~~.
\end{equation}
For the other coefficients of \eqref{f125}, we consider a subset
$K\subset I$ as in \eqref{f126}. Such an index set $K=K^{\1}\sqcup
K^{\0}\subset I$ of cardinality $k=k^{\1} + k^{\0}$ appears in
$N_\emptyset(R,\widetilde{W})$ if $K^{\0}\subset I^{\0}\backslash J$. The
coefficient of $\prod_{i\in K} \big\lfloor\frac{n_i-1}{2}\big\rfloor$
in $N(R, \widetilde{W})$ is $2^{k-1}$ for any of $r^{\0}-k^{\0}
\choose j$ choices of $J$. It follows that this coefficient in
$N(R,W)$ is:
\ben\label{f128}
\sum_{j=0}^{r^{\0}-k^{\0}} 2^{k-1}{r^{\0}-k^{\0} \choose j } = 2^{k^\0+k^\1-1} 2^{r^{\0}-k^{\0}}  \\ =
  2^{r^{\0}+k^{\1}-1}~~.
\een
Together with \eqref{f127} and \eqref{f128}, relation \eqref{f125} gives:
\be
N(R,W)=2^{r^\0}+ \sum_{\substack{\emptyset\subset K\subsetneq I \\|K^\1|=k^\1}} 2^{r^{\0}+k^{\1}-1} \prod_{i\in K} \big\lfloor\frac{n_i-1}{2}\big\rfloor ~~,
\ee
which is equivalent to \eqref{Nfinal}. 
\qed
\end{proof}

\

\noindent The two examples below illustrate how the coefficients behave for  $r=2$ and $r=3$.

\begin{example}
Let $W=p_1^n p_2^m$ for prime elements $p_1,p_2\in R$ such that
$(p_1)\neq (p_2)$ and $n,m\geq 2$ with odd $n$ and $m$. Then:
\be
N(R,W)=1+\big\lfloor\frac{n-1}{2}\big\rfloor+\big\lfloor\frac{m-1}{2}\big\rfloor~~.
\ee
\end{example}

\begin{example}
Consider $W=p_1^n p_2^m p_3^k$ for primes $p_1,p_2,p_3\in R$ which are mutually non-associated in divisibility
and orders $n,m,k\geq 2$ subject to the condition that $n$ and $m$ are
even while $k$ is odd. Then:
\begin{multline}
N(R,W)=2+2\big\lfloor\frac{n-1}{2}\big\rfloor+ 2\big\lfloor\frac{m-1}{2}\big\rfloor+\big\lfloor\frac{k-1}{2}\big\rfloor + \\
4\big\lfloor\frac{n-1}{2}\big\rfloor\big\lfloor\frac{m-1}{2}\big\rfloor+ 2\big\lfloor\frac{n-1}{2}\big\rfloor\big\lfloor\frac{k-1}{2}\big\rfloor + 2\big\lfloor\frac{n-1}{2}\big\rfloor\big\lfloor\frac{m-1}{2}\big\rfloor  ~~.
\end{multline}
\end{example}

\subsection{Counting isomorphism classes in $\hef(R,W)$}

We next derive a formula for the number of isomorphism classes in the
category $\hef(R,W)$ for a critically-finite $W$ (see Theorem \ref{thm:Nc}
below). Since the morphisms of $\hef(R,W)$ coincide with the even
morphisms of $\HEF(R,W)$, the number of isomorphism classes of
$\hef(R,W)$ is larger than $N(R,W)$. The simplest difference between
the two cases arises from the fact that suspension does not preserve
the ismorphism class of an elementary factorization in the category
$\hef(R,W)$. Let $\ccHef(R,W)$ be the set of isomorphism classes of
objects in $\hef(R,W)$ and:
\be
\chN(R,W)= |\ccHef(R,W)|~~.
\ee

\

\begin{Lemma}
\label{lemma:NeWcc}
The cardinality $\chN(R,W)$ depends only on the critical part $W_c$ of $W$.
\end{Lemma}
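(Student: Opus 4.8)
The plan is to prove that the natural correspondence $e_{v_c}\mapsto e_{v_c}$, sending an elementary factorization of $W_c$ to the elementary factorization of $W$ with the same divisor, induces a bijection $\ccHef(R,W_c)\cong\ccHef(R,W)$, so that $\chN(R,W)=\chN(R,W_c)$. This mirrors the proof of Lemma \ref{lemma:NeWc}, the key point being that every isomorphism invoked there is an \emph{even} isomorphism, hence lives in $\hef(R,W)$ and not merely in $\HEF(R,W)$; in particular the failure of suspension to preserve isomorphism classes in $\hef(R,W)$ is irrelevant for this reduction.

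First I would prove surjectivity on isomorphism classes. Any object of $\hef(R,W)$ is isomorphic to some $e_v$ with $v|W$ (by the normal form of elementary factorizations, strong isomorphism implying isomorphism in $\hef(R,W)$). Since $R$ is a GCD domain and $(W_0,W_c)=(1)$, the divisor $v$ factors as $v\sim v_0v_c$ with $v_0|W_0$, $v_c|W_c$ and $(v_0,v_c)=(1)$ (because $\gcd(v,W_0W_c)=\gcd(v,W_0)\gcd(v,W_c)$ for coprime $W_0,W_c$). Setting $p:=v_0$, $v:=v_c$ and $u:=W/(v_0v_c)$, one has $W=puv$ with $p$ coprime to both $u$ and $v$: indeed $v_c$ and $W_c/v_c$ are coprime to the divisor $v_0$ of $W_0$, and $v_0$ is coprime to $W_0/v_0$ precisely because $W_0$ is non-critical. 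Then taking $b=p$, $c=1$ in Proposition \ref{prop:EFcrit1}, part 1 gives $e_v=e_{v_0v_c}\simeq_{\hef(R,W)} e_{v_c}$, and $e_{v_c}$ is an object of the full subcategory $\hef_{W_c}(R,W)=\HEF_{W_c}(R,W)^{\0}$ of $\hef(R,W)$ consisting of those $e_w$ with $w|W_c$.

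For injectivity I would invoke Proposition \ref{prop:fact} for the reduced partition $W=W_cW_0$: it yields an $R$-linear $\Z_2$-graded equivalence $\HEF(R,W_c)\simeq\HEF_{W_c}(R,W)$ which is bijective on objects. Restricting to even morphisms gives an $R$-linear equivalence $\hef(R,W_c)\simeq\hef_{W_c}(R,W)$, still bijective on objects, hence inducing a bijection on isomorphism classes. Since $\hef_{W_c}(R,W)$ is a \emph{full} subcategory of $\hef(R,W)$, two objects $e_{v_c},e_{v_c'}$ with $v_c,v_c'|W_c$ are isomorphic in $\hef(R,W)$ if and only if they are isomorphic in $\hef_{W_c}(R,W)$, hence if and only if they are isomorphic in $\hef(R,W_c)$. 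Combining this with the surjectivity step gives the desired bijection $\ccHef(R,W)\cong\ccHef(R,W_c)$, and therefore $\chN(R,W)=\chN(R,W_c)$.

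The only step requiring care — and hardly an obstacle — is the verification of the hypotheses of Proposition \ref{prop:EFcrit1}, part 1 for the pair $(e_{v_0v_c},e_{v_c})$: one must check that the quadruple $(a',b,c,d')$ produced from the data \eqref{param} is pairwise coprime and that $(bc,s)=(1)$. This unwinds to the facts that $v_0$ is coprime to $W_0/v_0$ (since $W_0$ is non-critical), that divisors of $W_0$ and divisors of $W_c$ are coprime, and that $\gcd$ respects these splittings; all of these are immediate. Beyond this bookkeeping the argument is formally identical to that of Lemma \ref{lemma:NeWc}, now carried out with even isomorphisms throughout.
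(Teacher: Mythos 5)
Your proof is correct and follows essentially the same route as the paper, whose proof of this lemma simply states that it is identical to that of Lemma \ref{lemma:NeWc}; your key observation — that the isomorphism $e_{pv}\simeq e_v$ obtained from Proposition \ref{prop:EFcrit1}, part 1 is even, so the argument transfers verbatim to $\hef(R,W)$ — is exactly what makes that remark work. You are in fact somewhat more careful than the paper, which leaves implicit the step you handle via Proposition \ref{prop:fact} (that isomorphism in $\hef(R,W)$ between objects of $\hef_{W_c}(R,W)$ agrees with isomorphism in $\hef(R,W_c)$).
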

\begin{proof}
The proof is identical to that of Lemma \ref{lemma:NeWc}.
\end{proof}

\

\begin{Definition}
Let $T$ be a non-empty set. A map $f:\Ob \EF(R,W)\rightarrow
T$ is called an {\em even elementary invariant} if $f(e_1)=f(e_2)$ for any
  $e_1,e_2\in \Ob\EF(R,W)$ such that $e_1\simeq_{\hef(R,W)} e_2$.
An even elementary invariant $f$ is called {\em complete} if the map
$\underline{f}:\ccHef(R,W)\rightarrow T$ induced by $f$ is injective.
\end{Definition}

\

\noindent As in the previous subsection, we will compute $\chN(R,W)$
by constructing an {\em even} complete elementary invariant.

\

\begin{Definition}
The {\em even divisorial invariant} of an elementary factorization $e$ of
$W$ is the element $\cch(e)$ of the set $\Div_1(W)\times
\cP(I)^2$ defined through:
\be
\cch(e)=(s(e),I_x(e),I_y(e))~~.
\ee
This gives a map $\cch:\EF(R,W)\rightarrow \Div_1(W)\times \cP(I)^2$. 
\end{Definition} 

\

\begin{Lemma}
The even divisorial invariant $\cch:\Ob \EF(R,W)\rightarrow
\Div_1(W)\times \cP(I)^2$ is a {\em complete} even elementary
invariant.
\end{Lemma}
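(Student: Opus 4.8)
The plan is to show that $\cch$ is an even elementary invariant (i.e. constant on isomorphism classes of $\hef(R,W)$) and that the induced map on $\ccHef(R,W)$ is injective, so that it is complete. For the first part, suppose $e_1 \simeq_{\hef(R,W)} e_2$. By Proposition \ref{prop:ssII}, the equivalence between statements 1 and 3 (in the $\hef$ version) tells us precisely that $e_1$ and $e_2$ have the same triple $(s,I_x,I_y)$, which is exactly the statement that $\cch(e_1) = \cch(e_2)$. Hence $\cch$ is an even elementary invariant and the induced map $\underline{\cch}:\ccHef(R,W)\to \Div_1(W)\times \cP(I)^2$ is well-defined.

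For completeness, I would argue the converse inclusion. Suppose $e_1, e_2 \in \Ob\EF(R,W)$ satisfy $\cch(e_1) = \cch(e_2)$, i.e. $s(e_1)=s(e_2)$, $I_x(e_1)=I_x(e_2)$ and $I_y(e_1)=I_y(e_2)$. I want to conclude $e_1 \simeq_{\hef(R,W)} e_2$. Again this is exactly the implication $3.\Rightarrow 2.\Rightarrow 1.$ of Proposition \ref{prop:ssII} in its $\hef(R,W)$ (ordered) version: from $s(e_1)=s(e_2)=:s=\prod_{i\in I(s)} p_i^{m_i}$ together with the equality of the \emph{ordered} pair $(I_x,I_y)$, formula \eqref{xy} applied to $e_1$ and to $e_2$ — using $m_i(e_1)=\ord_{p_i} s = m_i(e_2)$ — forces $x(e_1)=x(e_2)$ and $y(e_1)=y(e_2)$; that is, the two factorizations have the same ordered data $(s,x,y)$, and by Proposition \ref{prop:EFcrit2}(1) (equivalently the $1.\Leftrightarrow 2.$ part of Proposition \ref{prop:ssII}) they are isomorphic in $\hef(R,W)$. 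Therefore $\underline{\cch}$ is injective.

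The main subtlety — though it is not really an obstacle, since the earlier results do the work — is keeping careful track of the \emph{ordered} versus \emph{unordered} distinction: the proof of Proposition \ref{prop:ssII} handles the $\HEF$ case with the unordered pair $\{I_x,I_y\}$ by splitting into the two cases $I_x(e_1)=I_x(e_2),\,I_y(e_1)=I_y(e_2)$ and $I_x(e_1)=I_y(e_2),\,I_y(e_1)=I_x(e_2)$; here, for $\hef(R,W)$, only the first case is relevant because $\cch$ records the ordered triple, and the second (odd) case is precisely what is excluded when we restrict to even morphisms. Thus the proof is simply an invocation of Proposition \ref{prop:ssII} together with the definitions. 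I would write:

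\begin{proof}
This is a reformulation of Proposition \ref{prop:ssII}. If $e_1\simeq_{\hef(R,W)} e_2$, then by the equivalence of statements $1.$ and $3.$ of that proposition (in the $\hef(R,W)$ version), $e_1$ and $e_2$ have the same ordered triple $(s(e),I_x(e),I_y(e))$, i.e. $\cch(e_1)=\cch(e_2)$; hence $\cch$ is an even elementary invariant. Conversely, if $\cch(e_1)=\cch(e_2)$, then $s(e_1)=s(e_2)$, $I_x(e_1)=I_x(e_2)$ and $I_y(e_1)=I_y(e_2)$; writing $s:=s(e_1)=s(e_2)=\prod_{i\in I(s)} p_i^{m_i}$ and applying \eqref{xy} to $e_1$ and to $e_2$, together with $m_i(e_1)=\ord_{p_i} s=m_i(e_2)$, gives $x(e_1)=x(e_2)$ and $y(e_1)=y(e_2)$, so $e_1$ and $e_2$ have the same $(s,x,y)$ and are therefore isomorphic in $\hef(R,W)$ by the equivalence of statements $2.$ and $1.$ of Proposition \ref{prop:ssII}. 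Hence the induced map $\underline{\cch}:\ccHef(R,W)\rightarrow \Div_1(W)\times \cP(I)^2$ is injective, i.e. $\cch$ is a complete even elementary invariant. \qed
\end{proof}
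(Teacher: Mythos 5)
Your proof is correct and takes essentially the same route as the paper, which simply cites Proposition \ref{prop:ssII}: isomorphism in $\hef(R,W)$ is equivalent to equality of the ordered data $(s,x,y)$, which is equivalent to equality of $(s,I_x,I_y)$, i.e. of the even divisorial invariants. Your extra unwinding of the implication $3.\Rightarrow 2.$ via \eqref{xy} merely repeats a step already contained in the proof of that proposition.
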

\begin{proof}
By Proposition \ref{prop:ssII}, two elementary factorizations of $W$
are isomorphic in $\hef(R,W)$ iff they have the same $(s,x,y)$, which in
turn is equivalent with coincidence of their even elementary invariants. 
\qed
\end{proof}

\

\noindent Using the essence $z(e)$ defined in \eqref{chs}, each
elementary factorization $e_v$ of $W$ determines an elementary
factorization $\essred(e)\eqdef e_{v/z(e)}$ of $\widetilde{W}\eqdef
W/z(e)^2$ (see Definition \ref{def:essred}).  For any normalized
critical divisor $z$ of $W$, let $\hef_z(R,W)$ denote the full
subcategory of $\hef(R,W)$ consisting of those elementary
factorizations whose essence equals $z$ and let $\ccHef_z(R,W)$ be its
set of isomorphism classes.

\

\begin{Lemma}
\label{lemma:chs2Isoc} For any critical divisor $z$ of $W$ such that
$(z,W/z^2)=(1)$, the map $\essred$ induces a well-defined bijection
$\check{e}ssred_z:\ccHef_z(R,W)\stackrel{\sim}{\rightarrow}
\ccHef_1(R,W/z^2)$.
\end{Lemma}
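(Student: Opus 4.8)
The plan is to transcribe the proof of Lemma \ref{lemma:chs2Iso} almost verbatim, replacing $\HEF(R,W)$ by $\hef(R,W)$, the complete elementary invariant $h$ by the complete \emph{even} elementary invariant $\cch$, and the symmetrized datum $(s,\{I_x,I_y\})$ by the ordered triple $(s,I_x,I_y)$ throughout. Recall from the paragraph preceding Lemma \ref{lemma:chs2Iso} that for $e:=e_v$ with $z(e)=z$ the essential reduction $\essred(e)=e_{v/z}$ is an essential factorization of $\widetilde{W}\eqdef W/z^2$, and that $s(\essred(e))=s(e)/z$, $v'(\essred(e))=v'(e)$, $u'(\essred(e))=u'(e)$; since $(z,v'(e))_1=(z,u'(e))_1=1$ this gives $x(\essred(e))=x(e)$, $y(\essred(e))=y(e)$, hence $I_x(\essred(e))=I_x(e)$ and $I_y(\essred(e))=I_y(e)$. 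Thus $\essred$ changes the even divisorial invariant only in its $\Div_1(W)$-component, by division by $z$: $\cch(\essred(e))=(s(e)/z,\,I_x(e),\,I_y(e))$.

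First I would check that $\check{e}ssred_z$ is well-defined. Let $e_1:=e_{v_1}$, $e_2:=e_{v_2}$ be elementary factorizations of $W$ with $z(e_1)=z(e_2)=z$ and $e_1\simeq_{\hef(R,W)}e_2$. By Proposition \ref{prop:ssII} (in its $\hef$ form) this is equivalent to $(s(e_1),I_x(e_1),I_y(e_1))=(s(e_2),I_x(e_2),I_y(e_2))$; dividing the common value of $s$ by $z$ and keeping the index sets fixed gives the same equality for $e_3:=\essred(e_1)$ and $e_4:=\essred(e_2)$, and since $\cch$ is a complete even elementary invariant on $\widetilde{W}$ this yields $e_3\simeq_{\hef(R,\widetilde{W})}e_4$. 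Hence $\essred$ descends to a map $\ccHef_z(R,W)\to\ccHef_1(R,\widetilde{W})$.

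Next I would prove bijectivity in two steps. For surjectivity: given an essential elementary factorization $e_v$ of $\widetilde{W}$, the factorization $e_{zv}$ of $W$ has essence $z$ (using $(z,W/z^2)=(1)$, exactly as in the text, via $z$ being coprime to $v'(e_{zv})=v'(e_v)$ and $u'(e_{zv})=u'(e_v)$ together with \eqref{f130}), so it lies in $\hef_z(R,W)$ and $\essred(e_{zv})=e_v$ by Definition \ref{def:essred}. For injectivity: if $e_3:=e_{v_3}$ and $e_4:=e_{v_4}$ are essential factorizations of $\widetilde{W}$ with $e_3\simeq_{\hef(R,\widetilde{W})}e_4$, put $e_1:=e_{zv_3}$ and $e_2:=e_{zv_4}$; by Proposition \ref{prop:ssII} we have $(s(e_3),I_x(e_3),I_y(e_3))=(s(e_4),I_x(e_4),I_y(e_4))$, and multiplying the common $s$ by $z$ while keeping the index sets yields $(s(e_1),I_x(e_1),I_y(e_1))=(s(e_2),I_x(e_2),I_y(e_2))$ with $z(e_1)=z(e_2)=z$, so $e_1\simeq_{\hef(R,W)}e_2$ by the same proposition. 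This gives the claimed bijection.

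I do not expect a genuine obstacle: the argument is a direct adaptation of Lemma \ref{lemma:chs2Iso}, the only point requiring care being that one must work with the ordered triple $(s,I_x,I_y)$ throughout rather than the symmetrized datum, which is precisely what the $\hef$-part of Proposition \ref{prop:ssII} provides. The one computational check worth doing carefully is the transformation rule $\cch(\essred(e))=(s(e)/z,I_x(e),I_y(e))$ and its inverse for $e_{zv}$ with $e_v$ essential; both rest on the coprimality $(z,W/z^2)=(1)$, which ensures that passing between $e$ and $\essred(e)$ neither creates nor destroys prime factors in the $I_x$- or $I_y$-parts.
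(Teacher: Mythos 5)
Your proposal is correct and follows essentially the same route as the paper, which simply states that the proof is "almost identical" to that of Lemma \ref{lemma:chs2Iso} with the symmetrized invariant $(s,\{I_x,I_y\})$ replaced by the ordered triple $(s,I_x,I_y)$ via the $\hef$-part of Proposition \ref{prop:ssII}. Your write-up just makes that adaptation explicit, and the key computational check $\cch(\essred(e))=(s(e)/z,I_x(e),I_y(e))$ is exactly the content already established in the paragraph preceding Lemma \ref{lemma:chs2Iso}.
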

\begin{proof} The proof is almost identical to that of Lemma
\ref{lemma:chs2Iso}, but taking into account that in $\hef(R,W)$ we
deal only with the even morphisms of $\HEF(R,W)$. \qed
\end{proof}
Let:
\ben
\chS \eqdef \im\, h\subset \Div_1(W)\times \cP(I)^2~~.
\een
For a subset $J\subset I^\0$, let:
\be
\chS_J\eqdef \cch(\ccHef_{z_J}(R,W))\subset \chS~~,
\ee
where $z_J$ was defined in \eqref{f151}. Define $\chN_J(R,W)\eqdef
|\chS_{J}|$ and
$\chN_\emptyset(R,W)=|\cch(\ccHef_1(R,W))|=|\ccHef_1(R,W)|$, where the
last equality holds since $\cch$ is a complete even elementary
invariant. We can again compute $\chN(R,W)$ in terms of
$\chN_\emptyset(R,W)$:

\

\begin{Proposition}
\label{prop:N1toNc}
We have: 
\ben
\label{f183}
\chN(R,W) = \sum_{J\subset I^{\0}} \chN_\emptyset (R,W/z_J^2)~~.
\een
\end{Proposition}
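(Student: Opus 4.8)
The plan is to follow verbatim the argument for Proposition~\ref{prop:N1toN}, with $\HEF(R,W)$ replaced by $\hef(R,W)$, the divisorial invariant $h$ replaced by the even divisorial invariant $\cch$, and Lemma~\ref{lemma:chs2Iso} replaced by Lemma~\ref{lemma:chs2Isoc}.

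First I would note that the essence $z(e)$ is an even elementary invariant: it is already an elementary invariant by Proposition~\ref{prop:zeleminvar}, and any isomorphism in $\hef(R,W)$ is in particular an isomorphism in $\HEF(R,W)$ (the two categories have the same objects, and the even morphisms of $\HEF(R,W)$ are among its morphisms), so $z$ is constant on the isomorphism classes of $\hef(R,W)$. Hence $z$ descends to a well-defined map $\ccHef(R,W)\to\Div_1(W)$.

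Next I would pin down the image of this map. As in the discussion preceding Lemma~\ref{lemma:chs2Iso}, every essence $z(e)$ is a normalized critical divisor of $W$ satisfying $(z(e),W/z(e)^2)=(1)$. Conversely, writing a normalized critical divisor as $z=\prod_{i\in I(z)}p_i^{m_i}$ with each $m_i\geq 1$, the relation $z^2\mid W$ forces $I(z)\subset I^{\0}$, and then $(z,W/z^2)=(1)$ forces $n_i-2m_i=0$ for every $i\in I(z)$; that is, $z=z_J$ for $J=I(z)\subset I^{\0}$, with $z_J$ as in \eqref{f151}. Consequently the essence map partitions the set of isomorphism classes:
\[
\ccHef(R,W)=\bigsqcup_{J\subset I^{\0}}\ccHef_{z_J}(R,W)~~,
\]
where, by definition, $\ccHef_{z_J}(R,W)$ is the set of isomorphism classes of objects of $\hef(R,W)$ whose essence equals $z_J$.

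The conclusion is then immediate. For each $J\subset I^{\0}$, Lemma~\ref{lemma:chs2Isoc} provides a bijection $\ccHef_{z_J}(R,W)\stackrel{\sim}{\rightarrow}\ccHef_1(R,W/z_J^2)$, and $|\ccHef_1(R,W/z_J^2)|=\chN_\emptyset(R,W/z_J^2)$ by the definition of $\chN_\emptyset$; summing cardinalities over the disjoint union above yields \eqref{f183}. Since all the substantive content — the well-definedness, injectivity and surjectivity of the essential-reduction map in the even setting — is already packaged inside Lemma~\ref{lemma:chs2Isoc}, I do not anticipate any genuine obstacle; the only point meriting a sentence of justification is that the essence map partitions $\ccHef(R,W)$ exactly over the subsets of $I^{\0}$, which is the elementary divisibility computation carried out above.
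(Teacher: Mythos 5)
Your proof is correct and follows the paper's own (very terse) argument: the paper simply cites Lemma \ref{lemma:chs2Isoc} together with the partition of isomorphism classes by essence, which is exactly what you spell out. The only cosmetic quibble is that $z^2\mid W$ alone does not force $I(z)\subset I^{\0}$ (a critical divisor may involve a prime of odd order); it is the combination with $(z,W/z^2)=(1)$, which gives $n_i=2m_i$, that yields $I(z)\subset I^{\0}$ and $z=z_J$ — your argument supplies this immediately afterwards, so the conclusion stands.
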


\begin{proof}
Follows from Lemma \ref{lemma:chs2Isoc}. \qed
\end{proof}

\

\noindent Define:
\be
\chS_\emptyset=\big\{\cch(e) \,\big|\, e\in \Ob \HEF(R,W)~\mathrm{and}~  z(e)=1\big\}
\ee
and:
\be
\chS_{\emptyset,k}\eqdef \big\{\cch(e) \,\big|\, e\in \Ob \HEF(R,W)\,,~ z(e)=1~\mathrm{and}~ |I_s(e)|=k\big\}~~.
\ee

\begin{Lemma}
\label{lemma:N0c}
For $k\geq 1$, we have:
\ben\label{f120c}
|\chS_{\emptyset,k}|=2^{k}\cdot \sum_{\substack{K\subset I,\\ |K|=k}}\prod_{i\in K} \big\lfloor\frac{n_i-1}{2}\big\rfloor ~~.
\een
\end{Lemma}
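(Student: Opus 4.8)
\noindent The plan is to follow the proof of Lemma~\ref{lemma:N0} almost verbatim, the single change being that the even divisorial invariant $\cch(e)=(s(e),I_x(e),I_y(e))$ records the \emph{ordered} pair $(I_x(e),I_y(e))$, whereas the invariant $h(e)$ used in Lemma~\ref{lemma:N0} recorded only the unordered pair $\{I_x(e),I_y(e)\}$; this is exactly what turns the factor $2^{k-1}$ of \eqref{f120} into the factor $2^{k}$ of \eqref{f120c}. First I would fix a subset $K\subset I$ with $|K|=k$ and a normalized divisor $s$ of $W$ with $I(s)=K$, writing $m_i\eqdef\ord_{p_i}(s)\geq 1$ for $i\in K$. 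For a factorization $e$ with $s(e)=s$ and $z(e)=1$, essentiality together with the inversion formulas for $\ord_{p_i}s$ forces $n_i-2m_i\geq 1$, i.e. $1\leq m_i\leq\big\lfloor\frac{n_i-1}{2}\big\rfloor$ for every $i\in K$; hence there are exactly $\prod_{i\in K}\big\lfloor\frac{n_i-1}{2}\big\rfloor$ admissible divisors $s$ with $I(s)=K$.

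Next I would introduce, in analogy with the proof of Lemma~\ref{lemma:N0},
\[
\chS_{\emptyset,k,s}\eqdef\big\{\cch(e)\;\big|\;e\in\Ob\HEF(R,W):\ z(e)=1,\ |I_s(e)|=k,\ s(e)=s\big\}~~,
\]
so that $\chS_{\emptyset,k}=\bigsqcup_{s}\chS_{\emptyset,k,s}$ over normalized $s$ with $|I(s)|=k$, and reduce the statement to the equality $|\chS_{\emptyset,k,s}|=2^{k}$. For any $e$ counted by $\chS_{\emptyset,k,s}$, the condition $z(e)=1$ means $I_z(e)=\emptyset$, i.e. $I_x(e)\sqcup I_y(e)=I_s(e)=I(s)=K$, so $(I_x(e),I_y(e))$ is an ordered partition of the $k$-element set $K$ into two (possibly empty) blocks; there are $2^{k}$ such, and since $\cch$ remembers this ordered pair together with $s$, distinct ordered partitions give distinct elements of $\chS_{\emptyset,k,s}$, whence $|\chS_{\emptyset,k,s}|\leq 2^{k}$. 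For the reverse inequality I would show that every ordered partition $K=I_x\sqcup I_y$ is realized, by taking
\[
v\eqdef\prod_{i\in I_x}p_i^{n_i-m_i}\cdot\prod_{i\in I_y}p_i^{m_i}\quad\text{and}\quad u\eqdef W/v~~;
\]
a direct check using $1\leq m_i\leq\big\lfloor\frac{n_i-1}{2}\big\rfloor$ gives $s(e_v)=s$, $v'(e_v)=\prod_{i\in I_x}p_i^{n_i-2m_i}$ and $u'(e_v)=\prod_{i\in I_y}p_i^{n_i-2m_i}\prod_{i\in I\setminus K}p_i^{n_i}$, so that \eqref{xy} and \eqref{chs} yield $I_x(e_v)=I_x$, $I_y(e_v)=I_y$ and $I_z(e_v)=\emptyset$ (hence $z(e_v)=1$), i.e. $\cch(e_v)=(s,I_x,I_y)$.

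Summing over $K$ then gives
\[
|\chS_{\emptyset,k}|=\sum_{\substack{K\subset I\\|K|=k}}2^{k}\prod_{i\in K}\big\lfloor\frac{n_i-1}{2}\big\rfloor=2^{k}\sum_{\substack{K\subset I\\|K|=k}}\prod_{i\in K}\big\lfloor\frac{n_i-1}{2}\big\rfloor~~,
\]
which is \eqref{f120c}. The only genuinely new ingredient compared with Lemma~\ref{lemma:N0} is the realizability step: one must verify that the explicit $v$ above has \emph{exactly} the prescribed index sets $I_x$, $I_y$ and trivial essence, and this is where the strict bound $n_i-2m_i\geq 1$ on $K$ enters. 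I expect this realizability check to be the main (though mild) obstacle; everything else is the bookkeeping already carried out in Lemma~\ref{lemma:N0}, with the $2{:}1$ symmetrization $(I_x,I_y)\sim(I_y,I_x)$ removed.
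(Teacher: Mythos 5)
Your proof is correct and follows essentially the same route as the paper: fix $K$ and $s$, observe that essentiality forces $1\leq m_i\leq\lfloor(n_i-1)/2\rfloor$, and count the $2^k$ ordered partitions $K=I_x\sqcup I_y$, which is exactly where the unordered-pair factor $2^{k-1}$ of Lemma~\ref{lemma:N0} becomes $2^k$. Your explicit realizability check (constructing $v$ for each ordered partition) is a detail the paper's proof merely asserts via the claimed bijection, so it is a welcome but not divergent addition.
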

\begin{proof} The proof is similar to that of Lemma \ref{lemma:N0}.
Consider a subset $K\subset I(s)$ of cardinality $k$. As in Lemma \ref{lemma:N0},
there are $\prod_{i\in K}\big\lfloor\frac{n_i-1}{2}\big\rfloor$ different
possibilities for $s$ such that $I(s)=K$. Fixing the set $I(s)$ with
$|I(s)|=k$, we have a set $\cP(I(s))$ of $2^k$ partitions
$I(s)=I_x\sqcup I_y$. Define:
\be
\chS_{\emptyset,k,s}=\big\{\cch(e) \,\big|\, e\in \Ob \,\hef(R,W)~ ,~~ z(e)=1~,~~ |I_s(e)|=k~~\mathrm{and}~~s(e)=s\, \big\}~~.
\ee
The map which sends a partition $\beta=(I_1,I_2)$ of $I(s)$ to the
element $\alpha=(s,I_1,I_2)$ is a bijection. We compute:
\be
|\chS_{\emptyset,k,s}|=|\cP(I(s))|=2^k~~.
\ee
This holds for any $s$ with $I(s)=K$, where $K\subset I$ has
cardinality $k$. Since $\chS_{\emptyset,k}=\sqcup_s \chS_{\emptyset
,k, s}$ and since the cardinality $|\chS_{\emptyset,k,s}|$ does not
depend on $s$, we find:
\be
|\chS_{\emptyset,k}|=\sum_s |\chS_{\emptyset ,k,s}|= 2^{k}\cdot \sum_{K\subset I: |K|=k}\prod_{i\in K} \big\lfloor\frac{n_i-1}{2}\big\rfloor ~~.
\ee 
\qed
\end{proof}

\

\noindent An immediate consequence is the following:

\

\begin{Proposition}
\label{prop:Ss_carc}
With the definitions above, we have: 
\ben \label{f126c}
\chN_\emptyset(R,W)=|\chS_\emptyset|=\sum_{k=0}^r 2^{k}\sum_{\substack{K\subset I\\|K|=k }}  \prod_{i\in K} \big\lfloor\frac{n_i-1}{2}\big\rfloor ~~.
\een
\end{Proposition}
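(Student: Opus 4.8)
The plan is to mirror the argument used for Proposition~\ref{prop:Ss_car}, partitioning $\chS_\emptyset$ according to the cardinality $k=|I_s(e)|$ of the index set of the normalized greatest common divisor $s(e)$. From the definitions of $\chS_\emptyset$ and $\chS_{\emptyset,k}$ one has the disjoint decomposition
\[
\chS_\emptyset=\bigsqcup_{k=0}^{r}\chS_{\emptyset,k}~~,
\]
since every elementary factorization $e$ with $z(e)=1$ determines an index set $I_s(e)\subset I$ of a unique size $k\in\{0,\ldots,r\}$. Summing cardinalities then gives $|\chS_\emptyset|=\sum_{k=0}^r|\chS_{\emptyset,k}|$, and since $\cch$ is a complete even elementary invariant (Proposition~\ref{prop:ssII}) this number equals $\chN_\emptyset(R,W)$.

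Next I would dispose of the boundary case $k=0$ by hand. An elementary factorization $e$ with $z(e)=1$ and $|I_s(e)|=0$ has $s(e)=1$, hence $v'(e)=v(e)$, $u'(e)=u(e)$ and $x(e)=y(e)=1$, so that $I_x(e)=I_y(e)=\emptyset$; thus $\cch(e)=(1,\emptyset,\emptyset)$ is the unique element of $\chS_{\emptyset,0}$ and $|\chS_{\emptyset,0}|=1$. This matches the $k=0$ term of the claimed formula, namely $2^0\prod_{i\in\emptyset}\big\lfloor\frac{n_i-1}{2}\big\rfloor=1$ (empty product). For $k\geq 1$, Lemma~\ref{lemma:N0c} supplies $|\chS_{\emptyset,k}|=2^{k}\sum_{K\subset I,\,|K|=k}\prod_{i\in K}\big\lfloor\frac{n_i-1}{2}\big\rfloor$.

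Finally, I would assemble the pieces:
\[
\chN_\emptyset(R,W)=|\chS_\emptyset|=\sum_{k=0}^{r}|\chS_{\emptyset,k}|=\sum_{k=0}^{r}2^{k}\sum_{\substack{K\subset I\\|K|=k}}\prod_{i\in K}\Big\lfloor\frac{n_i-1}{2}\Big\rfloor~~.
\]
Since the genuine combinatorics is already carried out in Lemma~\ref{lemma:N0c} (and the interpretation of $|\chS_\emptyset|$ as a count of isomorphism classes rests on Proposition~\ref{prop:ssII}), there is no real obstacle here; the only point requiring attention is the bookkeeping at $k=0$, where — in contrast with Proposition~\ref{prop:Ss_car} — the coefficient $2^{k}$ remains an integer at $k=0$, so the trivial factorization is absorbed into the sum rather than written as a separate leading $1$.
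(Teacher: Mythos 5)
Your proposal is correct and matches the paper's treatment: the paper presents this Proposition as an immediate consequence of Lemma~\ref{lemma:N0c} via exactly the decomposition $\chS_\emptyset=\sqcup_k \chS_{\emptyset,k}$, mirroring the proof of Proposition~\ref{prop:Ss_car}. Your explicit handling of the $k=0$ term (the unique class $(1,\emptyset,\emptyset)$, absorbed into the sum because $2^{0}=1$, unlike the $2^{k-1}$ coefficient in the ungraded case) is precisely the bookkeeping the paper leaves implicit.
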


\

\noindent We are now ready to compute $\chN(R,W)$.

\

\begin{Theorem}
\label{thm:Nc}
The number of isomorphism classes of the category $\hef(R,W)$ for a critically-finite $W$ as in \eqref{f129} is given by:
\ben
\label{Nfinalc}
\chN(R,W)=\sum_{k=0}^{r^\1}\sum_{\substack{K\subsetneq I, \\|K^\1|=k}} 2^{r^{\0}+k} \prod_{i\in K} \big\lfloor\frac{n_i-1}{2}\big\rfloor ~~,
\een
\end{Theorem}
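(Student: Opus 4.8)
The plan is to follow the proof of Theorem~\ref{thm:N} almost verbatim, with the symmetric bookkeeping there (unordered pairs $\{I_x,I_y\}$) replaced by the ordered bookkeeping $(I_x,I_y)$ appropriate to $\hef(R,W)$. By Lemma~\ref{lemma:NeWcc} we may assume from the outset that $W=W_c$ has the form \eqref{f129}, so the whole argument rests on combining Proposition~\ref{prop:N1toNc} with Proposition~\ref{prop:Ss_carc}. The former writes $\chN(R,W)$ as the sum over subsets $J\subset I^\0$ of the numbers $\chN_\emptyset(R,W/z_J^2)$ attached to the essential reductions of $W$, and the crucial point is that $W/z_J^2=\prod_{i\in I\setminus J}p_i^{n_i}$ retains the \emph{same} exponents $n_i$ for every prime it still contains. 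Hence Proposition~\ref{prop:Ss_carc}, applied to $\widetilde W=W/z_J^2$, reads
\[
\chN_\emptyset\bigl(R,W/z_J^2\bigr)=\sum_{K\subset I\setminus J}2^{|K|}\prod_{i\in K}\Bigl\lfloor\tfrac{n_i-1}{2}\Bigr\rfloor,
\]
with the floor factors unchanged as $J$ varies.

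Substituting this into \eqref{f183} and interchanging the two summations, I would fix $K\subset I$, put $K^\0:=K\cap I^\0$ and $K^\1:=K\cap I^\1$, and count the $J\subset I^\0$ for which $K$ contributes to the inner sum, namely those with $J\cap K=\emptyset$ --- that is, the subsets of $I^\0\setminus K^\0$, of which there are $2^{r^\0-|K^\0|}$ by the binomial identity $\sum_j\binom{m}{j}=2^m$. The coefficient of $\prod_{i\in K}\lfloor(n_i-1)/2\rfloor$ in $\chN(R,W)$ is therefore $2^{|K|}\cdot 2^{r^\0-|K^\0|}=2^{r^\0+|K^\1|}$, using $|K|=|K^\0|+|K^\1|$. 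Organizing the resulting sum over $K$ by the value $k=|K^\1|\in\{0,\dots,r^\1\}$ gives exactly \eqref{Nfinalc}.

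The substance all sits in the results already quoted: the essence $z(e)$ and the even divisorial invariant $\cch$, the criterion of Proposition~\ref{prop:ssII} identifying isomorphism in $\hef(R,W)$ with coincidence of the triple $(s,I_x,I_y)$, the bijection $\ccHef_z(R,W)\stackrel{\sim}{\rightarrow}\ccHef_1(R,W/z^2)$ of Lemma~\ref{lemma:chs2Isoc}, and the count in Lemma~\ref{lemma:N0c} --- where the weight $2^k$, rather than the $2^{k-1}$ of Lemma~\ref{lemma:N0}, is precisely what records that the ordered pair $(I_x,I_y)$ is no longer identified with $(I_y,I_x)$. What remains is the bookkeeping above, and the one step deserving care is the reindexing: one must verify that exchanging the outer sum over $J$ with the inner sum over $K$ causes no cancellation or double counting, which is immediate once one observes that a given $K$ occurs in $\chN_\emptyset(R,W/z_J^2)$ exactly when $K\subset I\setminus J$, always with the same weight $2^{|K|}\prod_{i\in K}\lfloor(n_i-1)/2\rfloor$. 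I do not anticipate a genuine obstacle: the proof is a routine specialization of that of Theorem~\ref{thm:N}.
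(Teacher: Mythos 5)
Your proposal is correct and follows essentially the same route as the paper: reduce to $W=W_c$, combine Proposition~\ref{prop:N1toNc} with Proposition~\ref{prop:Ss_carc}, interchange the sums over $J$ and $K$, and observe that a fixed $K$ contributes for exactly the $2^{r^\0-|K^\0|}$ subsets $J\subset I^\0\setminus K^\0$, giving the coefficient $2^{|K|}\cdot 2^{r^\0-|K^\0|}=2^{r^\0+|K^\1|}$. The paper performs the identical count via $\sum_j\binom{r^\0-k^\0}{j}=2^{r^\0-k^\0}$, so there is no substantive difference.
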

\begin{proof}
Using Proposition \ref{prop:N1toNc} and Proposition \ref{prop:Ss_carc},
we write:
\ben
\label{f125c}
\chN(R,W) =   \sum_{J\subset I^{\0}} \big( \sum_{k=0}^{r-j} 2^{k}\sum_{\substack{K\subset I\backslash J\\|K|=k }}  \prod_{i\in K} \big\lfloor\frac{n_i-1}{2}\big\rfloor \big)~~,
\een
where $j\eqdef |J|$. 
Consider a subset
$K\subset I$ as in \eqref{f126c}. Such an index set $K=K^{\1}\sqcup
K^{\0}\subset I$ of cardinality $k=k^{\1} + k^{\0}$ appears in
$\chN_\emptyset(R,\widetilde{W})$ if $K^{\0}\subset I^{\0}\backslash J$. The
coefficient of $\prod_{i\in K} \big\lfloor\frac{n_i-1}{2}\big\rfloor$
in $\chN(R, \widetilde{W})$ is $2^{k}$ for any of $r^{\0}-k^{\0}
\choose j$ choices of $J$. It follows that this coefficient in
$\chN(R,W)$ is:
\ben\label{f128c}
\sum_{j=0}^{r^{\0}-k^{\0}} 2^{k}{r^{\0}-k^{\0} \choose j } = 2^{k^\0+k^\1} 2^{r^{\0}-k^{\0}}  \\ =
  2^{r^{\0}+k^{\1}}~~.
\een
Together with \eqref{f128c}, relation \eqref{f125c} yields \eqref{Nfinalc}. 
\qed
\end{proof}

\

\section{Some examples}

\noindent In this section, we discuss a few classes of
examples to which the results of the previous sections
apply. Subsection 5.1 considers the ring of complex-valued holomorphic
functions defined on a smooth, non-compact and connected Riemann
surface, which will be discussed in more detail in a separate
paper. Subsection 5.2 considers rings arising through the
Krull-Kaplansky-Jaffard-Ohm construction, which associates to any
lattice-ordered Abelian group a B\'ezout domain having that ordered
group as its group of divisibility. Subsection 5.3 discusses B\'ezout
domains with a specified spectral poset, examples of which can be
produced by a construction due to Lewis.

\subsection{Elementary holomorphic factorizations over a non-compact Riemann surface}

\noindent Let $\Sigma$ be any non-compact connected Riemann surface
(notice that such a surface need not be algebraic and that it may have
infinite genus and an infinite number of ends). It is known that the
cardinal Krull dimension of $\O(\Sigma)$ is independent of 
$\Sigma$ and is greater than or equal to $2^{\aleph_1}$
(see \cite{Henriksen2,Alling1}). The following classical result (see
\cite{Alling2,Remmert}) shows that the $\C$-{\em algebra} of
holomorphic functions entirely determines the complex geometry of
$\Sigma$.

\

\begin{Theorem}[Bers]
Let $\Sigma_1$ and $\Sigma_2$ be two connected non-compact Riemann
surfaces. Then $\Sigma_1$ and $\Sigma_2$ are biholomorphic iff
$\O(\Sigma_1)$ and $\O(\Sigma_2)$ are isomorphic {\em as
  $\C$-algebras}.
\end{Theorem}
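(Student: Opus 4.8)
\noindent The plan is the following. One implication is immediate: if $\Phi\colon\Sigma_1\to\Sigma_2$ is a biholomorphism, then $f\mapsto f\circ\Phi$ is a $\C$-algebra isomorphism $\O(\Sigma_2)\xrightarrow{\sim}\O(\Sigma_1)$. For the converse, suppose we are given a $\C$-algebra isomorphism $\phi\colon\O(\Sigma_1)\to\O(\Sigma_2)$; the goal is to produce a biholomorphism $\psi\colon\Sigma_2\to\Sigma_1$ satisfying $\phi(f)=f\circ\psi$ for all $f\in\O(\Sigma_1)$.

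First I would reconstruct the point set of $\Sigma$ from the algebra $\O(\Sigma)$. For $p\in\Sigma$, evaluation $\ev_p\colon\O(\Sigma)\to\C$ is a surjective $\C$-algebra homomorphism, and $p\mapsto\ev_p$ is injective because holomorphic functions separate points of a connected Riemann surface. The key analytic input — valid precisely because $\Sigma$ is a \emph{non-compact} Riemann surface, so that $\O(\Sigma)$ is ``large'' (in particular a Stein algebra, and a B\'ezout domain as recalled above) — is that \emph{every} $\C$-algebra homomorphism $\O(\Sigma)\to\C$ is of the form $\ev_p$ for a unique $p\in\Sigma$; equivalently, every maximal ideal of $\O(\Sigma)$ with residue field $\C$ is of the form $\{f\in\O(\Sigma): f(p)=0\}$. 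Granting this, define $\psi\colon\Sigma_2\to\Sigma_1$ by declaring $\psi(q)$ to be the unique point of $\Sigma_1$ with $\ev_{\psi(q)}=\ev_q\circ\phi$ as characters of $\O(\Sigma_1)$. Applying the same recipe to $\phi^{-1}$ yields a two-sided inverse, so $\psi$ is a bijection, and unwinding the definition gives $\phi(f)(q)=f(\psi(q))$, i.e. $\phi(f)=f\circ\psi$ for every $f\in\O(\Sigma_1)$.

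It then remains to upgrade the bijection $\psi$ to a biholomorphism. By the Behnke--Stein theorem every non-compact Riemann surface is Stein, and by Narasimhan's embedding theorem $\Sigma_1$ admits a proper holomorphic embedding $\iota=(f_1,f_2,f_3)\colon\Sigma_1\hookrightarrow\C^3$ onto a closed complex submanifold. Then $\iota\circ\psi=(\phi(f_1),\phi(f_2),\phi(f_3))\colon\Sigma_2\to\C^3$ is holomorphic, hence continuous; since $\iota$ is a homeomorphism onto its closed image, $\psi=\iota^{-1}\circ(\iota\circ\psi)$ is continuous, and since $\iota\circ\psi$ is a holomorphic map into $\C^3$ whose image lies in the submanifold $\iota(\Sigma_1)$, it factors through a holomorphic map $\Sigma_2\to\Sigma_1$, which is $\psi$. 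Running the symmetric argument with $\phi^{-1}$ in place of $\phi$ shows $\psi^{-1}$ is holomorphic as well, so $\psi$ is the desired biholomorphism.

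The main obstacle is the analytic input in the second paragraph — the statement that the $\C$-algebra characters of $\O(\Sigma)$ are exhausted by point evaluations (equivalently, that $\O(\Sigma)$ has no ``free'' maximal ideal with residue field $\C$). This is exactly where non-compactness is essential: for compact $\Sigma$ one has $\O(\Sigma)=\C$ and the statement fails. It is itself a substantial classical fact and in the present context would be quoted from the literature cited above; the remaining ingredients (point separation, the Behnke--Stein and Narasimhan embedding theorems, and the soft descent of holomorphy along a closed embedding) are routine.
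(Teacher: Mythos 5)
The paper does not actually prove this theorem: it is quoted as a classical result with references to Alling and Remmert, so there is no internal proof to compare against. Your sketch is a correct outline of the standard argument, and its logical structure is sound: the forward direction by pullback, the reconstruction of points from $\C$-valued characters, and the upgrade of the resulting bijection to a biholomorphism via a proper embedding $\iota=(f_1,f_2,f_3)\colon\Sigma_1\hookrightarrow\C^3$ are all fine. The one place where you defer to the literature --- that every $\C$-algebra character of $\O(\Sigma)$ is a point evaluation --- is indeed the substantive input, and it is fair to quote it; but it is worth noting that, in the context of this paper, it reduces to material already on the table. Given a character $\chi$, the functions $f_i-\chi(f_i)$ have common zero set $\iota^{-1}(\chi(f_1),\chi(f_2),\chi(f_3))$, which is either a single point $p$ or empty; the statement that finitely many elements of $\O(\Sigma)$ with no common zero generate the unit ideal (a consequence of the B\'ezout property of $\O(\Sigma)$ together with the Weierstrass factorization, both invoked in Section 5.1) rules out the empty case and, applied once more to $f-\chi(f),f_1-\chi(f_1),\dots$, forces $\chi=\ev_p$. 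With that observation your second paragraph closes, and the proof is complete modulo only the standard Behnke--Stein and Narasimhan theorems. Two minor points: point separation by $\O(\Sigma)$ holds because $\Sigma$ is non-compact (it is false for compact surfaces, as you note), so state that hypothesis where you first use it; and the "free" maximal ideals of $\O(\Sigma)$ mentioned in Section 5.3 of the paper do exist, but their residue fields are not $\C$, which is why restricting to characters (rather than arbitrary maximal ideals) is the right move.
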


\

\noindent A B\'ezout domain $R$ is called {\em adequate} if for any
$a\in R^\times$ and any $b\in R$, there exist $r,s\in R$ such that
$a=rs$, $(r,b)=(1)$ and any non-unit divisor $s'$ of $s$ satisfies
$(s',b)\neq (1)$. It is known that any adequate B\'ezout domain $R$ is
an elementary divisor domain, i.e.  any matrix with elements from $R$
admits a Hermite normal form.  The following result provides a large
class of examples of non-Noetherian adequate B\'ezout domains:

\

\begin{Theorem}
For any smooth and connected non-compact Riemann surface $\Sigma$, the
ring $\O(\Sigma)$ is an adequate B\'ezout domain and hence an
elementary divisor domain.
\end{Theorem}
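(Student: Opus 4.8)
The plan is to verify directly that $R:=\O(\Sigma)$ is an \emph{adequate} B\'ezout domain; the final clause then follows at once from the fact, quoted just above, that every adequate B\'ezout domain is an elementary divisor domain. That $R$ is a B\'ezout domain is classical and I would treat it as known input: a connected non-compact Riemann surface is a Stein manifold (Behnke--Stein), so the Weierstrass product theorem and Cartan's Theorems~A and~B hold on $\Sigma$. Concretely: $R$ is a domain since $\Sigma$ is connected (identity theorem); the units of $R$ are exactly the nowhere-vanishing holomorphic functions, so a non-zero element is a non-unit iff it has at least one zero; and the zero set $Z(f)$ of any $f\in R^\times$ is discrete and locally finite. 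For the B\'ezout property one reduces to two generators $f,g$ (not both zero), lets $d\in R$ be a function with $\ord_p(d)=\min(\ord_p f,\ord_p g)$ for all $p$ (which exists by the Weierstrass theorem, the divisor $\sum_p\min(\ord_p f,\ord_p g)\,p$ being effective and locally finite), writes $f=dg_1$, $g=dg_2$ with $g_1,g_2$ having no common zero, and invokes Cartan~B to conclude $\langle g_1,g_2\rangle=R$; hence $\langle f,g\rangle=\langle d\rangle$.

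Next I would establish adequacy. Fix $a\in R^\times$ and $b\in R$; I must produce $a=rs$ with $(r,b)=(1)$ such that every non-unit divisor $s'$ of $s$ satisfies $(s',b)\neq(1)$. If $b=0$, take $r=1$ and $s=a$: then $(r,b)=(1)$ and the condition on $s$ holds vacuously, since $(s',0)=(s')\neq(1)$ for every non-unit $s'$. Assume $b\neq 0$ and set $E:=Z(a)\cap Z(b)$, which is discrete and locally finite, being a subset of $Z(a)$. By the Weierstrass theorem on the Stein surface $\Sigma$, choose $s\in R$ with $\ord_p(s)=\ord_p(a)$ for $p\in E$ and $\ord_p(s)=0$ for $p\notin E$. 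Then $\mathrm{div}(s)\leq\mathrm{div}(a)$, so $r:=a/s\in R$, $a=rs$, and $\mathrm{div}(r)$ is supported on $Z(a)\setminus Z(b)$.

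Then I would check the two coprimality conditions. First, $r$ and $b$ have no common zero (every zero of $r$ lies in $Z(a)\setminus Z(b)$ by construction), so $\langle r,b\rangle=R$, i.e. $(r,b)=(1)$, by the Cartan~B argument already used. Second, let $s'$ be any non-unit divisor of $s$; since $s\mid a\neq 0$ we have $s'\neq 0$, so $s'$ vanishes at some $p\in\Sigma$, and from $s'\mid s$ we get $\ord_p(s)\geq\ord_p(s')\geq 1$, hence $p\in Z(s)=E\subseteq Z(b)$. Thus $s'(p)=b(p)=0$, so no $R$-linear combination of $s'$ and $b$ can equal $1$ (evaluate at $p$), i.e. $(s',b)\neq(1)$. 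This shows $R$ is adequate, and therefore an elementary divisor domain by the cited theorem.

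The only genuinely non-elementary ingredient — and the step I expect to be the main obstacle if one insists on full detail — is the complex-analytic input: that $\Sigma$ is Stein and that the Weierstrass product theorem and Cartan's Theorems~A and~B apply, from which both the B\'ezout property and the statement ``no common zeros $\Rightarrow$ unit ideal'' are derived. This is precisely where the hypothesis that $\Sigma$ be a smooth connected non-compact Riemann surface is used (compactness would fail the Weierstrass theorem for effective divisors of large degree). Granting that analytic background, the remainder is a routine manipulation of divisors, the crucial elementary observation being that a non-zero element of $\O(\Sigma)$ is a non-unit exactly when it has a zero, which is what makes the condition on $s$ transparent.
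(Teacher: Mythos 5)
Your proof is correct, but it takes a genuinely different route from the paper's. The paper proves this theorem entirely by citation: adequacy of the ring of entire functions is attributed to Helmer and Henriksen for $\Sigma=\C$, the generalization to arbitrary non-compact Riemann surfaces to Alling's valuation-theoretic work, and the passage from ``adequate B\'ezout'' to ``elementary divisor domain'' is routed either through the $PM^\ast$ property or, alternatively, through Guralnick's stable-range-one result. You instead verify adequacy directly: given $a\neq 0$ and $b$, you split the divisor of $a$ along $Z(a)\cap Z(b)$ versus $Z(a)\setminus Z(b)$ using the Weierstrass product theorem on the Stein surface $\Sigma$, and check the two coprimality conditions by the elementary observation that in $\O(\Sigma)$ the gcd is governed by common zeros, with ``no common zeros implies unit ideal'' supplied by Cartan's Theorem B. This is essentially Helmer's original argument for $\C$, upgraded to general $\Sigma$ via Behnke--Stein, and your treatment of the degenerate case $b=0$ and of non-unit divisors $s'$ of $s$ (nonzero, hence vanishing somewhere in $Z(s)=Z(a)\cap Z(b)\subseteq Z(b)$) is sound. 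What your approach buys is a self-contained proof whose only black boxes are standard complex-analytic facts (Behnke--Stein, Weierstrass, Cartan B) plus the quoted algebraic implication ``adequate B\'ezout $\Rightarrow$ elementary divisor domain''; what the paper's approach buys is brevity and a second, independent derivation of the elementary-divisor property via stable range that does not pass through adequacy at all.
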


\

\begin{proof} The case $\Sigma=\C$ was established in
\cite{Helmer2,Henriksen3}. This generalizes to any Riemann surface
using \cite{Alling1,Alling2}. Since $\O(\Sigma)$ is an adequate B\'ezout
domain, it is also a $PM^\ast$ ring\footnote{A {\em $PM^\ast$}-ring is
  a unital commutative ring $R$ which has the property that any
  non-zero prime ideal of $R$ is contained in a {\em unique} maximal
  ideal of $R$.} \cite{Helmer2} and hence \cite{Zabavsky} an
elementary divisor domain. The fact that $\O(\Sigma)$ is an elementary
divisor domain can also be seen as follows.  Guralnick \cite{G} proved
that $\O(\Sigma)$ is a B\'ezout domain of stable range one. By \cite{EO,R}, 
this implies that $\O(\Sigma)$ is an elementary divisor domain. \qed
\end{proof}

The prime elements of $\O(\Sigma)$ are those holomorphic functions
$f:\Sigma\rightarrow \C$ which have a single simple zero on
$\Sigma$. This follows, for example, from the Weierstrass
factorization theorem on non-compact Riemann surfaces (see
\cite[Theorem 26.7]{FO}).  A critically-finite element $W\in
\O(\Sigma)$ has the form $W=W_0W_c$, where $W_0:\Sigma\rightarrow \C$
is a holomorphic function with (possibly infinite) number of simple
zeroes and no multiple zeroes while $W_c$ is a holomorphic function
which has only a finite number of zeroes, all of which have
multiplicity at least two. All results of this paper apply to this
situation, allowing one to determine the homotopy category $\hef(R,W)$
of elementary D-branes (and to count the isomorphism classes of such)
in the corresponding {\em holomorphic} Landau-Ginzburg model
\cite{lg1,lg2} defined by $(\Sigma,W)$. 

\subsection{Constructions through the group of divisibility}
\label{subsec:div}
Recall that the {\em group of divisibility} $G(R)$ of an integral
domain $R$ is the quotient group $K^\times/U(R)$, where $K$ is the
quotient field of $R$ and $U(R)$ is the group of units of $R$. This is
a partially-ordered Abelian group when endowed with the order induced
by the $R$-divisibility relation, whose positive cone equals
$R^\times/U(R)$.  Equivalently, $G(R)$ is the group of principal
non-zero fractional ideals of $R$, ordered by reverse inclusion. Since
the positive cone generates $G(R)$, a theorem due to Clifford implies
that $G(R)$ is a directed group (see \cite[par. 4.3]{Mott}). It is an
open question to characterize those directed Abelian groups which
arise as groups of divisibility of integral domains. It is known that
$G(R)$ is totally-ordered iff $R$ is a valuation domain, in which case
$G(R)$ is order-isomorphic with the value group of $R$ and the natural
surjection of $K^\times$ to $G(R)$ gives the corresponding
valuation. Moreover, a theorem due to Krull \cite{Krull} states that
any totally-ordered Abelian group arises as the group of divisibility
of some valuation domain.  It is also known\footnote{Notice that a UFD
  is a B\'ezout domain iff it is Noetherian iff it is a PID (see
  Appendix \ref{app:Bezout}).} that $R$ is a UFD iff $G(R)$ is
order-isomorphic with a (generally infinite) direct sum of copies of
$\Z$ endowed with the product order (see \cite[Theorem 4.2.2]{Mott}).

An ordered group $(G,\leq)$ is called {\em lattice-ordered} if the
partially ordered set $(G,\leq)$ is a lattice, i.e. any two element
subset $\{x,y\}\subset G$ has an infimum $\inf(x,y)$ and a supremum
$\sup(x,y)$ (these two conditions are in fact equivalent for a group
order); in particular, any totally-ordered Abelian group is
lattice-ordered.  Any lattice-ordered Abelian group is torsion-free
(see \cite[p. 10]{Jaffard} or \cite[15.7]{Gilmer}). The divisibility
group $G(R)$ of an integral domain $R$ is lattice-ordered iff $R$ is a
GCD domain \cite{Mott}. In particular, the group of divisibility of a
B\'ezout domain is a lattice-ordered group.

When $R$ is a B\'ezout domain, the prime elements of $R$ are detected
by the lattice-order of $G(R)$ as follows. Given any Abelian
lattice-ordered group $(G,\leq)$ and any $x\in G$, let $\uparrow
x\eqdef \{y\in G|x\leq y\}$ and $\downarrow x\eqdef \{y\in G|y\leq
x\}$ denote the up and down sets determined by $x$. A {\em positive
  filter} of $(G,\leq)$ is a filter of the lattice $(G_+,\leq)$,
i.e. a proper subset $F\subset G_+$ having the following two
properties:
\begin{enumerate}[1.]
\itemsep 0.0em
\item $F$ is upward-closed, i.e. $x\in F$ implies $\uparrow x\subset F$
\item $F$ is closed under finite meets, i.e. $x,y\in F$ implies $\inf(x,y)\in F$. 
\end{enumerate}
Notice that $\uparrow x$ is a positive filter for any $x\in G_+$.  A
positive filter $F$ of $(G,\leq)$ is called:
\begin{enumerate}[(a)]
\itemsep 0.0em
\item {\em prime}, if $G_+\setminus F$ is a semigroup, i.e. if $x,y\in
  G_+\setminus F$ implies $x+y\in G_+\setminus F$.
\item {\em principal}, if there exists $x\in F$ such that $F=\uparrow
  x$.
\end{enumerate}
If $R$ is a B\'ezout domain with field of fractions $K$ and group of
divisibility $G=K^\times/U(R)$, then the natural projection
$\pi:K^\times\rightarrow G$ induces a bijection between the set of
proper ideals of $R$ and the set of positive filters of $G$, taking a
proper ideal $I$ to the positive filter $\pi(I\setminus \{0\})$ and
a positive filter $F$ to the proper ideal $\{0\}\cup \pi^{-1}(F)$
(see \cite{Sheldon,BrandalBezout}). This correspondence
maps prime ideals to prime positive filters and non-zero principal
ideals to principal positive filters. In particular, the prime
elements of $R$ correspond to the principal prime positive filters of
$G$.

The following result shows (see \cite[Theorem 5.3, p. 113]{FS}) that any
lattice-ordered Abelian group is the group of divisibility of some
B\'ezout domain, thus allowing one to construct a very large class of
examples of such domains using the theory of lattice-ordered groups:

\

\begin{Theorem}
\label{KKJO}(Krull-Kaplansky-Jaffard-Ohm)
If $(G,\leq)$ is a lattice-ordered Abelian group, then there exists a
B\'ezout domain $R$ whose group of divisibility is order-isomorphic to
$(G,\leq)$.
\end{Theorem}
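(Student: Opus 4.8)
The plan is to realize $R$ via the classical group-algebra construction and to read off its divisibility theory from a content function, following \cite[Theorem~5.3]{FS}. First I would fix a field $k$. Since $(G,\le)$ is lattice-ordered it is torsion-free, so the group algebra $k[G]$ is an integral domain; let $Q$ be its field of fractions, and for $g\in G$ let $X^g\in k[G]$ denote the corresponding (invertible) element, so that $g\mapsto X^g$ embeds $G$ into $Q^\times$. For nonzero $f=\sum_g f_gX^g\in k[G]$ with finite support $\mathrm{supp}(f)=\{g:f_g\neq0\}$, I would set
\[
c(f)\eqdef\bigwedge_{g\in\mathrm{supp}(f)}g\in G ,
\]
the infimum being taken in the lattice $(G,\le)$. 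The crucial input is the Gauss-type identity $c(fh)=c(f)+c(h)$ for nonzero $f,h$; granting it, $c$ extends to a group homomorphism $\bar c\colon Q^\times\to G$ by $\bar c(f/h)\eqdef c(f)-c(h)$, with $\bar c(X^g)=g$, and I would define
\[
R\eqdef\{0\}\cup\{\,q\in Q^\times:\bar c(q)\ge 0\,\}\subset Q .
\]
That $R$ is a subring of $Q$ with field of fractions $Q$ follows from $\bar c$ being a homomorphism together with the elementary inequality $c(f_1+f_2)\ge c(f_1)\wedge c(f_2)$, itself a consequence of $\mathrm{supp}(f_1+f_2)\subseteq\mathrm{supp}(f_1)\cup\mathrm{supp}(f_2)$ and of the distributivity of $+$ over $\wedge$ in an $\ell$-group.

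Next I would identify the group of divisibility of $R$. An element $q\in Q^\times$ is a unit of $R$ iff $\bar c(q)\ge 0$ and $-\bar c(q)=\bar c(q^{-1})\ge 0$, i.e. iff $\bar c(q)=0$; hence $U(R)=\ker\bar c$, and $\bar c$ induces a group isomorphism $G(R)=Q^\times/U(R)\xrightarrow{\ \sim\ }G$ (surjective because $\bar c(X^g)=g$). Under this isomorphism the positive cone $R^\times/U(R)$ of $G(R)$ corresponds to $\bar c(R\setminus\{0\})=G_+$, since $X^g\in R$ iff $g\ge 0$; therefore $G(R)$ is order-isomorphic to $(G,\le)$, which is the assertion to be proved. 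In particular $G(R)$ is lattice-ordered, so $R$ is a GCD domain by the criterion of Mott recalled above.

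It remains to upgrade ``GCD'' to ``B\'ezout''. Being a B\'ezout domain means that every finitely generated ideal is principal, which by induction on the number of generators reduces to showing that $(a,b)$ is principal for all $a,b\in R\setminus\{0\}$. Since $R$ is already a GCD domain, a gcd $d$ of $a$ and $b$ exists with $(a,b)\subseteq(d)$, so the real point is that $d$ lies in the ideal $(a,b)$; after replacing $a,b$ by suitable unit multiples (units have content $0$, hence do not change $\bar c$) and dividing out $d$, this amounts to the assertion that two elements $a,b\in k[G]$ with supports in $G_+$ and $c(a)\wedge c(b)=0$ generate the unit ideal of $R$. I expect this ``coprimeness'' statement, together with the multiplicativity $c(fh)=c(f)+c(h)$ used throughout, to be the main obstacle: both are Gauss-lemma-type facts that rely essentially on the lattice (Riesz interpolation) structure of $G$---directedness alone does not suffice---and the natural route is to pass to a representation of the $\ell$-group $(G,\le)$ as a subdirect product of totally ordered abelian groups and to track leading terms coordinatewise, exactly as in the classical Gauss lemma over a valued field. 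These two steps I would carry out following \cite[Theorem~5.3]{FS}, thereby completing the proof.
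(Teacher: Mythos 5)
Your construction is essentially the one the paper itself records after the theorem statement (and attributes to Krull--Kaplansky--Jaffard--Ohm via \cite[Theorem 5.3]{FS}): form the group algebra $k[G]$ and its quotient field, define a content/valuation with values in $G$, cut out $R$ by positivity, and reduce the two Gauss-lemma-type facts (multiplicativity of the content and the B\'ezout property) to the totally ordered case through Lorenzen's embedding of $(G,\leq)$ into a product of totally ordered groups. Since the paper likewise defers exactly those two steps to the cited literature rather than proving them, your proposal matches its treatment both in route and in level of detail.
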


\

\noindent For any totally ordered group $G_0$, the result of Krull
mentioned above gives a valuation ring whose divisibility group is
order-isomorphic to $G_0$. This valuation ring can be taken to be the
group ring $k[G_0]$, where $k$ is a field together with the following
valuation on the field of fractions $k(G_0)$:
\be
v\left(\sum_{i=1}^m a_i X_{g_i}/\sum_{j=1}^n b_j X_{h_j}\right)=\inf(g_1,\ldots, g_m) - \inf(h_1, \ldots, h_n)~~, 
\ee 
where it is assumed that all coefficients appearing in the expression
are non-zero. Lorenzen \cite{Lorenzen} proved that every
lattice-ordered group can be embedded into a direct product of totally
ordered groups with the product ordering. This embedding is used by
Kaplansky and Jaffard to construct the valuation domain $R$ of Theorem
\ref{KKJO}. By the result of Lorenzen, there exists a lattice
embedding $f: G \to H\eqdef\prod_{ \gamma \in \Gamma} G_{\gamma}$,
where $G_{\gamma}$ is a totally ordered group for all $\gamma \in
\Gamma$ and $H$ has the product ordering.  Let $Q=k( \{ X_g : g \in G
\})$ be the group field with coefficients in a field $k$ with the set
of formal variables $X_g$ indexed by elements of $G$. There is a
valuation $\varphi: Q^\times \to H$. The integral domain $R$ is the
domain defined by this valuation, i.e. $R\eqdef \{0\} \cup \{x \in
Q^\times : \varphi(x) \geq 0\}$. It is proved by Ohm that the
divisibility group of $R$ is order-isomorphic to $G$. Combining this
with the results of the previous sections, we have:

\

\begin{Proposition}
\label{prop:group}
Let $G$ be any lattice-ordered group and consider the ring $R$
constructed by Theorem \ref{KKJO}. Assume further that $W$ is a
critically-finite element of $R$. Then statements of Theorems
\ref{thm:Krull}, \ref{thm:decomp} and \ref{thm:N} hold.
\end{Proposition}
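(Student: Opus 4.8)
The plan is to read Proposition \ref{prop:group} as a direct application of Theorems \ref{thm:Krull}, \ref{thm:decomp} and \ref{thm:N}, whose only hypotheses are that $R$ be a B\'ezout domain and that $W$ be critically-finite. The second hypothesis is assumed outright, so the only point that needs checking is that the ring $R$ furnished by the Krull--Kaplansky--Jaffard--Ohm construction is a B\'ezout domain.

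First I would recall the construction behind Theorem \ref{KKJO}: using Lorenzen's embedding $f\colon G\hookrightarrow H\eqdef\prod_{\gamma\in\Gamma}G_\gamma$ into a product of totally ordered groups, one forms the group field $Q=k(\{X_g:g\in G\})$, the induced valuation $\varphi\colon Q^\times\to H$, and sets $R\eqdef\{0\}\cup\{x\in Q^\times:\varphi(x)\ge 0\}$. Since $Q$ is a field and $\varphi$ is a valuation, $R$ is an integral domain, and the content of Theorem \ref{KKJO} (the part due to Ohm) is precisely that $R$ is a B\'ezout domain whose group of divisibility is order-isomorphic to $G$. Thus $R$ is a B\'ezout domain, which is all that is required on the ring side.

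Since $W$ is assumed critically-finite, Definition \ref{def:crit-fin} supplies a factorization $W=W_0W_c$ with $W_c=p_1^{n_1}\cdots p_N^{n_N}$ as in \eqref{Wcritform}; in particular $R$ has prime elements and all the standing assumptions of Section 3 and Section \ref{subsec:counting} are in force. It then suffices to quote the three theorems in turn: Theorem \ref{thm:Krull} gives that $\mhef(R,W)$ is Krull--Schmidt with indecomposables the non-trivial primary factorizations of $W$; Theorem \ref{thm:decomp} gives the coproduct decomposition $\mhef(R,W)\simeq\vee_{i=1}^N\mhef(R,p_i^{n_i})$; and Theorem \ref{thm:N} gives the counting formula \eqref{Nfinal} for $N(R,W)$. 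I do not expect any real obstacle here: the proposition is a packaging of the general results of the paper into the concrete setting produced by Theorem \ref{KKJO}, and the only non-formal ingredient --- that the KKJO construction actually yields a B\'ezout domain, and not merely a domain realizing the prescribed lattice-ordered group as its group of divisibility --- is exactly the assertion of Theorem \ref{KKJO} itself.
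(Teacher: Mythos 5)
Your proposal is correct and matches the paper's intent exactly: the paper states this proposition with no separate proof beyond the phrase ``combining this with the results of the previous sections,'' i.e.\ it is precisely the observation that Theorem \ref{KKJO} supplies a B\'ezout domain and that the hypotheses of Theorems \ref{thm:Krull}, \ref{thm:decomp} and \ref{thm:N} are then met. Nothing further is needed.
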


\

\noindent The simplest situation is when the lattice-ordered group $G$
is totally ordered, in which case $R$ is a valuation domain. Then a
proper subset $F\subset G_+$ is a positive filter iff it is
upward-closed, in which case the complement $G\setminus F$ is
non-empty and downward-closed. If $G\setminus F$ has a greatest
element $m$, then $G\setminus F=\downarrow m$ and $F=(\uparrow m)
\setminus \{m\}$. If $G\setminus m$ does not have a greatest element,
then $(G\setminus F, F)$ is a Dedekind cut of the totally-ordered set
$(G,\leq)$. For example we can take $G\in \{\Z,\QQ,\R\}$ with the
natural total order:
\begin{itemize}
\itemsep 0.0em
\item When $G=\Z$, the B\'ezout domain $R$ is a discrete valuation
  domain and thus a PID with a unique non-zero prime ideal and hence
  with a prime element $p\in R$ which is unique up to association in
  divisibility. In this case, any positive filter of $\Z$ is principal
  and there is only one prime filter, namely $\uparrow 1=\Z_+\setminus
  \{0\}$. A critically-finite potential has the form $W=W_0 p^k$,
  where $k\geq 2$ and $W_0$ is a unit of $R$.
\item When $G=\QQ$, there are two types of positive filters. The first
  have the form $F=(\uparrow q) \setminus \{q\}=(q,+\infty)\cap \QQ$
  with $q\in \QQ_{\geq 0}$, while the second correspond to Dedekind
  cuts and have the form $F=[a,+\infty)\cap \QQ$ with $a\in
    \R_{>0}$. In particular, a positive filter is principal iff it has
    the form $F=[q,+\infty)\cap \QQ$ with $q\in \QQ_{>0}$. A principal
      positive filter can never be prime, since $\QQ_+\setminus
      F=[0,q)\cap \QQ$ is not closed under addition for $q>0$. Hence
        the B\'ezout domain $R$ has no prime elements when
        $G=\QQ$.
\item When $G=\R$, any proper subset of $\R_+$ has an infimum and
  hence positive filters have the form $F=(a,+\infty)$ or
  $F=[a,+\infty)$ with $a\in \R_{>0}$, the latter being the principal
    positive filters. No principal positive filters can be prime, so
    the corresponding B\'ezout domain has no prime
    elements.
\end{itemize}

\noindent We can construct more interesting examples as follows. 
Let $(G_i, \leq_i)_{i\in I}$ be any family of lattice-ordered Abelian groups,
where the non-empty index set $I$ is arbitrary.  Then the direct
product group $G\eqdef \prod_{i\in I} G_i$ is a lattice-ordered Abelian group
when endowed with the product order $\leq$:
\be
(g_i)_{i\in I}\leq (g'_i)_{i\in I}~~\mathrm{iff}~~\forall i\in I: g_i\leq_i g'_i~~.
\ee
Let $\supp(g)\eqdef \{i\in I|g_i\neq 0\}$. The direct sum $G^0\eqdef
\oplus_{i\in I} G_i=\{g=(g_i)\in G||\supp(g)<\infty\}$ is a subgroup
of $G$ which becomes a lattice-ordered Abelian group when endowed with
the order induced by $\leq$.  For any $x=(x_i)_{i\in I}\in G$, we have
$\uparrow_G x=\prod_{i\in I} \uparrow x_i$ while for any $x^0\in G^0$,
we have $\uparrow_{G^0} x^0=\oplus_{i\in I}\uparrow x_i^0$, where
$\uparrow_G$ and $\uparrow_{G_0}$ denote respectively the upper sets
computed in $G$ and $G^0$.  Hence: 
\begin{enumerate}[I.]
\itemsep 0.0em
\item The principal positive filters of $G$ have the form
  $F=\prod_{i\in I}F_i$, where:
\begin{enumerate}[1.]
\itemsep 0.0em
\item each $F_i$ is a non-empty subset of $G_{i+}$ which either
  coincides with $\uparrow 0_i$ or is a principal positive filter of
  $(G_i,\leq_i)$
\item at least one of $F_i$ is a principal positive filter of $(G_i,\leq_i)$.
\end{enumerate}
Such a principal positive filter $F$ of $G$ is prime iff the set
$G_{i+}\setminus F_i$ is empty or a semigroup for all $i\in I$. In
particular, the principal prime ideals of the B\'ezout domain $R$
associated to $G$ by the construction of Theorem \ref{KKJO} are in
bijection with families (indexed by $I$) of principal prime ideals of
the B\'ezout domains $R_i$ associated to $G_i$ by the same
construction. The {\em non-zero} principal prime ideals of $R$ are in
bijection with families $(J_i)_{i\in S}$, where $S$ is a non-empty
subset of $I$ and $J_i$ is a non-zero principal prime ideal of $R_i$
for each $i\in S$.
\item The principal positive filters of $G^0$ have the form
  $F^0=\oplus_{i\in I}F_i$, where $(F_i)_{i\in I}$ is a family of
  subsets of $F_i\subseteq G_{i+}$ such that the set $\supp F\eqdef
  \{i\in I|F_i\neq \uparrow_{G_i} 0_i \}$ is finite and non-empty and
  such that $F_i$ is a principal positive filter of $(G_i,\leq_i)$ for
  any $i\in \supp F$.  Such a principal positive filter $F^0$ of $G^0$
  is prime iff $F_i$ is prime in $(G_i,\leq_i)$ for any $i\in \supp
  F$. In particular, the non-zero principal prime ideals of the B\'ezout
  domain $R^0$ associated to $G^0$ by the construction of Theorem
  \ref{KKJO} are in bijection with finite families of the form
  $J_{i_1},\ldots, J_{i_n}$ ($n\geq 1$), where $i_1,\ldots, i_n$ are
  distinct elements of $I$ and $J_{i_k}$ is a non-zero principal prime
  ideal of the B\'ezout domain $R_{i_k}$ associated to $G_{i_k}$ by the
  same construction.
\end{enumerate}

\noindent It is clear that this construction produces a very large
class of B\'ezout domains which have prime elements and hence to which
Proposition \ref{prop:group} applies. For example, consider the direct
power $G=\Z^I$ and the direct sum $G=\Z^{(I)}$, endowed with the
product order. Then the B\'ezout domain $R^0$ associated to $\Z^{(I)}$
is a UFD whose non-zero principal prime ideals are indexed by the {\em
  finite} non-empty subsets of $I$.  On the other hand, the non-zero
principal prime ideals of the B\'ezout domain $R$ associated to $\Z^I$
are indexed by {\em all} non-empty subsets of $I$. Notice that $R$ and
$R^0$ coincide when $I$ is a finite set.

\subsection{Constructions through the spectral poset}
Given a unital commutative ring $R$, its {\em spectral poset} is the
prime spectrum $\Spec(R)$ of $R$ viewed as a partially-ordered set
with respect to the order relation $\leq$ given by inclusion between
prime ideals. Given a poset $(X,\leq)$ and two elements $x,y\in X$, we
write $x\ll y$ if $x<y$ and $x$ is an immediate neighbor of $y$, i.e.
if there does not exist any element $z\in X$ such that $x<z<y$. It was
shown in \cite{KaplanskyBook} that the spectral poset of any unital
commutative ring satisfies the following two conditions, known as {\em
  Kaplansky's conditions}:
\begin{enumerate}[I.]
\itemsep 0.0em
\item Every non-empty totally-ordered subset of $(\Spec(R),\leq)$ has
  a supremum and an infimum (in particular, $\leq$ is a lattice
  order).
\item Given any elements $x,y\in \Spec(R)$ such that $x<y$, there
  exist distinct elements $x_1,y_1$ of $\Spec(R)$ such that $x\leq
  x_1<y_1\leq y$ and such that $x_1\ll y_1$.
\end{enumerate}
It is known \cite{Hochster,Lewis} that these conditions are not sufficient
to characterizes spectral posets.  It was shown in \cite{Speed} that a
poset $(X,\leq)$ is order-isomorphic with the spectral poset of a
unital commutative ring iff $(X,\leq)$ is profinite, i.e. iff
$(X,\leq)$ is an inverse limit of finite posets; in particular, any finite 
poset is order-isomorphic with a spectral poset \cite{Lewis}. 

A partially ordered set $(X,\leq)$ is called a {\em tree} if for
every $x\in X$, the lower set $\downarrow x=\{y\in X|y \leq x\}$ is
totally ordered. The following result was proved by Lewis:

\

\begin{Theorem}{\rm \cite{Lewis}}
\label{thm:Lewis}
Let $(X,\leq)$ be a partially-ordered set. Then the following
statements are equivalent:
\begin{enumerate}[(a)]
\itemsep 0.0em
\item $(X,\leq)$ is a tree which has a unique minimal element $\theta\in X$ and
  satisfies Kaplansky's conditions I. and II.
\item $(X,\leq)$ is isomorphic with the spectral poset of a B\'ezout
  domain.
\end{enumerate}
Moreover, $R$ is a valuation domain iff $(X,\leq)$ is a totally-ordered set. 
\end{Theorem}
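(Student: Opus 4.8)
The plan is to establish the two implications of the equivalence separately and then dispatch the final valuation clause, which is easy once the equivalence is in place. The implication (b)$\Rightarrow$(a) is short. Suppose $(X,\leq)\cong\Spec(R)$ for a B\'ezout domain $R$. Kaplansky's conditions I and II hold for the prime spectrum of \emph{any} unital commutative ring \cite{KaplanskyBook}, so they hold here. Since $R$ is a domain, the zero ideal $\langle 0\rangle$ is prime and is contained in every prime ideal, so it is the unique minimal element $\theta$ of $\Spec(R)$. It remains to check that $\Spec(R)$ is a tree, i.e. that $\downarrow\mathfrak{p}$ is totally ordered for every prime $\mathfrak{p}$. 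But a B\'ezout domain is a Pr\"ufer domain, so each localization $R_{\mathfrak{p}}$ is a valuation domain; the primes of $R_{\mathfrak{p}}$ are linearly ordered by inclusion, and they correspond order-preservingly to the primes of $R$ contained in $\mathfrak{p}$. Hence $\downarrow\mathfrak{p}$ is a chain, and $(X,\leq)$ satisfies (a).

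The substantive direction is (a)$\Rightarrow$(b), which is Lewis's construction \cite{Lewis}. I would carry it out in three stages. \emph{Stage one: chains.} If $X=C$ is a single chain with least element, then, using Krull's theorem (Subsection \ref{subsec:div}, \cite{Krull}) that every totally ordered abelian group is the value group of a valuation domain and the fact that the prime spectrum of a valuation domain is precisely its chain of convex subgroups, one realizes $C$ as the spectrum of a valuation domain; the hypothesis that $C$ satisfies Kaplansky's conditions is exactly Kaplansky's characterization of which chains arise this way. \emph{Stage two: grafting.} When a tree is obtained from an already-realized tree (with domain $R$ and a distinguished prime $\mathfrak{q}$ corresponding to a node $x$) by attaching a new chain at $x$, one forms the conductor-square pullback $R'=V\times_{V/M}(R/\mathfrak{q})$, where $V$ is a valuation domain realizing the new chain and $M$ is its maximal ideal. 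The spectrum of $R'$ is obtained by gluing $\Spec V$ to $\Spec R$ along the identification $M\leftrightarrow\mathfrak{q}$, and one verifies via the Pr\"ufer pullback criteria that $R'$ is again a B\'ezout domain. \emph{Stage three: limits.} A general tree satisfying Kaplansky's conditions is, after Speed's characterization \cite{Speed}, an inverse limit of finite trees; taking the corresponding filtered direct limit of the B\'ezout domains produced above yields a B\'ezout domain (a filtered colimit of B\'ezout domains along these maps is a B\'ezout domain, since finitely many elements and hence any finitely generated ideal already live in some term), whose spectrum is the prescribed inverse-limit poset. Condition II is precisely what guarantees that there are enough covering pairs at limit nodes for the inductive grafting to exhaust $X$.

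An alternative, equivalent route to (a)$\Rightarrow$(b) is to realize $X\setminus\{\theta\}$ as the poset of nonempty prime positive filters of a suitably built lattice-ordered abelian group $G$ — the tree condition on $X$ is the dual of the root-system structure of the prime filters of an $\ell$-group — and then invoke the Krull--Kaplansky--Jaffard--Ohm theorem \ref{KKJO} to obtain a B\'ezout domain $R$ with $G(R)\cong G$, reading off $\Spec(R)\cong X$ from the filter--ideal correspondence recalled in Subsection \ref{subsec:div}; this route still requires an explicit construction of $G$ from $X$. Finally, the valuation clause: if $R$ is a valuation domain then $\Spec(R)$ is totally ordered by definition; conversely, if $\Spec(R)$ is a chain then it has a unique maximal element, so $R$ is local, and a local B\'ezout domain is a valuation domain (in a local Bézout domain, writing $aR+bR=cR$ with $a=cx$, $b=cy$, $c=as+bt$ forces $xs+yt=1$, so $x$ or $y$ is a unit, whence $a\mid b$ or $b\mid a$).

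The main obstacle is entirely contained in (a)$\Rightarrow$(b): keeping the B\'ezout property under control through both the grafting step — which needs the exact Pr\"ufer conditions for conductor-square pullbacks, not merely a heuristic gluing — and the passage to the direct/inverse limit, while maintaining precise control of the spectrum at limit primes. The implication (b)$\Rightarrow$(a) and the valuation clause are routine by comparison.
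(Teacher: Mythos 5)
This theorem is quoted from Lewis's paper \cite{Lewis}; the present paper gives no proof of its own, but immediately after the statement it summarizes Lewis's actual construction, which is the route you relegate to a parenthetical ``alternative'': build the lattice-ordered abelian group $G=\{f:X^\ast\to\Z \mid |\supp(f)|<\infty\}$ with positive cone \eqref{poscone}, identify the prime positive filters of $G$ with $X\setminus\{\theta\}$ via \eqref{Fx}, and invoke the Krull--Kaplansky--Jaffard--Ohm Theorem \ref{KKJO} together with the ideal--filter correspondence of Subsection \ref{subsec:div}. Your direction (b)$\Rightarrow$(a) is correct and complete (Kaplansky's conditions for arbitrary spectra, the zero ideal as $\theta$, and the tree property from B\'ezout $\Rightarrow$ Pr\"ufer $\Rightarrow$ $R_{\mathfrak p}$ a valuation domain), and your valuation clause is correct: the cancellation argument showing that a quasi-local B\'ezout domain is a valuation domain is exactly right.

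The genuine gap is in your primary route for (a)$\Rightarrow$(b). First, the pullback $R'=V\times_{V/M}(R/\mathfrak q)$ is a construction of D$+$M type: it places $\Spec(R/\mathfrak q)$ \emph{above} the chain $\Spec V$, i.e.\ it grafts toward the root, not outward along a new branch, so as written it does not attach a new chain at an interior node $x$ of an already-realized tree. More seriously, even with the orientation fixed, pullbacks of this kind do not preserve the B\'ezout property in general --- whether $D+M$ is B\'ezout depends delicately on the relation between $D$, its quotient field, and $V/M$ --- so ``one verifies via the Pr\"ufer pullback criteria'' conceals the hardest point rather than resolving it. Second, the limit stage is not justified: a tree satisfying Kaplansky's conditions I and II can have nodes with arbitrarily many branches and chains of arbitrary (transfinite) length, and it is not clear that such a tree is exhausted by a filtered system of ``finitely grafted'' subtrees whose colimit of rings has the prescribed spectrum; identifying $\Spec(\varinjlim R_i)$ with $\varprojlim\Spec(R_i)$ also requires the transition maps to be controlled, which your sketch does not provide. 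By contrast, the group-theoretic route avoids all of this in one stroke, at the cost of the explicit (but elementary) verification that the prime positive filters of the group $G$ above, ordered by inclusion, form a poset isomorphic to $X\setminus\{\theta\}$ --- precisely the computation you defer. If you intend to complete the proof rather than cite \cite{Lewis}, that is the step to carry out.
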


\

\noindent An explicit B\'ezout domain $R$ whose spectral poset is
order-isomorphic with a tree $(X,\leq)$ satisfying condition (a) of
Theorem \ref{thm:Lewis} is found by first constructing a
lattice-ordered Abelian group $G$ associated to $(X,\leq)$ and then
constructing $R$ from $G$ is in Theorem \ref{KKJO}. 
The lattice-ordered group $G$ is given by \cite{Lewis}:
\be
G=\{f:X^\ast\rightarrow \Z\, |\, |\supp(f)|<\infty\}~~,
\ee
where $X^\ast\eqdef \{x\in X\, | \, \exists y\in X: y\ll x\}$ and
$\supp f\eqdef \{x\in X^\ast \, | \, f(x)\neq 0\}$ is a tree when
endowed with the order induced from $X$. The group operation is given
by pointwise addition. The lattice order on $G$ is defined by the
positive cone:
\ben
\label{poscone}
G_+\eqdef \{f\in G\, | \, f(x)>0 ~\forall x\in \minsupp(f)\}=\{f\in G \, | \, f(x)\geq 1 ~\forall x\in \minsupp(f)\}~~,
\een
where the order on $\Z$ is the natural order and the {\em minimal
  support} of $f\in G$ is defined through:
\ben
\label{minsupp}
\minsupp(f)\eqdef \{x\in \supp(f) \,|\, \forall y\in X^\ast ~\mathrm{such~that}~y<x: f(y)=0\}~~.
\een
Notice that $f\in G_+$ if $\minsupp(f)=\emptyset$ (in particular, we
have $0\in G_+$).  The lattice-ordered Abelian group $G$ has the
property that the set of its prime positive filters\footnote{Called
  ``prime V-segments'' in \cite{Lewis}.} (ordered by inclusion) is
order-isomorphic with the tree obtained from $(X,\leq)$ by removing
the minimal element $\theta$ (which corresponds to the zero ideal of
$R$). Explicitly, the positive prime filter $F_x$ associated to an
element $x\in X\setminus \{\theta\}$ is defined through
\cite[p. 432]{Lewis}:
\ben
\label{Fx}
F_x\eqdef \{f\in G_+ \, | \, \exists y\in \minsupp(f): y\leq
x\}=\{f\in G_+ \, | \, \minsupp(f)\cap (\downarrow x)\neq
\emptyset\}~~.
\een 
By Lemma \ref{lemma:prime}, a principal prime ideal of a B\'ezout domain is
necessarily maximal. This implies that the prime elements of $R$
(considered up to association in divisibility) correspond to certain
maximal elements of the tree $(X,\leq)$. Notice, however, that a
B\'ezout domain can have maximal ideals which are not principal (for
example, the so-called ``free maximal ideals'' of the ring of
complex-valued holomorphic functions defined on a non-compact Riemann
surface $\Sigma$ \cite{Alling1}). For any maximal element $x$ of $X$
which belongs to $X^\ast$, let $1_x\in G$ be the element defined by
the characteristic function of the set $\{x\}$ in $X^\ast$:
\be 
1_x(y)\eqdef \twopartdef{1}{y=x}{0}{y\in X^\ast \setminus \{x\} }~~.  
\ee 
Then $\supp(1_x)=\minsupp(1_x)=\{x\}$ and $1_x\in G_+\setminus
\{0\}$. Notice that $1_x\in F_x$. 

\

\begin{Proposition}
\label{prop:spec}
Let $(X,\leq)$ be a tree which has a unique minimal element and
satisfies Kaplansky's conditions I. and II. and let $R$ be the B\'ezout
domain determined by $(X,\leq)$ as explained above.
\begin{enumerate}[(a)] \itemsep 0.0em
\item For each maximal element $x$ of $X$ which belongs to $X^\ast$, the
principal positive filter $\uparrow 1_x$ is prime and hence
corresponds to a principal prime ideal of $R$. Moreover, we have:
\ben
\label{up1x}
\uparrow 1_x=\{f\in G_+ \, | \, \supp(f)\cap \downarrow x\neq \emptyset\}
\een
and:
\ben
\label{Fx1x}
F_x=\{f\in \, \uparrow 1_x \, | \, \inf S_f(x) \in S_f(x)\}=\{f\in \, \uparrow 1_x \, | \, \exists \min S_f(x)\}~~,
\een
where: 
\be
S_f(x)\eqdef \supp(f)\cap \downarrow x~~.
\ee
\item Let $W$ be a critically-finite element of $R$.  Then the
  statements of Theorem \ref{thm:Krull}, \ref{thm:decomp} and \ref{thm:N}
  hold.
\end{enumerate}
\end{Proposition}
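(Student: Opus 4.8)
The proof splits along the two parts of the statement, with part (a) — a computation inside the lattice-ordered group $G$ — carrying the substance, and part (b) falling out as a one-line consequence of part (a) together with the theorems of Sections 3 and 4. For part (a) the plan is to begin with an observation valid for every $x\in X\setminus\{\theta\}$, not only for maximal ones: because $\downarrow x$ is a chain and every $f\in G$ has finite support, the set $\supp(f)\cap\downarrow x$, when non-empty, has a least element $y_0$, and $y_0\in\minsupp(f)$ (any $w<y_0$ in $X^\ast$ with $f(w)\neq 0$ would lie in $\supp(f)\cap\downarrow x$ strictly below $y_0$). Comparing this with the definition~\eqref{Fx} of $F_x$ yields
\[
F_x=\{\,f\in G_+\ \big|\ \supp(f)\cap\downarrow x\neq\emptyset\,\}~,
\]
so that~\eqref{up1x} is exactly the identity $\uparrow 1_x=F_x$. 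The inclusion $\uparrow 1_x\subseteq F_x$ is free: $\minsupp(1_x)=\{x\}$ shows $1_x\in F_x$, and $F_x$ is upward-closed. For the reverse inclusion I would take $f\in F_x$ and show $f-1_x\in G_+$ by checking $(f-1_x)(y)>0$ for each $y\in\minsupp(f-1_x)$; writing $g\eqdef f-1_x$, which agrees with $f$ off $x$ and has $g(x)=f(x)-1$, this is a short case split. If $y=x$, then $f\in F_x$ and the fact that $\downarrow x$ is a chain force $x\in\minsupp(f)$, whence $f(x)\geq 1$, and since $g(x)\neq 0$ also $f(x)\geq 2$, so $g(x)\geq 1$. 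If $y\neq x$, one shows $y\in\minsupp(f)$, whence $g(y)=f(y)>0$; here maximality of $x$ is decisive, since it rules out the otherwise troublesome element $w=x$ among the $w<y$ one must control. I expect this last case split to be the only genuinely delicate point, precisely because it is the unique spot where the hypothesis that $x$ be maximal is consumed.

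Granting~\eqref{up1x}, formula~\eqref{Fx1x} is then formal: $S_f(x)=\supp(f)\cap\downarrow x$ is a finite subset of the chain $\downarrow x$, so $\inf S_f(x)=\min S_f(x)\in S_f(x)$ whenever it is non-empty, i.e. whenever $f\in\uparrow 1_x$; thus the right-hand side of~\eqref{Fx1x} collapses to $\uparrow 1_x$, which equals $F_x$ by~\eqref{up1x}. Finally, since $F_x$ belongs to the family of \emph{prime} positive filters of $G$ furnished by Lewis's construction, $\uparrow 1_x$ is a principal prime positive filter, and under the order-isomorphism between proper ideals of $R$ and positive filters of $G$ (matching principal ideals with principal filters and prime ideals with prime filters) it corresponds to a principal prime ideal of $R$, equivalently to a prime element of $R$ up to association. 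This settles part (a).

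Part (b) is then immediate. The ring $R$ attached to $(X,\leq)$ is a B\'ezout domain by Theorem~\ref{thm:Lewis} (equivalently, it is the domain produced from $G$ by the Krull--Kaplansky--Jaffard--Ohm construction of Theorem~\ref{KKJO}), and part (a) exhibits prime elements of $R$, so any critically-finite $W\in R$ meets the standing hypotheses of Theorems~\ref{thm:Krull}, \ref{thm:decomp} and~\ref{thm:N}. Those results then apply verbatim, giving that $\mhef(R,W)$ is Krull--Schmidt with non-zero indecomposables the non-trivial primary factorizations of $W$, the coproduct decomposition $\mhef(R,W)\simeq\vee_{i=1}^{N}\mhef(R,p_i^{n_i})$, and the counting formula~\eqref{Nfinal} for the number of isomorphism classes of $\HEF(R,W)$. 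The only thing to verify here is that these hypotheses are in force, which the preceding discussion establishes.
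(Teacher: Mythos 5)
Your proof is correct, and part (a) is organized differently from the paper's. The paper computes $\uparrow 1_x$ head-on: it expands $\minsupp(f-1_x)$ through a case analysis (introducing the auxiliary sets $Q_x=A_x\sqcup B_x$), arrives at $\uparrow 1_x=G_+\setminus A_x$, proves primality by checking directly that the complement $A_x$ is a semigroup, and only then relates $\uparrow 1_x$ to $F_x$ via the minimum criterion \eqref{Fx1x}, whose proof is written so as not to presuppose that $S_f(x)$ has a least element (it invokes Kaplansky's condition I. to produce $\inf S_f(x)$ and then asks whether the infimum is attained). You instead prove the single identity $F_x=\uparrow 1_x$: the inclusion $\uparrow 1_x\subseteq F_x$ from $1_x\in F_x$ and upward-closure, the reverse by essentially the same $\minsupp(f-1_x)$ case split (with maximality of $x$ consumed at the same spot as in the paper, namely to exclude $w=x$ among the $w<y$ one must control), and you then inherit primality of $\uparrow 1_x$ from Lewis's statement that $F_x$ is a prime positive filter rather than reverifying it. This is legitimate and shorter, but note what it rests on: your identification $F_x=\{f\in G_+\mid \supp(f)\cap\downarrow x\neq\emptyset\}$, and hence the collapse of \eqref{Fx1x} to a tautology (both sides equal to $\uparrow 1_x$), uses the finiteness of $\supp(f)$ in the paper's stated definition of $G$, since a finite non-empty chain automatically has a minimum. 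Under that definition your argument is complete; the paper's more laborious treatment of \eqref{Fx1x} is the version that would survive in a variant of Lewis's construction where supports need not be finite. Part (b) is handled identically in both.
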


\begin{proof}
The second statement follows from the results of the previous
sections. To prove the first statement, let $x$ be a maximal element
of $X$ which belongs to $X^\ast$. We have:
\ben
\label{up1}
\uparrow 1_x=\{f\in G_+ \, | \, f-1_x\in G_+\}=\{f\in G_+ \, | \, f(y)> 1_x(y)~\forall y\in \minsupp(f-1_x) \}~~.
\een
On the other hand, we have $\minsupp(f-1_x)=\{y\in X^\ast \, | \,
f(y)\neq 1_x(y) \, \& \, \forall z\in X^\ast ~\mathrm{such~that}~ z<
y: f(z)=1_x(z)\}$. Since $x$ is maximal, any element $z\in X^\ast$ for
which there exists $y\in X^\ast$ such that $z<y$ satisfies $z\neq
x$ and hence $1_x(z)=0$. This gives:
\beqa
\minsupp(f-1_x)&=&\{y\in X^\ast \, | \, f(y)\neq 1_x(y) \, \& \, \forall z\in X^\ast ~\mathrm{such~that}~ z< y: f(z)=0\}=\\
&=&\twopartdef{\minsupp(f)\cup \{x\}}{f\in Q_x}{\minsupp(f)\setminus \{x\}}{f\in G_+\setminus Q_x}~~,
\eeqa
where: 
\be
Q_x \eqdef \{f\in G_+ \, | \, f(x)\neq 1 \, \& \, \forall z\in X^\ast  ~\mathrm{such~that}~ z< x: f(z)=0\}=A_x \sqcup B_x~~,
\ee
with:
\beqa
A_x &\eqdef& \{f\in G_+ \, | \,  \forall z\in X^\ast  ~\mathrm{such~that}~ z\leq x: f(z)=0\}=\{f\in G_+ \, | \, \supp(f)\cap \downarrow x= \emptyset\}~~\nn\\
B_x &\eqdef& \{f\in G_+ \, | \, x\in \minsupp(f) \, \& \, f(x)>1\}\subset F_x\subset G_+\setminus A_x~~.
\eeqa
This gives: 
\ben
\uparrow 1_x=(G_+\setminus Q_x)\cup B_x =(G_+\setminus A_x)\cup B_x=G_+\setminus A_x=\{f\in G_+ \, | \, \supp(f)\cap \downarrow x\neq \emptyset\}~~,
\een
which establishes \eqref{up1x}. 
Notice that $G_+\setminus (\uparrow 1_x)=A_x$ is a semigroup, so
$\uparrow 1_x$ is a prime principal positive filter and hence it corresponds to
a principal prime ideal of $R$. Also notice that $F_x\subset \, \uparrow 1_x$. 

Consider an element $f\in \, \uparrow 1_x$. Then the non-empty set
$S_f(x)\eqdef \supp(f)\cap \downarrow x$ is totally ordered (since $X$
is a tree and hence $\downarrow x$ is totally ordered).  By
Kaplansky's condition I., this set has an infimum which we denote by
$x_f=\inf S_f(x)$; notice that $x_f\in \, \downarrow x$. For any $y\in
X^\ast$ with $y<x_f$, we have $y\not\in S_f(x)$ and hence $f(y)=0$.
Hence if $x_f$ belongs to $S_f(x)$ (i.e. if $S_f(x)$ has a minimum),
then $x_f=\min S_f(x)$ is an element of $\minsupp(f)\cap \downarrow x$
and in this case we have $f\in F_x$.  Conversely, given any element
$f\in F_x$, it is easy to see that the totally-ordered set
$\minsupp(f)\cap \downarrow x$ must be a singleton, hence
$\minsupp(f)\cap \downarrow x=\{x_f\}$ for a unique element $x_f\in
S_f(x)$.  This element must be a minimum (and hence an infimum) of the
totally-ordered set $S_f(x)$, since $x_f$ belongs to $\minsupp (f)$. We
conclude that \eqref{Fx1x} holds.
\qed
\end{proof}

\begin{remark}  
Statement (a) of Proposition \ref{prop:spec}
allows us to construct particular critically-finite elements of $R$ as
follows. For each maximal element of $X$ which belongs to $X^\ast$,
let $p_x$ be prime element of $R$ which generates the principal prime
ideal corresponding to the principal prime positive filter $\uparrow
1_x$ (notice that $p_x$ is determined up to association in
divisibility). For any finite collection $x_1,\ldots, x_N$ ($N\geq 1$)
of maximal elements of $X$ which belong to $X^\ast$ and any integers
$n_1,\ldots, n_N$ such that $n_j\geq 2$ for each $j\in \{1,\ldots,
N\}$, the element $W=\prod_{j=1}^N p_{x_j}^{n_j}\in R$ is
critically-finite.
\end{remark}

\

\noindent The following statement will be used in the construction of some examples below: 

\

\begin{Proposition} 
\label{prop:wo} 
Let $(S,\leq)$ be a well-ordered set. Then $(S,\leq)$ is a tree with a
unique minimal element.  Moreover, $(S,\leq)$ satisfies Kaplansky's
conditions I. and II. iff $S$ has a maximum. 
\end{Proposition}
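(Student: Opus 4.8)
The plan is to observe first that a well-ordered set is automatically totally ordered, so that the ``tree'' and ``unique minimal element'' assertions are immediate, and then to show that the entire content of the stated equivalence lies in Kaplansky's condition I.\ --- condition II will hold unconditionally for every well-ordered set.

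First I would record the easy facts. Since every non-empty subset of $S$ has a least element, in particular $\{x,y\}$ does for any $x,y\in S$, so any two elements of $S$ are comparable and $(S,\leq)$ is totally ordered. Hence for every $x\in S$ the lower set $\downarrow x$ is totally ordered, which is exactly the definition of a tree; and $\theta\eqdef\min S$ satisfies $\theta\leq s$ for all $s\in S$, so $\theta$ is the unique minimal element of $(S,\leq)$. Next I would dispose of Kaplansky's condition II: given $x<y$ in $S$, I would set $y_1\eqdef\min\{z\in S\mid x<z\leq y\}$, which exists because this set is a non-empty subset of $S$ (it contains $y$), and then check that no $z$ can satisfy $x<z<y_1$ --- such a $z$ would lie in $\{w\in S\mid x<w\leq y\}$ and be strictly below $y_1$, contradicting minimality --- so $x\ll y_1$. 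Taking $x_1\eqdef x$ then produces a distinct pair with $x\leq x_1<y_1\leq y$ and $x_1\ll y_1$, establishing condition II for every well-ordered set.

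The heart of the argument is condition I, which, since $S$ is totally ordered (so every subset is a totally-ordered subset), just asserts that every non-empty subset of $S$ has both an infimum and a supremum \emph{in $S$}. Infima are free: the least element of any non-empty $A\subseteq S$ is its infimum. For suprema, I would observe that the set $U(A)$ of upper bounds of $A$ in $S$, if non-empty, is itself a non-empty subset of the well-ordered set $S$, hence has a least element, which is precisely $\sup A$; and if $U(A)=\emptyset$ then $A$ has no supremum in $S$. Thus condition I is equivalent to: every non-empty $A\subseteq S$ has an upper bound in $S$. Instantiating this at $A=S$ forces $U(S)\neq\emptyset$, i.e.\ $S$ has a maximum; conversely a maximum of $S$ is an upper bound of every subset, so condition I then holds. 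Combining this with the unconditional validity of condition II, $(S,\leq)$ satisfies both Kaplansky conditions exactly when $S$ has a maximum.

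I expect no genuine obstacle here; the only point needing care is the bookkeeping in the last step --- recognizing that, once infima are known to be automatic, condition I collapses to the single requirement ``$S$ has a maximum'' by instantiating the upper-bound criterion at $A=S$ --- together with being precise that ``supremum'' and ``infimum'' are taken in the poset $S$ itself and not in some completion. Everything else is a direct unwinding of the definitions of well-order, of tree, and of the immediate-neighbor relation $\ll$.
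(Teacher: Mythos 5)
Your proof is correct and follows essentially the same route as the paper's: well-ordering gives total order (hence tree and unique minimum), the immediate-neighbor condition holds unconditionally, and the supremum/infimum condition reduces to the existence of upper bounds and hence to $S$ having a maximum. Your choice of witnesses $x_1=x$, $y_1=\min\{z\mid x<z\leq y\}$ is in fact slightly cleaner than the paper's (which can degenerate when $y$ is the immediate successor of $x$), and you attach the two halves of the argument to the correct labels I and II as defined in the text.
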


\begin{proof} Since $S$ is well-ordered, it is totally ordered and has
a minimum, therefore it is a tree with a unique minimal element.
Given $x,y\in S$ such that $x<y$, we have $x\leq x_1\ll y_1\leq y$,
where $x_1\eqdef \min\{x<s\leq y\}$ and $y_1\eqdef \min\{x_1<s\leq
y\}$. Thus $S$ satisfies Kaplansky's condition $I.$ Any non-empty
totally-ordered subset $A\subset S$ has a minimum since $S$ is
well-ordered. Moreover, $A$ has a supremum (namely $\min\{s\in S \, |
\, \forall x\in A: x\leq s\}$) iff it has an upper bound. Hence $S$
satisfies Kaplansky's condition II. iff it every non-empty subset of
$S$ has an upper bound, which amounts to the condition that $S$ has a
greatest element. \qed
\end{proof}

\

\begin{remark} Every element of $S$ (except a possible greatest
element) has an immediate successor (upper neighbor). In particular,
$S$ has a maximal element iff it has a maximum $M$, which in turn
happens iff the order type $\alpha$ of $S$ is a successor ordinal. In
this case, $M$ has a predecessor iff $\alpha$ is a double successor
ordinal, i.e. iff there exists an ordinal $\beta$ such that
$\alpha=\beta+2$.
\end{remark}

\

\begin{Example}
\label{ex:tree1}
Consider the tree $T$ whose underlying set is the set $\N=\Z_{\geq 0}$
of non-negative integers together with the following partial order: $0
< n$ for every $n \in \N$ and there is no further strict inequality;
notice that any maximal vertex $n \in \N^\ast=\Z_{>0}$ has an
immediate lower neighbor, namely $0$. This corresponds to a countable
corolla, i.e. a tree rooted at $0$ and with an edge connecting the
root to $n$ for every $n \in \N^\ast$ (and no other edges).  By
Proposition \ref{prop:spec}, each maximal vertex $n \in \N^\ast$
corresponds to a principal prime ideal of the associated B\'ezout
domain.
\end{Example}

\

\begin{Example}
\label{ex:tree2} We can make the previous example more interesting by
replacing the edges of $T$ with a tree. For each $x \in \N^\ast$,
consider a tree $T_x$ with a unique root (minimal element) $r_x\in
T_x$ and which satisfies Kaplansky's conditions I. and II.  Consider
the tree $\cT$ obtained by connecting $0$ to $r_x$ for $x \in
\N^\ast$.  Then $T$ has a unique minimal element (namely $0$) and
satisfies Kaplansky's conditions I. and II. By Proposition
\ref{prop:spec}, those maximal elements of each of the trees $T_x$
which have an immediate lower neighbor correspond to prime elements of
the associated B\'ezout domain $R$. We obtain many examples of B\'ezout
domains by varying the trees $T_x$:

\begin{enumerate}[1.]  \itemsep 0.0em
\item Assume that for every $x \in \N^\ast$, the tree $T_x$ is reduced
to the single point $r_x=x$. Then we recover Example \ref{ex:tree1}.
\item For any element $x \in \N^\ast$, consider a finite tree $T_x$
and let $\Sigma_x$ be the set of maximal elements of $T_x$.  Then
$\cT^\ast= \cT\setminus \{0\}$ and any maximal element of $\cT$
different from $0$ has an immediate lower neighbor.  The corresponding
B\'ezout domain $R$ has a principal prime ideal for every element of
the set $\cup_{x\in \N^\ast} \Sigma_x$.
\item For each $x \in \N^\ast$, consider a well-ordered set $S_x$
which has a maximum $m_x$ and denote the minimum element of $S_x$ by
$r_x$. By Proposition \ref{prop:wo}, we can take $T_x=S_x$ in the
general construction above, thus obtaining a tree $\cT$ and a
corresponding B\'ezout domain $R$. Let $U\subset \N^\ast$ be the set
of those $x\in \N^\ast$ for which $S_x$ is a double successor
ordinal. Then each element of $U$ corresponds to a principal prime
ideal of $R$.
\end{enumerate}
\end{Example}

\appendix

\section{GCD domains}
\label{app:GCD} 

\noindent Let $R$ be an integral domain and $U(R)$ its multiplicative
group of units. For any finite sequence of elements $f_1,\ldots,
f_n\in R$, let $\langle f_1,\ldots, f_n\rangle$ denote the ideal
generated by the set $\{f_1,\ldots, f_n\}$. An element $u\in R$ is a
unit iff $\langle u\rangle=R$. Two elements $f,g\in R$ are called {\em
  associated in divisibility} (we write $f\sim g$) if there exists
$u\in U(R)$ such that $g=uf$.  This is equivalent with the condition
$\langle f\rangle=\langle g\rangle$. The association relation is an
equivalence relation on $R$.

\

\begin{Definition}
An integral domain $R$ is called a {\em GCD domain} if any two
elements $f,g$ admit a greatest common divisor (gcd). 
\end{Definition}

\

\noindent Let $R$ be a GCD domain. In this case, the gcd of two elements $f,g$
is determined up to association and the corresponding equivalence
class is denoted by $(f,g)$. Any two elements $f,g$ of $R$ 
also admit a least common multiple (l.c.m.), which is determined up
to association and whose equivalence class is denoted by
$[f,g]$. By induction, any finite collection of elements
$f_1,\ldots, f_n$ admits a gcd and and lcm, both of which are
determined up to association and whose equivalence classes are denoted by: 
\be
(f_1,\ldots, f_n)~~\mathrm{and}~~[f_1,\ldots, f_n]~~.
\ee

\begin{remark}
Any irreducible element of a GCD domain is prime, hence primes and
irreducibles coincide in a GCD domain. In particular, any element of a
GCD domain which can be factored into primes has unique prime factorization,
up to permutation and association of the prime factors.
\end{remark}

\section{B\'ezout domains}
\label{app:Bezout}

\noindent Let $R$ be a GCD domain. We say that the {\em B\'ezout
  identity} holds for two elements $f$ and $g$ of $R$ if for one
(equivalently, for any) gcd $d$ of $f$ and $g$, there exist $a,b\in R$
such that $d=af+bg$. This amounts to the condition that the ideal
$\langle f,g\rangle$ is principal, namely we have $\langle f,g\rangle
=\langle d\rangle$.

\subsection{Definition and basic properties}

\

\

\begin{Definition}
An integral domain $R$ is called a \emph{B\'ezout domain} if any (and
hence all) of the following equivalent conditions hold:
\begin{itemize}
\itemsep 0.0em
\item $R$ is a GCD domain and the B\'ezout identity holds for any two
  non-zero elements $f,g\in R$.
\item The ideal generated by any two elements of $R$ is principal.
\item Any finitely-generated ideal of $R$ is principal.
\end{itemize}
\end{Definition}

\noindent More generally, a {\em B\'ezout ring} is a unital commutative
ring $R$ which has the property that its finitely-generated ideals are
principal. Hence a B\'ezout domain is a B\'ezout ring which is an integral
domain.  The following well-known statement shows that the B\'ezout
property is preserved under quotienting by principal ideals:

\

\begin{Proposition}
Let $R$ be a B\'ezout ring and $I$ be a finitely-generated (hence
principal) ideal of $R$. Then $R/I$ is a B\'ezout ring.
\end{Proposition}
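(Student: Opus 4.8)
The plan is to verify the defining property of a B\'ezout ring directly for $R/I$: namely, that every finitely-generated ideal of $R/I$ is principal. First I would fix the quotient morphism $\pi:R\to R/I$ and take an arbitrary finitely-generated ideal $\bar J=\langle \pi(a_1),\ldots,\pi(a_n)\rangle$ of $R/I$, with $a_1,\ldots,a_n\in R$. The idea is to pull $\bar J$ back to a finitely-generated ideal of $R$, apply the B\'ezout hypothesis there, and then push the resulting principal ideal forward again.

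Concretely, I would consider the ideal $J'\eqdef \langle a_1,\ldots,a_n\rangle + I$ of $R$. Since $I$ is finitely generated by hypothesis (indeed principal, as $R$ is B\'ezout) and $\langle a_1,\ldots,a_n\rangle$ is finitely generated by construction, the sum $J'$ is a finitely-generated ideal of $R$; the B\'ezout property of $R$ then gives $J'=\langle d\rangle$ for some $d\in R$. Applying $\pi$ and using $\pi(I)=0$, I get $\pi(J')=\langle \pi(d)\rangle$ on one hand and $\pi(J')=\langle \pi(a_1),\ldots,\pi(a_n)\rangle=\bar J$ on the other, whence $\bar J=\langle \pi(d)\rangle$ is principal. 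Since $\bar J$ was arbitrary, $R/I$ is a B\'ezout ring.

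There is essentially no hard step here: the one point that must not be overlooked is why $J'$ is finitely generated, and this is precisely where the hypothesis that $I$ be finitely generated (equivalently, principal) enters — without it, the pullback of $\bar J$ to $R$ need not be finitely generated and the argument would collapse. I would also note in passing that the same argument shows, more generally, that the quotient of a B\'ezout ring by any finitely-generated ideal is again a B\'ezout ring, which in particular recovers the statement invoked earlier in the paper that $R/\langle p^n\rangle$ is a B\'ezout ring (the case used in Lemma \ref{lemma:prime}).
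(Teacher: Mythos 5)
Your argument is correct. The paper states this Proposition without proof, as a well-known fact, so there is nothing to compare it against; your route (pull a finitely-generated ideal of $R/I$ back to $R$, apply the B\'ezout property there, push forward) is the standard one. One remark on your closing commentary, though: the finite generation of $I$ is \emph{not} in fact the point where the argument would collapse. Instead of the full preimage $J'=\langle a_1,\ldots,a_n\rangle+I$ you may work with the smaller ideal $J''=\langle a_1,\ldots,a_n\rangle$ generated by the chosen lifts alone. This is finitely generated no matter what $I$ is, hence principal, say $J''=\langle d\rangle$; and since $\pi$ is surjective, $\pi(J'')$ is on the one hand the ideal of $R/I$ generated by $\pi(d)$ and on the other hand equal to $\bar J=\langle \pi(a_1),\ldots,\pi(a_n)\rangle$. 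So the quotient of a B\'ezout ring by an \emph{arbitrary} ideal is again a B\'ezout ring; the hypothesis that $I$ be finitely generated is harmless and matches the only case the paper uses (quotients by $\langle p^n\rangle$ in Lemma \ref{lemma:prime}), but it is not needed for the conclusion.
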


\

\noindent If $R$ is a B\'ezout domain and $f_1,\ldots, f_n\in R$, then
we have $\langle f_1,\ldots, f_n\rangle=\langle d\rangle$ for any
$d\in (f_1,\ldots, f_n)$ and there exist $a_1,\ldots, a_n\in R$ such
that $d=a_1f_1+\ldots+a_nf_n$. The elements $f_1,\ldots, f_n$ are
called {\em coprime} if $(f_1,\ldots, f_n)=(1)$, which amounts to the
condition $\langle f_1,\ldots, f_n\rangle=R$. This happens iff there
exist elements $a_1,\ldots, a_n\in R$ such that
$a_1f_1+\ldots+a_nf_n=1$. Notice that every B{\'e}zout domain is
integrally closed \cite{Bourbaki}.

\begin{remark}
B\'ezout domains coincide with those Pr\"ufer domains which are GCD
domains. Since any Pr\"ufer domain is coherent, it follows that any
B\'ezout domain is a coherent ring.
\end{remark}

\

\noindent The following result characterizes finitely-generated
projective modules over B\'ezout domains:

\

\begin{Proposition}
{\rm \cite{FS}}
Every finitely-generated projective module over a B\'ezout domain is free.
\end{Proposition}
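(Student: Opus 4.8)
The plan is to deduce the statement from the sharper claim that \emph{every finitely-generated torsion-free module over a B\'ezout domain is free}, which I would prove by induction on the rank. First I would observe that a finitely-generated projective module $P$ over the domain $R$ is a direct summand of some $R^n$, hence in particular a submodule of $R^n$, hence torsion-free; moreover its rank $r\eqdef \dim_K(P\otimes_R K)$, where $K$ is the field of fractions of $R$, is finite and at most $n$ (tensor $P\oplus Q\simeq R^n$ with $K$). So it suffices to treat an arbitrary finitely-generated torsion-free $R$-module $P$ of finite rank $r$, where $R$ is a B\'ezout domain.

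The base case $r=0$ is immediate: torsion-freeness gives an injection $P\hookrightarrow P\otimes_R K$, and the target is $0$, so $P=0$. For the inductive step I would first clear denominators to realize $P$ as a submodule of $R^r$ which spans $K^r$: an isomorphism $P\otimes_R K\simeq K^r$ identifies $P$ with a finitely-generated $R$-submodule of $K^r$ whose $K$-span is all of $K^r$, and multiplying by a suitable nonzero $d\in R$ (injective on $P$ since $R$ is a domain) moves it inside $R^r$ without changing either property. Next, let $\pi:R^r\to R$ be the projection onto the last coordinate. The image $\pi(P)$ is a finitely-generated ideal of $R$, hence principal, say $\pi(P)=\langle a\rangle$; since $P$ spans $K^r$ we cannot have $a=0$, so $\langle a\rangle\simeq R$ is free. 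Thus $\pi|_P:P\twoheadrightarrow\langle a\rangle$ is a surjection onto a projective module and therefore splits, giving $P\simeq \ker(\pi|_P)\oplus R$. Here $\ker(\pi|_P)=P\cap(R^{r-1}\times\{0\})$ is a direct summand of the finitely-generated module $P$, hence finitely generated; it is torsion-free as a submodule of $R^{r-1}$; and its rank is $r-1$, since tensoring the short exact sequence $0\to\ker(\pi|_P)\to P\to\langle a\rangle\to 0$ with $K$ is exact and $\langle a\rangle$ has rank one. By the induction hypothesis $\ker(\pi|_P)\simeq R^{r-1}$, whence $P\simeq R^{r}$.

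The point requiring the most care — and the only place where non-Noetherianity of $R$ could cause trouble — is the bookkeeping which keeps every module appearing in the induction finitely generated and torsion-free; this uses only that a direct summand of a finitely-generated module is finitely generated (being a quotient of it) and that submodules of free modules over a domain are torsion-free, neither of which needs a chain condition. The two genuinely ``B\'ezout'' inputs enter exactly once each: the finitely-generated ideal $\pi(P)$ is principal, and a nonzero principal ideal of a domain is free of rank one (as recorded in the earlier subsections on GCD and B\'ezout domains). I would also remark that the statement can alternatively be obtained by combining the structure theory of finitely-generated torsion-free modules over Pr\"ufer domains with the fact that invertible ideals of a B\'ezout domain are principal, but the inductive argument above is self-contained and follows \cite{FS}. \qed
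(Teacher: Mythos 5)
Your proof is correct. The paper offers no argument of its own for this Proposition --- it simply cites Fuchs--Salce --- but your induction on rank (reduce to finitely generated torsion-free modules, embed in $R^r$ after clearing denominators, split off the principal ideal $\pi(P)=\langle a\rangle$ as a free rank-one summand) is the standard proof and is essentially the one found in that reference; the two B\'ezout-specific inputs are exactly the ones you isolate, and every intermediate module is kept finitely generated and torsion-free without invoking any chain condition.
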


\

\noindent In particular, finitely-generated projective factorizations
over a B\'ezout domain coincide with finite-rank matrix factorizations.

\subsection{Examples of B\'ezout domains}

\noindent The following rings are B\'ezout domains:

\begin{itemize}
\item Principal ideal domains (PIDs) coincide with the Noetherian
  B\'ezout domains. Other characterizations of PIDs among B\'ezout
  domains are given below.
\item Any generalized valuation domain is a B\'ezout domain.
\item The ring $\O(\Sigma)$ of holomorphic complex-valued functions
  defined on any\footnote{Notice that $\Sigma$ need not be
    algebraic. In particular, $\Sigma$ can have infinite genus and an
    infinite number of ends.} smooth connected non-compact Riemann
  surface $\Sigma$ is a non-Noetherian B\'ezout domain. In particular,
  the ring $\O(\C)$ of entire functions is a non-Noetherian B\'ezout
  domain.
\item The ring $\A$ of all algebraic integers (the integral closure of
  $\Z$ inside $\C$) is a non-Noetherian B\'ezout domain which has no
  prime elements.
\end{itemize}

\subsection{The Noetherian case}

\noindent The following is well-known:

\

\begin{Proposition}
Let $R$ be a B\'ezout domain. Then the following statements are
equivalent:
\begin{itemize}
\itemsep 0.0em
\item $R$ is Noetherian
\item $R$ is a principal ideal domain (PID)
\item $R$ is a unique factorization domain (UFD)
\item $R$ satisfies the ascending chain condition on principal ideals
  (ACCP)
\item $R$ is an atomic domain.
\end{itemize}
\end{Proposition}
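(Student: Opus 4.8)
The plan is to prove the cycle of implications
\[
\text{Noetherian}\Rightarrow\text{PID}\Rightarrow\text{UFD}\Rightarrow\text{ACCP}\Rightarrow\text{atomic}\Rightarrow\text{Noetherian},
\]
so that all five conditions become equivalent. Three of these arrows --- $\text{PID}\Rightarrow\text{UFD}$, $\text{UFD}\Rightarrow\text{ACCP}$, $\text{ACCP}\Rightarrow\text{atomic}$ --- are classical facts valid in an arbitrary integral domain; the arrow $\text{Noetherian}\Rightarrow\text{PID}$ is immediate from the definition of a B\'ezout domain; and the remaining arrow $\text{atomic}\Rightarrow\text{Noetherian}$, which is the only place where the GCD/B\'ezout hypothesis is used in an essential way, is the point that requires actual argument.

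First, $\text{Noetherian}\Rightarrow\text{PID}$: in a Noetherian ring every ideal is finitely generated, and in a B\'ezout domain every finitely generated ideal is principal, so every ideal of $R$ is principal. Next, $\text{PID}\Rightarrow\text{UFD}$ is standard. For $\text{UFD}\Rightarrow\text{ACCP}$, given an ascending chain $\langle a_1\rangle\subseteq\langle a_2\rangle\subseteq\cdots$ one has $a_{n+1}\mid a_n$ for every $n$, so the lengths $\ell(a_n)$ of the prime factorizations form a non-increasing sequence of non-negative integers; once $\ell(a_{n+1})=\ell(a_n)$ the divisibility $a_{n+1}\mid a_n$ forces $a_{n+1}\sim a_n$, and the chain of ideals stabilizes. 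For $\text{ACCP}\Rightarrow\text{atomic}$, if some non-zero non-unit admitted no factorization into irreducibles, one could repeatedly split off a non-unit factor to produce a strictly increasing chain of principal ideals, contradicting ACCP.

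It remains to prove $\text{atomic}\Rightarrow\text{Noetherian}$. Since a B\'ezout domain is a GCD domain, every irreducible element of $R$ is prime (Remark in Appendix~\ref{app:GCD}), and an atomic domain in which irreducibles are prime is a UFD by the usual characterization; hence $R$ is a UFD. I would finish by showing that a B\'ezout domain which is a UFD is in fact a PID. Let $I$ be a non-zero ideal and choose $0\neq a\in I$ whose prime factorization has the least possible length $\ell(a)$ among non-zero elements of $I$. For any $b\in I$, pick a gcd $d$ of $a$ and $b$; since $R$ is B\'ezout, $\langle d\rangle=\langle a,b\rangle\subseteq I$, and $d\mid a$ gives $\ell(d)\leq\ell(a)$, while minimality forces $\ell(d)=\ell(a)$ and hence $d\sim a$. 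Thus $a\mid b$, i.e. $b\in\langle a\rangle$; as $b$ was arbitrary, $I=\langle a\rangle$. Every ideal of $R$ is therefore principal, so $R$ is Noetherian, which closes the cycle.

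I do not expect a serious obstacle: the real content is that for B\'ezout domains a hierarchy of a priori distinct finiteness and factorization conditions collapses, and once one observes that the GCD property upgrades ``atomic'' to ``UFD'' and that the gcd/B\'ezout identity then upgrades ``UFD'' to ``PID'', the rest is bookkeeping of classical implications.
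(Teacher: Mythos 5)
Your proof is correct. The paper states this proposition in Appendix B as ``well-known'' and gives no proof, so there is nothing to compare against; your cycle of implications is the standard argument, and the only step with real content --- that an atomic B\'ezout domain is a PID, via the GCD property upgrading atomic to UFD and then the minimal-factorization-length argument showing every non-zero ideal is generated by any element of least length --- is carried out correctly.
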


\subsection{Characterizations of B\'ezout domains}

\

\

\begin{Definition}
Let $R$ be a commutative ring. The \emph{Bass stable rank} $\bsr(R)$
of $R$ is the smallest integer $n$, such that for any collection
$\{a_0, a_1, \dots, a_n\}$ of generators of the unit ideal, there
exists a collection $\{ \lambda_1, \dots , \lambda_n\}$ in $R$ such
that the collection $\{a_i- \lambda_i a_0: \ 1 \leq i \leq n\}$ also
generate the unit ideal. If no such $n$ exists, then $\bsr(R)\eqdef \infty$. 
\end{Definition}

\

\begin{Definition}
A unital commutative ring $R$ is called a {\em Hermite ring} (in the
sense of Kaplansky) if every matrix $A$ over $R$ is equivalent with an
upper or a lower triangular matrix.
\end{Definition}

\

\noindent The following result is proved in \cite[Theorem 8.1]{Mortini}

\

\begin{Theorem}
{\rm \cite{Mortini}}
Let R be a B\'ezout domain. Then $\bsr(R)\leq 2$. Moreover, $R$ is a
Hermite ring.
\end{Theorem}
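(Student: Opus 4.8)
The plan is to handle the two assertions independently, in each case reducing everything to the defining features of a B\'ezout domain recorded in Appendix \ref{app:Bezout}: the ideal generated by two elements is principal and is generated by their gcd, and the B\'ezout identity holds for comaximal pairs.

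For $\bsr(R)\le 2$ I would verify the case $n=2$ of the definition directly. Let $a_0,a_1,a_2\in R$ generate the unit ideal; after disposing of the trivial case $a_1=a_2=0$ (where $a_0\in U(R)$ and any reduction works), let $d$ be a gcd of $a_1$ and $a_2$ and write $a_1=db_1$, $a_2=db_2$, so that $b_1,b_2$ are coprime and $\langle a_1,a_2\rangle=\langle d\rangle$; unimodularity of $(a_0,a_1,a_2)$ then forces $\langle a_0,d\rangle=R$. Choosing $u,v\in R$ with $ub_1+vb_2=1$, I would set $\lambda_1:=v$ and $\lambda_2:=-u$ and check the two identities
\[
u(a_1-\lambda_1 a_0)+v(a_2-\lambda_2 a_0)=d\ ,\qquad b_2(a_1-\lambda_1 a_0)-b_1(a_2-\lambda_2 a_0)=-a_0\ ,
\]
which exhibit both $d$ and $a_0$ inside $\langle a_1-\lambda_1 a_0,\ a_2-\lambda_2 a_0\rangle$; since $\langle a_0,d\rangle=R$, this ideal is all of $R$, giving the required reduction and hence $\bsr(R)\le 2$. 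The one nonobvious point here is the choice of the $\lambda_i$: the naive attempt $\lambda_i$ proportional to $b_i$ only places $d-\mu a_0$ in the ideal, which need not be a unit.

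For the Hermite property I would first treat a $1\times 2$ matrix $(a,b)$: taking a gcd $d$ with $a=da_1$, $b=db_1$ and $u,v$ with $ua_1+vb_1=1$, the matrix $U=\left(\begin{smallmatrix} u & -b_1\\ v & a_1\end{smallmatrix}\right)$ lies in $\SL_2(R)\subset\GL_2(R)$ and satisfies $(a,b)\,U=(d,0)$; transposing gives the dual statement for $2\times 1$ matrices. I would then pass to a general matrix $A\in\Mat(m,n,R)$ by induction on $m+n$: use such invertible $2\times 2$ moves to clear the first column of $A$ to the column $(d,0,\dots,0)^{\mathrm T}$, apply the inductive hypothesis to the lower-right $(m-1)\times(n-1)$ block via block-diagonal invertible matrices, and observe that this step does not disturb the already-cleared column; the outcome is an equivalent upper triangular matrix. (Alternatively one may simply invoke Kaplansky's theorem that a commutative ring is Hermite precisely when every $1\times 2$ matrix admits such a reduction.) I expect the only part needing genuine care to be the bookkeeping in this induction — verifying that the embedded block-diagonal matrices remain invertible and preserve the partial triangularization — rather than anything conceptually deep; beyond the B\'ezout identity and the equality $\langle a,b\rangle=\langle\gcd(a,b)\rangle$, no further input is used.
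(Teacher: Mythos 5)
Your proof is correct, but note that the paper does not actually prove this statement: it is quoted directly from \cite{Mortini} (and is classical, going back to Kaplansky's work on elementary divisor rings), so you have supplied an argument where the paper supplies only a citation. Your stable-rank computation is the standard one, and the ``crossed'' choice $\lambda_1=v$, $\lambda_2=-u$ is exactly the right device: the two displayed identities do place both $d$ and $a_0$ in $\langle a_1-\lambda_1a_0,\,a_2-\lambda_2a_0\rangle$, and unimodularity of $\{a_0,d\}$ finishes the case $n=2$. Two small points of hygiene: in the degenerate case $a_1=a_2=0$ it is not true that \emph{any} reduction works (taking $\lambda_1=\lambda_2=0$ leaves the zero ideal); you must pick, say, $\lambda_1=1$ so that $a_1-\lambda_1a_0=-a_0$ is a unit. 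Also, since the paper defines gcd's only for nonzero elements, you should either dispose of the case where exactly one of $a_1,a_2$ vanishes separately or invoke the convention $\gcd(a,0)=a$. The Hermite part is likewise correct: the $2\times 2$ matrix has determinant $ua_1+vb_1=1$, and the induction is sound because the block-embedded operations act trivially on the already-cleared row or column. It is worth recording why this stops at triangular form: clearing the remaining off-diagonal entries is precisely the elementary divisor problem, which for general B\'ezout domains is the open conjecture the paper discusses, so your argument is as strong as these elementary means allow.
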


\

\begin{acknowledgements}
This work was supported by the research grant IBS-R003-S1. 
\end{acknowledgements}

\

\end{document}